\newtheorem{theorem}{Theorem}[section]
\newtheorem{lemma}[theorem]{Lemma}
\newtheorem{proposition}[theorem]{Proposition}
\newtheorem{corollary}[theorem]{Corollary}
\theoremstyle{definition}
\newtheorem{definition}[theorem]{Definition}
\newtheorem{example}[theorem]{Example}
\theoremstyle{remark}
\newtheorem{remark}[theorem]{Remark}
\numberwithin{equation}{section}
\newcommand*{\longhookrightarrow}{\ensuremath{\lhook\joinrel\relbar\joinrel\rightarrow}}
\newcommand{\ow}{\omega}
\newcommand{\p}{\partial}
\newcommand{\C}{{\mathbb{C}}}
\newcommand{\I}{{\mathcal{I}}}
\newcommand{\R}{{\mathbb{R}}}
\newcommand{\Q}{{\mathbb{Q}}}
\newcommand{\Z}{{\mathbb{Z}}}
\newcommand{\N}{{\mathbb{N}}}
\renewcommand{\epsilon}{\varepsilon}
\renewcommand{\theta}{\vartheta}
\newcommand{\lda}{\lambda}
\def\blfootnote{\gdef\@thefnmark{}\@footnotetext}
\DeclareMathOperator{\Hess}{Hess}
\DeclareMathOperator{\crit}{crit}
\DeclareMathOperator{\Supp}{Supp}
\DeclareMathOperator{\id}{id}
\DeclareMathOperator{\im}{im}
\DeclareMathOperator{\ind}{ind}
\DeclareMathOperator{\Spec}{\text{Spec}}
\DeclareMathOperator{\Fix}{Fix}
\DeclareMathOperator{\RS}{RS}
\DeclareMathOperator{\CZ}{CZ}
\DeclareMathOperator{\Sp}{Sp}
\DeclareMathOperator{\sh}{shift}
\begin{document}
\title[Equivariant wrapped Floer homology and symmetric periodic Reeb orbits]{Equivariant wrapped Floer homology and symmetric periodic Reeb orbits}
\author{Joontae Kim, Seongchan Kim and Myeonggi Kwon}
 \address{School of Mathematics, Korea Institute for Advanced Study, 85 Hoegiro, Dongdaemun-gu, Seoul 02455, Republic of Korea}
 \email {joontae@kias.re.kr}
 \address{Institut de Math\'
ematiques, Universit\'e de Neuch\^atel, Rue  Emile-Argand 11, 2000 Neuch\^atel, Switzerland}
 \email {seongchan.kim@unine.ch}
  \address{Center for Geometry and Physics, Institute for Basic Science (IBS), Pohang 37673, Korea}
 \email {mkwon@ibs.re.kr}
\subjclass[2010]{Primary: 53D40, 37C27; Secondary:  37J05}
\keywords{equivariant wrapped Floer homology, symmetric periodic Reeb orbit, Seifert conjecture, brake orbit}

\maketitle
 
\begin{abstract}
The aim of this article is to apply a Floer theory to study symmetric periodic Reeb orbits. We define positive equivariant wrapped Floer homology using a (anti-)symplectic involution on a Liouville domain and investigate its algebraic properties. By a careful analysis of index iterations, we obtain a non-trivial lower bound on the minimal number of geometrically distinct symmetric periodic Reeb orbits on a certain class of real contact manifolds. This includes  non-degenerate real dynamically convex starshaped hypersurfaces in $\R^{2n}$ which are invariant under complex conjugation. As a result, we give a  partial answer to the Seifert conjecture on brake orbits in the contact setting.   
\end{abstract}

\setcounter{tocdepth}{2}
\setcounter{secnumdepth}{4}

\tableofcontents

\section{Introduction}

 \addtocontents{toc}{\protect\setcounter{tocdepth}{0}}
\subsection*{A conjecture of Seifert.}  Consider a mechanical Hamiltonian system   in $\R^{2n}$ associated with a Hamiltonian  of the form
\begin{equation}\label{eq: Hamil}
E(q,p) = \sum_{i,j}g_{ij}(q)p_ip_j + U(q),
\end{equation}
where the matrix $g_{ij}(q)$ is   symmetric and positive-definite for each $q$   and $U$ is a smooth function of $q$. Note that  this Hamiltonian is invariant under  complex conjugation $\rho_0(q,p)=(q,-p)$ which is an anti-symplectic involution, i.e., $\rho_0$ satisfies $\rho_0^2=\id_{\R^{2n}}$  and $\rho_0^* \omega= -\omega$, where $\omega=\sum_{j=1}^ndq_j\wedge dp_j$ is the standard symplectic form on $\R^{2n}$.

Let $c \in \R$ be a regular value of $E$. A $2T$-periodic orbit $x=(q,p): [0,2T]\to E^{-1}(c)$ of the Hamiltonian vector field of $E$ is called a \textit{brake orbit} if $p(0)=p(T)=0$. See Section~\ref{sec: Ham_Reeb_chords} for the definition of the Hamiltonian vector field. In \cite{Seifert}, Seifert proved that if $U$ is real analytic and  the projection of $E^{-1}(c)$ to the position space $\R^n$ is  bounded and homeomorphic to a closed unit ball, then there exists a brake orbit on $E^{-1}(c)$. Under these assumptions, he then conjectured the following:
$$
\text{  \emph{There exist at least $n$ geometrically distinct brake orbits on $E^{-1}(c)$.} }
$$
We remark that  the lower bound $n$ in the Seifert conjecture is optimal. For example,   the Hamiltonian system
\begin{equation*}
H(z_1, \dots, z_n) = \sum_{j=1}^n \frac{ \pi |z_j|^2}{a_j},   \quad a_i / a_j \notin \Q \quad \text{for\, $i \neq j$}
\end{equation*} 
on $\C^n$ has precisely $n$ geometrically distinct brake orbits on each positive energy level.

We state   some related results on brake orbits. Below, a Hamiltonian $H$ is an arbitrary smooth function on $\R^{2n}$. It is  said to be \textit{even in $p$} if it is invariant under $\rho_0$. 
\begin{itemize}
\item (Rabinowitz, \cite{Rabisym}) If $H$ is even in $p$, $c \in \R$ is a regular value, and $z \cdot \nabla H(z) \neq 0 $ for all $ z \in H^{-1}(c)$, then there exists a brake orbit on $H^{-1}(c)$.
\item (Szulkin, \cite{Szul}) If $H$ satisfies the assumptions of Rabinowitz and $H^{-1}(c)$ is $\sqrt{2}$-pinched, then the Seifert conjecture holds.
\item (Long--Zhang--Zhu, \cite{no}) If $H$ is even in $p$, the regular energy level $H^{-1}(c)$ is strictly convex, and $H$ is also invariant under the antipodal map $(q,p) \mapsto  (-q,-p)$, then there exist at least two brake orbits on $H^{-1}(c)$.
\item (Liu--Zhang, \cite{evenconvex}) Under the assumptions of Long--Zhang--Zhu, the Seifert conjecture holds.
\item (Giamb\`o--Giannoni--Piccione, \cite{Seifertn2}) The Seifert conjecture holds for the case $n=2$.
\item (Frauenfelder--Kang, \cite{UrsJungsoo}) If $H:\R^4 \rightarrow \R$ is even in $p$, and  the regular energy level $H^{-1}(c)$ is starshaped and dynamically convex,  then there exist either two or infinitely many brake orbits, see Remark~\ref{UrsJungssremka}.
\end{itemize}
We also refer the reader to    \cite{brake1, brake2,  Bolotingeneral, brake5, brake4,  brake7, brake12, Liu3, brake11,   brake6,brake10} for works that study the Seifert conjecture.

 \addtocontents{toc}{\protect\setcounter{tocdepth}{0}}
\subsection*{The Seifert conjecture in contact geometry.}  In this article, we study the Seifert conjecture in the contact setting. A \textit{real contact manifold} is a triple $(\Sigma, \alpha, \rho)$, where $\Sigma$ is a (co-oriented) contact manifold with a contact form $\alpha$ and $\rho \in \text{Diff}(\Sigma)$ is an anti-contact involution, meaning~that 
$$
\rho^2 = \id_{\Sigma}, \quad \rho^* \alpha = - \alpha.
$$
The Reeb vector field $R=R_{\alpha}$ is then anti-invariant under the involution $\rho$, i.e.,
$$
\rho^* R = -R.
$$
Assume that the fixed point set $\mathcal{L}= \text{Fix}(\rho)$, which is a Legendrian submanifold of $\Sigma$, is nonempty. The flow $\phi_R^t$ of the Reeb vector field $R$ satisfies 
\begin{equation}\label{eq:invoReeb}
\phi_R^t = \rho \circ \phi_R^{-t} \circ \rho.
\end{equation}
A smooth integral curve $c : [0,T] \rightarrow \Sigma$ of the Reeb vector field $R$ satisfying the boundary condition $c(0), c(T) \in \mathcal{L}$ is called a \textit{Reeb chord}. In view of equation \eqref{eq:invoReeb}, associated to each Reeb chord $c:[0,T] \rightarrow \Sigma$ is a \textit{symmetric periodic Reeb orbit}
$$
c^2(t) = \begin{cases} c(t) & \text{ if $ t \in [0,T]$,} \\ \rho \circ c(2T-t) & \text{ if $ t \in [T,2T]$,} \end{cases}
$$
see Figure \ref{fig:symmintro}.
\begin{figure}[h]
\begin{center}
\begin{tikzpicture}[scale=0.2]
\draw [dashed] (-5,0)--(15,0);

\draw [thick] plot [smooth,tension=0.7] coordinates {(7,0)
(7,0.5)
(7,3)
(9,3)
(10,5)
(8,9)
(6,9)
(5,7)
(6,5)
(5,2)
(3,3)
(2.5,6)
(1,6)
(0,5)
(1,3)
(2,1.3)
(2,0) };
\draw [->,thick] (9.2,7.2)--(8.8,7.9) ;
\draw [->,thick] (8.8,-7.9)--(9.2,-7.2) ;
\draw [thick] plot [smooth,tension=0.7] coordinates {
(2,0)
(2,-1.3)
(1,-3)
(0,-5)
(1,-6)
(2.5,-6)
(3,-3)
(5,-2)
(6,-5)
(5,-7)
(6,-9)
(8,-9)
(10,-5)
(9,-3)
(7,-3)
(7,-0.5)
(7,0)
 } ;
\draw [<->] (13,-2)--(13,2) ;
\node at (14,3) {$\rho$};
\draw [fill] (7,0) circle [radius=0.3];
 \draw [fill] (2,0) circle [radius=0.3];
\node at (15,0) [right]{$\mathcal{L}=\Fix(\rho)$};
\node at (1,8)  {$c(t)$};
\node at (-1,-9)  {$\rho \circ c(2T-t)$};
\node at (0,1) {$c(T)$};
\node at (9,1) {$c(0)$};
\end{tikzpicture}
\end{center}
\caption{A symmetric periodic Reeb orbit}
\label{fig:symmintro}
\end{figure}
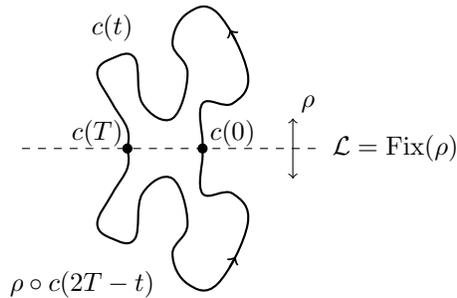
In particular, any Reeb chord comes in a pair and there is a one-to-one correspondence between the set of such pairs and the set of symmetric periodic Reeb orbits.

\begin{example}If the regular energy level $H^{-1}(c)$ is starshaped  (with respect to the origin) which is invariant under complex conjugation $\rho_0$, then the standard Liouville one-form   $\lambda = \tfrac{1}{2}\sum_{j=1}^n(q_jdp_j-p_jdq_j)$ on $\R^{2n}$ restricts to a contact form $\alpha$ on $H^{-1}(c)$. Moreover, the restriction $\rho := \rho_0 |_{H^{-1}(c)}$ defines an anti-contact involution on $H^{-1}(c)$. Consequently,  $(H^{-1}(c),\alpha, \rho)$ is a real contact manifold. The corresponding Legendrian submanifold $\mathcal{L}$ is given by $H^{-1}(c) \cap \left \{ p=0 \right \}$ which implies that  brake orbits are symmetric periodic Reeb orbits on $H^{-1}(c)$. In this case, the existence theorems of Seifert \cite{Seifert} and Rabinowitz \cite{Rabisym} follow immediately from the Arnold chord conjecture proved by Mohnke \cite{mohnke}. 
\end{example}

We translate the Seifert conjecture into the language of contact geometry:
\begin{quote}
\centering
\textit{ There exist at least $n$ geometrically distinct symmetric periodic Reeb orbits on any real contact manifold $(\Sigma^{2n-1}, \alpha, \rho)$.}
\end{quote}
Note that the aforementioned results on brake orbits  \cite{ UrsJungsoo, brake12, Liu3, brake11,  evenconvex,  no, Szul, brake10} study special cases of this conjecture. 

In this article, we apply a Floer theory to this conjecture, taking up  an approach given by Liu and Zhang \cite{Liu3, evenconvex}.  In their theory, they use strict convexity of a hypersurface twice as pointed out in \cite{GuttKang}: First, the Clarke dual action functional, which exists only in the strictly convex case, is used to obtain information on the interval on which the indices of brake orbits lie. Secondly, when the hypersurface is strictly convex, then the index of  brake orbits behaves well under iterations.   Borrowing an idea of Gutt and Kang \cite{GuttKang}, we show that under a weaker assumption  the approach of Liu and Zhang works well in the framework of a Floer  theory and that the index of symmetric periodic Reeb orbits behaves well under iterations, provided that the contact form is non-degenerate. 

Now let $W\subset \R^{2n}$ be a compact starshaped domain with the standard Liouville form $\lambda$. Assume that $W$ admits an exact anti-symplectic involution $\rho_W$ on $W$, i.e., $\rho_W^2=\id_W$ and $\rho^*_W\lambda=-\lambda$. Then the starshaped hypersurface $\Sigma$ given by the boundary of $W$ is a real contact manifold $(\Sigma=\p W,\alpha=\lambda|_{\Sigma},\rho=\rho_W|_\Sigma)$. The contact form $\alpha$ is called \textit{real dynamically convex} if   the Maslov indices of all Reeb chords satisfy suitable lower bounds, see Section~\ref{sec:realdyn}. One of our main results is the following assertion.
\begin{theorem}\label{thm:mianfirst}  Let the triple $(\Sigma,\alpha,\rho)$ be as above. Assume that the contact form $\alpha$ on $\Sigma$ is non-degenerate and real  dynamically convex. Then there exist at least $n  $ geometrically distinct  and simple symmetric periodic Reeb orbits on $\Sigma$.
\end{theorem}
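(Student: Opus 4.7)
The plan is to apply the positive equivariant wrapped Floer homology \(HW^{+,\Z/2}\) constructed earlier in the paper to the Liouville domain \(W \subset \R^{2n}\) bounded by \(\Sigma\), together with the Lagrangian \(L = W \cap \{p=0\}\) fixed by complex conjugation \(\rho_0\). The first step is a model computation: for the standard ball \(B^{2n}\) (or for an irrational ellipsoid as in the example after Seifert's conjecture), one computes \(HW^{+,\Z/2}_*\) explicitly, obtaining that it has rank one in each sufficiently large degree of a prescribed parity. By invariance of \(HW^{+,\Z/2}\) under Liouville homotopies through real contact structures and a continuation argument using real dynamical convexity, the same computation transfers to \((W, L, \rho_0)\).

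\textbf{Chain-level data.} Since \(\alpha\) is non-degenerate, the chain complex \(CW^{+,\Z/2}(W, L)\) is freely generated (after discarding ``bad'' iterates) by the Reeb chords of \(\alpha\) with endpoints on \(\mathcal{L}\), equivalently, via the doubling construction \(c \mapsto c^2\) recalled in the introduction, by symmetric periodic Reeb orbits. The grading of such a generator is determined by a Maslov-type index of \(c\), and real dynamical convexity forces every generator to sit in degree at least \(n\). If there are only finitely many geometrically distinct simple symmetric periodic Reeb orbits \(\gamma_1, \dots, \gamma_k\), then every generator of the complex arises as some iterate \(\gamma_i^m\), and its index grows linearly in \(m\) at the rate of a positive mean index \(\widehat{\mu}(\gamma_i)\).

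\textbf{The counting step.} Suppose for contradiction that \(k < n\). Apply the common index jump theorem, adapted to the symmetric setting in the spirit of Liu--Zhang and Gutt--Kang, to produce a large integer \(N\) and iterates \(m_1, \dots, m_k\) such that the Maslov indices of \(\gamma_1^{m_1}, \dots, \gamma_k^{m_k}\) all fall within a short window of length at most \(n-1\) centered near \(2N\). The number of generators of \(CW^{+,\Z/2}\) in the degree range \([2N - n + 1,\, 2N + n - 1]\) is then bounded above by a quantity of order \(2k < 2n\), while the model computation forces the rank of \(HW^{+,\Z/2}\) in the same window to be at least \(2n\). This contradiction yields \(k \geq n\), and simplicity of the produced orbits is ensured by working with the minimal period within each orbit class.

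\textbf{Main obstacle.} The delicate part is the index iteration analysis for symmetric periodic Reeb orbits under real dynamical convexity alone. In contrast to the strictly convex setting of Liu--Zhang, there is no Clarke dual action functional available to control Maslov indices of iterates from above, so these bounds must be derived purely from the Floer-theoretic side via action-index relations in the spirit of Gutt--Kang. One must also distinguish odd and even iterates of \(c^2\), which obey different Maslov-index formulas relative to \(c\) because only the even iterates factor through the symmetric doubling, and carefully handle the bad iterates that fail to contribute to the \(\Z/2\)-equivariant chain complex.
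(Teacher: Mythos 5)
Your overall architecture is the same as the paper's: positive $\Z_2$-equivariant wrapped Floer homology of the real Lagrangian $L=W\cap\{p=0\}$, the common index jump theorem, and a count of generators in a window of $n$ consecutive degrees. But as written the argument has two genuine gaps.

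First, the step you yourself label the ``main obstacle'' --- the index iteration analysis --- is precisely the content you would need to supply and do not. The counting step requires that, for each simple chord $c_j$, \emph{at most one} iterate lands in the degree window $[K-n+1,K]$ produced by the index jump. The paper gets this from the strict monotonicity $\mu_I(c^{\ell+1})>\mu_I(c^{\ell})$ under real dynamical convexity (Theorem \ref{thm:indexincreasingproperty}), proved not by ``action-index relations'' but by a direct computation with the Robbin--Salamon and H\"ormander indices giving $\mu_I(c^{\ell+1})\ge \mu_I(c^{\ell})+\mu_I(c)-\tfrac{n-1}{2}$, combined with $\mu_I(c)\ge\tfrac{n+1}{2}$. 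Without this monotonicity several iterates of one orbit could populate the window and $k\ge n$ does not follow. Relatedly, your claim that real dynamical convexity forces every generator into degree at least $n$ is false in the chord setting: the bound $\mu_I(c)\ge\tfrac{n+1}{2}$ together with the grading $|c|=\mu_I(c)-\tfrac{n-1}{2}$ only gives degree $\ge 1$; you are importing the closed-orbit bound $\mu_{\CZ}\ge n+1$. Your window bookkeeping (``$2k<2n$ versus rank $\ge 2n$'') also does not match the correct count: what is used is that each of the $n$ degrees $K-n+1,\dots,K$ carries a nonzero class of $HW^{\Z_2,+}$, each class needs a generator, and each simple chord supplies at most one.

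Second, the computation of $HW^{\Z_2,+}$ is only gestured at. The paper does not transfer a model computation from the ball by a deformation argument --- invariance of the equivariant theory under homotopies of real Liouville structures is itself nontrivial and is established nowhere --- but instead shows $\Sigma$ is displaceable from $\widehat L=\R^n$, deduces vanishing of $HW_*(L;W)$ (Theorem \ref{thm: incvan}), feeds this into the Leray--Serre type spectral sequence and the tautological exact sequence to obtain $HW^{\Z_2,+}_*(L;W)\cong H_{*+n-1}^{\Z_2}(L,\p L)\ne 0$ in every degree $\ge 1$ (Theorem \ref{cor: antisympcomp}), and identifies generators with $\Z_2$-pairs of Reeb chords via a Morse--Bott spectral sequence (Corollary \ref{cor: genwhfpsotive}). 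Finally, there is no discarding of ``bad iterates'' in this $\Z_2$-equivariant theory; that issue belongs to $S^1$-equivariant or contact homology constructions and plays no role here.
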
 
This theorem in particular  implies that   if a compact starshaped hypersurface in $\R^{2n}$ invariant under  complex conjugation is non-degenerate and real dynamically convex, then the contact Seifert conjecture holds.   
Without the non-degeneracy assumption, Liu and Zhang \cite{evenconvex} prove that if $H^{-1}(c)$ is strictly convex and invariant under   complex conjugation  and the antipodal map, then the contact Seifert conjecture holds. In Theorem~\ref{thm:mianfirst}, we obtain the same result under weaker assumptions,   but under the  additional assumption of non-degeneracy, in order to apply Floer theory.

In Theorem~\ref{theorem:appmain} we prove the same assertion   for a broader class of real contact manifolds   and Theorem~\ref{thm:mianfirst} is obtained as a corollary.  As an immediate corollary of the methods used in the proof of Theorem~\ref{thm:mianfirst} we obtain the following result.

\begin{corollary}   In addition to the assumptions of  Theorem~\ref{thm:mianfirst}, assume that there exist precisely $n$ geometrically distinct and simple symmetric periodic Reeb orbits on $\Sigma$. Then   their  indices  are  all different. 
\end{corollary}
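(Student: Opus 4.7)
The plan is to extract the corollary directly from the proof strategy of Theorem \ref{thm:mianfirst}. That proof, following the approach adapted from Liu--Zhang and Gutt--Kang, should isolate a window of degrees $\mathcal{W} = \{d_1 < \cdots < d_n\}$ in which the positive equivariant wrapped Floer homology has total rank at least $n$; moreover, by the iteration analysis under real dynamical convexity and non-degeneracy, each geometrically distinct simple symmetric periodic Reeb orbit contributes \emph{at most} one generator to $\mathcal{W}$, through a single distinguished iterate whose degree in $\mathcal{W}$ is a function of the Maslov index of the underlying simple orbit.

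Granted these two ingredients, the corollary becomes a pigeonhole argument. Suppose there are exactly $n$ geometrically distinct simple symmetric periodic Reeb orbits $\gamma_1,\ldots,\gamma_n$ on $\Sigma$. Each $\gamma_i$ contributes to $\mathcal{W}$ in at most one degree $d(\gamma_i)$, so the total number of generators in $\mathcal{W}$ coming from the $\gamma_i$ is at most $n$. Since the rank in $\mathcal{W}$ is at least $n$, equality is forced throughout: every $\gamma_i$ contributes exactly one generator, and $\{d(\gamma_1),\ldots,d(\gamma_n)\}$ coincides as a multiset with $\mathcal{W}$. In particular, the $d(\gamma_i)$ are pairwise distinct; since $d(\gamma_i)$ is determined by the Maslov index of $\gamma_i$, the indices of the $\gamma_i$ must be pairwise distinct as well.

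The only nontrivial step is the ``at most one generator per simple orbit in $\mathcal{W}$'' claim, which is precisely the iteration inequality underlying Theorem \ref{thm:mianfirst}: the Maslov indices of the distinguished iterates of a simple orbit, under real dynamical convexity, grow strictly enough that at most one lands in the critical window $\mathcal{W}$. Since this inequality is already established in the course of proving Theorem \ref{thm:mianfirst}, no new analytical input is needed, and the corollary drops out of the pigeonhole bookkeeping above.
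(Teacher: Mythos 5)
Your reconstruction is essentially the paper's (implicit) argument: the window is $[K-n+1,K]$ produced by the common index jump theorem (Theorem \ref{theoremgeneralcommon} together with Proposition \ref{propremark}), the distinguished iterate is $c_j^{2m_j}$ (by Theorem \ref{thm:indexincreasingproperty} and real dynamical convexity, no other iterate of $c_j$ can have index in the window), and the pigeonhole closes because the homology is nonzero there and is generated by $\Z_2$-pairs of Reeb chords (Corollary \ref{cor: genwhfpsotive}, Theorem \ref{cor: antisympcomp}). So the approach is the right one. Two points of imprecision, though. First, ``total rank at least $n$ in $\mathcal{W}$'' is not enough to conclude that the multiset $\{d(\gamma_1),\dots,d(\gamma_n)\}$ equals $\mathcal{W}$: if the rank were concentrated in fewer degrees, several $d(\gamma_i)$ could coincide. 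What the paper actually supplies, and what you need, is that $HW^{\Z_2,+}_d(L;W)\neq 0$ in \emph{each} of the $n$ degrees $d\in[K-n+1,K]$, so that each degree separately demands a generator and the assignment $j\mapsto d(\gamma_j)$ is forced to be a bijection.

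Second, your last step claims that $d(\gamma_i)$ is a function of the Maslov index of the underlying simple orbit, and deduces that the indices of the simple orbits are pairwise distinct. This is not justified: $d(\gamma_i)=\mu_I(c_i^{2m_i})-\tfrac{n-1}{2}$ depends on the iterate number $m_i$ and on the full iteration behaviour of the linearized flow, not only on $\mu_I(c_i)$; the common index jump theorem as stated pins down $\mu_I(c_i^{2m_i\pm 1})$ but not $\mu_I(c_i^{2m_i})$ in terms of $\mu_I(c_i)$. What the pigeonhole argument actually yields is that the indices of the distinguished iterates $c_i^{2m_i}$ (equivalently, the degrees in which the $n$ orbits appear as homology generators) are pairwise distinct, which is the content the paper is asserting; the extra inference to the indices of the simple chords themselves should be dropped.
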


\begin{remark}\label{UrsJungssremka} \rm In \cite[Conjecture 1.1]{Liu3} Liu and Zhang conjecture that if $\Sigma\subset \R^{2n}$ is a strictly convex hypersurface which is invariant under complex conjugation, then there exist either precisely $n$ or infinitely many geometrically distinct symmetric periodic Reeb  orbits. Note that  non-degeneracy is not assumed. The conjecture is proved for the case $n=2$ under a  weaker assumption by Frauenfelder-Kang \cite[Theorem~2.5]{UrsJungsoo} via holomorphic curve techniques.  More precisely,  they proved that if $(\Sigma, \alpha)$ is a starshaped dynamically convex hypersurface in $\C^2$ which is invariant under complex conjugation, then there are either two or infinitely many symmetric periodic Reeb orbits. Recall that convexity implies dynamical convexity, see \cite{HWZ}.  Their theorem says, in particular, that the existence of any nonsymmetric periodic Reeb orbit ensures infinitely many symmetric periodic Reeb orbits. This phenomenon can  also be found in the study of symmeric periodic points of reversible maps. Indeed, any area-preserving map defined on the open unit disk in the complex plane which is reversible with respect to complex conjugation must admit a symmetric periodic point. If there is more than one periodic point, which is  possibly nonsymmetric, then there have to be infinitely many symmetric periodic points, see \cite{Kang2}.
\end{remark}
We now further assume that $W$ admits an exact symplectic involution $\iota_W$, namely, $\iota^2_W=\id_W$ and $\iota^*_W\lambda=\lambda$, which commutes with $\rho_W$. Then its boundary $\Sigma $ carries a contact involution $\iota=\iota_W|_\Sigma$.  For a Reeb chord $c:[0,T] \rightarrow \Sigma$, the corresponding symmetric periodic Reeb  orbit $c^2$ is called  \emph{\text{doubly symmetric}} if $\iota( \text{im}(c^2) ) = \text{im}(c^2)$, see Figure \ref{Fig:symminvintro}.


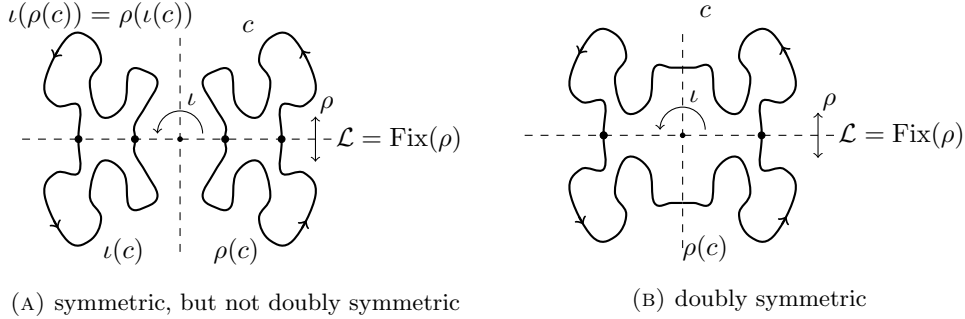
\begin{figure}[h]
\begin{subfigure}{0.45\textwidth}
   \centering
\begin{tikzpicture}[scale=0.15]

 \draw [fill] (4,0) circle [radius=0.2];
\begin{scope}[yscale=1,xscale=1,xshift=6cm]
 \draw [thick] plot [smooth,tension=0.7] coordinates {(7,0)
(7,0.5)
(7,3)
(9,3)
(10,5)
(8,9)
(6,9)
(5,7)
(6,5)
(5,2)
(3,3)
(2.5,6)
(1,6)
(0,5)
(1,3)
(2,1.3)
(2,0.3)
(2,0) };
\draw [->,thick] (9.2,7.2)--(8.8,7.9) ; 
\draw [fill] (7,0) circle [radius=0.3];
 \draw [fill] (2,0) circle [radius=0.3];
\node at (4,10)  {$c$};
\end{scope}

\begin{scope}[yscale=-1,xscale=1,xshift=6cm]
 \draw [thick] plot [smooth,tension=0.7] coordinates {(7,0)
(7,0.5)
(7,3)
(9,3)
(10,5)
(8,9)
(6,9)
(5,7)
(6,5)
(5,2)
(3,3)
(2.5,6)
(1,6)
(0,5)
(1,3)
(2,1.3)
(2,0.3)
(2,0) };
\draw [<-,thick] (9.2,7.2)--(8.8,7.9) ; 
\node at (3,10)  {$\rho(c)$};
\end{scope}

\begin{scope}[yscale=1,xscale=-1,xshift=-2cm]
 \draw [thick] plot [smooth,tension=0.7] coordinates {(7,0)
(7,0.5)
(7,3)
(9,3)
(10,5)
(8,9)
(6,9)
(5,7)
(6,5)
(5,2)
(3,3)
(2.5,6)
(1,6)
(0,5)
(1,3)
(2,1.3)
(2,0.3)
(2,0) };
\draw [<-,thick] (9.2,7.2)--(8.8,7.9) ; 
\draw [fill] (7,0) circle [radius=0.3];
\draw [fill] (2,0) circle [radius=0.3];
\node at (5,11)  {$\iota (\rho(c))=\rho(\iota(c))$};
\end{scope}

\begin{scope}[yscale=-1,xscale=-1,xshift=-2cm]
 \draw [thick] plot [smooth,tension=0.7] coordinates {(7,0)
(7,0.5)
(7,3)
(9,3)
(10,5)
(8,9)
(6,9)
(5,7)
(6,5)
(5,2)
(3,3)
(2.5,6)
(1,6)
(0,5)
(1,3)
(2,1.3)
(2,0.3)
(2,0) };
\draw [->,thick] (9.2,7.2)--(8.8,7.9) ; 
\node at (3,10)  {$\iota(c)$};
\end{scope}

\draw [<->] (16,-2)--(16,2) ;
\draw [dashed] (4,-10)--(4,9);
\draw [dashed] (-10,0)--(18,0);
\node at (17,3) {$\rho$};
\node at (17,0) [right]{$\mathcal{L}=\Fix(\rho)$};

\draw [->] (6,0.5) arc [x radius = 2cm, y radius = 2cm, start angle = 0, end angle = 180];
\node at (5,3.5) {$\iota$};
\end{tikzpicture}
  \caption{symmetric, but not doubly symmetric}
\end{subfigure}
\begin{subfigure}{0.45\textwidth}
  \centering
\begin{tikzpicture}[scale=0.15]

\begin{scope}[yscale=1,xscale=1]
 \draw [thick] plot [smooth,tension=0.7] coordinates {(7,0)
(7,0.5)
(7,3)
(9,3)
(10,5)
(8,9)
(6,9)
(5,7)
(6,5)
(5,2)
(3,3)
(2.5,6)
(1,6)
(0,6)
};
\draw [->,thick] (9.2,7.2)--(8.8,7.9) ; 
\draw [fill] (7,0) circle [radius=0.3];
\node at (2,11)  {$c$};
\end{scope}

\begin{scope}[yscale=1,xscale=-1]
 \draw [thick] plot [smooth,tension=0.7] coordinates {(7,0)
(7,0.5)
(7,3)
(9,3)
(10,5)
(8,9)
(6,9)
(5,7)
(6,5)
(5,2)
(3,3)
(2.5,6)
(1,6)
(0,6)
};
\draw [<-,thick] (9.2,7.2)--(8.8,7.9) ; 
\draw [fill] (7,0) circle [radius=0.3];
\end{scope}

\begin{scope}[yscale=-1,xscale=-1]
 \draw [thick] plot [smooth,tension=0.7] coordinates {(7,0)
(7,0.5)
(7,3)
(9,3)
(10,5)
(8,9)
(6,9)
(5,7)
(6,5)
(5,2)
(3,3)
(2.5,6)
(1,6)
(0,6)
};
\draw [->,thick] (9.2,7.2)--(8.8,7.9) ; 
\draw [fill] (7,0) circle [radius=0.3];
\end{scope}

\begin{scope}[yscale=-1,xscale=1]
 \draw [thick] plot [smooth,tension=0.7] coordinates {(7,0)
(7,0.5)
(7,3)
(9,3)
(10,5)
(8,9)
(6,9)
(5,7)
(6,5)
(5,2)
(3,3)
(2.5,6)
(1,6)
(0,6)
};
\draw [<-,thick] (9.2,7.2)--(8.8,7.9) ; 
\node at (2,10)  {$\rho(c)$};
\end{scope}

\draw [<->] (12,-2)--(12,2) ;
\draw [dashed] (0,-10)--(0,9);
\draw [dashed] (-14,0)--(14,0);
\node at (13,3) {$\rho$};
\node at (13,0) [right]{$\mathcal{L}=\Fix(\rho)$};
\draw [fill] (0,0) circle [radius=0.2];
\draw [->] (2,0.5) arc [x radius = 2cm, y radius = 2cm, start angle = 0, end angle = 180];
\node at (1,3.5) {$\iota$};

\end{tikzpicture}

  \caption{doubly symmetric}
\end{subfigure}

\caption{An illustration of doubly symmetric periodic Reeb orbits}
\label{Fig:symminvintro}
\end{figure}
Our next result is  a  slight generalization of \cite[Theorem~1.2]{Liu3}.
\begin{theorem}\label{thm:main2s} Let the quadruple $(\Sigma,\alpha,\rho,\iota)$ be as above. Assume that the contact form $\alpha$ on $\Sigma$ is non-degenerate and real  dynamically convex. Then  there exist at least $n + \mathcal{N}(\Sigma)$ geometrically distinct  and  simple symmetric periodic Reeb orbits on $\Sigma$, where $2\mathcal{N}(\Sigma)$ is the number of geometrically distinct and simple symmetric periodic  Reeb orbits   which are not doubly symmetric. 
\end{theorem}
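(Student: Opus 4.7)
The plan is to run the same scheme used to prove Theorem \ref{thm:mianfirst} (or rather its more general form Theorem \ref{theorem:appmain}), now refined to incorporate the additional commuting contact involution $\iota$. Since $\iota$ commutes with $\rho$ and preserves $\alpha$, it acts on the set of simple symmetric periodic Reeb orbits on $\Sigma$: by the definition above, the doubly symmetric orbits are exactly the $\iota$-fixed ones, while the non-doubly-symmetric orbits assemble into $\mathcal{N}$ free $\iota$-orbits of size two. Writing $N_d$ for the number of geometrically distinct simple doubly symmetric periodic Reeb orbits, the total count is $N_d + 2\mathcal{N}$, and the number of $\iota$-orbits in this total is $N_d + \mathcal{N}$. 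The desired inequality $N_d + 2\mathcal{N} \geq n + \mathcal{N}$ is therefore equivalent to showing that the number of $\iota$-orbits of simple symmetric periodic Reeb orbits is at least $n$.

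I would deduce this by refining the positive equivariant wrapped Floer homology underlying Theorem \ref{thm:mianfirst} to a further $\iota$-equivariant theory. Since $\iota$ extends to a symplectic involution of the Liouville completion preserving the Legendrian $\mathcal{L}$, one can select all auxiliary data, namely the $\rho$-equivariant Hamiltonians, almost complex structures, and perturbations used to define the positive equivariant wrapped Floer homology, to be additionally $\iota$-invariant. Under such a choice $\iota$ acts on the chain complex, and taking the invariant complex (equivalently, regrouping generators into $\iota$-orbits) produces a refined Floer theory whose chain-level generators correspond to $\iota$-orbits of Reeb chord pairs. Because a Reeb chord $c$ and its image $\iota\circ c$ differ by a contactomorphism preserving a symmetric trivialization of $\xi$ along them, their Maslov/Conley-Zehnder indices coincide, so the real dynamical convexity assumption and the index iteration analysis of Theorem \ref{thm:mianfirst} carry over verbatim to $\iota$-orbits of Reeb chord pairs.

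Replaying the argument that produces the $n$ distinct generators in Theorem \ref{thm:mianfirst}, but now interpreting chain generators as $\iota$-orbits of Reeb chord pairs, I expect to obtain $n$ geometrically distinct $\iota$-orbits of simple symmetric periodic Reeb orbits, distinguished by their Conley-Zehnder indices. This gives $N_d + \mathcal{N} \geq n$ and hence the theorem.

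The main obstacle is technical: verifying that $\iota$-equivariance is fully compatible with the existing $\rho$-equivariant framework. Concretely, one must establish transversality and compactness of the relevant moduli spaces within the class of doubly $(\rho,\iota)$-invariant data, and confirm that the spectral-sequence and positive-action filtration arguments producing the rank bound $n$ in Theorem \ref{thm:mianfirst} transfer without loss to the refined theory whose generators are $\iota$-orbits of Reeb chord pairs rather than the pairs themselves. Once these compatibilities are verified, the proof proceeds in direct analogy with that of Theorem \ref{thm:mianfirst}.
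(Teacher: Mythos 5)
Your opening reduction is exactly the paper's: writing the total as $N_d+2\mathcal{N}$ and the number of $\iota$-classes as $N_d+\mathcal{N}$, the claim $N_d+2\mathcal{N}\ge n+\mathcal{N}$ is equivalent to showing there are at least $n$ $\iota$-classes (in the paper's notation, $r=k+l\ge n$). Where you diverge is in how you propose to prove that inequality. The paper (proof of Theorem \ref{thm:appli}) does \emph{not} build any new $\iota$-refined Floer theory. It reuses the same positive equivariant wrapped Floer homology $HW^{\Z_2,+}_*(L;W)$ and the same pigeonhole argument as in Theorem \ref{theorem:appmain}: one applies the common index jump theorem (Theorem \ref{theoremgeneralcommon}) only to the $r$ representatives $c_1,\dots,c_{k+l}$, notes that $c_j$ and $\iota(c_j)$ have identical linearized Reeb flows up to conjugation and hence identical Maslov indices at every iterate, and concludes via the index increasing property (Theorem \ref{thm:indexincreasingproperty}) that the generators realizing the $n$ consecutive nonvanishing degrees in the window $[K-n+1,K]$ are distinguished by their $\iota$-class, not by individual orbits. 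That single observation — $\iota$-paired chords contribute in the same degree — is all that is needed to upgrade the count from orbits to $\iota$-classes.

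Your alternative mechanism, a further $\iota$-equivariant (i.e.\ $\Z_2\times\Z_2$-equivariant) wrapped Floer theory, is a genuine gap as written. First, "taking the invariant complex'' is not an innocuous regrouping: over $\Z_2$ coefficients the homology of the $\iota$-invariant subcomplex is not the homology of the original complex, nor is it the $\iota$-equivariant homology, and you would have to recompute the analogue of Theorem \ref{cor: antisympcomp} to know it is still nonzero in all degrees $\ge 1$ — without that, the pigeonhole argument has nothing to pigeonhole into. Second, achieving transversality within the class of doubly $(\rho,\iota)$-invariant almost complex structures is obstructed in general (equivariant transversality fails precisely at configurations fixed by the symmetry, e.g.\ Floer strips asymptotic to doubly symmetric orbits), which is why the paper's construction goes through a Borel-type parametrized family for $\rho$ rather than invariant data; doing this simultaneously for two commuting involutions is a substantial new construction, not a routine check. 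Since the theorem follows from the already-established theory by the index-equality observation above, you should replace the proposed doubly-equivariant machinery with that observation; as it stands, the key inequality $N_d+\mathcal{N}\ge n$ is not actually proved.
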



We prove the same assertion for more general real contact manifolds in  Theorem~\ref{thm:appli}. Then Theorem~\ref{thm:main2s} is obtained as a corollary. Again, the proof has an immediate corollary.
 
\begin{corollary} In addition to the assumptions of Theorem~\ref{thm:main2s}, assume that there exist precisely $n$ geometrically distinct and simple symmetric periodic Reeb  orbits on $\Sigma$. Then they are all   doubly  symmetric periodic Reeb orbits and their   indices are all different. 
\end{corollary}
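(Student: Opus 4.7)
The plan is to read both conclusions directly off the proof of Theorem \ref{thm:main2s}, following the same pattern that yielded the corollary to Theorem \ref{thm:mianfirst}.

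First I would dispose of the doubly symmetric assertion by a pigeonhole argument on the count of Theorem \ref{thm:main2s}. The non-doubly-symmetric simple symmetric periodic Reeb orbits on $\Sigma$ come in $\iota$-pairs $\{c^2,\iota\circ c^2\}$ of geometrically distinct orbits, which is why the count of such orbits is the even number $2\mathcal{N}(\Sigma)$. Theorem \ref{thm:main2s} then produces at least $n+\mathcal{N}(\Sigma)$ geometrically distinct simple symmetric periodic Reeb orbits on $\Sigma$, and combining this with the standing hypothesis of precisely $n$ such orbits gives
\[
n \;\geq\; n+\mathcal{N}(\Sigma),
\]
forcing $\mathcal{N}(\Sigma)=0$. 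Hence every simple symmetric periodic Reeb orbit on $\Sigma$ is doubly symmetric.

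For the distinct-index statement I would invoke the index-counting mechanism inside the proof of Theorem \ref{thm:main2s}. That proof, working with the positive equivariant wrapped Floer homology of the real contact manifold $(\Sigma,\alpha,\rho)$ enhanced by the commuting contact involution $\iota$, identifies under real dynamical convexity and non-degeneracy a list of $n$ prescribed index levels at which the chain complex must carry nontrivial generators; the iteration formula for the Maslov index of symmetric periodic Reeb orbits ensures that a single simple, non-degenerate symmetric periodic Reeb orbit — which has a uniquely defined index and well-controlled iterates — can contribute to at most one of these levels. With exactly $n$ simple symmetric periodic Reeb orbits available to cover $n$ distinct required levels, the pigeonhole principle assigns each orbit to its own level, and the $n$ indices are therefore pairwise distinct.

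The only substantive input — that the $n$ required index levels survive the iteration analysis and cannot be doubly occupied by a single simple orbit — is already carried out in the proof of Theorem \ref{thm:main2s}; the corollary just harvests it once the extremal orbit count is imposed, so no genuinely new obstacle appears.
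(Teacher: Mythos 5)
Your proposal is correct and follows exactly the route the paper intends (it declares the corollary "immediate" from the proof of Theorem \ref{thm:appli}, the general version of Theorem \ref{thm:main2s}): the count $n\ge n+\mathcal{N}(\Sigma)$ forces $\mathcal{N}(\Sigma)=0$, and the common-index-jump/index-increasing mechanism assigns each of the $n$ simple orbits to exactly one of the $n$ required degrees in the window $[K-n+1,K]$, so the relevant indices are pairwise distinct.
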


In the following we explain two main ingredients to prove our theorems: \textit{equivariant wrapped Floer homology} and \textit{analysis of index iterations}.
 \addtocontents{toc}{\protect\setcounter{tocdepth}{0}}
\subsection*{Equivariant wrapped Floer homology.} Our main tool is a variant of Lagrangian Floer theory, namely \emph{equivariant wrapped Floer homology}. Let $(W, \lda)$ be a Liouville domain with a Liouville form $\lda$ and   an admissible Lagrangian $L$, meaning that $L$ is exact and intersects the boundary $\p W$ in a Legendrian submanifold, see Section~\ref{sec: setHW} for definitions. This is a basic setup that one considers to define (non-equivariant) wrapped Floer homology,  see for example \cite{AboSei}, \cite{Rit}.

For an equivariant theory, we additionally put a $\Z_2$-symmetry on $W$ given by a symplectic or an anti-symplectic involution $\I: W \rightarrow W$ under which $L$ is invariant. We then have an induced $\Z_2$-action on the space of paths in $W$ with end points on the Lagrangian $L$, see equation \eqref{eq: involutionpathspace}. This allows us to define a $\Z_2$-equivariant version of wrapped Floer homology, which is an open string analogue of the $S^1$-equivariant symplectic homology in \cite{BO17}, \cite{Gutt}. 

We are mainly interested in the case when $L$ is given by the fixed point set of an anti-symplectic involution. In this case, we say that $(W, \lda, \I)$ is a \emph{real Liouville domain}. If the anti-symplectic involution $\I: W \rightarrow W$ is exact, then the contact boundary $(\p W,\alpha,\I)$ with the induced contact form $\alpha$ is a real contact manifold. Examples are starshaped domains in $\R^{2n}$ invariant under complex conjugation $(q, p) \mapsto (q, -p)$.

In Section~\ref{sec: defEWFH}, we define equivariant wrapped Floer homology groups for both symplectic and anti-symplectic involutions. In view of \cite{BO17}, one can develop equivariant Floer theory in two different flavors. This is basically due to the fact that the Borel construction, see Section~\ref{sec: Borel}, admits several geometric structures, for example, it can be seen as a quotient space by an action or as a total space of a fiber bundle, see \cite[Section~1]{BO17}. The construction of the equivariant theory given in this paper takes the second point of view, and this matches the construction in \cite{SS} where an equivariant Lagrangian Floer theory with symplectic involutions is outlined. See also \cite{Hut} for a general theory of Floer homology of families. As pointed out in \cite{BO17} and \cite{SS}, a technical benefit of this choice is that the analysis on Floer trajectories is easier to deal with. We also provide a description of equivariant Morse homology in Section~\ref{sec: eMH}. This hopefully makes many ideas of the constructions in equivariant wrapped Floer homology transparent.

For applications to multiplicity results of symmetric periodic Reeb orbits, we need to observe that the generators of positive equivariant wrapped Floer homology correspond to ($\Z_2$-pairs of) Reeb chords on the contact type boundary. Even though positive equivariant wrapped Floer homology is designed to have this property, it is not directly visible from its definition. To identify generators with Reeb chords, we construct a Morse--Bott spectral sequence in equivariant wrapped Floer homology, which might be of independent interest, in Section~\ref{sec: mbss}. If the contact form on the boundary is chord-non-degenerate, see Section~\ref{sec: genetorsof eHW}, the first page of the Morse--Bott spectral sequence is generated by ($\Z_2$-pairs of) Reeb chords and hence so is the resulting homology, see Corollary~\ref{cor: genwhfpsotive}. A similar technique was used in \cite{Gutt} to show that positive $S^1$-equivariant symplectic homology is generated by periodic Reeb orbits. 

We next compute equivariant wrapped Floer homology for the Liouville domains and the Lagrangians in question to detect sufficiently many generators. For this purpose, we establish algebraic properties of equivariant wrapped Floer homology, notably Leray--Serre type long exact sequences, see Corollary~\ref{cor: lsssfull}. A version of such a long exact sequence in symplectic homology is given in \cite[Theorem~1.2]{BO17}. The idea is that a special choice of Floer data, which we call \emph{periodic family of admissible Floer data} in Section~\ref{sec: periodicfamham}, simplifies the equivariant Floer chain complex, see Lemma~\ref{lem: idenper}, so that we have a $\Z_2$-complex structure on it, see Section~\ref{sec: z2cpx}. A $\Z_2$-complex structure on a chain complex then algebraically produces a Leray--Serre type spectral sequence for the resulting $\Z_2$-equivariant homology, see Lemma~\ref{lem: alglsss}.

 It turns out that if the non-equivariant wrapped Floer homology vanishes, then the positive equivariant wrapped Floer homology has enough generators for our multiplicity results, see Corollary~\ref{cor: sympinvcomp} and Corollary~\ref{cor: antisympcomp}. Note that in Liouville domains with vanishing symplectic homology such as subcritical Stein domains, the wrapped Floer homology of any admissible Lagrangian vanishes, see Remark~\ref{rmk: vanishingSHandHW}. In Section~\ref{sec: vanishing wfh sectioin}, we also prove a vanishing property of wrapped Floer homology under a displaceability condition on contact type boundaries and Lagrangians. A similar vanishing result for the symplectic homology of displaceable contact hypersurfaces can be found in \cite{Kang3}. 


\subsection*{Analysis of index iterations.} The main issue in applying Floer theory to multiplicity questions is to distinguish iterated orbits from simple ones. Even though the  equivariant wrapped Floer homology groups have infinitely many generators, it is not obvious that they are represented by (geometrically distinct) symmetric periodic Reeb orbits. Indeed, iterations of a single symmetric periodic Reeb orbit might represent distinct generators of the homology groups. In order to avoid this, we have to establish a suitable condition on the Maslov index of Reeb chords: First of all, we impose a condition on the contact form on a real contact manifold to satisfy a certain lower bound of the Maslov index. Such a contact form is called \emph{real dynamically convex}, see Definition~\ref{def:realdc}. Our terminology is motivated by the fact that a strictly convex hypersurface in~$\R^{2n}$ which is invariant under complex conjugation is real dynamically convex, see Theorem~\ref{thm:chorddynami}. Assuming real dynamical convexity, we prove by a careful analysis based on the properties of the Robbin-Salamon index and the formula for the H\"ormander index given in \cite[Formula 2.10]{Duister} that the Maslov index is increasing under the iterations, see Theorem~\ref{thm:indexincreasingproperty}.   This together with the common index jump theorem, which is proved by Liu and Zhang \cite[Theorem~1.5]{Liu3}, gives rise to a powerful tool for counting geometrically distinct symmetric periodic Reeb orbits in terms of the homology computations.

\subsection*{Acknowledgement.} The authors  would like to thank Urs Frauenfelder for suggesting an extension of the main theorems to the  displaceable case, Felix Schlenk for reading the draft of this paper and Chungen Liu for sending us his papers. 
The part of this article was written during a visit of the first author at  the Institut de Math\'ematiques at  Neuch\^atel and a visit of the second author at the Department of Mathematics at ETH Z\"urich.
The authors cordially thank Universit\'e de Neuch\^atel and ETH Z\"urich   for their warm hospitality. 
 JK is supported by the Swiss Government Excellence Scholarship and a KIAS Individual Grant MG068002 at Korea Institute for Advanced Study,   SK   by   the grant 200021-181980/1 of the Swiss National Foundation, and 
MK by the Institute for Basic Science (IBS-R003-D1) and the SFB/TRR 191 `Symplectic Structures in Geometry, Algebra and Dynamics' funded by the DFG (Projektnummer 281071066 -- TRR 191).
\addtocontents{toc}{\protect\setcounter{tocdepth}{2}}
\section{Wrapped Floer homology}
\subsection{Recollection of wrapped Floer homology}
We recall the definition of wrapped Floer homology. As we will apply the index iteration theory, the homology is equipped with a $\Z$-grading which comes from the Maslov index. Basically, our conventions are those of \cite{KKL}. We refer to \cite[Section~3]{AboSei} and \cite[Section~4]{Rit} for more detailed descriptions.

\subsubsection{Geometric setup}\label{sec: setHW}
Let $(W^{2n}, \lambda)$ be a Liouville domain. By definition, $W$ is a compact manifold with boundary such that $d\lambda$ is symplectic and the associated Liouville vector field $X_\lambda$, which is defined by the condition $d\lambda(X_\lambda,\cdot)=\lambda$, is pointing outwards along the boundary. The restricted 1-form $\alpha:=\lambda|_{\partial W}$ becomes a contact form on the boundary $\p W$. We denote by $R_\alpha$ the \emph{Reeb vector field} of $\alpha$ on $\p W$ characterized by the conditions
$$
d\alpha(R_\alpha,\cdot)=0\quad\text{and} \quad\alpha(R_\alpha)=1.
$$
Attaching an infinite cone, we obtain the \emph{completion} of $W$ as a noncompact manifold
$$
\widehat{W}:=W\cup ([1,\infty)\times \p W)
$$
equipped with the extended Liouville form
$$
\widehat{\lambda}:=\begin{cases}
\lambda & \text{on $W$},\\
r\alpha & \text{on $[1,\infty)\times \p W$},
\end{cases}
$$
where $r$ stands for the coordinate on $[1, \infty)$. We denote by $\widehat{\ow}=d\widehat{\lambda}$ the symplectic form on $\widehat{W}$. Note that the Liouville vector field $X_{\widehat{\lambda}}$ restricts to $r \p_r$ on $[1,\infty)\times \p W$.  
\begin{definition}
A Lagrangian $L$ in $W$ is called {\it admissible} if it has the following properties:
\begin{itemize}
	\item $L$ is transverse to the boundary $\partial W$ and $\p L=L \cap \partial W$ is a Legendrian, i.e., $\alpha|_{\p L}=0$;
	\item $L$ is exact, i.e., $\lambda|_L$ is an exact 1-form; and
	\item the Liouville vector field of $\lambda$ is tangent to $L$ near its  boundary. 
\end{itemize}	
\end{definition}
\noindent The third condition allows us to complete an admissible Lagrangian $L$ to a noncompact Lagrangian in $\widehat{W}$:
$$
\widehat{L}=L \cup_{\partial L} \left([1, \infty) \times \partial L \right).
$$
\subsubsection{Hamiltonian chords and Reeb chords}\label{sec: Ham_Reeb_chords}
Given a Hamiltonian $H:\widehat{W}\to \R$, the associated \emph{Hamiltonian vector field} $X_H$ is defined by the equation $d\widehat{\lambda}(X_H,\cdot)=dH$. We denote by $\phi^t_H:\widehat{W}\to \widehat{W}$ the \emph{Hamiltonian flow} of $X_H$. Its time-1 map $\phi_H^1$ is called the \emph{Hamiltonian diffeomorphism} of~$H$. An orbit $c: [0,1] \rightarrow \widehat W$ of the Hamiltonian vector field $X_H$ meeting the boundary condition $c(0), c(1) \in \widehat L$ is called a \textit{Hamiltonian chord} of $H$. The Hamiltonian chords naturally correspond to the intersection points of $L\cap \phi_H^1(L)$ via the assignment $c\mapsto c(1)$. We say that a Hamiltonian chord~$c$ is \textit{contractible} if the class $[c] \in \pi_1(\widehat W, \widehat L)$ is trivial. We denote by $\mathcal{P}(H)$ the set of all contractible Hamiltonian chords of $H$. A Hamiltonian chord $c$ is called {\it non-degenerate} if it~satisfies
	$$
	D\phi^1_{H}(T_{c(0)}\widehat{L})\cap T_{c(1)}\widehat{L}=\{0\}.
	$$
	A Hamiltonian $H:\widehat{W}\to \R$ is called {\it non-degenerate} if all Hamiltonian chords of $H$ are non-degenerate, equivalently, $\widehat{L}$ and $\phi_H^1(\widehat{L})$ intersect transversally. 
	
	Let $(\Sigma,\alpha)$ be a contact manifold and let $\mathcal{L}\subset \Sigma$ be a Legendrian. A {\it Reeb chord} of length $T>0$ (with respect to $\mathcal{L}$) is an orbit $c:[0,T]\to \Sigma$ of the Reeb vector field $R_\alpha$ meeting the boundary condition $c(0),c(T)\in \mathcal{L}$. A Reeb chord $c:[0,T]\to \Sigma$ is called \emph{non-degenerate} if it satisfies
	$$
	D\phi_R^T(T_{c(0)}\mathcal{L})\cap T_{c(T)}\mathcal{L}=\{0\},
	$$
	where $\phi^t_R:\Sigma\to \Sigma$ is the \emph{Reeb flow} of $R_\alpha$. We denote by $\Spec(\Sigma,\alpha,\mathcal{L})$ the set of all lengths $T$ of Reeb chords.
\begin{definition}
A time-independent Hamiltonian $H: \widehat W \rightarrow \R$ is called \textit{admissible} if $H<0$ on $W$ and $H$ is of the form $H(r,y) = ar+b$ on the symplectization $[1,\infty)\times \p W$ with $a \not \in \Spec(\partial W, \alpha, \partial L)$, $a>0$ and $b\in \R$. Here $a$ is called the \textit{slope} of $H$.
\end{definition}
\noindent We denote by $\mathcal{H}$ the space of admissible Hamiltonians on $\widehat{W}$. For any admissible Hamiltonian~$H$, its $C^2$-small generic perturbation on $W$ is an admissible Hamiltonian which is non-degenerate, see \cite[Lemma~8.1]{AboSei}. All Hamiltonian chords are contained in the compact domain $W$ as $a\notin\Spec(\p W,\alpha,\p L)$.

\subsubsection{Maslov index of Hamiltonian chords} In order to define the Maslov index of Hamiltonian chords, we additionally assume that the Maslov class $\mu_{\widehat{L}}:\pi_2(\widehat{W},\widehat{L})\to \Z$ of the Lagrangian $\widehat{L}$ vanishes. For example, this assumption is satisfied if $c_1(W)$ vanishes on $\pi_2(W)$ and $\pi_1(L)=0$.

Let $D^+=\{z\in \mathbb{C} \mid  |z|\le 1,\ \im z\ge 0\}$ be the half-disk with real part $D^+_\R:=D^+\cap\R$. A Hamiltonian chord $c$ is contractible if and only if it admits a \emph{capping half-disk}, namely, a map $v:(D^+,D^+_\R)\to (\widehat{W},\widehat{L})$ satisfying $v(e^{\pi it})=c(t)$ for $t\in [0,1]$. A capping half-disk $v:D^+\to \widehat{W}$ of~$c$ yields a  symplectic trivialization of $v^*T\widehat{W}$:
$$
\mathfrak{T}_v:(v^*T\widehat{W},\widehat{\ow})\longrightarrow D^+\times (\mathbb{C}^n,\ow_0)
$$
such that $\mathfrak{T}_v(T_{v(z)}\widehat{L})=\Lambda_0$ for $z\in D^+_\R$ where $\Lambda_0=\{z\in \C^n \mid \im z=0\}$ is the horizontal Lagrangian subspace in $(\C^n,\ow_0:=\frac{i}{2}\sum_jdz_j\wedge d\bar{z}_j)$. Such a trivialization $\mathfrak{T}_v$ is called an {\it adapted symplectic trivialization} of $v^*T\widehat{W}$. We write
$$
\Psi_c:[0,1]\longrightarrow \Sp(2n),\quad \Psi_c(t):=\mathfrak{T}_v(c(t))\circ D\phi_H^t(c(0))\circ \mathfrak{T}_v^{-1}(c(0))
$$
for the linearization of the Hamiltonian flow along $c$ with respect to the trivialization $\mathfrak{T}_v$.
\begin{definition}\label{defmaslov}
The {\it Maslov index} of a contractible Hamiltonian chord $c:[0,1]\to \widehat{W}$ is 
	$$
	\mu(c):=\mu_{\RS}(\Psi_c\Lambda_0,\Lambda_0) \in \tfrac{1}{2}\Z
	$$
	where $\mu_{\RS}$ denotes the Robbin-Salamon index, see Section~\ref{sec: RSindex}.
\end{definition}
\noindent Since the Maslov class $\mu_{\widehat{L}}$ vanishes, $\mu(c)$ is independent of the choices involved, see \cite[Lemma~2.1]{KKL}.

\begin{remark} One can define the Maslov index using any Lagrangian subspace $\Lambda$ in $\C^n$ other than~$\Lambda_0$. The Maslov index is independent of the choice of $\Lambda$.
\end{remark}

\begin{definition}
We define the {\it index} of $c\in\mathcal{P}(H)$ by $|c|:=-\mu(c)-\frac{n}{2}$.	
\end{definition}

\begin{remark}
If $c$ is non-degenerate, then  we have $|c|\in \Z$ by \cite[Theorem~2.4]{RSindex}.	
\end{remark}

\subsubsection{Action functional}
We let
\begin{equation}\label{eq: pathspace}
\mathscr{P}=\{c\in W^{1,2}([0,1],\widehat{W})\mid c(0),c(1)\in \widehat{L}; \;[c]=0\in \pi_1(\widehat{W},\widehat{L})\}	
\end{equation}
be the space of contractible paths of class $W^{1,2}$ with ends in $\widehat{L}$. Recall that $\widehat{L}$ is exact, so $\widehat{\lambda}|_{\widehat{L}}=d\ell$ for some $\ell\in C^\infty(\widehat{L})$. We fix any such $\ell$. The {\it action functional} $\mathcal{A}_H:\mathscr{P} \to \R$ associated to  a Hamiltonian $H \in C^{\infty}(\widehat{W})$ is defined by
\begin{eqnarray*}
\mathcal{A}_H(c) &=& -\int_{D^+}v^*d\widehat{\lambda}-\int_0^1 H(c(t))dt \\
&=& \ell(c(1))-\ell(c(0))-\int_0^1c^*\widehat{\lambda}-\int_0^1H(c(t))dt,
\end{eqnarray*}
where $v:D^+\to \widehat{W}$ is a capping half-disk of $c \in \mathscr{P}$. We write $\crit(\mathcal{A}_H)$ for the set of critical points of $\mathcal{A}_H$. Then $\crit(\mathcal{A}_H)=\mathcal{P}(H)$. An $\widehat{\ow}$-compatible almost complex structure $J$ on $\widehat{W}$ is said to be of \emph{contact type}  if it satisfies the condition $\widehat{\lambda}\circ J=dr$ on $[1,\infty)\times \p W$. We abbreviate
\begin{equation}\label{eq: space_almcpxstr}
\mathcal{J}:=\{\{J_t\}_{t\in[0,1]}\mid \text{there exists $r_0\ge 1$ such that each $J_t$ is of contact type for $r\ge r_0 $} \}.
\end{equation}
An element $J\in \mathcal{J}$ is called \emph{admissible}. In particular, for an admissible $J$ we  can apply the maximum principle for solutions to Floer-type equations, see \cite[Lemma~4.5]{Rit}.

Now we fix a non-degenerate admissible Hamiltonian $H \in C^{\infty}(\widehat{W})$ and $J=\{J_t\}_{t\in[0,1]}\in \mathcal{J}$. For $c_\pm\in \mathcal{P}(H)$ with $c_-\ne c_+$ we let $\widehat{\mathcal{M}}(c_-,c_+;H,J)$ be the \emph{space of Floer strips} consisting of $u:\R\times [0,1]\to \widehat{W}$ satisfying
\begin{eqnarray*}
\begin{cases}
\p_s u+J_t(u)(\p_t u -X_H(u))=0,  \\
\displaystyle\lim_{s\to \pm\infty}u(s,t)=c_\pm(t),\\
u(s,0),u(s,1)\in \widehat{L}.	
\end{cases}
\end{eqnarray*}
This space carries a free $\R$-action which is given by translation in the $s$-variable. Its quotient
$$
\mathcal{M}(c_-,c_+;H,J):=\widehat{\mathcal{M}}(c_-,c_+;H,J)/\R
$$ is called the {\it moduli space of Floer strips} from $c_-$ to $c_+$. The action decreases along Floer strips, and a standard argument (see \cite{FHS} and \cite{RS2}) shows that $\mathcal{M}(c_-,c_+;H,J)$ is a smooth manifold of dimension $|c_-|-|c_+|-1$ for a  generic $J\in \mathcal{J}$.

Let $\tau\in \R\cup \{\infty\}$. The {\it filtered wrapped Floer chain complex} is defined by
$$
CF_*^{<\tau}(H)=\bigoplus_{\substack{ c\in \mathcal{P}(H)\text{ with }|c|=* \\ \mathcal{A}_H(c)<\tau}}\Z_2\langle c\rangle.
$$
The {\it Floer differential} is defined by counting rigid Floer strips
$$
\p: CF_*^{<\tau}(H)\to CF_{*-1}^{<\tau}(H), \quad \p(c_-)=\sum_{\substack{ c_+\in \mathcal{P}(H)\\ |c_-|-|c_+|=1}}\#_{2}\mathcal{M}(c_-,c_+;H,J)\cdot c_+,
$$
where $\#_{2}$ denotes the count modulo two. For a  generic choice of $J$, the differential is well-defined and satisfies $\partial \circ \partial=0$. We define the {\it filtered wrapped Floer homology of the Hamiltonian $H$} as the homology of the chain complex
$$
HF_*^{<\tau}(H,J)=H_*(CF_*^{<\tau}(H),\partial).
$$
For generic pairs $(H_\pm, J_\pm)$ with $H_-\le H_+$, one can define a  \emph{continuation map} $HF_*^{<\tau}(H_-,J_-)\to HF_*^{<\tau}(H_+,J_+)$
by counting rigid $s$-dependent Floer strips, see \cite[Section~3.2]{Rit} for the construction. Using continuation maps one shows that $HF_*^{<\tau}(H,J)$ does not depend on the choice of $J$ and only depends on the slope of $H$, see \cite[Lemma~3.1]{Rit}. We therefore omit $J$ from the notation. We define the {\it filtered wrapped Floer homology of a Lagrangian $L$} by
$$
HW_*^{<\tau}(L;W):=\lim_{\underset{H}{\longrightarrow}} HF_*^{<\tau}(H),
$$
where the direct limit is taken over admissible Hamiltonians using continuation maps. If $\tau=\infty$ we call $HW_*(L;W):=HW_*^{<\infty}(L;W)$ the \emph{wrapped Floer homology} of $L$.

\subsubsection{Positive wrapped Floer homology}
Fix $\epsilon>0$ which is smaller than the minimum length of Reeb chords from $\p L$ to $\p L$. We define the \emph{positive wrapped chain complex} as the quotient chain complex
$$
CF_*^{+}(H):=CF_*(H)\big /CF_*^{<\epsilon}(H)
$$
equipped with the induced differential. Following the usual limit procedure, the resulting homology group, denoted by $HW^+_*(L;W)$, is called \emph{positive wrapped Floer homology} of the Lagrangian $L$. There exists a short exact sequence
$$
0\to CF_*^{<\epsilon}(H) \to CF_*(H) \to CF_*^+(H) \to 0,
$$
which induces a long exact sequence
$$
\cdots \to HF_*^{<\epsilon}(H) \to HF_*(H) \to HF_*^+(H) \to \cdots.
$$
Taking the direct limit with respect to $H$, we obtain the tautological exact sequence
\begin{equation}\label{eq: viterboLES}
\cdots \to H_{*+n}(L,\p L) \to HW_*(L;W) \to HW_*^+(L;W) \to \cdots,
\end{equation}
where  we have used the fact that $HW^{<\epsilon}_*(L;W)$ is isomorphic to $H_{*+n}(L,\p L)$, see \cite[Proposition~1.3]{Vit}.

\subsection{Vanishing of wrapped Floer homology} \label{sec: vanishing wfh sectioin} In this section, we show that (non-equivariant) wrapped Floer homology vanishes under a displaceability assumption. In \cite{Kang3}, using the notion of leaf-wise intersections, it is proved that the symplectic homology of a Liouville domain vanishes if its boundary is displaceable. We carry out this idea in wrapped Floer homology. 
 
\subsubsection{Displaceability} In what follows, $(W, \lambda)$ is a Liouville domain, $\Sigma=\p W$ is a contact type boundary, $L$ is an admissible Lagrangian, and   $\mathcal{L} := L \cap \Sigma$  is a Legendrian submanifold in $\Sigma$. The following notion of displaceability is taken from \cite{Merry}. 

\begin{definition}\label{def: disp}
The hypersurface $\Sigma$ is \emph{displaceable from $\widehat L$ in $\widehat W$} if there exists a compactly supported Hamiltonian diffeomorphism $\phi$ on $\widehat W$ such that $\phi(\Sigma) \cap \widehat L = \emptyset$.
\end{definition}

\begin{example}\label{examp:hypersurfaceids}
Starshaped hypersurfaces $\Sigma$ in $\R^{2n}$ are displaceable from the real Lagrangian $\widehat L = \{(q, p) \in \R^{2n}\;|\; p = 0\}$.
\end{example}

\begin{proposition}\label{prop: Lagselfdisp}
If $\Sigma$ is displaceable from $\widehat L$ in $\widehat W$, then $W$ is displaceable from $\widehat L$ in $\widehat W$ by the same displacing Hamiltonian diffeomorphism.
\end{proposition}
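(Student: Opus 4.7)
The plan is to reduce the claim to the equivalent statement $\phi^{-1}(\widehat L)\cap W=\emptyset$ and to deduce this from a connectedness argument combined with the fact that the noncompact end of $\widehat L$ is unchanged by the compactly supported diffeomorphism $\phi$. Setting $\widetilde L:=\phi^{-1}(\widehat L)$, the hypothesis $\phi(\Sigma)\cap\widehat L=\emptyset$ is equivalent to $\widetilde L\cap\Sigma=\emptyset$, and the decomposition $\widehat W\setminus\Sigma=\mathrm{int}(W)\sqcup(\widehat W\setminus W)$ into two disjoint open subsets then forces each connected component of $\widetilde L$ to lie entirely in $\mathrm{int}(W)$ or entirely in $\widehat W\setminus W$.

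The remaining step is to show that the correct piece is always $\widehat W\setminus W$. Since $\phi$ has compact support, choose $R>1$ with $\mathrm{supp}(\phi)\subset\{r\le R\}\subset\widehat W$; then $\phi^{-1}$ restricts to the identity on $\{r>R\}$, which yields
\begin{equation*}
\widetilde L\cap\{r>R\}=\widehat L\cap\{r>R\}=(R,\infty)\times\partial L\;\subset\;\widehat W\setminus W.
\end{equation*}
Via the bijection $\phi^{-1}\colon\widehat L\to\widetilde L$, every connected component of $\widetilde L$ corresponds to a connected component of $\widehat L$ of the form $L_i\cup_{\partial L_i}([1,\infty)\times\partial L_i)$; as long as $\partial L_i\neq\emptyset$, this component contains the noncompact end $(R,\infty)\times\partial L_i$, which by the above identification sits in $\widehat W\setminus W$. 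By the component-wise dichotomy established in the first step, the corresponding component of $\widetilde L$ must lie entirely in $\widehat W\setminus W$. Hence $\widetilde L\cap W=\emptyset$, or equivalently $\phi(W)\cap\widehat L=\emptyset$, as desired.

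I do not expect serious technical difficulty. The one subtle point requiring comment is the tacit hypothesis that every component of the admissible Lagrangian $L$ meets the boundary $\partial W$, so that $\widehat L$ has no closed component hiding in $\mathrm{int}(W)$. This is automatic in the settings relevant to this paper, notably Example~\ref{examp:hypersurfaceids} where $L$ is a ball in $\R^n$ with boundary a Legendrian sphere. The proof then yields the proposition with the same displacing diffeomorphism $\phi$.
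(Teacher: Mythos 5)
Your proof is correct and is essentially the paper's argument: both rest on the fact that the displaced hypersurface separates $\widehat W$ into a bounded and an unbounded piece, so the connected, unbounded Lagrangian cannot sit inside the bounded piece without crossing the hypersurface. The only differences are cosmetic — you conjugate by $\phi^{-1}$ and argue component by component (which handles a disconnected $L$, whereas the paper simply takes $\widehat L$ connected), and you invoke compact support of $\phi$ where the paper invokes boundedness of $\phi(\mathrm{int}(W))$.
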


\begin{proof}
Let $\phi : \widehat W \rightarrow \widehat W$ be a Hamiltonian diffeomorphism displacing $\Sigma$. Note that the  hypersurface $\Sigma \subset \widehat W$ separates $\widehat W$ into two connected components: the bounded piece $\text{int} (W)$ and the unbounded piece $\widehat W \setminus W$. The image $\phi(\Sigma)$ also separates $\widehat W$ into the bounded piece $\phi(\text{int}(W))$ and the other. Since $\widehat L$ is connected and unbounded, $\widehat L$ cannot entirely be contained in the bounded piece $\phi(\text{int} (W))$. Since $\phi(\text{int} (W))$ is bounded by $\phi(\Sigma)$, it follows that if $\phi(\text{int} (W)) \cap \widehat L \neq \emptyset$, then $\phi(\Sigma) \cap \widehat L \neq \emptyset$.
\end{proof}


\subsubsection{Filtered wrapped Floer homology and a tweaked action functional}

Let $H_{\tau}: \widehat W \rightarrow \R$ be an admissible Hamiltonian of slope $\tau$ and take  any  \emph{compactly supported} Hamiltonian $F \in C_c^{\infty}([0,1] \times \widehat W)$.  Following \cite{Merry}  we define a \emph{tweaked action functional} $\mathcal{A}_{H_{\tau}}^F : \mathscr{P} \rightarrow \R$ by
$$
\mathcal{A}_{H_{\tau}}^F(c) = \ell(c(1)) - \ell(c(0)) - \int_0^1 c^*\widehat \lda - \int_0^1 \beta(t){H_{\tau}}(c(t))dt - \int_0^1 \dot \chi(t) F(\chi(t),c(t))dt,
$$
where $\beta: [0,1] \rightarrow \R$ is a non-negative smooth function with $\beta(t) = 0$ for $t \in [\frac{1}{2},1]$ and $\int_0^1 \beta(t) dt = 1$, and $\chi: [0,1] \rightarrow [0,1]$ is a smooth monotone function such that $\chi(\frac{1}{2}) = 0$ and $\chi(1) = 1$. The following lemma is straightforward.

\begin{lemma}
The differential of $\mathcal{A}_{H_{\tau}}^F$ is given by
$$
d\mathcal{A}_{H_{\tau}}^F(c)(\zeta) = \int_0^1 \widehat \ow(\dot c(t) - X_{\beta(t){H_{\tau}}}(c(t)) - \dot \chi (t) X_{F}(\chi(t), c(t)), \zeta(t)) dt\quad \text{for $\zeta\in T_c\mathscr{P}$}.
$$
Consequently, $c \in \crit(\mathcal{A}_{H_{\tau}}^F)$ if and only if $c \in \mathscr{P}$ solves the equation
\begin{equation}\label{eq: eqcrit}
\dot c(t) = \beta(t) X_{H_{\tau}}(c(t)) + \dot \chi(t) X_{F}(\chi(t), c(t)).
\end{equation}
\end{lemma}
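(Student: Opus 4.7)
The plan is to carry out a straightforward first-variation computation. Given $c \in \mathscr{P}$ and $\zeta \in T_c\mathscr{P}$, I would pick a smooth family $\{c_\epsilon\}_{\epsilon \in (-\epsilon_0, \epsilon_0)} \subset \mathscr{P}$ with $c_0 = c$ and $\partial_\epsilon c_\epsilon|_{\epsilon=0} = \zeta$, then differentiate each of the four summands defining $\mathcal{A}_{H_\tau}^F(c_\epsilon)$ at $\epsilon = 0$.

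For the primitive boundary piece $\ell(c(1)) - \ell(c(0))$, the chain rule returns $d\ell(\zeta(1)) - d\ell(\zeta(0))$; since $\widehat{\lambda}|_{\widehat{L}} = d\ell$ and $\zeta(0), \zeta(1) \in T\widehat{L}$, this simplifies to $\widehat{\lambda}(\zeta(1)) - \widehat{\lambda}(\zeta(0))$. For $-\int_0^1 c^*\widehat{\lambda}$, I would apply Stokes' theorem to $d\widehat{\lambda} = \widehat{\ow}$ on the thin rectangle $[0,\epsilon]\times[0,1]$ pushed into $\widehat{W}$ by $(\epsilon, t) \mapsto c_\epsilon(t)$, and differentiate at $\epsilon=0$. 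This yields
\begin{equation*}
\frac{d}{d\epsilon}\Big|_{\epsilon=0}\Bigl(-\int_0^1 c_\epsilon^*\widehat{\lambda}\Bigr) = \int_0^1 \widehat{\ow}(\dot c, \zeta)\,dt - \widehat{\lambda}(\zeta(1)) + \widehat{\lambda}(\zeta(0)),
\end{equation*}
whose boundary contributions cancel exactly those coming from the $\ell$-term.

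For the two Hamiltonian summands, straight differentiation produces $-\int_0^1 \beta(t)\,dH_\tau(\zeta)\,dt$ and $-\int_0^1 \dot\chi(t)\,dF_{\chi(t)}(\zeta)\,dt$. The defining identity $d\widehat{\lambda}(X_H, \cdot) = dH$ rewrites these as $-\int_0^1 \widehat{\ow}(\beta(t)X_{H_\tau}, \zeta)\,dt$ and $-\int_0^1 \dot\chi(t)\,\widehat{\ow}(X_F, \zeta)\,dt$. Assembling all contributions, and using $X_{\beta(t)H_\tau} = \beta(t)X_{H_\tau}$ (since $\beta$ depends only on time), gives the stated formula. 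The second assertion then follows by testing $d\mathcal{A}_{H_\tau}^F(c)$ against variations $\zeta$ compactly supported in $(0,1)$ and invoking non-degeneracy of $\widehat{\ow}$ together with the fundamental lemma of the calculus of variations to force the integrand to vanish pointwise.

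I do not foresee any genuine obstacle here; the calculation is routine. The only care required is careful sign bookkeeping (exploiting the skew-symmetry $\widehat{\ow}(X, \zeta) = -\widehat{\ow}(\zeta, X)$) and the observation that the boundary terms from the $\ell$-piece cancel those produced by Stokes' theorem on the $\widehat{\lambda}$-integral. This cancellation is the precise reason for including $\ell(c(1)) - \ell(c(0))$ in the definition of $\mathcal{A}_{H_\tau}^F$, and it reflects the exactness of $\widehat{L}$.
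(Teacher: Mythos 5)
Your computation is correct and is exactly the routine first-variation argument that the paper omits (it states the lemma as "straightforward" without proof): the Stokes/Cartan identity for the $\widehat{\lambda}$-term cancels the boundary contributions from the primitive $\ell$, the convention $d\widehat{\lambda}(X_H,\cdot)=dH$ handles the Hamiltonian terms with the right signs, and the fundamental lemma gives the critical point equation. Nothing to add.
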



\begin{remark}
Critical points of the tweaked action functional $\mathcal{A}_{H_{\tau}}^F$ are closely related to the notion of \emph{relative leaf-wise intersections} in \cite{Merry}.
\end{remark}

The following lemma allows us to define the Floer homology of $\mathcal{A}_{H_{\tau}}^F$ for generic $F$. The proof is analogous to \cite[Appendix~A]{AFleafwise}, see also \cite[Theorem~2.28]{Merry}.

\begin{lemma}
For a generic $F$ in the $C^{\infty}$-topology, the tweaked functional $\mathcal{A}_{H_{\tau}}^F$ is Morse.
\end{lemma}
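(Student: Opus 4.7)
The plan is to apply the Sard--Smale theorem to a universal moduli space of critical points, following the standard genericity scheme in Hamiltonian Floer theory; the argument parallels those in \cite{FHS} and, closer to our setup, \cite[Appendix A]{AFleafwise}.

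First, I would fix a separable Banach manifold $\mathcal{B}$ of admissible perturbations which is dense in $C_c^\infty([0,1]\times \widehat W)$ in the $C^\infty$-topology; Floer's $C^\varepsilon$-space of smooth compactly supported Hamiltonians weighted by a rapidly increasing sequence is the standard choice. I would then form the universal critical set as the zero set $\Phi^{-1}(0)$ of a section of a suitable Banach bundle $\mathcal{E} \to \mathscr{P}\times \mathcal{B}$, whose fibre map is
\[
\Phi(c,F) = \dot c - \beta(t)X_{H_\tau}(c) - \dot\chi(t)X_F(\chi(t),c).
\]
Standard ODE regularity guarantees that every zero of $\Phi$ is a smooth Hamiltonian chord of the vector field appearing in equation \eqref{eq: eqcrit}.

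The heart of the argument is to prove that the linearization $D\Phi_{(c,F)}$ is surjective at every zero. The $c$-derivative is a first-order linear ODE operator which, with the Lagrangian boundary conditions coming from $\widehat L$, is Fredholm of index $0$. The $F$-derivative sends $\widehat F\in T_F\mathcal{B}$ to $-\dot\chi(t)X_{\widehat F}(\chi(t),c(t))$. If the cokernel of the full linearization were nontrivial, there would exist a nonzero $\mu$ in the kernel of the formal adjoint satisfying
\[
\int_0^1 \dot\chi(t)\, \widehat\omega\bigl(\mu(t),\, X_{\widehat F}(\chi(t),c(t))\bigr)\,dt = 0 \quad\text{for every } \widehat F \in \mathcal{B}.
\]
Since $\chi$ is monotone with $\chi(1/2)=0$ and $\chi(1)=1$, we have $\dot\chi>0$ on an open subinterval $I\subset [1/2,1]$, on which $\chi$ is a diffeomorphism onto an open subinterval of $[0,1]$. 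A bump-function argument in $\mathcal{B}$, localising perturbations near the spacetime points $(\chi(t),c(t))\in [0,1]\times \widehat W$ with $t\in I$, combined with the non-degeneracy of $\widehat\omega$, forces $\mu$ to vanish on $I$. Unique continuation for the linear first-order ODE $D_c\Phi^*\mu = 0$ then propagates this vanishing to all of $[0,1]$, yielding $\mu\equiv 0$ and hence surjectivity of $D\Phi$.

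Given surjectivity, the Sard--Smale theorem shows that the set of regular values $F \in \mathcal{B}$ of the projection $\Phi^{-1}(0)\to \mathcal{B}$ is residual; a regular value is precisely an $F$ for which every critical point of $\mathcal{A}_{H_\tau}^F$ is non-degenerate, that is, $\mathcal{A}_{H_\tau}^F$ is Morse. A standard density/approximation argument then transfers the genericity statement from $\mathcal{B}$ to the $C^\infty$-topology on $C_c^\infty([0,1]\times \widehat W)$. The main obstacle will be the transversality step, specifically the unique continuation argument propagating the vanishing of the cokernel element from the subinterval $I$ to the entire path: this is routine for first-order linear ODE systems but requires setting up the Banach completions so that the formal adjoint is well-defined and the cokernel consists of smooth sections along $c$.
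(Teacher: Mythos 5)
Your proposal is correct and is essentially the argument the paper has in mind: the paper does not write out a proof but defers to \cite[Appendix A]{AFleafwise} and \cite[Theorem 2.28]{Merry}, both of which run exactly this Sard--Smale scheme (universal zero set, surjectivity of the linearization via localized Hamiltonian perturbations where $\dot\chi>0$ plus unique continuation for the adjoint ODE, then Taubes' trick to pass from the $C^\varepsilon$-space back to $C^\infty$). A point in your favor worth noting: since the perturbation is time-dependent and $\chi$ is injective on the interval where $\dot\chi>0$, there is no somewhere-injectivity issue for the chord itself, so the localization step is even more routine than in the Floer-trajectory transversality arguments you cite.
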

Denote the Floer homology of the tweaked action functional $\mathcal{A}_{H_{\tau}}^F$ by $HF_*(\mathcal{A}_{H_{\tau}}^F)$. 

\begin{proposition}
$HF_*(\mathcal{A}_{H_{\tau}}^F) \cong HW_*^{<\tau}(L;W)$.
\end{proposition}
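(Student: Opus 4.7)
The plan is to reduce this to the invariance of Lagrangian Floer homology under Hamiltonian isotopy of one of the branes. Unpacking the critical point equation \eqref{eq: eqcrit}, on $[0,1/2]$ a critical path $c$ follows the flow of $\beta(t)X_{H_{\tau}}$, which is the time-$1$ flow $\phi_{H_{\tau}}^1$ up to a reparametrization (since $\int_0^1\beta=1$), and on $[1/2,1]$ it follows the flow of $\dot\chi(t)X_F(\chi(t),\cdot)$, which is the time-$1$ flow $\phi_F^1$ up to a reparametrization. Hence $c(1)=\phi_F^1(\phi_{H_{\tau}}^1(c(0)))$, and the boundary condition $c(0),c(1)\in\widehat L$ identifies $\crit(\mathcal{A}_{H_{\tau}}^F)$ with $\widehat L\cap (\phi_F^1\circ\phi_{H_{\tau}}^1)^{-1}(\widehat L)$. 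In other words, $\mathcal{A}_{H_{\tau}}^F$ computes the Lagrangian Floer homology of the pair $(\widehat L,(\phi_F^1)^{-1}(\widehat L))$ relative to the Hamiltonian $H_{\tau}$, and since $\phi_F^1$ is a compactly supported Hamiltonian diffeomorphism one expects the standard isotopy invariance to yield the claim.

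I would make this precise through a continuation argument deforming $F$ to $0$ via $F_s=sF$, $s\in[0,1]$, producing chain maps $CF_*(\mathcal{A}_{H_{\tau}}^F)\to CF_*(\mathcal{A}_{H_{\tau}}^0)$ by counting rigid $s$-dependent Floer strips. A reverse homotopy together with the usual cobordism-of-continuations argument gives mutually inverse maps on homology. The necessary analytic inputs are: (i) the extra term $-\int_0^1\dot\chi(t)F(\chi(t),c(t))\,dt$ is bounded in absolute value by $\|F\|_{C^0}$ (since $\chi$ is monotone with $\chi(1/2)=0$, $\chi(1)=1$), giving uniform action bounds along the continuation; (ii) because $X_F$ vanishes outside a compact subset of $\widehat W$, the continuation equation on the conical end reduces to the standard Floer equation for $\beta(t)H_{\tau}$, so the maximum principle for contact-type $J$ (\cite[Lemma 4.5]{Rit}) confines continuation strips to a compact region; and (iii) transversality for generic $F$ follows as in \cite[Appendix A]{AFleafwise} and \cite[Theorem 2.28]{Merry}. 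This yields $HF_*(\mathcal{A}_{H_{\tau}}^F)\cong HF_*(\mathcal{A}_{H_{\tau}}^0)$.

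At $F=0$ the tweaked functional is the action functional of the time-dependent Hamiltonian $\beta(t)H_{\tau}$, whose time-one flow coincides with $\phi_{H_{\tau}}^1$ since $\int_0^1\beta=1$. A standard time-reparametrization at the chain level then identifies $HF_*(\mathcal{A}_{H_{\tau}}^0)\cong HF_*(H_{\tau})$. Finally, for an admissible Hamiltonian of slope $\tau$, a direct Legendre-transform computation on the conical end shows that every Hamiltonian chord has action strictly less than $\tau$, so $CF_*(H_{\tau})=CF_*^{<\tau}(H_{\tau})$; a cofinality argument in the direct limit defining $HW_*^{<\tau}(L;W)$ then gives $HF_*(H_{\tau})\cong HW_*^{<\tau}(L;W)$, completing the chain of isomorphisms.

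The main obstacle is verifying the continuation analytics in the mixed time-split framework of the tweaked functional, where $\beta(t)$ localizes the $H_{\tau}$-contribution to $[0,1/2]$ and $\dot\chi(t)$ localizes the $F$-contribution to $[1/2,1]$. However, this is essentially a routine adaptation of the Rabinowitz-Floer formalism of \cite{AFleafwise,Merry} for leaf-wise intersections to the wrapped Lagrangian setting, and the compact support of $F$ ensures that no new substantive difficulties arise beyond careful bookkeeping.
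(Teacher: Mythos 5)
Your proposal is correct and follows essentially the same route as the paper: a continuation argument deforming $F$ to $0$ to get $HF_*(\mathcal{A}_{H_{\tau}}^F)\cong HF_*(H_{\tau})$ (the paper delegates the details to \cite[Section 2.2]{Kang3}), followed by the standard identification $HF_*(H_{\tau})\cong HW_*^{<\tau}(L;W)$ for an admissible Hamiltonian of slope $\tau$. Your additional remarks on action bounds, the maximum principle, and cofinality are just the details the paper leaves to the references.
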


\begin{proof}
It follows from a standard argument using continuation maps with respect to a homotopy between $F$ and the constant function $0$ that $HF_*(\mathcal{A}_{H_{\tau}}^F) \cong HF_*(H_{\tau})$; We refer to \cite[Section~2.2]{Kang3} for details. On the other hand, since $H_{\tau}$ is an admissible Hamiltonian of slope $\tau$, we have $HF_*(H_{\tau}) \cong HW_*^{<\tau}(L; W)$.
\end{proof}


\subsubsection{Vanishing of wrapped Floer homology} \label{sec: vanHW} We shall prove the following vanishing property.
\begin{theorem}\label{thm: incvan}
If $\Sigma$ is displaceable from $\widehat L$ in $\widehat W$, then the inclusion map
$$
\iota_*: HW^{<\tau}_*(L; W) \rightarrow HW^{<\tau + e(L)}_*(L; W)
$$
is a zero map for all $\tau \in \R$, where $e(L)$ is defined as in \eqref{defofeL}.
\end{theorem}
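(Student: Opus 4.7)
The plan is to adapt the symplectic-homology vanishing argument of \cite{Kang3} to the Lagrangian wrapped setting via the tweaked action functional $\mathcal{A}_{H_\tau}^F$ introduced above. The first step is to invoke Proposition \ref{prop: Lagselfdisp}: the displacing Hamiltonian diffeomorphism of $\Sigma$ automatically displaces the whole Liouville domain $W$ from $\widehat L$. Hence, given any $\delta>0$, I would pick a compactly supported Hamiltonian $F\in C_c^\infty([0,1]\times \widehat W)$ whose time-one flow displaces $W$ from $\widehat L$ and whose Hofer norm $\|F\|$ is within $\delta$ of the displacement energy $e(L)$ appearing in \eqref{defofeL}.

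Next I would analyse the critical points of $\mathcal{A}_{H_\tau}^F$ using equation \eqref{eq: eqcrit}. Because $\beta$ is supported on $[0,\tfrac{1}{2}]$ while $\dot\chi$ is supported on $[\tfrac{1}{2},1]$, any critical point $c$ is the concatenation of a time-one $H_\tau$-orbit on $[0,\tfrac{1}{2}]$ with a time-one $F$-orbit on $[\tfrac{1}{2},1]$ whose two endpoints lie on $\widehat L$. In particular $c(\tfrac{1}{2})=\phi_{H_\tau}^1(c(0))$ must lie in $(\phi_F^1)^{-1}(\widehat L)$, which is disjoint from $W$ by the choice of $F$. Splitting the action accordingly,
\[
\mathcal{A}_{H_\tau}^F(c) \ = \ \mathcal{A}_{H_\tau}(y) \ - \ \int_0^1 \dot\chi(t)\,F(\chi(t),c(t))\,dt,
\]
the second summand is bounded in absolute value by $\|F\|<e(L)+\delta$, and the first summand is controlled by an explicit calculation in the symplectisation $[1,\infty)\times \p W$, because $y$ ends outside $W$ where $H_\tau$ is linear in $r$. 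Combining these, every critical point of $\mathcal{A}_{H_\tau}^F$ has action bounded above by a quantity of the form $C(\tau)+e(L)+\delta$, where $C(\tau)$ matches the natural action window of $HW_*^{<\tau}(L;W)$.

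The final step is to assemble the inclusion $\iota_*$. Using the Proposition identifying $HF_*(\mathcal{A}_{H_\tau}^F)\cong HW_*^{<\tau}(L;W)$ via continuation maps $F\rightsquigarrow 0$, together with the slope-increasing continuation $H_\tau\rightsquigarrow H_{\tau+e(L)+2\delta}$, I would factor $\iota_*$ through a composite whose image consists entirely of cycles supported on generators with action above $\tau+e(L)+2\delta$; consequently $\iota_*=0$ as a map into $HW_*^{<\tau+e(L)+2\delta}(L;W)$, and taking $\delta\to 0$ yields the theorem.

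The main obstacle I expect is rigorously constructing these continuation homotopies so that \emph{both} the identification $HF_*(\mathcal{A}_{H_\tau}^F)\cong HW_*^{<\tau}(L;W)$ and the uniform action estimate survive simultaneously. This relies on a uniform $C^0$-bound for Floer trajectories, supplied by the maximum principle for SFT-like almost complex structures and by the compact support of $F$, together with a generic perturbation argument that achieves Morse-ness of $\mathcal{A}_{H_\tau}^F$ without enlarging $\|F\|$ beyond $e(L)+\delta$. Once these standard but delicate Floer-theoretic ingredients are in place, the vanishing of $\iota_*$ proceeds in parallel with the closed-string argument of \cite{Kang3}.
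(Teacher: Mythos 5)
Your overall strategy is the paper's: displace $W$ itself from $\widehat L$ via Proposition \ref{prop: Lagselfdisp}, analyse the critical points of the tweaked functional $\mathcal{A}_{H_\tau}^F$, and factor the action-window inclusion through the Floer homology of $\mathcal{A}_{H_\tau}^F$. But two steps, as written, would fail. First, the key estimate must be a \emph{lower} bound on the action of critical points, not an upper bound. From the location constraint $r\circ c(0)\in(1,\mathfrak{f}]$ one computes
$$
\mathcal{A}_{H_\tau}^F(c)=\ell(c(1))-\ell(c(0))+\tau-\int_0^1\bigl(r\alpha(X_{\tilde F}(t,c))+\tilde F(t,c)\bigr)dt\ \geq\ \ell(c(1))-\ell(c(0))+\tau-C_F,
$$
which grows linearly in the \emph{slope} $\tau$. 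The point is that for a fixed action window $a$ one chooses the slope $\tau(a)$ so large that $\mathcal{A}_{H_\tau}^F$ has \emph{no} critical points of action below $a+\|F\|_-$, so the filtered group $HF_*^{<a+\|F\|_-}(\mathcal{A}_{H_\tau}^F)$ vanishes because its chain complex is literally zero. Your upper bound ``$C(\tau)+e(L)+\delta$'' gives nothing, and the later phrase ``cycles supported on generators with action above $\tau+e(L)+2\delta$'' is not how the vanishing is used --- such chains would not even lie in the filtered subcomplex; what vanishes is the intermediate filtered group itself.

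Second, the map to be factored is the action-window inclusion for a \emph{fixed} admissible Hamiltonian of large slope, not a slope-increasing continuation $H_\tau\rightsquigarrow H_{\tau+e(L)+2\delta}$; you are conflating the slope of $H_\tau$ with the action level $\tau$ of the statement. What is needed are the two \emph{filtered} continuation maps $\Phi_*^{<a}:HF_*^{<a}(\mathcal{A}_{H_\tau})\to HF_*^{<a+\|F\|_-}(\mathcal{A}_{H_\tau}^F)$ and $\Psi_*^{<a}:HF_*^{<a}(\mathcal{A}_{H_\tau}^F)\to HF_*^{<a+\|-F\|_-}(\mathcal{A}_{H_\tau})$, whose action shifts come from the energy identity for parametrized Floer solutions (Remark \ref{rem: energyparam}) and add up to $\|F\|_-+\|-F\|_-=\|F\|$; a homotopy-of-homotopies argument identifies $\Psi_*\circ\Phi_*$ with the chain-level inclusion $CF^{<a}\hookrightarrow CF^{<a+\|F\|}$. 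Since the middle group vanishes for $\tau=\tau(a)$ large, the inclusion vanishes; one then takes the direct limit over slopes and lets $\|F\|\to e(L)$. The unfiltered isomorphism $HF_*(\mathcal{A}_{H_\tau}^F)\cong HW_*^{<\tau}(L;W)$ that you invoke is the wrong tool here: it forgets exactly the filtration data that produces the shift by $e(L)$ in the target.
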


An immediate corollary is the following.

\begin{corollary}\label{cor: displaceable}
If $\Sigma$ is displaceable from $\widehat L$ in $\widehat W$, the wrapped Floer homology $HW_*(L; W)$ vanishes.
\end{corollary}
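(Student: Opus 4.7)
The plan is to deduce the vanishing directly from Theorem \ref{thm: incvan} by passing to the direct limit that defines $HW_*(L;W)$. Since all the substantive work (the displacement energy bound on continuation maps, the use of the tweaked action functional, etc.) has already been carried out in the theorem, the corollary is essentially a formal manipulation with directed colimits, so I do not anticipate any real obstacle here.

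First I would identify $HW_*(L;W)$ with the directed colimit $\varinjlim_{\tau \to \infty} HW^{<\tau}_*(L;W)$, where the transition maps are the action-inclusion-induced maps $\iota_*\colon HW^{<\tau}_*(L;W) \to HW^{<\tau'}_*(L;W)$ for $\tau \le \tau'$. This follows from the construction: every admissible Hamiltonian $H$ of slope $a$ has its (finitely many) Hamiltonian chords confined to the compact domain $W$, hence of bounded action, so the filtration on $CF_*(H)$ stabilizes once $\tau$ exceeds this action bound. Swapping the direct limit over admissible Hamiltonians with the direct limit over the action window $\tau$ (both are directed colimits, so the interchange is legitimate) yields the identification $HW_*(L;W) = \varinjlim_\tau HW^{<\tau}_*(L;W)$.

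Given this, let $\alpha \in HW_*(L;W)$ be arbitrary. By the colimit description, $\alpha$ is represented by a class $\alpha_\tau \in HW^{<\tau}_*(L;W)$ for some finite $\tau \in \R$, and the canonical map from $HW^{<\tau}_*(L;W)$ to the colimit factors as
\[
HW^{<\tau}_*(L;W) \xrightarrow{\iota_*} HW^{<\tau + e(L)}_*(L;W) \longrightarrow HW_*(L;W).
\]
By Theorem \ref{thm: incvan}, the first arrow is the zero map under the displaceability assumption, so $\alpha_\tau$ dies already in $HW^{<\tau + e(L)}_*(L;W)$. Consequently $\alpha = 0$, and since $\alpha$ was arbitrary, $HW_*(L;W) = 0$.
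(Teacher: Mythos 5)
Your proof is correct and is exactly the standard colimit argument the paper has in mind when it calls this an ``immediate corollary'' of Theorem \ref{thm: incvan}: every class lives at some finite action level $\tau$, and the canonical map to the colimit factors through the inclusion $HW^{<\tau}_*\to HW^{<\tau+e(L)}_*$, which vanishes. No issues.
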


\begin{remark}\label{rmk: vanishingSHandHW}
If $\Sigma$ is displaceable \emph{from itself}, then we already know that $HW_*(L;W)$ vanishes for any admissible Lagrangian $L$. In this case, the symplectic homology $SH_*(W)$ vanishes as  shown in \cite{Kang3}. By the result in \cite[Theorem~6.17]{Rit} the wrapped Floer homology $HW_*(L;W)$ is a module over $SH_*(W)$ for any admissible Lagrangian $L$. It follows that the wrapped Floer homology $HW_*(L;W)$ vanishes.
\end{remark}

For a given compactly supported Hamiltonian $F \in C^{\infty}_c([0,1] \times \widehat W)$, we define
$$
\mathfrak{f}: = \max_{t\in [0,1]} \Supp(r\circ F(t,\cdot)).
$$
Here $r$ denotes the cylindrical coordinate, and if $F$ is supported only in $W$, then we put $\mathfrak{f} = 0$ by convention. In the rest of this section, we   assume that $\Sigma$ is displaceable from $\widehat L$ and that $\phi_F$ is a displacing Hamiltonian diffeomorphism.

\begin{lemma}\label{lem: cylcorcrit}
If $c \in \crit (\mathcal{A}_{H_{\tau}}^F)$, then we have $r \circ c(0) \in (1, \mathfrak{f}]$. In particular, if $\mathfrak{f}=0$ the set $\crit(\mathcal{A}_{H_\tau}^F)$ is empty.
\end{lemma}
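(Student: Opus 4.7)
The plan is to first translate the critical-point equation \eqref{eq: eqcrit} into a concrete dynamical statement about $c(0)$, $c(1/2)$ and $c(1)$, and then bound $r\circ c(0)$ from below using displaceability and from above using the non-resonance $\tau\notin\Spec(\p W,\alpha,\p L)$.

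For the translation step, I will use that $\beta$ is supported in $[0,\tfrac12]$ while monotonicity of $\chi$ with $\chi(\tfrac12)=0$ forces $\chi\equiv 0$ on $[0,\tfrac12]$, so $\dot\chi$ is supported in $[\tfrac12,1]$. On $[0,\tfrac12]$, equation \eqref{eq: eqcrit} becomes $\dot c=\beta(t)X_{H_\tau}(c)$, and a time reparameterization (using $\int_0^{1/2}\beta\,dt=1$) yields $c(\tfrac12)=\phi_{H_\tau}^1(c(0))$. On $[\tfrac12,1]$, substituting $s=\chi(t)$ in $\dot c=\dot\chi(t)X_F(\chi(t),c)$ gives $c(1)=\phi_F^1(c(\tfrac12))$. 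Combined with the boundary data $c(0),c(1)\in\widehat L$, being a critical point becomes equivalent to
$$c(0)\in\widehat L\qquad\text{and}\qquad \phi_F^1\circ\phi_{H_\tau}^1(c(0))\in\widehat L.$$

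Next I will pin down $r\circ c(0)$. For the lower bound: on the cone $X_{H_\tau}=\tau R_\alpha$ is tangent to $\p W$, so the Hamiltonian flow of $H_\tau$ preserves $W$. By Proposition~\ref{prop: Lagselfdisp}, $\phi_F$ displaces $W$ from $\widehat L$. If $c(0)\in W$ then $c(\tfrac12)\in W$, and hence $c(1)=\phi_F^1(c(\tfrac12))\notin\widehat L$, a contradiction; so $r\circ c(0)>1$. For the upper bound: since $c(0)$ lies strictly on the cone and $\tau R_\alpha$ preserves the level sets of $r$, we have $r\circ c(\tfrac12)=r\circ c(0)$. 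If this common value exceeded $\mathfrak{f}$, then $F(t,\cdot)$ would vanish near $c(\tfrac12)$ for all $t$, so $c(1)=\phi_F^1(c(\tfrac12))=c(\tfrac12)$; this forces $\phi_{H_\tau}^1(c(0))\in\widehat L$, which on $[1,\infty)\times\p W$ says precisely that the time-$\tau$ Reeb flow sends the endpoint of $c(0)$ back to $\p L$, contradicting $\tau\notin\Spec(\p W,\alpha,\p L)$. Hence $r\circ c(0)\le\mathfrak{f}$, and the in-particular statement follows because $(1,\mathfrak{f}]$ is empty when $\mathfrak{f}=0$.

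The bulk of the argument is bookkeeping; the only subtle point is obtaining the factorization $c(1)=\phi_F^1\circ\phi_{H_\tau}^1(c(0))$ correctly (one must carefully use that $\beta$ and $\dot\chi$ are supported on complementary subintervals and that $\int\beta=1$ so that both flows are run for unit parameter length). After that, the displaceability hypothesis and the non-resonance of $\tau$ combine cleanly to localize $c(0)$ inside $\{1<r\le\mathfrak{f}\}$.
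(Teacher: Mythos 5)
Your proof is correct and follows the same strategy as the paper's: factor the critical-point equation into $c(\tfrac12)=\phi_{H_\tau}^1(c(0))$ and $c(1)=\phi_F(c(\tfrac12))$, rule out $r\circ c(0)\le 1$ via Proposition~\ref{prop: Lagselfdisp}, and rule out $r\circ c(0)>\mathfrak{f}$ via $\tau\notin\Spec(\p W,\alpha,\p L)$. You simply spell out the time reparametrizations and the $r$-invariance of the $H_\tau$-flow on the cone, which the paper leaves implicit.
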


\begin{proof}
Note that $c$ is a solution of the equation \eqref{eq: eqcrit}. Suppose $r \circ c(0) > \mathfrak{f}$. Then $c$ is a Reeb chord of period $\tau \not\in \Spec(\Sigma, \alpha, \mathcal{L})$, which contradicts the choice of $\tau$. Suppose $r \circ c(0) \leq 1$, in other words $c(0) \in L$. Then $\phi_{H_{\tau}}^{t} (c(0))  \subset W$ for all $t\in \R$ since $H_{\tau}$ is admissible. We therefore have $\phi_F(\phi_{H_{\tau}}^t(c(0))) \in \widehat L$ for some $t \in \R$. In particular, $\phi_F(W) \cap \widehat L \neq \emptyset$. This contradicts Proposition~\ref{prop: Lagselfdisp}.
\end{proof}

We set $\displaystyle \|F\|_{-}: = \int_0^1 \min_{y \in \widehat W} F(t, y) dt.$ 

\begin{lemma}\label{lem: chotau}
For each $a > 0$, there exists $\tau=\tau(a)>0$ as large as one likes such that there is no critical point $c$ of $\mathcal{A}_{H_{\tau}}^F$ with $\mathcal{A}_{H_{\tau}}^F(c) < a + \|F\|_-$. Consequently, the filtered chain complex $CF^{<a + \|F\|_-}_*(\mathcal{A}_{H_{\tau}}^F)$ and hence the homology $HF^{<a + \|F\|_-}_*(\mathcal{A}_{H_{\tau}}^F)$ vanishes.
\end{lemma}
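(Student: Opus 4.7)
The plan is to evaluate the action $\mathcal{A}_{H_\tau}^F(c)$ explicitly at a critical point $c$, isolate its dependence on the slope $\tau$, and show that admissibility of $H_\tau$ forces the action to grow linearly in $\tau$. Then choosing $\tau$ large enough pushes every critical value above $a + \|F\|_-$, so $CF^{<a+\|F\|_-}_*(\mathcal{A}_{H_\tau}^F)$ has no generators.

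First I would localize the critical points. By Lemma~\ref{lem: cylcorcrit}, any $c \in \crit(\mathcal{A}_{H_\tau}^F)$ satisfies $r_0 := r\circ c(0) \in (1, \mathfrak{f}]$, so $c(0)$ lies on the symplectization. The standard choice of $\chi$ with $\chi(1/2)=0$, $\chi(1)=1$, smooth and monotone forces $\chi \equiv 0$ on $[0,1/2]$, hence $\dot\chi \equiv 0$ there; combined with $\beta \equiv 0$ on $[1/2,1]$, the critical-point equation~\eqref{eq: eqcrit} decouples. On $[0,1/2]$, $c$ flows under $\beta\, X_{H_\tau}$; since $X_{H_\tau} = -\tau R_\alpha$ on the cone, $c$ stays at constant level $r=r_0$. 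On $[1/2,1]$, $c$ follows the reparametrized $F$-flow, so $\tilde c(s) := c(\chi^{-1}(s))$ is the time-$s$ $F$-trajectory starting at $c(1/2)$.

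Next I would compute $\mathcal{A}_{H_\tau}^F(c)$ using this structure. On the first half $\widehat\lambda(\dot c) = -\beta\tau r_0$ and $H_\tau \circ c = \tau r_0 + b$, so the two contributions are
\[
-\int_0^{1/2} c^*\widehat\lambda = \tau r_0, \qquad -\int_0^{1/2} \beta H_\tau\circ c\, dt = -\tau r_0 - b,
\]
whose $\tau r_0$ terms cancel, leaving $-b$. On the second half, the substitution $s=\chi(t)$ yields $-\int_0^1[\widehat\lambda(X_F)+F](s,\tilde c(s))\,ds$. Altogether
\[
\mathcal{A}_{H_\tau}^F(c) \;=\; \bigl[\ell(c(1))-\ell(c(0))\bigr] \;-\; b \;-\; \int_0^1 \bigl[\widehat\lambda(X_F)+F\bigr](s,\tilde c(s))\,ds.
\]

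Finally I would estimate each term uniformly in $\tau$. Admissibility gives $\tau+b = H_\tau|_{\partial W} \le 0$, so $-b \ge \tau$. The Liouville primitive $\ell$ is constant on each component of $[1,\infty)\times \partial L$ (where $\widehat\lambda$ vanishes) and bounded on the compact piece $L$, so $|\ell(c(1))-\ell(c(0))|$ is bounded by a constant depending only on $L$. Compact support of $F$ bounds $\widehat\lambda(X_F)$ and yields $-\int_0^1 F(s,\tilde c)\,ds \ge -\|F\|_+$. Hence there exists $C=C(L,F)$, independent of $\tau$, with
\[
\mathcal{A}_{H_\tau}^F(c) \;\ge\; \tau - C.
\]
Choosing any $\tau > a + \|F\|_- + C$ in the complement of the countable set $\Spec(\partial W,\alpha,\partial L)$ makes every critical value exceed $a+\|F\|_-$, so the filtered chain complex $CF^{<a+\|F\|_-}_*(\mathcal{A}_{H_\tau}^F)$ is trivial, and so is its homology. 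The main obstacle is the action computation: if the cancellation of the $\tau r_0$ contributions failed, the action would scale uncontrollably with $\tau$. It is precisely the conic linearity $H_\tau = \tau r + b$ together with the identity $\widehat\lambda(X_{H_\tau}) = -\tau r$ that produces this cancellation, and the admissibility bound $-b\ge\tau$ then supplies the desired linear-in-$\tau$ growth.
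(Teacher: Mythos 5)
Your proposal is correct and follows essentially the same route as the paper: localize critical points via Lemma~\ref{lem: cylcorcrit}, compute the action so that the $\tau r_0$ contributions from $-\int c^*\widehat\lambda$ and $-\int\beta H_\tau$ cancel, and extract the surviving linear-in-$\tau$ term (the paper normalizes $b=-\tau$ where you keep $b$ general and use $-b\ge\tau$). The remaining bounds on $\ell$ and on the $F$-dependent terms match the paper's constant $C_F$, so no further comment is needed.
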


\begin{proof}
For notational convenience, we denote $\tilde F (t, \cdot) : = \dot \chi(t) F(\chi(t), \cdot)$, and we choose without loss of generality $H_{\tau}(r, y) = \tau r - \tau$ for $r >1$. By Lemma~\ref{lem: cylcorcrit}, if $c \in \crit (\mathcal{A}_{H_{\tau}}^F)$, then $r \circ c(0) \in (1, \mathfrak{f}]$. In particular, $c$ is a solution of the equation $\dot c = - \tau \beta(t) R(c) + X_{\tilde F}(t, c)$ where $R$ denotes the Reeb vector field. Abbreviating $r_0: = r \circ c(0)$ we compute 
\begin{align*}
\mathcal{A}_{H_{\tau}}^F(c)&= \ell(c(1)) - \ell(c(0)) - \int_0^1 r\alpha\big(- \tau \beta(t) R(c) + X_{\tilde F}(t, c)\big) dt - \int_0^1 (\beta(t)H_{\tau}(c) + \tilde F(t, c)) dt \\
&=\ell(c(1)) - \ell(c(0)) + \tau \cdot r_0 - \int_0^1 r \alpha(X_{\tilde F}(t, c))dt - (\tau \cdot r_0 - \tau) - \int_0^1 \tilde F(t, c)dt\\
&= \ell(c(1)) - \ell(c(0)) + \tau - \int_0^1 (r \alpha(X_{\tilde F}(t,c)) + \tilde F(t, c))dt.
\end{align*}
Since $r \circ c \leq \mathfrak{f}$, the term $\int_0^1 (r \alpha(X_{\tilde F}(t,c)) + \tilde F(t, c))dt$ is bounded from above by a constant $C_F > 0$ which does not depend on $c$. It follows that
$$
\mathcal{A}_{H_{\tau}}^F(c) \geq \ell(c(1)) - \ell(c(0)) + \tau - C_F.
$$
We then take $\tau= \tau(a)>0$ such that $\ell(c(1)) - \ell(c(0)) + \tau - C_F \geq a + \|F\|_-$. This completes the proof.
\end{proof}

We now prove the vanishing property. This will basically be done by showing that the inclusion $\iota_*: HF^{< a}_*(\mathcal{A}_{H_{\tau(a)}}) \rightarrow HF^{<a+ \|F\|}_*(\mathcal{A}_{H_{\tau(a)}})$ factors through $HF^{<a + \|F\|_-}_*(\mathcal{A}_{H_{\tau}}^F)$ which vanishes for suitable $\tau = \tau(a)$. Define the \emph{displacement energy of $\Sigma$ from $\widehat L$ in $\widehat W$} by
\begin{equation}\label{defofeL}
e(L): = \inf\{\|F\|: F \in C^{\infty}_c([0,1] \times \widehat W), \; \phi_F(\Sigma) \cap \widehat L = \emptyset \}.
\end{equation}

\begin{proof}[Proof of Theorem~\ref{thm: incvan}]
Choosing a generic homotopy $F^s$ such that $F^0 \equiv 0$ and $F^1 =\dot{\chi}(t)F(\chi(t),\cdot)$, we have continuation maps on filtered homology groups
\begin{eqnarray}
&&\Phi^{<a}_*: HF_*^{< a}(\mathcal{A}_{H_{\tau}}) \rightarrow HF_*^{< a + \|F\|_-}(\mathcal{A}_{H_{\tau}}^F) \nonumber\\
&&\Psi^{<a}_*: HF_*^{< a}(\mathcal{A}_{H_{\tau}}^F) \rightarrow HF_*^{< a + \|-F\|_-}(\mathcal{A}_{H_{\tau}})\nonumber
\end{eqnarray}
for each $a >0$. We remark that the action shift by $\|F\|_-$ (or $\|-F\|_-$) is due to the energy consumption property of parametrized Floer solutions which we count to define the continuation maps, see Remark~\ref{rem: energyparam}. By composing the two maps, we have a map (note that $\|-F\|_- + \|F\|_- = \|F\|$)
$$
\Psi_*^{<a + \|F\|_-} \circ \Phi_*^{<a} : HF_*^{< a}(\mathcal{A}_{H_{\tau}}) \rightarrow HF_*^{< a + \|F\|}(\mathcal{A}_{H_{\tau}}),
$$
and by the usual homotopy of homotopies argument in Floer theory, we have that
$$
\Psi_*^{<a + \|F\|_-} \circ \Phi_*^{<a} = \iota_*
$$
where $\iota_*$ is the map induced by inclusion at the chain level.

Taking $\tau = \tau(a)$ as in Lemma~\ref{lem: chotau}, we have $HF_*^{< a + \|F\|_-}(\mathcal{A}_{H_{\tau}}^F)=0$. Therefore the map
$$
\iota_*: HF_*^{< a}(\mathcal{A}_{H_{\tau}}) \rightarrow HF_*^{< a + \|F\|}(\mathcal{A}_{H_{\tau}})
$$
vanishes for all $F$ and $a$. We finally take the direct limit along $\tau  \rightarrow \infty$, and conclude that the inclusion
$$
\iota_*: HW_*^{<a}(L; W) \rightarrow HW_*^{<a+e(L)}(L; W)
$$
also vanishes.
\end{proof}

\begin{remark}\label{rem: energyparam}
For a solution of the parametrized Floer equation $w: \R \times [0,1] \rightarrow \widehat W$, the energy
$$
E(w) : = \int_{-\infty}^{\infty} \|\partial_s w\|^2 ds
$$
satisfies  
$$
E(w) \leq \mathcal{A}_{\beta \cdot {H_{\tau}}}(w_{-}) - \mathcal{A}_{{H_{\tau}}}^F(w_{+}) + \|F\|_-.
$$
\end{remark}

\section{Equivariant wrapped Floer homology} \label{sec: ewf}

\subsection{$\Z_2$-complexes} \label{sec: z2cpx} As an algebraic preliminary to develop equivariant theories, we briefly outline the notion of $\Z_2$-complex. The parallel notion of an $S^1$-complex can be found in \cite{BO17}.
\begin{definition}
A $\Z$-graded chain complex $(C_*, \partial)$ with $\Z_2$-coefficients is called a \emph{$\Z_2$-complex} if it admits the additional datum of a sequence of maps
$$
\phi = (\phi_0 = \partial, \phi_1, \phi_2, \dots)
$$
such that $\phi_j$ has degree $j-1$, i.e., $\phi_j : C_k \rightarrow C_{k +j-1}$, and satisfies the relations
\begin{equation}\label{eq: phi_rel}
\forall k \geq 0, \quad \sum_{i+j = k} \phi_i \circ \phi_j = 0.
\end{equation}
\end{definition}

\begin{remark}
The first two terms of the relation \eqref{eq: phi_rel} are
$$
\partial \circ \partial = 0, \quad \partial \circ \phi_1 + \phi_1 \circ \partial = 0.
$$
\end{remark}
Given a $\Z_2$-complex $(C_*, \partial)$, we form the \emph{$\Z_2$-equivariant chain complex} $(C^{\Z_2}_*, \partial^{\Z_2})$ by
$$
C^{\Z_2}_* := \Z_2[w] \otimes_{\Z_2} C_*,
$$
where $w$ is a formal variable of degree one, and
\begin{equation}\label{eq: ez2differentialalg}
\partial^{\Z_2}(w^l \otimes x) : = \sum_{j=0}^l w^{l-j}\phi_j(x).
\end{equation}
We note that the differential formally splits into
$$
\partial^{\Z_2} = \partial + w^{-1}\phi_1 + w^{-2}\phi_2 + \cdots.
$$
It follows immediately from the relations \eqref{eq: phi_rel} that we have $\partial^{\Z_2} \circ \partial^{\Z_2} = 0$.
\begin{definition}
The \emph{$\Z_2$-equivariant homology} of the $\Z_2$-complex $(C_*, \phi)$ is the homology of the $\Z_2$-equivariant chain complex $(C_*^{\Z_2},\p^{\Z_2})$:
$$
H^{\Z_2}_*(C_*, \phi): = H_*(C^{\Z_2}_*, \partial^{\Z_2}).
$$
\end{definition}

\subsubsection{Leray--Serre type spectral sequence} \label{sec: lstss} For each $p \in \Z$ we denote the $\Z_2$-module of polynomials in $w$ of degree less than or equal to $p$ by $\Z_2[w]/\{w^{p+1}=0\}$. We set $F_pC_*^{\Z_2}=0$ for $p<0$. Note that the tensor product 
$$
F_p C^{\Z_2}_* :=\Z_2[w]/\{w^{p+1}=0\}  \otimes_{\Z_2} C_*
$$
is a subgroup in $C_*^{\Z_2}$. The following lemma is straightforward from the definition.
\begin{lemma}
The subgroups $\{F_p C^{\Z_2}_* \}_{p \in \Z}$ define  a filtration on $(C^{\Z_2}_*, \partial^{\Z_2})$ which is bounded from below and is exhaustive.
\end{lemma}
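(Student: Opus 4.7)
The plan is to verify the three defining properties of a bounded-below, exhaustive filtration — nestedness, stability under the differential, and the boundedness/exhaustion conditions — directly from the definitions in Section \ref{sec: z2cpx}, since each follows by unpacking the formula for $\partial^{\Z_2}$.

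First I would observe that the natural quotient maps $\Z_2[w]/\{w^{p+1}=0\} \hookrightarrow \Z_2[w]/\{w^{p+2}=0\}$ induce inclusions $F_p C^{\Z_2}_* \subset F_{p+1} C^{\Z_2}_*$ after tensoring with $C_*$, so the subgroups form a nested sequence. Boundedness from below is then immediate: by the convention $F_p C^{\Z_2}_* = 0$ for $p < 0$, the filtration is supported in degrees $p \geq 0$. Exhaustiveness is equally direct, since any element of $C^{\Z_2}_* = \Z_2[w]\otimes_{\Z_2}C_*$ is a finite sum $\sum_l w^l \otimes x_l$, and hence lies in $F_p C^{\Z_2}_*$ as soon as $p$ is at least the maximum $w$-degree appearing in that sum.

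The only slightly substantive step is compatibility with $\partial^{\Z_2}$. For a generator $w^l \otimes x$ with $l \leq p$, formula \eqref{eq: ez2differentialalg} gives
\[
\partial^{\Z_2}(w^l \otimes x) = \sum_{j=0}^{l} w^{l-j} \otimes \phi_j(x),
\]
where every summand has $w$-degree $l - j \leq l \leq p$, so the output lies in $F_p C^{\Z_2}_*$. Extending $\Z_2$-linearly, we conclude $\partial^{\Z_2}(F_p C^{\Z_2}_*) \subset F_p C^{\Z_2}_*$, which together with the preceding observations shows that $\{F_p C^{\Z_2}_*\}_{p \in \Z}$ is indeed a bounded-below, exhaustive filtration of $(C^{\Z_2}_*, \partial^{\Z_2})$.

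Since every item above reduces to inspection of finite polynomial degrees and the explicit differential formula, there is no genuine obstacle; the content of the lemma is really that the filtration by $w$-degree on $\Z_2[w]$ transfers cleanly to the equivariant complex, and this is exactly what is needed to justify running the Leray--Serre-type spectral sequence of Section \ref{sec: lstss}.
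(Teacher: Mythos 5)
Your proof is correct and is exactly the routine verification the paper has in mind when it declares the lemma ``straightforward from the definition'': the only point with any content is that $\partial^{\Z_2}(w^l\otimes x)=\sum_{j=0}^l w^{l-j}\otimes\phi_j(x)$ never raises the $w$-degree, which you check correctly. No issues.
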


By a standard theorem of homological algebra, we have a spectral sequence $\{E^r_{p,q}\}$ which converges to the $\Z_2$-equivariant homology $H_*^{\Z_2}(C_*)$.  Denoting the induced filtration on $C_k^{\Z_2}$ by $F_p C_k^{\Z_2}$ for each $k \geq 0$, the $E^0$-term is given by 
$$
E^0_{p,q} : = F_p C^{\Z_2}_{p+q}/ F_{p-1} C^{\Z_2}_{p+q},
$$
and the differential $d^0_{p,q}$ is the one  induced by $\partial^{\Z_2}$ on the   quotient $E^0_{p,q}$.

\begin{lemma}\label{lem: alglsss}
The $E^1$-term of the spectral sequence is given by
$$
E^1_{p,q} = \begin{cases} H_q(C_*), & p \geq 0, \\ 0, & \text{$p<0$.}  \end{cases}
$$
The differential $d^1_{p,q} : E^1_{p,q} \rightarrow E^1_{p-1,q}$ is given by $d^1 = \phi_1$.
\end{lemma}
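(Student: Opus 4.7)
The plan is to compute the $E^0$-page directly from the definition of the filtration, identify $d^0$ with the base differential $\partial$, and then read off $d^1$ as the piece that drops one level of filtration. First, since $w$ has degree one and $\Z_2$ has trivial grading, every element of $F_pC^{\Z_2}_{p+q}$ admits a unique expression $\sum_{l=0}^p w^l\otimes x_l$ with $x_l\in C_{p+q-l}$. Quotienting by $F_{p-1}C^{\Z_2}_{p+q}$ kills all terms with $l<p$, leaving the canonical identification $E^0_{p,q}\cong w^p\otimes C_q\cong C_q$ for $p\ge 0$, while $E^0_{p,q}=0$ for $p<0$ since $F_{-1}C^{\Z_2}_*=0$ by convention.

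Next, to pin down $d^0$, I would apply formula \eqref{eq: ez2differentialalg} to a representative $w^p\otimes x$:
\[
\partial^{\Z_2}(w^p\otimes x)=w^p\phi_0(x)+w^{p-1}\phi_1(x)+\cdots+\phi_p(x).
\]
Only the leading term $w^p\partial x$ has $w$-degree equal to $p$; all lower terms lie in $F_{p-1}C^{\Z_2}_{p+q-1}$ and therefore vanish in $E^0_{p,q-1}$. Hence $d^0$ is identified with $\partial$ under the identification above, and taking homology column-by-column yields $E^1_{p,q}=H_q(C,\partial)$ for $p\ge 0$ and $E^1_{p,q}=0$ for $p<0$, as claimed.

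Finally, for $d^1\colon E^1_{p,q}\to E^1_{p-1,q}$, I represent a class by $[w^p\otimes x]$ with $\partial x=0$ and apply $\partial^{\Z_2}$ once more. Since the $\phi_0$ term now vanishes,
\[
\partial^{\Z_2}(w^p\otimes x)=w^{p-1}\phi_1(x)+w^{p-2}\phi_2(x)+\cdots+\phi_p(x),
\]
and the projection onto $F_{p-1}C^{\Z_2}_{p+q-1}/F_{p-2}C^{\Z_2}_{p+q-1}\cong E^1_{p-1,q}$ retains only the top piece $w^{p-1}\phi_1(x)$. Under the identification $E^1_{p-1,q}\cong H_q(C)$ this reads $d^1[x]=[\phi_1(x)]$; that $\phi_1$ descends to a well-defined map on homology follows from the relation $\partial\phi_1+\phi_1\partial=0$ recorded immediately after the definition of a $\Z_2$-complex. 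No substantive obstacle appears; the argument is purely formal once the bidegree bookkeeping (in particular the shift caused by $\deg w=1$) is tracked carefully.
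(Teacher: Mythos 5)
Your argument is correct and follows essentially the same route as the paper: identify $E^0_{p,q}$ with $w^p\otimes C_q$, read off $d^0=\id\otimes\partial$ from the leading term of $\partial^{\Z_2}$, and then extract $d^1=\phi_1$ from the next term. The extra remark that $\phi_1$ descends to homology via $\partial\phi_1+\phi_1\partial=0$ is a welcome detail the paper leaves implicit.
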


\begin{proof}
Observe that we can identify $E^0_{p,q}$ with $\Z_2\langle w^p\rangle \otimes C_q$ using the fact that we are over the field $\Z_2$. For an element $w^p \otimes x \in \Z_2\langle w^p\rangle \otimes C_q$, we have
$$
\partial^{\Z_2}(w^p \otimes x) = \sum_{j=0}^p w^{p-j} \otimes \phi_j(x),
$$
and this implies, passing to the quotient, that
$$
d^0_{p,q}(w^p \otimes x) = w^p \otimes \phi_0(x) = w^p \otimes \partial(x).
$$
In other words,
$$
d^0_{p,q}	 = \id \otimes \partial,
$$
so that $E^1_{p,q} = \Z_2\langle w^p\rangle \otimes_{\Z_2} H_q(C_*) = H_q(C_*)$. 

The same computation shows that 
$$
d^1_{p,q}(w^p \otimes x) = w^{p-1} \otimes \phi_1(x).
$$
Under the identification $\Z_2\langle w^p\rangle \otimes_{\Z_2} H_q(C_*) = H_q(C_*)$, we can write $d^1 = \phi_1$ as asserted.
\end{proof}

\begin{remark}
Since $H_*(\R P^{\infty}) = H_*(B\Z_2) = \Z_2[w]$, one may rephrase the $E^1$-term as follows:
$$
E^1_{p,q} = H_p(B\Z_2) \otimes_{\Z_2} H_q(C_*) 
$$ 
\end{remark}

\subsection{Equivariant Morse homology}\label{sec: eMH} Before defining equivariant wrapped Floer homology, we describe the construction of a finite dimensional model, namely, equivariant Morse homology following \cite{SS}. This construction is the so-called family Morse homology theory, see \cite{Hut}. Throughout this paper, we use $\Z_2$-coefficients in (equivariant) homology.

\subsubsection{$\Z_2$-spaces and the Borel constructions}\label{sec: Borel}
A topological space endowed with a topological $\Z_2$-action is called a \emph{$\Z_2$-space}. Note that a topological space $X$ is a $\Z_2$-space if and only if  it admits an involution. It is well-known that $\displaystyle B\Z_2=\R P^\infty=\lim_{\underset{N}{\longrightarrow}}\R P^N$ and $\displaystyle E\Z_2=S^\infty=\lim_{\underset{N}{\longrightarrow}} S^N$. For a $\Z_2$-space $X$, the group $\Z_2$ acts on the product $X\times E\Z_2$ diagonally and its quotient space
$$
X_\text{Borel}:=X\times_{\Z_2}E\Z_2
$$ is called the {\it Borel construction}. If $X$ is a point, then $X_\text{Borel}=E\Z_2/\Z_2=B\Z_2=\R P^\infty$. In the Morse setup, our approach to the equivariant theory corresponds to considering the fibration
$$
X \longhookrightarrow X_\text{Borel}\longrightarrow \R P^\infty,
$$
where the projection map is induced by the projection onto the second factor $pr_2:X\times E\Z_2\to E\Z_2$. The \emph{equivariant homology of a $\Z_2$-space $X$} is defined as the singular homology of its Borel construction:
$$
H_*^{\Z_2}(X):=H_*(X_\text{Borel}).
$$
If the $\Z_2$-action is free, i.e., if the involution has no fixed points, then we have
$$
H_*^{\Z_2}(X)=H_*(X/\Z_2).
$$
In practice, we approximate $E\Z_2=S^\infty$ by finite-dimensional spheres $S^N$ on which $\Z_2$ still acts freely.  It is known that 
$$
H_*^{\Z_2}(X)=\lim_{\underset{N}{\longrightarrow}}H_*(X\times_{\Z_2}S^N),
$$
where the direct limit is taken over morphisms induced by the equivariant inclusions $X\times_{\Z_2} S^N\hookrightarrow X\times_{\Z_2} S^{N+1}$.
 
\subsubsection{$\Z_2$-equivariant Morse chain complex}\label{subsec:eMH}
Let $M$ be a closed manifold with a $\Z_2$-action, induced by an involution $\iota: M \rightarrow M$. The quotient of the unit-sphere $S^N\subset \R^{N+1}$ by the antipodal map $\sigma(z)=-z$ is $\R P^N$. We take the perfect Morse function $f: \R P^{N} \rightarrow \R$ whose lift $\widetilde{f} : S^N \rightarrow \R$ to the covering $S^N$ is given by $\widetilde{f}(z) = \sum_{j=0}^{N} j|z_j|^2$. The function $f$ has precisely one critical point, say $z^{(j)}$, of each index $0 \leq j \leq N$. We take a family of functions $h_z : M \rightarrow \R$ parametrized by $z \in S^{N}$, which yields a function $h: M \times S^N \rightarrow \R$ such that
\begin{itemize}
\item $h$ is $\Z_2$-invariant in the sense that $h_{-z} \circ \iota = h_z$;
\item $h+\tilde{f}$ is Morse, and if $(x, z) \in \crit(h+\tilde{f})$, then $z \in \crit(\widetilde f)$ (and hence $x \in \crit(h_z))$;  
\item for each $z \in \crit(\tilde f)$, the corresponding function $h_z$ is Morse;
\item $h_z$ is independent of $z$ near each critical point of $\tilde{f}$. 
\end{itemize}
A construction of such an $h_z$ is illustrated in  Example~\ref{ex: prodham} where a similar construction in Floer theory is given. The Morse case is similar but much simpler.
\begin{remark}
Note that $h_z: M \rightarrow \R$ itself need not be $\Z_2$-invariant for a fixed $z \in S^N$.
\end{remark}
Let $g_{S^N}$ be the round metric on $S^N$. It is $\Z_2$-invariant and Morse-Smale with respect to $\tilde{f}$. Take a $\Z_2$-invariant family of metrics $g = \{g_z\}_{z \in S^N}$ on $M$, i.e., $\iota^*g_{-z} = g_z$, which is locally constant near the critical points of $\tilde f$. We then have a $\Z_2$-invariant metric $ g_z \oplus g_{S^N}$ on $M \times S^N$, which we by abuse of notation still denote by $g$. 
\begin{remark}
For a fixed $z \in S^N$, the metric $g_z$ is not necessarily $\Z_2$-invariant.
\end{remark}
The critical points of $h+\tilde{f}$ always come in pairs due to  the $\Z_2$-invariance:  $p=(x, z) \in \crit(h+\tilde{f})$ if and only if $\overline{p}:=(\iota(x), -z) \in \crit(h+\tilde{f})$. We denote by $Z_p=  \{ p , \overline{p}   \}$  the $\Z_2$-pair of $p \in \crit(h)$.  Its index is defined as  
$$
|Z_p| : = \ind(x; h_z) + \ind(z; \tilde f), \quad p = (x,z).
$$
We define the \emph{equivariant Morse chain complex} by
$$
CM^{\Z_2, N}_*(h, g) : = \bigoplus_{\substack{Z_p \subset  \crit(h+\tilde{f}) \\ |Z_p|=*}} \Z_2\langle Z_p \rangle.
$$
Given two critical points $p_{\pm} = (x_{\pm}, z_{\pm}) \in \crit(h+\tilde{f})$, we denote by $\widehat{\mathcal{M}}(Z_{p_-}, Z_{p_+}; h, g)$ the space of pairs $(u,v)$, where  $u: \R \rightarrow M$ and $v: \R \rightarrow S^N$ are solutions to the system of ordinary differential equations
$$
\begin{cases}
\p_su(s) + \nabla h_{v(s)}(u(s)) = 0, \\
\p_sv(s) + \nabla \tilde f(v(s)) = 0,
\end{cases}
$$
with asymptotic conditions
$$
\lim_{s \rightarrow \pm \infty}(u(s), v(s)) \in Z_{p_\pm}.
$$
\begin{remark}
Note that the above system of differential equations is \emph{not} the negative gradient flow equation of a Morse function $h:M\times S^N\to \R$.	
\end{remark}
For $Z_{p_-} \neq Z_{p_+}$, denote the quotient of $\widehat{\mathcal{M}}(Z_{p_-}, Z_{p_+}; h, g)$ by the $\R$-action by $\mathcal{M}(Z_{p_-}, Z_{p_+}; h, g)$. Note that this moduli space carries a free $\Z_2$-action. We denote its $\Z_2$-quotient by $\mathcal{M}_{\Z_2}(Z_{p_-}, Z_{p_+}; h, g)$. By a standard argument, for a generic $g$ the moduli space $\mathcal{M}_{\Z_2}(Z_{p_-}, Z_{p_+}; h, g)$ is a smooth manifold of dimention $|Z_{p_-}|-| Z_{p_+}|-1$. In particular, if $|Z_{p_-}|-| Z_{p_+}| =1$, then this space is a compact zero-dimensional manifold and hence a finite set. We   define the \emph{equivariant Morse differential} by
$$
\p^{\Z_2}(Z_{p_-})=\sum_{|Z_{p_-}|-|Z_{p_+}|=1}\#_{2}\mathcal{M}_{\Z_2}(Z_{p_-},Z_{p_+};h,g)\cdot Z_{p_+}.
$$
Denote  by $HM_*^{\Z_2, N}(M)$ the homology of the equivariant Morse chain complex $(CM^{\Z_2, N}_*(h, g), \p^{\Z_2})$. As the notation indicates, it can be shown that this group does not depend on the choice of $(h, g)$. Finally, the \emph{equivariant Morse homology} is defined by   taking the direct limit with respect to the maps induced by  the equivariant inclusions $S^N\hookrightarrow S^{N+1}$:
$$
HM_*^{\Z_2}(M) : = \varinjlim_{N} HM_*^{\Z_2, N}(M).
$$
As in the non-equivariant case, the equivariant Morse homology is isomorphic to the equivariant singular homology. See \cite[Theorem~5.1]{Hut} and \cite[Proposition~2.5]{BO17} for this result in the symplectic homology setup.

\subsubsection{Periodic family and $\Z_2$-complex structure} \label{sec: perMorcompl} In this section, we re-interpret the differential~$\p^{\Z_2}$ as a $\Z_2$-equivariant differential of a $\Z_2$-complex in the sense of Section~\ref{sec: z2cpx}. The point is that we can canonically identify   fibers of $M_{\text{Borel}}$ over a critical point of $f$ with each other. To do this, we use the canonical shift on $\R P^{\infty}$ given by the infinite shift
$$
\tau:\R P^\infty\to \R P^\infty, \quad [z_0:z_1: \cdots]\mapsto [0:z_0:z_1:\cdots].
$$
Since $\R P^\infty$ is no longer a smooth manifold, let us clarify this symmetry in a finite approximation following ideas in \cite[Section~2.3]{BO17}. From now on, we write $h_N:=h:M\times S^N\to \R$ for $N\ge 1$ where $h$ is given in Section~\ref{subsec:eMH}.

Consider the two embeddings 
\begin{eqnarray*}
&&\tilde{\tau}_0:S^N\to S^{N+1},\quad (z_0,\dots,z_N)\mapsto (z_0,\dots,z_N,0) \\
&&\tilde{\tau}_1:S^N\to S^{N+1},\quad (z_0,\dots,z_N)\mapsto (0,z_0,\dots,z_N). 
\end{eqnarray*}
Note that $\tilde{\tau}_0$ is the canonical embedding and $\tilde{\tau}_1$ is an embedding given by the shift. They induce embeddings
\begin{equation}\label{eq: shiftemb}
\widetilde{\mathcal{T}}_j:M \times S^N\rightarrow M \times S^{N+1}, \quad (x,z) \mapsto (x, \tilde{\tau}_j(z)).
\end{equation}
We put an additional condition on $\{(h_N, g_N)\}_{N\ge 1}$, which we call \emph{periodicity}:
$$
h_N= \widetilde{\mathcal{T}}^*_0 h_{N+1}=\widetilde{\mathcal{T}}^*_1 h_{N+1}, \quad g_N = \widetilde{\mathcal{T}}^*_0g_{N+1} = \widetilde{\mathcal{T}}^*_1g_{N+1}
$$
hold  for all $N\ge 1$. 
\begin{remark}\label{rem: periodicseq}
A periodic family of functions $\{h_N\}$ can always be constructed from a given Morse function $h_0: M \rightarrow \R$. One can use the same recipe as in Example~\ref{ex: prodham}. Indeed, if we choose a family of local slices and bump functions $\beta:S^N\to [0,1]$ in a periodic manner, the resulting family $\{h_N\}$ is periodic. The same argument applies to construct a periodic family of metrics $\{g_N\}$.
\end{remark}
We also observe that $\{(\tilde{f}_N,g_{S^N})\}_{N\ge 1}$ is \emph{periodic} meaning that for all $N\ge 1$:
\begin{itemize}
	\item $\im(\tilde{\tau}_0)$ and $\im(\tilde{\tau}_1)$ are invariant under the gradient flow of $\tilde{f}_{N+1}$;
	\item $\tilde{f}_N=\tilde{\tau}_0^*\tilde{f}_{N+1}=\tilde{\tau}_1^*\tilde{f}_{N+1}+const$, and $g_{S^N}=\tilde{\tau}_0^*g_{S^{N+1}}=\tilde{\tau}_1^*g_{S^{N+1}}$.
\end{itemize}
This periodicity implies that the gradient flow of $\tilde{f}_N$ is preserved under $\tilde{\tau}_0$ and $\tilde{\tau}_1$.  A nice feature of the periodicity condition is that it simplifies our equivariant Morse chain complex.

Recall that the perfect Morse function $f_N:\R P^N\to \R$ has precisely one critical point $z^{(j)}$ of each index $0\le j \le N$. Its preimage under the projection is  denoted by $\{ \pm z^{(j)} \}\subset S^N$. Fix a Morse function $h_0 : = h_{N,z^{(0)}}: M \rightarrow \R$. By the periodicity, we can assume that $z^{(j)}$ are chosen such that each Morse function $h_{N,z^{(j)}}:M\to \R$ on the fiber $M\times \{z^{(j)}\}$ is given by $h_0$. Note that $h_{N,-z^{(j)}}$ is automatically determined by the $\Z_2$-invariance:   $h_{N,-z^{(j)}}=\iota^*h_{N,z^{(j)}}$. We choose a metric $g_0$ on $M$ which is Morse-Smale with respect to $h_0$. We then have a canonical identification between critical points of $h_{N,z^{(j)}}$ and $h_0$ for all $j\ge 1$. For a critical point $x \in \crit(h_0)$ we formally denote the corresponding critical point of $h_{N,z^{(j)}}$ by $w^jx$, where as before $w$ denotes a formal variable of degree $1$.  As a result, the equivariant Morse chain group is simplified under this identification to
\begin{equation}\label{eq: simplymorsecomplex}
CM_*^{\Z_2, N}(h_N, g_N) = \Z_2[w]/\{w^{N+1}=0\} \otimes_{\Z_2} CM_*(h_0, g_0).
\end{equation}
 Indeed, every generator in $CM_*^{\Z_2,N}(h_N,g_N)$ is of the form $Z_p$ for some $p=(w^jx,z^{(j)})$ with $x\in \crit(h_0)$ and $j\ge 0$. Then the generator $Z_p$ corresponds uniquely to $w^j \otimes x$ in the identification \eqref{eq: simplymorsecomplex}. We also simplify the moduli spaces  which we count for $\p^{\Z_2}$ in the following way. Since our Morse data are periodic, there is a one-to-one correspondence
\begin{equation}\label{lem: permoduli}
\mathcal{M}_{\Z_2}(Z_{p_-},Z_{p_+}; h_N, g_N) \cong \mathcal{M}_{\Z_2}(Z_{q_-}, Z_{q_+}; h_N, g_N),
\end{equation}
where $p_- = (w^jx_-, z^{(j)})$, $p_+ = (w^kx_+, z^{(k)})$, $q_- = (w^{j-k}x_-, z^{(j-k)})$, and $q_+ = (x_+, z^{(0)})$ for some $j > k$.
Since the periodicity condition still leaves enough freedom to achieve  all  necessary transversality of moduli spaces, we can define a family of maps $\phi_{\ell}: CM_*(h_0,g_0) \rightarrow CM_{* + ({\ell}-1)}(h_0,g_0)$~by 
\begin{equation}\label{eq: z2comhomo}
\phi_{\ell} (x_-) :=  \sum_{\substack{x_+  \text{ with}\\ \text{ind}(x_-;h_0) - \text{ind}(x_+;h_0) +{\ell}=1  }} \#_{2} \mathcal{M}_{\Z_2}(w^{\ell} \otimes x_-, x_+; h_N, g_N) \cdot x_+.	
\end{equation}
By analyzing the boundary of the 1-dimensional moduli spaces, we get the relations \eqref{eq: phi_rel} for $0\le k\le N$.
\begin{corollary}\label{cor:diffequi}
Under the identification \eqref{eq: simplymorsecomplex} the equivariant Morse differential $\p^{\Z_2}$ is given by \eqref{eq: ez2differentialalg}.
\end{corollary}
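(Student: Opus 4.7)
The plan is to unpack the definition of $\p^{\Z_2}$ on a generator $Z_{p_-}$ corresponding to $w^j \otimes x_-$ under the identification \eqref{eq: simplymorsecomplex}, and then to use the periodicity of the Morse data to rewrite the resulting count so that it matches the algebraic formula \eqref{eq: ez2differentialalg} term by term.

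First, I would observe that any target $Z_{p_+}$ contributing to $\p^{\Z_2}(Z_{p_-})$ must have $p_+ = (x_+, z^{(k)})$ with $0 \le k \le j$. Indeed, the $v$-component of any trajectory $(u, v)$ is a negative gradient flow line of $\tilde{f}_N$ on $S^N$ from $z^{(j)}$ to $z^{(k)}$, which forces $k \le j$ (and is constant when $k = j$, in which case $h_{v(s)} \equiv h_0$ by construction). The dimension constraint $|Z_{p_-}| - |Z_{p_+}| = 1$ translates into $\ind(x_-; h_0) - \ind(x_+; h_0) + (j - k) = 1$.

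Next, for each such $k$, I would invoke the identification \eqref{lem: permoduli}, which follows from the periodicity of $\{(h_N, g_N)\}$ and $\{(\tilde{f}_N, g_{S^N})\}$ together with the invariance of the gradient flow of $\tilde{f}_N$ under the shift embeddings $\widetilde{\mathcal{T}}_0, \widetilde{\mathcal{T}}_1$. This yields a canonical bijection between $\mathcal{M}_{\Z_2}(Z_{p_-}, Z_{p_+}; h_N, g_N)$ and $\mathcal{M}_{\Z_2}(w^{j-k} \otimes x_-, x_+; h_N, g_N)$, the latter being precisely the moduli space whose mod-$2$ count defines the coefficient of $x_+$ in $\phi_{j-k}(x_-)$ via \eqref{eq: z2comhomo}. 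Summing the contributions over $k$ and transporting the identification $Z_{p_+} \leftrightarrow w^k \otimes x_+$ through the differential, I obtain
$$
\p^{\Z_2}(w^j \otimes x_-) = \sum_{k=0}^{j} w^k \otimes \phi_{j-k}(x_-) = \sum_{i=0}^{j} w^{j-i} \otimes \phi_i(x_-),
$$
which is exactly \eqref{eq: ez2differentialalg} with $l = j$.

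The only delicate point is to justify the bijection \eqref{lem: permoduli} rigorously: one must verify that shifting a trajectory via iterated application of $\widetilde{\mathcal{T}}_1$ preserves both the $\Z_2$-quotient structure and the transversality of the moduli spaces involved. This, however, rests on the freedom retained in the periodic family of metrics (cf.\ Remark \ref{rem: periodicseq}) and the explicit shift-invariance of the Morse data for $\tilde{f}_N$ established in Section \ref{sec: perMorcompl}, so the verification is essentially bookkeeping once the periodic setup is in place.
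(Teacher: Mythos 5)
Your proposal is correct and follows essentially the same route as the paper's proof: identify $Z_{p_\pm}$ with $w^j\otimes x_-$ and $w^k\otimes x_+$, use the periodicity correspondence \eqref{lem: permoduli} to reduce each moduli space to the one defining $\phi_{j-k}$, translate the index condition $|Z_{p_-}|-|Z_{p_+}|=1$ into $\ind(x_-;h_0)-\ind(x_+;h_0)+(j-k)=1$, and sum over $k$. The extra observations you add (that $k\le j$ is forced by the gradient flow of $\tilde f_N$, and the caveat about transversality under the shift) are consistent with the paper's setup and do not change the argument.
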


\begin{proof} Choose $Z_{p_{-}}$ (resp.\ $Z_{p_+}$) which corresponds to $w^j \otimes x_-$ (resp.\ $w^k \otimes x_+$). In view of the correspondence \eqref{lem: permoduli} we can identify the moduli space  $\mathcal{M}_{\Z_2} (Z_{p_-}, Z_{p_+} ;h_N, g_N)$ with   $\mathcal{M}_{\Z_2} (w^{j-k} \otimes x_-,   x_+ ;h_N, g_N)$. Moreover, the identity   $|Z_{p_-}|-|Z_{p_+}|=1$ can be rephrased as 
\begin{align*}
1 &= |Z_{p_-}|-|Z_{p_+}|\\
&=\left( \text{ind}(x_-;h_0)+\text{ind}(z^{(j-k)};\tilde{f}_N) \right) - \left( \text{ind}(x_+;h_0)+\text{ind}(z^{(0)};\tilde{f}_N) \right)\\
&= \text{ind}(x_-;h_0) - \text{ind}(x_+;h_0) + (j-k).
\end{align*}
Then the differential can be written as
$$
\p^{\Z_2}(w^j \otimes x_-) =  \sum_{k=0}^j   \sum_{\substack{x_+  \text{ with}\\ \text{ind}(x_-;h_0) - \text{ind}(x_+;h_0) +j-k=1  }}   \#_{2}   \mathcal{M}_{\Z_2} (w^{j-k} \otimes x_-,   x_+ ;h_N, g_N) \cdot  w^k \otimes x_+.
$$
The differential $\p^{\Z_2}$ is therefore given by
$$
\p^{\Z_2}(w^{j} \otimes  x) = \sum_{k=0}^{j} w^{k} \otimes \phi_{j-k}(x) = \sum_{k=0}^{j} w^{j-k} \otimes \phi_{k}(x) .
$$
This completes the proof of the corollary.
\end{proof}
Since for a fixed $j$, the maps $\phi_j$ defined in \eqref{eq: z2comhomo} for varying $N\ge j$ coincide, we can take a direct limit of chain complexes $\{(CM^{\Z_2,N}_*(h_N,g_N),\{\phi_j\}_{0\le j\le N})\}$ as $N\to \infty$, with respect to the maps induced by the equivariant inclusions $S^N\hookrightarrow S^{N+1}$. We denote by 
$$
CM_*^{\Z_2}(h,g)=\Z_2[w]\otimes_{\Z_2}CM_*(h_0,g_0)
$$
the direct limit of chain complexes equipped with maps $\{\phi_j\}_{j\ge 0}$.
Then the Morse chain complex $CM_*(h_0, g_0)$ is a $\Z_2$-complex with the collection of maps $\{\phi_j\}$. Therefore by definition the (limit) chain complex $(CM_*^{\Z_2}(h,g), \p^{\Z_2})$ is the equivariant chain complex of the $\Z_2$-complex $(CM_*(h_0, g_0), \p, \{\phi_j\})$. Its homology is isomorphic to $HM_*^{\Z_2}(M)$ as the homology functor commutes with direct limits. By Lemma~\ref{lem: alglsss}, we immediately obtain the following assertion which gives the classical Leray--Serre spectral sequence.
\begin{corollary}
	There is a spectral sequence $\{E^r\}_{r\ge 0}$ converging to $HM_*^{\Z_2}(M)$ with the first page given by
	$$
	E^1_{p,q}=\begin{cases}
		HM_q(M), & p\ge 0, \\
		0, & \text{otherwise}.
	\end{cases}
	$$
\end{corollary}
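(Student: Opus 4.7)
The plan is to apply the algebraic machinery from Section \ref{sec: z2cpx}, specifically Lemma \ref{lem: alglsss}, to the $\Z_2$-complex structure on the Morse chain complex that has just been constructed. The setup is already in place: the discussion preceding the corollary exhibits $(CM_*(h_0,g_0),\partial,\{\phi_j\}_{j\ge 0})$ as a $\Z_2$-complex, and identifies the equivariant chain complex of this $\Z_2$-complex, in the sense of Section \ref{sec: z2cpx}, with the direct limit
\[
CM_*^{\Z_2}(h,g) = \Z_2[w]\otimes_{\Z_2}CM_*(h_0,g_0),
\]
whose homology is $HM_*^{\Z_2}(M)$.

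First, I would record the filtration on $CM_*^{\Z_2}(h,g)$ inherited from the finite approximations, namely
\[
F_p CM_*^{\Z_2}(h,g) := \Z_2[w]/\{w^{p+1}=0\}\otimes_{\Z_2}CM_*(h_0,g_0),
\]
with $F_pCM_*^{\Z_2}(h,g)=0$ for $p<0$. This filtration is bounded below and exhaustive, and is preserved by $\partial^{\Z_2}$ because the formula \eqref{eq: ez2differentialalg} shows that applying $\partial^{\Z_2}$ to an element of $w$-degree $\le p$ produces an element of $w$-degree $\le p$. The standard machinery of filtered chain complexes then yields a spectral sequence $\{E^r_{p,q}\}_{r\ge 0}$ converging to the homology of the filtered complex, i.e. to $HM_*^{\Z_2}(M)$.

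Next, I would compute the $E^1$-page. Invoking Lemma \ref{lem: alglsss} directly for the $\Z_2$-complex $(CM_*(h_0,g_0),\partial,\{\phi_j\})$, the $E^1$-term is
\[
E^1_{p,q} = \begin{cases} H_q(CM_*(h_0,g_0),\partial), & p\ge 0,\\ 0, & p<0, \end{cases}
\]
with differential $d^1=\phi_1$. Since non-equivariant Morse homology of $(M,h_0,g_0)$ computes the singular homology $HM_q(M)$, we may rewrite this as $E^1_{p,q}=HM_q(M)$ for $p\ge 0$ and zero otherwise, which is the content of the corollary.

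There is no real obstacle here: all of the work has been done in Lemma \ref{lem: alglsss} and in the identification of $CM_*^{\Z_2}(h,g)$ as the $\Z_2$-equivariant chain complex of a $\Z_2$-complex. The only point that requires a line of care is checking that the direct limit over $N$ of the filtrations $F_pCM_*^{\Z_2,N}(h_N,g_N)$ agrees, in each bidegree, with the filtration on the limit complex, so that the spectral sequence of the limit is the limit of the spectral sequences; this is automatic because for fixed $p$ the subcomplex $F_pCM_*^{\Z_2,N}(h_N,g_N)$ stabilizes once $N\ge p$, thanks to the periodicity condition that makes the $\phi_j$ independent of $N$ for $N\ge j$.
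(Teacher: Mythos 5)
Your proposal is correct and follows the same route as the paper: the corollary is obtained by applying Lemma \ref{lem: alglsss} to the $\Z_2$-complex structure $(CM_*(h_0,g_0),\partial,\{\phi_j\})$ just constructed, using that the limit complex $CM_*^{\Z_2}(h,g)$ is by definition the associated $\Z_2$-equivariant chain complex whose homology is $HM_*^{\Z_2}(M)$. Your extra remark about the filtration stabilizing for $N\ge p$ is a fine justification of the compatibility with the direct limit, which the paper handles by simply noting that homology commutes with direct limits.
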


\subsection{Definition of equivariant wrapped Floer homology} \label{sec: defEWFH}
In this section, we introduce the equivariant wrapped Floer homology of an admissible Lagrangian $L$ in a Liouville domain $(W,\lambda)$ when it admits an involution $\I:W\to W$. We assume that one of the following holds:
\begin{quote}
	{\bf (Symplectic involution)}
	\begin{itemize}
		\item 	$\I$ is \emph{exact symplectic}, i.e., $\I^*\lambda=\lambda$, and
		\item  $L$ is $\I$-invariant.
	\end{itemize}
\end{quote}
\begin{quote}
	{\bf (Anti-symplectic involution)} 
	\begin{itemize}
		\item 	$\I$ is \emph{exact anti-symplectic}, i.e., $\I^*\lambda=-\lambda$, and
		\item $L=\Fix(\I)$ and $\p L=\Fix(\I|_{\p W})$ are nonempty.	
	\end{itemize}
\end{quote}
In the latter case, the triple $(W,\lambda,\I)$ is called a  {real Liouville domain} and $L=\Fix(\I)$ is called a \emph{real Lagrangian}. Note that real Lagrangians are  admissible and $\I$-invariant.
We mainly follow ideas in \cite{SS}, where equivariant Lagrangian Floer homology (with a symplectic involution) is constructed using family Morse homology theory. See also \cite{BO17} for the construction of $S^1$-equivariant symplectic homology and \cite{UrsMoon} for the case of an anti-symplectic involution.

\subsubsection{Geometric setup}
We observe that $\I$ restricts to an involution $\I|_{\p W}$ on the contact manifold $(\p W,\alpha:=\lambda|_{\p W})$ which is 
\begin{quote}
\begin{itemize}
	\item  a (strict) \emph{contact involution}, i.e., $\I|_{\p W}^*\alpha=\alpha$ if $\I$ is exact symplectic;
	\item an (strict) \emph{anti-contact involution}, i.e., $\I|_{\p W}^*\alpha=-\alpha$ if $\I$ is exact anti-symplectic.
\end{itemize}
\end{quote}
Note that a Legendrian $\p L$ is $\I|_{\p W}$-invariant. We then extend $\I$ to an exact (anti-)symplectic involution $\widehat{\I}$ defined on the completion $\widehat{W}$:
$$
\widehat{\I}(x)=\begin{cases}
	\I(x) & \text{for $x\in W$},\\
	(r,\I|_{\p W}(y)) & \text{for $x=(r,y)\in [1,\infty)\times \p W$}.
\end{cases}
$$
One can see that $\widehat{L}$ is $\widehat{\I}$-invariant. The path space $\mathscr{P}$ defined in \eqref{eq: pathspace} admits an involution
\begin{equation}\label{eq: involutionpathspace}
\I_{\mathscr{P}}:\mathscr{P}\longrightarrow \mathscr{P},\quad \I_{\mathscr{P}}(c(t)):=\begin{cases}
\widehat{\I}(c(t)) & \text{if $\I$ is symplectic}, \\
\widehat{\I}(c(1-t)) & \text{if $\I$ is anti-symplectic}.
\end{cases}	
\end{equation}
By abuse of notation, we abbreviate by $\I$ the various involutions on $\widehat{W}$, $\p W$, and $\mathscr{P}$.

\subsubsection{A family of action functionals}
Fix $N\in \N$. For a family of Hamiltonians
$$
H:\widehat{W}\times S^N\longrightarrow \R
$$
we denote $H_z:=H(\cdot,z)$ for $z\in S^N$. We say that a family of Hamiltonians $H:\widehat{W}\times S^N\to \R$ is \emph{$\Z_2$-invariant} if it is invariant under the diagonal $\Z_2$-action on $\widehat{W}\times S^N$, i.e., 
\begin{equation}\label{eq : Z2invHam}
H_{-z}(\I x)=H_z(x)\quad\text{for all $x\in \widehat{W}$ and  $z\in S^N$}.
\end{equation}
Since $\widehat{L}$ is exact, there exists $\ell \in C^\infty(\widehat{L})$ such that $\widehat{\lambda}|_{\widehat{L}}=d\ell$ and $\ell$ is  $\I$-invariant up to a constant. Given a $\Z_2$-invariant Hamiltonian family $H:\widehat{W}\times S^N\to \R$, we consider the corresponding  \emph{family of action functionals} $\mathcal{A}:\mathscr{P}\times S^N\to \R$ given by
$$
\mathcal{A}(c,z):=\mathcal{A}_{H_z}(c)=\ell(c(1))-\ell(c(0))-\int_0^1c^*\widehat{\lambda}-\int_0^1H_z(c(t))dt.
$$
One sees that $\mathcal{A}$ is $\Z_2$-invariant with respect to the diagonal $\Z_2$-action on $\mathscr{P}\times S^N$.  The differential of $\mathcal{A}$ is given by
$$
d\mathcal{A}(c,z)\cdot(\zeta,\ell ) = \int_0^1 \widehat{\ow}(\dot{c}(t)-X_{H_z}(c(t)),\zeta(t))dt-\int_0^1\frac{\p H}{\p z}(c(t),z)dt \cdot \ell,
$$
where $\zeta\in T_c\mathscr{P}$ and $\ell\in T_zS^N$. Hence, $(c,z)\in \mathscr{P}\times S^N$ is a critical point of $\mathcal{A}$ if and only if 
\begin{equation}\label{eq: critpt}
c\in \mathcal{P}(H_z)\quad\text{and}\quad \int_0^1 \frac{\p H}{\p z}(c(t),z)dt=0.	
\end{equation}
We write $\mathcal{P}(H):=\crit(\mathcal{A})$ for the set of critical points of $\mathcal{A}$. 
\subsubsection{A family of almost complex structures}
A family of $\widehat{\ow}$-compatible almost complex structures $J=\{J_z^t\}_{t\in [0,1],z\in S^N}$ on $\widehat{W}$ is called \emph{$\Z_2$-invariant} if the following holds:
\begin{itemize}
	\item if $\I$ is symplectic it satisfies
$$
J_{-z}^t=\I^*J^t_z:=\I_*\circ J^t_z\circ \I_* \quad \text{for all $t,z$},
$$
	\item if $\I$ is anti-symplectic it satisfies
$$
-J_{-z}^{1-t}=\I^*J_z^t\quad \text{for all $t,z$}.
$$
\end{itemize}
Such a $J$ is called \emph{admissible} if $J_z=\{J_z^t\}_{t\in [0,1]}\in \mathcal{J}$ for all $z\in S^N$, where $\mathcal{J}$ is defined in \eqref{eq: space_almcpxstr}. We denote by $\mathcal{J}_N^{\Z_2}$ the set of $\Z_2$-invariant admissible families of $\widehat{\ow}$-compatible almost complex structures. The set of $\mathcal{J}_N^{\Z_2}$ is nonempty and contractible, see \cite{MS17}. For an admissible $J$, we consider the $\Z_2$-invariant $S^N$-family of $L^2$-metrics on $\mathscr{P}$ given by
$$
\langle \zeta_1,\zeta_2 \rangle_z := \int_0^1 \widehat{\ow}(\zeta_1(t),J_z^t\zeta_2(t))dt\quad \text{for $z\in S^N$ and $\zeta_1,\zeta_2\in T_c\mathscr{P}$}.
$$
As in Section~\ref{subsec:eMH}  this together with a $\Z_2$-invariant metric $g$ on $S^N$  gives rise to a $\Z_2$-invariant metric on $\mathscr{P}\times S^N$. The $L^2$-gradient of $\mathcal{A}$ at $(c,z)\in \mathscr{P}\times S^N$ is given by
\begin{equation}\label{eq: gradient}
\begin{split}
	\nabla\mathcal{A}(c,z) &=J_z^t(\dot{c}-X_{H_z}(c))-\int_0^1\frac{\p H}{\p z}(c(t),z)dt \\
	&=\nabla \mathcal{A}_{H_z}(c)-\int_0^1\frac{\p H}{\p z}(c(t),z)dt,
\end{split}
\end{equation}	
where $\nabla \mathcal{A}_{H_z}$ denotes the $L^2$-gradient of the functional $\mathcal{A}_{H_z}$ with respect to the metric $\langle \cdot, \cdot \rangle_z$. Since $\mathcal{A}$ is $\Z_2$-invariant, the set $\mathcal{P}(H)$ is $\Z_2$-invariant. Given $p\in \mathcal{P}(H)$, we denote by $Z_p$ the \emph{critical $\Z_2$-pair} of $p$. In particular, we have $Z_p=Z_{\I p}$, see Figure \ref{fig: z_2orbits}.

\begin{figure}[h]
\begin{subfigure}{0.48\textwidth}
   \centering
\begin{tikzpicture}[scale=0.2]

\begin{scope}[yscale=1,xscale=1,xshift=2cm]
 \draw [thick] plot [smooth,tension=0.7] coordinates {(7,0)
(7,0.5)
(7,3)
(9,3)
(10,5)
(8,9)
(6,9)
(5,7)
(6,5)
(5,2)
(3,3)
(2.5,6)
(1,6)
(0,6)
(0,5)
(1,3)
(2,1.3)
(2,0.3)
(2,0)
};
\draw [->,thick] (9.2,7.2)--(8.8,7.9) ; 
\draw [fill] (7,0) circle [radius=0.3];
 \draw [fill] (2,0) circle [radius=0.3];
\node at (3,9)  {$c$};
\end{scope}

\begin{scope}[yscale=-1,xscale=-1,xshift=2cm]
 \draw [thick] plot [smooth,tension=0.7] coordinates {(7,0)
(7,0.5)
(7,3)
(9,3)
(10,5)
(8,9)
(6,9)
(5,7)
(6,5)
(5,2)
(3,3)
(2.5,6)
(1,6)
(0,6)
(0,5)
(1,3)
(2,1.3)
(2,0.3)
(2,0)
};
\draw [->,thick] (9.2,7.2)--(8.8,7.9) ; 
\draw [fill] (7,0) circle [radius=0.3];
\draw [fill] (2,0) circle [radius=0.3];
\node at (3,9)  {$\I(c)$};
\end{scope}

\draw [dashed] (-13,0)--(13,0);
\node at (13,0) [right]{$\widehat{L}$};
\draw [fill] (0,0) circle [radius=0.2];
\draw [->] (2,0.5) arc [x radius = 2cm, y radius = 2cm, start angle = 0, end angle = 180];
\node at (0,3.5) {$\I$};

\end{tikzpicture}

  \caption{$\mathcal{I}$ is symplectic.}
\end{subfigure}
\begin{subfigure}{0.48\textwidth}
  \centering
\begin{tikzpicture}[scale=0.2]
\draw [dashed] (-5,0)--(15,0);

\begin{scope}[yscale=1,xscale=1]
\draw [thick] plot [smooth,tension=0.7] coordinates {(7,0)
(7,0.5)
(7,3)
(9,3)
(10,5)
(8,9)
(6,9)
(5,7)
(6,5)
(5,2)
(3,3)
(2.5,6)
(1,6)
(0,5)
(1,3)
(2,1.3)
(2,0.3)
(2,0) };
\draw [->,thick] (9.2,7.2)--(8.8,7.9) ;
\draw [fill] (7,0) circle [radius=0.3];
 \draw [fill] (2,0) circle [radius=0.3];
\node at (3,9)  {$c$};
\end{scope}

\begin{scope}[yscale=-1,xscale=1]
\draw [thick] plot [smooth,tension=0.7] coordinates {(7,0)
(7,0.5)
(7,3)
(9,3)
(10,5)
(8,9)
(6,9)
(5,7)
(6,5)
(5,2)
(3,3)
(2.5,6)
(1,6)
(0,5)
(1,3)
(2,1.3)
(2,0.3)
(2,0) };
\draw [<-,thick] (9.2,7.2)--(8.8,7.9) ;
\node at (3,9)  {$\I(c)$};
\end{scope}

\draw [<->] (12,-2)--(12,2) ;

\draw [dashed] (-5,0)--(12,0);
\node at (13,3) {$\I$};
\node at (14,0) [right]{$\widehat{L}=\Fix(\I)$};

\end{tikzpicture}
  \caption{$\mathcal{I}$ is anti-symplectic.}
\end{subfigure}
\caption{An illustration of $\Z_2$-pairs: the antipodal involution and complex conjugation on $\C$.}
\label{fig: z_2orbits}
\end{figure}
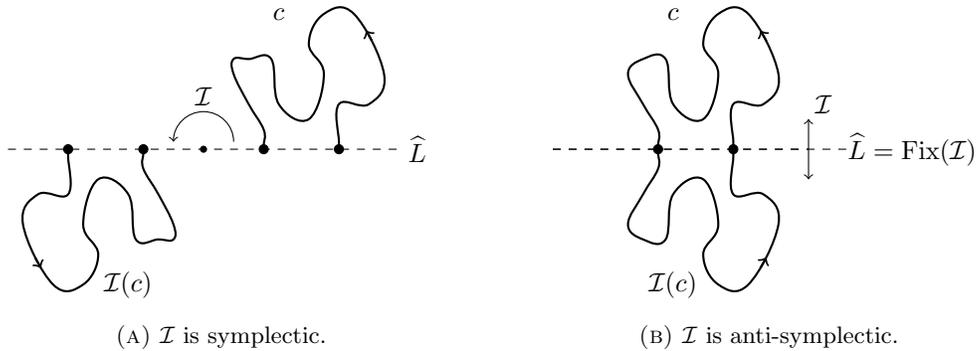

\begin{definition}
	A $\Z_2$-invariant Hamiltonian family $H:\widehat{W}\times S^N\to \R$ is called weakly admissible if $H_z\in \mathcal{H}$ for all $z\in S^N$ with constant slope not depending on $z$.
\end{definition}
We denote by $\mathcal{H}_N^{\Z_2}$ the space of $\Z_2$-invariant weakly admissible families of Hamiltonians. Note that the set $\mathcal{H}_N^{\Z_2}$ is nonempty.
\subsubsection{Hessian and non-degeneracy}
Let $\nabla$ be the $t$-dependent Levi-Civita connection with respect to the metric $\widehat{\ow}(\cdot,J_z^t\cdot)\oplus g$ on $\widehat{W}\times S^N$.
\begin{lemma}
	The Hessian of $\mathcal{A}$ at a critical point $p=(c,z)\in \mathcal{P}(H)$ is given by
\begin{equation}\label{eq: asympop}
\begin{split}
	\Hess_{(c,z)}\mathcal{A}:T_c\mathscr{P}\oplus T_z S^N &\longrightarrow  L^2(c^*T\widehat{W})\oplus T_zS^N \\
	(\zeta,\ell) &\longmapsto  \left( \begin{matrix}
J_z^t(\nabla_t\zeta-\nabla_\zeta X_{H_z}-(D_z X_{H_z})\cdot \ell) \\
-\int_0^1 \nabla_\zeta\frac{\p H}{\p z}dt-\int_0^1\nabla_\ell \frac{\p H}{\p z}dt
	\end{matrix} \right).
\end{split}
\end{equation}

\end{lemma}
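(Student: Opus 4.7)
The plan is to compute the Hessian directly by covariantly differentiating the gradient vector field $\nabla \mathcal{A}$ given in equation \eqref{eq: gradient}, using the product Levi-Civita connection $\nabla$ on $\widehat{W}\times S^N$. Since $(c,z)$ is a critical point, the gradient vanishes there, so the resulting bilinear form is independent of the choice of torsion-free connection extending this one, and it coincides with the intrinsic Hessian of $\mathcal{A}$ on the Hilbert manifold $\mathscr{P}\times S^N$. Thus it suffices to write
$$
\Hess_{(c,z)}\mathcal{A}\cdot (\zeta,\ell) \;=\; \nabla_{(\zeta,\ell)}\nabla\mathcal{A}
$$
and evaluate the two components of $\nabla\mathcal{A}=\bigl(J_z^t(\dot c - X_{H_z}(c)),\, -\int_0^1\tfrac{\p H}{\p z}(c(t),z)\,dt\bigr)$ separately.

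For the first component, I would differentiate $J_z^t\bigl(\dot c - X_{H_z}(c)\bigr)$. At a critical point one has $\dot c = X_{H_z}(c)$, so the derivative of $J_z^t$ itself produces terms multiplied by $\dot c - X_{H_z}(c)=0$ and hence drops out; this is the standard observation that at critical points one may treat $J_z^t$ as parallel. Differentiating the remaining factor $\dot c - X_{H_z}(c)$ in the direction $(\zeta,\ell)$ yields $\nabla_t \zeta$ from the variation of $\dot c$, $-\nabla_\zeta X_{H_z}$ from the variation of the base point of the Hamiltonian vector field, and $-(D_z X_{H_z})\cdot\ell$ from the variation of the parameter $z$. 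Applying $J_z^t$ produces precisely the top entry of \eqref{eq: asympop}. For the second component, I would differentiate the $S^N$-valued function $z\mapsto -\int_0^1\tfrac{\p H}{\p z}(c(t),z)\,dt$ in $(\zeta,\ell)$: the $\zeta$-variation yields $-\int_0^1 \nabla_\zeta \tfrac{\p H}{\p z}\,dt$ and the $\ell$-variation yields $-\int_0^1\nabla_\ell \tfrac{\p H}{\p z}\,dt$, giving the bottom entry.

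The only minor subtlety is bookkeeping: showing that the connection terms coming from the $t$-dependence of $J_z^t$ in the $L^2$ metric and from the non-product pieces of the Levi-Civita connection vanish at the critical point. This is handled by the fact that $\dot c - X_{H_z}(c)=0$ and $\int_0^1\tfrac{\p H}{\p z}(c(t),z)\,dt=0$ there, so all cross terms of the form $(\nabla_{(\zeta,\ell)}J_z^t)\cdot(\dot c - X_{H_z}(c))$ and similar Christoffel contributions disappear. After this cancellation, what remains is the asserted formula \eqref{eq: asympop}, and the same computation identifies $\Hess_{(c,z)}\mathcal{A}$ with the asymptotic operator that will govern the Fredholm theory for the parametrized Floer equation in the next section.
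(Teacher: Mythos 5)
Your computation is correct and follows essentially the same route as the paper: one covariantly differentiates the gradient \eqref{eq: gradient} along a variation $(\zeta,\ell)$, uses $\dot c - X_{H_z}(c)=0$ at the critical point to discard the $(\nabla J_z^t)(\dot c - X_{H_z})$ term, and collects the remaining pieces into the two components of \eqref{eq: asympop}. The paper phrases this as $\nabla_s\nabla\mathcal{A}(c_s,z_s)|_{s=0}$ for paths $c_s,z_s$ with velocities $\zeta,\ell$, which is the same argument in slightly different notation.
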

\begin{proof}
Take $c_s\in \mathscr{P}$ and $z_s\in S^N$, $s\in (-\epsilon,\epsilon)$ such that $(d/ds)c_s=\zeta$ and $(d/ds)z_s=\ell$. 
Using \eqref{eq: gradient}, we verify that
	\begin{eqnarray*}
		\Hess_{(c,z)}\mathcal{A}(\zeta,\ell) &=& \nabla_s \nabla \mathcal{A}(c_s,z_s) |_{s=0} \\
		&=& \nabla_s\left[ \nabla \mathcal{A}_{H_{z_s}}(c_s)-\int_0^1\frac{\p H}{\p z}(c_s,z_s)dt \right]|_{s=0} \\
		&=& \nabla_\zeta\nabla\mathcal{A}_{H_{z_s}}(c_s)|_{s=0}+\nabla_\ell\nabla \mathcal{A}_{H_{z_s}}(c_s)|_{s=0} -\nabla_s\int_0^1\frac{\p H}{\p z}(c_s,z_s)dt|_{s=0} \\
		&=& \Hess_c\mathcal{A}_{H_z}(\zeta) +\nabla_\ell(J^t_{z_s}(c)(\dot{c}-X_{H_{z_s}}(c)))|_{s=0}\\
		&&-\int_0^1\nabla_\zeta \frac{\p H}{\p z}(c_s,z_s)|_{s=0}dt-\int_0^1\nabla_\ell \frac{\p H}{\p z}(c_s,z_s)|_{s=0}dt\\
		&=& \Hess_c\mathcal{A}_{H_z} (\zeta) + (\nabla_\ell J_z^t)(\underbrace{\dot{c}-X_{H_z}(c)}_{=0})-J_z^t(c)\nabla_\ell X_{H_z}(c)\\
		&&-\int_0^1\nabla_\zeta \frac{\p H}{\p z}(c,z)dt-\int_0^1\nabla_\ell \frac{\p H}{\p z}(c,z)dt\\
		&=& \Hess_c\mathcal{A}_{H_z}(\zeta) -J^t_z(c)D_zX_{H_z}(\ell)-\int_0^1\nabla_\zeta \frac{\p H}{\p z}(c,z)dt-\int_0^1\nabla_\ell \frac{\p H}{\p z}(c,z)dt.
	\end{eqnarray*}
Here $\Hess_c\mathcal{A}_{H_z}$ denotes the Hessian of $\mathcal{A}_{H_z}$ at $c$. This completes the proof.
\end{proof}
\begin{definition}
A  $\Z_2$-pair of critical points $Z_p\subset \mathcal{P}(H)$ is called \emph{non-degenerate} if the kernel of the Hessian $\Hess_{(c,z)}\mathcal{A}$ is trivial for some (and hence any) $(c,z)\in Z_p$.
\end{definition}

\subsubsection{Admissible family of Hamiltonians}\label{sec:Produclikeham}
Let  $f$ and $\tilde{f}$ be defined as in Section~\ref{subsec:eMH}. From now on we fix a local slice for each critical $\Z_2$-pair of $\tilde{f}$ and assume that all local slices are disjoint. Since $\Z_2$ is finite, local slices are nothing but $\Z_2$-invariant open neighborhoods where the $\Z_2$-action is given by the antipodal map $\sigma$ on $S^{N}$.

Let $U$ be the union of all local slices, which is an open neighborhood of $\crit(\tilde{f})$. Choose an open  set $U_0$ in $S^N$ such that $U_0$ and $\sigma(U_0)$ are disjoint and $U=U_0\sqcup \sigma(U_0)$. Intuitively, one may think of $U_0$ as a neighborhood of $\crit(f)$ in $\R P^N$. 
\begin{definition}
	A family of Hamiltonians $H:\widehat{W}\times S^N\to \R$ is called \emph{admissible} if it satisfies~that
	\begin{enumerate}
	\item $H\in \mathcal{H}_N^{\Z_2}$;
	\item \label{eq: prod_prop2} Critical points of $\mathcal{A}_{H+\tilde{f}}$ lie over critical points of $\tilde{f}$. More precisely, if $(c,z)\in\crit(\mathcal{A}_{H+\tilde{f}})$, then we have $z\in \crit(\tilde{f})$;
	\item \label{eq: prod_prop3} For each $z\in \crit(\tilde{f})$, the corresponding Hamiltonian $H_z$ is non-degenerate;
	\item \label{eq: prod_prop4} All critical $\Z_2$-pairs in $\crit(\mathcal{A}_{H+\tilde{f}})$ are non-degenerate;
	\item \label{eq: prod_prop5} $H_z$ is independent of $z$ along each local slice.
	\item  \label{eq: prod_prop6}The inequality
\begin{equation}\label{eq: prod_estimate}
	\|\p_z H\|< \min_{S^N\setminus U} \|\nabla \tilde{f}\|
\end{equation}  
holds.
\end{enumerate}
\end{definition}
We denote by $\mathcal{H}_f \subset \mathcal{H}_N^{\Z_2}$ the subclass of admissible families of Hamiltonians.
\begin{remark}
	Condition \eqref{eq: prod_prop2} above is equivalent to the condition
	$$
	\crit(\mathcal{A}_{H+\tilde{f}})=\bigcup_{z\in \crit(\tilde{f})}\mathcal{P}(H_z)\times \{z\}.
	$$ 
Condition \eqref{eq: prod_prop3}  actually follows from the other conditions, but for simplicity we included it. Condition  \eqref{eq: prod_prop6}  implies the action decreasing property of solutions that is used to define the $\Z_2$-equivariant differential, see Lemma~\ref{lam: actdecreasing}.
\end{remark}
The space $\mathcal{H}_f$ is nonempty in view of the following explicit example.
\begin{example}\label{ex: prodham}
We construct an admissible family of Hamiltonians $H\in \mathcal{H}_{f}$. Fix an admissible non-degenerate Hamiltonian $H_0:\widehat{W}\to \R$. We first define the $\Z_2$-invariant Hamiltonian $H':\widehat{W}\times U\to \R$ by 
$$
H'(x,z):=\begin{cases}
	H_0(x) & \text{if $z\in U_0$,} \\
	H_0(\I x) & \text{if $z \in  \sigma(U_0).$}
\end{cases}
$$
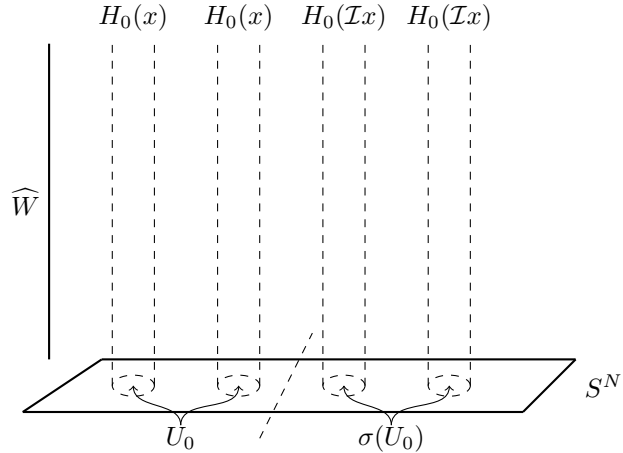
\begin{figure}[h]
\begin{center}
\begin{tikzpicture}[scale=0.7]


\draw [thick] (0,-3)--(0,3);

\draw [thick] (1,-3)--(10,-3);

\draw [thick] (-0.5,-4)--(9,-4);

\draw [thick] (1,-3) to (-0.5,-4);
\draw [thick] (10,-3) to (9,-4);

\draw [dashed] (2,-3.5) arc [x radius = 0.4cm, y radius = 0.2cm, start angle = 0,  end angle = 360];

\draw [dashed] (4,-3.5) arc [x radius = 0.4cm, y radius = 0.2cm, start angle = 0,  end angle = 360];

\draw [dashed] (6,-3.5) arc [x radius = 0.4cm, y radius = 0.2cm, start angle = 0,  end angle = 360];

\draw [dashed] (8,-3.5) arc [x radius = 0.4cm, y radius = 0.2cm, start angle = 0,  end angle = 360];

\draw [dashed] (1.2,-3.5) to (1.2,3);
\draw [dashed] (2,-3.5) to (2,3);
\draw [dashed] (3.2,-3.5) to (3.2,3);
\draw [dashed] (4,-3.5) to (4,3);
\draw [dashed] (5.2,-3.5) to (5.2,3);
\draw [dashed] (6,-3.5) to (6,3);
\draw [dashed] (7.2,-3.5) to (7.2,3);
\draw [dashed] (8,-3.5) to (8,3);
\draw [dashed] (4,-4.5) to (5,-2.5);




\draw [->](2.5,-4.25) to [out=90,in=-90](1.6,-3.5);
\draw [->](2.5,-4.25) to [out=90,in=-90](3.6,-3.5);

\draw [->](6.5,-4.25) to [out=90,in=-90](5.6,-3.5);
\draw [->](6.5,-4.25) to [out=90,in=-90](7.6,-3.5);

\node at (0,0) [left]{$\widehat{W}$};
\node at (10,-3.5) [right]{$S^N$};
\node at (1.6,3.5) {$H_0(x)$};
\node at (3.6,3.5) {$H_0(x)$};
\node at (5.6,3.5) {$H_0(\I x)$};
\node at (7.6,3.5) {$H_0(\I x)$};
\node at (2.5,-4.5) {$ U_0 $};
\node at (6.5,-4.5) {$ \sigma(U_0) $};


\end{tikzpicture}
\end{center}
\caption{An illustration of $H'$}
\label{fig: construction_H}
\end{figure}
\noindent See Figure~\ref{fig: construction_H}. To be able to  extend $H'$ to a globally defined family $H$ with constant slope, we choose  a $\Z_2$-invariant cut-off function $\beta:S^N\to \R$ such that
$$
\beta(z)=\begin{cases}
	1 & \text{on $U'$,} \\
	0 & \text{outside of $U$,}
\end{cases}
$$
where $U'\subset U$ is a  $\Z_2$-invariant neighborhood of $\crit(\tilde{f})$ such that $\overline{U'}\subset U$. Also choose a cut-off function $\eta:\widehat{W}\to \R$ such that
$$
\eta(x)=\begin{cases}
	1 & \text{on $[1,\infty)\times \p W$,} \\
	0 & \text{on $W\setminus \big((1-\delta,1]\times \p W\big)$},
\end{cases}
$$
for sufficiently small $\delta>0$. We then define the $\Z_2$-invariant Hamiltonian $H:\widehat{W}\times S^N\to \R$ by
$$
H(x,z):=\epsilon\big[\beta(z)H'(x,z)+(1-\beta(z))\eta(x)H_0(x)\big],
$$
where $\epsilon>0$ is chosen so small that the inequality \eqref{eq: prod_estimate} holds (with $U'$ instead of $U$). By construction we have $H\in \mathcal{H}_N^{\Z_2}$ and it satisfies condition \eqref{eq: prod_prop5} for $U'$. Since $H_0$ is non-degenerate, condition \eqref{eq: prod_prop3} is also fulfilled. \\

{\bf Claim 1.} \emph{Condition \eqref{eq: prod_prop2} holds}.\\
Let $\tilde{H}=H+\tilde{f}$. We observe that $\p_z \tilde{H}=\nabla\tilde{f}$ on $\widehat{W}\times U'$. By \eqref{eq: critpt}, we obtain 
$$
\crit(\mathcal{A}_{\tilde{H}})\supset \bigcup_{z\in \crit(\tilde{f})}\mathcal{P}(H_z)\times \{z\}.
$$
To see the reverse, let $z\in S^N\setminus U'$. Using \eqref{eq: prod_estimate}, we verify that 
$$
\int_0^1 \frac{\p \tilde{H}}{\p z}(c(t),z)dt= \int_0^1 \frac{\p H}{\p z}(c(t),z)+\nabla \tilde{f}(z)dt\ne 0 \quad\text{for $c\in \mathcal{P}(H_z)$}.
$$
Using \eqref{eq: critpt} again we obtain Claim 1.\\

{\bf Claim 2.} \emph{Condition \eqref{eq: prod_prop4} holds}.\\
Fix $(c,z)\in \crit(\mathcal{A}_{\tilde{H}})$. Let $(\zeta,\ell)\in T_c\mathscr{P}\oplus T_zS^N$ such that
$$
\Hess_{(c,z)}\mathcal{A}(\zeta,\ell)=\left( \begin{matrix}
J_z(\nabla_t\zeta-\nabla_\zeta X_{\tilde{H}_z}-(D_z X_{\tilde{H}_z})\cdot \ell) \\
-\int_0^1 \nabla_\zeta\frac{\p \tilde{H}}{\p z}dt-\int_0^1\nabla_\ell \frac{\p \tilde{H}}{\p z}dt
\end{matrix} \right)=\left(	\begin{matrix}
		0 \\
		0
	\end{matrix}\right).
$$
We need to show that $(\zeta,\ell)=0$. We first verify that
\begin{eqnarray*}
	\int_0^1 \nabla_\zeta\frac{\p\tilde{ H}}{\p z}dt+\int_0^1\nabla_\ell \frac{\p \tilde{H}}{\p z}dt &=& \int_0^1\underbrace{\nabla_\zeta \frac{\p \tilde{f}}{\p z}}_{=0}dt + \int_0^1\nabla_\ell \frac{\p \tilde{f}}{\p z}dt \\
	&=&\nabla_\ell \nabla\tilde{f}\\
	&=&\Hess_z \tilde{f}(\ell),
\end{eqnarray*}
where $\Hess_z\tilde{f}$ is the Hessian of $\tilde{f}$ at $z$.
Since $\tilde{f}$ is non-degenerate, $\ell=0$. Further,
\begin{eqnarray*}
J_z(\nabla_t\zeta-\nabla_\zeta X_{\tilde{H}_z}-(D_z X_{\tilde{H}_z})\cdot \ell )&=& J_z(\nabla_t \zeta-\nabla_\zeta X_{\tilde{H}_z}) =\Hess_c \mathcal{A}_{\tilde{H}_z}(\zeta).
\end{eqnarray*}
Since $\tilde{H}_z=H_z+const$ is non-degenerate, we have $\zeta=0$. This completes the proof of Claim 2.
\end{example}

\begin{definition}
Let $H\in \mathcal{H}_f$. The \emph{Maslov index} of $p=(c,z)\in \crit (\mathcal{A}_{H+\tilde{f}})$ is defined by
$$
\mu(p):=\mu(c)-\ind(z;-\tilde{f}),
$$
where $\ind(z;-\tilde{f})$ denotes the \emph{Morse index} of $z$ with respect to $-\tilde{f}$. The \emph{Maslov index} of a critical $\Z_2$-pair $Z_p$ is defined by
$$
\mu(Z_p):=\mu(p') \quad \text{for $p'\in Z_p$}.
$$
	
\end{definition}
\subsubsection{Moduli spaces}
Fix $H\in \mathcal{H}_f$. Let $J\in \mathcal{J}_N^{\Z_2}$ be such that $J_z$ is independent of $z$ along each local slice and $g$ a $\Z_2$-invariant metric on $S^N$ which is Morse-Smale with respect to $\tilde{f}$. Given $p_\pm =(c_\pm, z_\pm) \in \crit(\mathcal{A}_{H+\tilde{f}})$ we denote by $\widetilde{\mathcal{M}}(Z_{p_-},Z_{p_+};H,J)$ the space of pairs $(u, z)$, where $u:\R \times [0,1]\to \widehat{W}$ and  $z:\R \to S^N$  are solutions to the system of equations
\begin{equation}\label{eq: moduli}
	\begin{cases}
	\p_s u+J_{z(s)}^t(u)(\p_t u - X_{H_{z(s)}}(u))=0,\\
	\dot{z}-\nabla \tilde{f}(z) =0,
\end{cases}
\end{equation}
with asymptotic and boundary conditions
$$
\lim_{s\to \pm\infty}(u(s,\cdot),z(s))\in Z_{p_\pm},\quad u(s,0),u(s,1)\in \widehat{L}.
$$
If $Z_{p_-}\ne Z_{p_+}$ let $\mathcal{M}(Z_{p_-},Z_{p_+};H,J):=\widetilde{\mathcal{M}}(Z_{p_-},Z_{p_+};H,J)/\R$ be the quotient by the $\R$-translation in the $s$-variable. This space admits a free $\Z_2$-action
$$
(u(s,t),z(s))\mapsto \begin{cases}
(\I(u(s,t)),-z(s)) & \text{if $\I$ is symplectic},\\
	(\I(u(s,1-t),-z(s)) & \text{if $\I$ is anti-symplectic},
\end{cases}
$$
which is induced by the involution \eqref{eq: involutionpathspace} on the path space equipped with the antipodal map on~$S^N$. The $\Z_2$-quotient of $\mathcal{M}(Z_{p_-},Z_{p_+};H,J)$ is denoted by 
$$
\mathcal{M}_{\Z_2}(Z_{p_-},Z_{p_+};H,J).
$$
\begin{remark}
Since the system of differential equations \eqref{eq: moduli} is not the  negative gradient flow equation of $\mathcal{A}_{H+\tilde{f}}$, it is not obvious that the action decreases along solutions to   equation \eqref{eq: moduli}. However, it will turn out that with   condition \eqref{eq: prod_estimate} we have the action decreasing property, in Section~\ref{sec: actdecreas}.
\end{remark}
\subsubsection{Transversality issues} This section is devoted to prove the following. 
\begin{proposition}
	For generic $J\in \mathcal{J}^{\Z_2}_N$, the moduli space $\mathcal{M}_{\Z_2}(Z_{p_-},Z_{p_+};H,J)$ is a smooth manifold of dimension $\mu(Z_{p_+})-\mu(Z_{p_-})-1$.
\end{proposition}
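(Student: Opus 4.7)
The plan is to decouple \eqref{eq: moduli} by first handling the $z$-component, which is the positive gradient flow equation $\dot z=\nabla\tilde f(z)$ and independent of $u$. By the Morse-Smale assumption on $(\tilde f,g)$, the space of trajectories $z:\R\to S^N$ from $z_-$ to $z_+$ is transversally cut out and forms a smooth manifold of dimension $\ind(z_+;\tilde f)-\ind(z_-;\tilde f)$. For each such $z$, the $u$-equation is an $s$-dependent Cauchy-Riemann equation with Lagrangian boundary $\widehat L$ and non-degenerate chord asymptotics $c_\pm$; its linearization is Fredholm of index $\mu(c_+)-\mu(c_-)$. Combining the two contributions, the full linearization at $(u,z)$ has Fredholm index
\[
(\mu(c_+)-\mu(c_-))+(\ind(z_+;\tilde f)-\ind(z_-;\tilde f))=\mu(Z_{p_+})-\mu(Z_{p_-}),
\]
using $\ind(z;-\tilde f)=N-\ind(z;\tilde f)$ together with $\mu(p)=\mu(c)-\ind(z;-\tilde f)$.

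To promote this to regularity of $\mathcal{M}$, I would form the universal moduli space of triples $(u,z,J)$ with $J$ varying over $\mathcal{J}_N^{\Z_2}$, show that the universal linearization is surjective at every solution, and invoke the Sard-Smale theorem. By the standard duality argument, this reduces to killing every cokernel element of the fixed-$J$ linearization by an equivariant variation $\delta J$. A Floer-Hofer-Salamon type somewhere-injectivity argument, adapted to the Lagrangian boundary, produces a parameter point $(s_0,t_0)$ at which $u$ is interior and injective; since $z$ is a non-constant gradient trajectory I may further arrange that $z(s_0)$ lies outside the local slices of condition \eqref{eq: prod_prop5}, so that $J$ is genuinely free to vary in a neighborhood of $(t_0,z(s_0))\in[0,1]\times S^N$.

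The $\Z_2$-constraint on $J$ (symplectic or anti-symplectic) is enforced by symmetrizing the perturbation: whenever I vary $J$ by $\delta J$ near $(t_0,z(s_0))$, I simultaneously vary it by the induced $\I$-image near $(t_0,-z(s_0))$. Because the antipodal action on $S^N$ is free, the points $z(s_0)$ and $-z(s_0)$ are distinct and the two perturbation supports are disjoint in $[0,1]\times S^N$. Hence the equivariant perturbation still agrees with $\delta J$ near $(t_0,z(s_0))$, and equivariant transversality costs no flexibility relative to the non-equivariant case. Finally, the free $\R$-translation quotient drops the dimension by one, and the $\Z_2$-action on $\mathcal{M}(Z_{p_-},Z_{p_+};H,J)$ is free because any fixed point would require $z(s)=-z(s)$ on $S^N$; hence $\mathcal{M}_{\Z_2}$ is smooth of the asserted dimension $\mu(Z_{p_+})-\mu(Z_{p_-})-1$.

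The main technical step I anticipate is the adaptation of somewhere-injectivity, which must simultaneously accommodate the $s$-dependent parameter $z(s)$, the Lagrangian boundary condition, and the cylindrical-end behavior of admissible Hamiltonians. Each ingredient admits a standard individual treatment in the literature; the $\Z_2$-equivariance itself introduces no new obstruction thanks to the freeness of the antipodal action on $S^N$.
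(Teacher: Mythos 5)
Your proposal follows essentially the same route as the paper's proof (Theorem \ref{thm: equivwraptran}): reinterpret \eqref{eq: moduli} as a continuation-type Floer equation parametrized by the Morse--Smale trajectory space on $S^N$, compute the Fredholm index as the sum of the two contributions to get $\mu(Z_{p_+})-\mu(Z_{p_-})$, run the universal-moduli-space/Sard--Smale argument with a somewhere-injectivity point, and obtain equivariant perturbations for free because the antipodal action on $S^N$ is free, then quotient by $\R$ and the free $\Z_2$-action. The only caveat is your clause ``since $z$ is a non-constant gradient trajectory'': when $z_-=z_+$ the trajectory is constant and never leaves its local slice, so in that case you must instead perturb $J_0$ itself by a slice-constant, $\Z_2$-invariant variation (legitimate because $U_0$ and $\sigma(U_0)$ are disjoint), which is exactly the standard fixed-parameter Floer transversality the paper also invokes.
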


For the proof, we shall translate the system \eqref{eq: moduli} to a  parametrized continuation equation for which the analysis is well-known. See \cite[Section~2.2]{BO17} for similar situation in the symplectic homology setup.

Let $\widehat{\mathcal{M}}(Z_{z_-},Z_{z_+};-\tilde{f},g)$ be the space of \emph{parametrized} gradient flow lines on $S^N$ for $-\tilde{f}$ from $Z_{z_-}$ to $Z_{z_+}$, where $Z_{z_\pm}$ are critical $\Z_2$-pairs of $-\tilde{f}$. Since $g$ is Morse-Smale, the space $\widehat{\mathcal{M}}(Z_{z_-},Z_{z_+};-\tilde{f},g)$ is a smooth manifold of dimension $\ind(Z_{z_-};-\tilde{f})-\ind(Z_{z_+};-\tilde{f})$, where the index is given by $\ind(Z_z;-\tilde{f}):=\ind(z';-\tilde{f})$ for any $z'\in Z_z$. From now on, we think of
$$
\Gamma:=\widehat{\mathcal{M}}(Z_{z_-},Z_{z_+};-\tilde{f},g)
$$ as a  parameter space. Consider a parametrized data of pairs given by
\begin{equation}\label{eq: paradatawrap}
(H_v^s,J_v^s):=(H_{v(s)},J_{v(s)})\quad \text{for $s\in \R$ and $v\in \Gamma$}.
\end{equation}
Note that $(H_v^s,J_v^s)$ is independent of $s$ for $|s|\gg 1$. We define the \emph{parametrized moduli space of continuation trajectories}
$$
\mathcal{P}(Z_{p_-},Z_{p_+};\{H_v^s\},\{J_v^s\})
$$
as the space consisting of pairs $(u,v)$ with $u:\R\times [0,1]\to \widehat{W}$ and $v\in \Gamma$ satisfying
$$
\begin{cases}
	\p_su+J_{v}^{s,t}(u)(\p_t u -X_{H_{v}^s}(u))=0, \\
     u(s,j)\in \widehat{L}\quad\text{for $j=0,1$}, \\
	\displaystyle \lim_{s\to \pm \infty} (u(s,t),v(s))\in Z_{p_\pm}.
\end{cases}
$$
It  follows  immediately from \eqref{eq: paradatawrap} that $\mathcal{P}(Z_{p_-},Z_{p_+};\{H_v^s\},\{J_v^s\})=\widetilde{\mathcal{M}}(Z_{p_-},Z_{p_+};H,J)$.
\begin{theorem}\label{thm: equivwraptran}
	For generic $J\in \mathcal{J}_N^{\Z_2}$ the space $\mathcal{P}(Z_{p_-},Z_{p_+};\{H_v^s\},\{J_v^s\})$ is a smooth manifold of dimension $\mu(Z_{p_+})-\mu(Z_{p_-})$.	
\end{theorem}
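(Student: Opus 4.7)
The plan is to view $\mathcal{P}(Z_{p_-},Z_{p_+};\{H_v^s\},\{J_v^s\})$ as the zero set of a Fredholm section of a Banach bundle and apply a Sard--Smale argument in the $\Z_2$-equivariant category, following the template of \cite[Section 2.2]{BO17}. Since the second equation in \eqref{eq: moduli} is the gradient-flow equation of $-\tilde{f}$ on $S^N$ and $g$ is Morse--Smale, the parameter space $\Gamma=\widehat{\mathcal{M}}(Z_{z_-},Z_{z_+};-\tilde{f},g)$ is already a smooth manifold of dimension $\ind(Z_{z_-};-\tilde{f})-\ind(Z_{z_+};-\tilde{f})$. The full problem therefore reduces to a parametrized continuation problem for the strip $u$ with parameter $v\in \Gamma$, and I would set this up on a standard $W^{1,p}$-Banach manifold of maps $u:\R\times [0,1]\to \widehat{W}$ with Lagrangian boundary conditions and asymptotics prescribed by $Z_{p_\pm}$, augmented by the finite-dimensional factor $\Gamma$.

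Next I would verify the Fredholm property and compute the index. The vertical linearization at a solution $(u,v)$ splits into the linearized Cauchy--Riemann operator $D_u$ for the $v$-dependent continuation equation plus a finite-rank contribution from variations $\delta v\in T_v\Gamma$. By the Riemann--Hilbert-type index theorem (see \cite{RS2}), $D_u$ is Fredholm of index $\mu(c_+)-\mu(c_-)$, and tacking on the $\dim\Gamma$ parameter directions gives total index
\begin{equation*}
\mu(c_+)-\mu(c_-)+\dim\Gamma=\bigl(\mu(c_+)-\ind(z_+;-\tilde{f})\bigr)-\bigl(\mu(c_-)-\ind(z_-;-\tilde{f})\bigr)=\mu(Z_{p_+})-\mu(Z_{p_-}),
\end{equation*}
matching the claimed dimension. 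To upgrade the zero set to a smooth manifold, I would vary $J$ in a separable Banach submanifold $\mathcal{J}^{\Z_2,\mathrm{reg}}_N\subset \mathcal{J}^{\Z_2}_N$ of perturbations of Floer's $C^\epsilon$-class that preserve the $\Z_2$-symmetry, and form the universal moduli space. The universal linearization picks up the extra term $Y\mapsto Y^t_{v(s)}(u)\bigl(\p_t u-X_{H_{v(s)}}(u)\bigr)$ along tangent directions $Y$ to $\mathcal{J}^{\Z_2,\mathrm{reg}}_N$, and I would show that it is surjective by a somewhere-injectivity argument in the style of \cite{FHS}.

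The main obstacle is the $\Z_2$-equivariance constraint: a perturbation $Y$ supported near $(u(s_0,t_0),v(s_0))$ is automatically mirrored, up to sign, at $(\I(u(s_0,t_0)),-v(s_0))$. To neutralize this, I would pick $(s_0,t_0)\in \R\times(0,1)$ with $u(s_0,t_0)\notin \Fix(\I)$ such that no other $(s,t)\neq (s_0,t_0)$ has $(u(s,t),v(s))$ lying in the $\Z_2$-orbit $\{(u(s_0,t_0),v(s_0)),(\I u(s_0,t_0),-v(s_0))\}$. Such a point exists because the Morse trajectory $v$ is injective on $\R$ and because $Z_{p_-}\neq Z_{p_+}$ prevents $(u,v)$ from being globally $\Z_2$-fixed (a $\Z_2$-fixed pair would have $v$ equal to its own antipode, forcing $v$ to be constant and contradicting distinct asymptotics). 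Unique continuation for $D_u$ then allows a localized $\Z_2$-invariant perturbation $Y$ to pair nontrivially with any element $\eta\in L^q$ in the cokernel of the non-parametric operator, giving surjectivity of the universal linearization. Sard--Smale applied to the Fredholm projection from the universal moduli space to $\mathcal{J}^{\Z_2,\mathrm{reg}}_N$, combined with the standard density trick to pass from $C^\epsilon$- to $C^\infty$-regularity, then yields the residual set of $J\in \mathcal{J}^{\Z_2}_N$ for which $\mathcal{P}(Z_{p_-},Z_{p_+};\{H_v^s\},\{J_v^s\})$ is a smooth manifold of the asserted dimension.
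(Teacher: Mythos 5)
Your proposal follows essentially the same route as the paper: the moduli space is realized as the zero set of a Fredholm section over $\mathcal{B}\times\Gamma$ with $\Gamma=\widehat{\mathcal{M}}(Z_{z_-},Z_{z_+};-\tilde f,g)$, the index is $\ind D_u\mathcal{F}_v+\dim\Gamma=\mu(Z_{p_+})-\mu(Z_{p_-})$, and transversality is obtained from a universal moduli space over the $\Z_2$-invariant almost complex structures by killing the annihilator with a localized $\Z_2$-invariant perturbation, followed by Sard--Smale and Taubes' trick. One small imprecision: injectivity of $v$ alone does not rule out $v(s)=-v(s_0)$ for some $s\neq s_0$; the correct (and easy) reason the mirrored support misses the solution is that $s\mapsto -v(s)$ is itself a gradient trajectory of the $\Z_2$-invariant pair $(\tilde f,g)$ and hence is disjoint from $v$, which is the fact implicitly used in the paper's reference to the construction of Example \ref{ex: prodham}.
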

\begin{proof}
We follow a standard argument in \cite{FHS} and  point out the different parts.  Although $J$ is $\Z_2$-invariant, we have much freedom to perturb it as the parameter space $\Gamma$ is additionally involved. We denote by $\mathcal{J}^{\Z_2,\ell}_N$ the completion of $\mathcal{J}^{\Z_2}_N$ with respect to the $C^\ell$-topology. Let $J\in \mathcal{J}^{\Z_2,\ell}_N$. We consider the section of a suitable Banach bundle with the factor $\mathcal B$ of the base a $W^{1,p}$-space and the fibers of ${\mathcal E}$ an $L^p$-space with $p >2$,
\begin{eqnarray*}
	\mathcal{F}:\mathcal{B}\times \Gamma &\longrightarrow& \mathcal{E} \\
	(u,v) &\longmapsto& \mathcal{F}_v(u):=\p_s u+J_v^{s,t}(u)(\p_t u-X_{H_v^s}(u)).
\end{eqnarray*}
Then
$$
\mathcal{F}^{-1}(0)=\mathcal{P}(Z_{p_-},Z_{p_+};\{H_v^s\},\{J_v^s\}).
$$
For the construction of such a Banach bundle we refer to \cite[Section~4.1]{BO_MB}, where the same construction appears in Morse--Bott symplectic homology. Our case is much simpler as the asymptotics are $\Z_2$-pairs which are 0-dimensional. In particular, we do not need to work with a weighted Sobolev space.
We now consider the linearized operator of $\mathcal{F}$ at $(u,v)\in \mathcal{F}^{-1}(0)$ given by
\begin{equation}
\begin{split}
&	D_{(u,v)}\mathcal{F}	 : T_u\mathcal{B}\oplus T_v\Gamma \longrightarrow  L^p(u^*T\widehat{W}) 	\\
&D_{(u,v)}\mathcal{F}(\zeta,\ell)=\underbrace{\nabla_s \zeta +J_v^s(\nabla_t \zeta-\nabla_t X_{H_v^s})+\nabla_\zeta J_v^s(\p_t u -X_{H_v^s})}_{=D_u\mathcal{F}_v(\zeta)}+\nabla_\ell \big(J_v^s(\p_t u-X_{H_v^s})\big),
\end{split}
\end{equation}
where $D_u\mathcal{F}_v$ is the usual Floer operator.
Note that $D_{(u,v)}\mathcal{F}$ is a Fredholm operator with index
\begin{eqnarray*}
	\ind D_{(u,v)}\mathcal{F} &=& \ind D_u\mathcal{F}_v+\dim \Gamma \\
	&=& \mu(c_+)-\mu(c_-)+\ind(z_-;-\tilde{f})-\ind(z_+;-\tilde{f})\\
	&=& \mu(p_+)-\mu(p_-).
\end{eqnarray*}
Now consider the section
\begin{eqnarray*}
	\mathcal{F}_\text{univ}:\mathcal{B}\times \Gamma\times \mathcal{J}^{\Z_2,\ell}_N&\longrightarrow& \mathcal{E} \\
	(u,v,J) &\longmapsto& \p_s u+J_v^s(u)(\p_t u-X_{H_v^s})
\end{eqnarray*}
and define  the universal moduli space to be the zero locus
$$
\mathcal{P}_\text{univ}:=\mathcal{F}_\text{univ}^{-1}(0).
$$
We claim that $\mathcal{P}_\text{univ}$ is a Banach manifold of class $C^\ell$. Given this, we can apply the parametric transversality theorem and Taubes' trick to complete the proof. 

To see the claim, consider the linearization of $\mathcal{F}_\text{univ}$ given by
\begin{equation}
	\begin{split}
 & D_{(u,v,J)}\mathcal{F}_\text{univ} :	 T_u\mathcal{B}\oplus T_v\Gamma\oplus T_J\mathcal{J}^{\Z_2,\ell}_N \longrightarrow  L^p(u^*T\widehat{W}) \\		
 & D_{(u,v,J)}\mathcal{F}_\text{univ}(\zeta,V,Y) = D_{(u,v)}\mathcal{F}(\zeta,V)+D_{(u,v,J)}\big(J_v^s(u)(\p_t u-X_{H_v^s})\big)(Y).
	\end{split}
\end{equation}
Since $D_{(u,v)}\mathcal{F}$ is Fredholm, the cokernel of $D_{(u,v,J)}\mathcal{F}_\text{univ}$ has a finite dimension. Hence, the image of $D_{(u,v,J)}\mathcal{F}_\text{univ}$ is closed. We claim that the image is dense. It suffices to show that the annihilator of $\im (D_{(u,v,J)}\mathcal{F}_\text{univ})$ is trivial. Suppose that there exists $\eta\in L^q(u^*T\widehat{W})$ with $1/p+1/q=1$, which annihilates the image of $D_{(u,v,J)}\mathcal{F}_\text{univ}$. Then $\eta$ is of class $C^\ell$ and 
\begin{equation}\label{eq: transann2}
	\int_{\R\times [0,1]}\langle \eta,Y_t(u)(\p_t u-X_{H_v^s}(u))\rangle ds\wedge dt=0\quad \text{for all $Y\in T_J\mathcal{J}^{\Z_2,\ell}_N$}.	
\end{equation}
Assume  by contradiction  that there is a point $z_0=(s_0,t_0)\in R(u)$ with  $\eta(z_0)\ne 0$ where $R(u)$ is the set of regular points defined in \cite[Section~4]{FHS}. Choose $Y_0\in \text{End}(T_{u(z_0)}\widehat{W},J_{t_0},\widehat{\ow})$ such that $\langle \eta,Y_{t_0}(u)(\p_t u-X_{H_v^s}(u)) \rangle>0$, see \cite[Section~8]{SZ}. Now we want to extend $Y_0$ to $\tilde{Y}\in T_J\mathcal{J}^{\Z_2,\ell}_N$ such that the left hand side of \eqref{eq: transann2} is strictly positive, which gives the contradiction. This is the place that requires a specific argument. This $\Z_2$-invariant extension can be obtained by a construction as in Example~\ref{ex: prodham}. To show that the extension $\tilde{Y}$ is independent of $z$ along each local slice, we refer to \cite[Remark~5.2]{FHS}. By the maximum principle, a Floer strip $u$ lies in a compact region in~$\widehat{W}$ so that we can find the extension $\tilde{Y}$ even under the requirement that $J$ is admissible. This proves the universal moduli space $\mathcal{P}_\text{univ}$ is a Banach manifold of class $C^\ell$ as claimed and proves the theorem.
\end{proof}

\subsubsection{Chain complex}
We define the \emph{$\Z_2$-equivariant chain groups} by
$$
CF_*^{\Z_2,N}(H,J):=\bigoplus_{Z_p\subset \mathcal{P}(H+\tilde{f})} \Z_2\langle Z_p\rangle
$$
with grading given by $|Z_p|:=-\mu(Z_p)-n/2\in \Z$. For generic $J\in \mathcal{J}_N^{\Z_2}$ we define the   \emph{$\Z_2$-equivariant differential} $\p^{\Z_2}:CF_*^{\Z_2,N}(H,J)\to CF_{*-1}^{\Z_2,N}(H,J)$ by the formula
$$
\p^{\Z_2}(Z_{p_-}):=\sum_{|Z_{p_-}|-|Z_{p_+}|=1}\#_{2}\mathcal{M}_{\Z_2}(Z_{p_-},Z_{p_+};H,J)\cdot Z_{p_+}.
$$
By a standard argument, we have $\p^{\Z_2}\circ \p^{\Z_2}=0$. We denote by 
$$
HF_*^{\Z_2,N}(H,J)
$$
the homology group associated to the chain complex $(CF_*^{\Z_2,N}(H,J),\p^{\Z_2})$. By the usual direct limit procedure, we obtain
$$
HW_*^{\Z_2}(L;W):=\lim_{\underset{N}{\longrightarrow}} \lim_{\underset{H}{\longrightarrow}} HF_*^{\Z_2,N}(H,J),
$$ 
where the limit $\displaystyle \lim_{\underset{H}{\longrightarrow}}$ is taken over admissible families of Hamiltonians using continuation maps, and the limit $\displaystyle \lim_{\underset{N}{\longrightarrow}}$ is taken using morphisms induced by the equivariant inclusions $S^N\hookrightarrow S^{N+1}$.
We call $HW_*^{\Z_2}(L;W)$ the \emph{$\Z_2$-equivariant wrapped Floer homology group} of $L$ in $W$.
A standard continuation argument shows that this homology is independent of the choice of data $(H,f,J,g)$.
\subsection{Periodic family of Floer data}\label{sec: periodicfamham} In this section, we choose a special kind of $\Z_2$-invariant admissible families of Hamiltonians, parallel to Section~\ref{sec: perMorcompl}. We write $\widetilde{\mathcal{T}}_j:\widehat{W}\times S^N\to \widehat{W}\times S^{N+1}$ for the embeddings defined in \eqref{eq: shiftemb}, with   $M$ being replaced by $\widehat{W}$.
\begin{definition}
A sequence $\{(H_N,J_N)\}$ of pairs with $H_N\in \mathcal{H}_N^{\Z_2}$ and $J_N\in \mathcal{J}_N^{\Z_2}$ is called \emph{periodic} if the following conditions hold for all $N\ge 1$:
\begin{itemize}
	\item $H_N=\widetilde{\mathcal{T}}_0^*H_{N+1}=\widetilde{\mathcal{T}}_1^*H_{N+1}$;
	\item $J_N=\widetilde{\mathcal{T}}_0^*J_{N+1}=\widetilde{\mathcal{T}}_1^*J_{N+1}$.
\end{itemize}
\end{definition}

\begin{remark}
To construct a periodic sequence $\{(H_N, J_N)\}$, we follow the ideas in Example~\ref{ex: prodham}, see also Remark~\ref{rem: periodicseq}.
\end{remark}
Suppose that a periodic sequence $\{(H_N,J_N)\}$ is given. The same procedure as in Section~\ref{subsec:eMH} leads to the following canonical identification, which is analogous to \eqref{eq: simplymorsecomplex}. One should be aware that the role of $\tilde{f}$ is replaced by $-\tilde{f}$ as we used the (positive) gradient flow of $\tilde{f}$ in the definition of the moduli spaces \eqref{eq: moduli}. Take $H_0:=H_{N,z^{(0)}}\in \mathcal{H}$ where $z^{(0)}$ is a critical point of $\tilde{f}$ of index 0, and a generic $J_0\in \mathcal{J}$. 
\begin{lemma}\label{lem: idenper}
There is a canonical identification
$$
CF^{\Z_2, N}_*(H_N, J_N) = \Z_2[w]/\{w^{N+1}=0\} \otimes_{\Z_2} CF_*(H_0,J_0)
$$
where $w$ is a formal variable with $|w| =1$.
\end{lemma}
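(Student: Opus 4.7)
The plan is to construct the identification of the two sides as graded $\Z_2$-vector spaces by enumerating generators. The two key inputs are the admissibility condition \eqref{eq: prod_prop2}, which localizes $\crit(\mathcal{A}_{H_N+\tilde{f}})$ over $\crit(\tilde{f})$, and the periodicity of $\{(H_N, J_N)\}$, which forces the Floer data on each fiber over a critical point of $\tilde{f}$ to coincide with the fixed pair $(H_0, J_0)$.

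First, by \eqref{eq: prod_prop2}, a critical point of $\mathcal{A}_{H_N + \tilde{f}}$ is a pair $(c, z)$ with $z \in \crit(\tilde{f})$ and $c \in \mathcal{P}(H_{N, z})$, while the critical set of the perfect Morse function $\tilde{f}$ on $S^N$ consists of the antipodal pairs $\{\pm z^{(j)}\}_{j=0}^N$ with $\ind(z^{(j)}; \tilde{f}) = j$. Since $H_N$ is $\Z_2$-invariant, the involution on $\mathscr{P}\times S^N$ sends $(c, z^{(j)})$ to $(\I c, -z^{(j)})$, so each $\Z_2$-pair $Z_p$ has a unique representative of the form $(c, z^{(j)})$ with $c \in \mathcal{P}(H_{N, z^{(j)}})$ and $0 \le j \le N$. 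Under the shift embeddings, $\widetilde{\tau}_0(z^{(j)}) = z^{(j)}$ and $\widetilde{\tau}_1(z^{(j)}) = z^{(j+1)}$ inside $S^{N+1}$, so the relations $H_N = \widetilde{\mathcal{T}}_0^* H_{N+1} = \widetilde{\mathcal{T}}_1^* H_{N+1}$ yield $H_{N, z^{(j)}} = H_{N+1, z^{(j)}} = H_{N+1, z^{(j+1)}}$; induction on $j$ then gives $H_{N, z^{(j)}} = H_0$ for all $0 \le j \le N$, and the same argument applied to $\{J_N\}$ gives $J_{N, z^{(j)}} = J_0$ for every $j$. Consequently $\mathcal{P}(H_{N, z^{(j)}})$ is canonically identified with $\mathcal{P}(H_0)$ for each $j$.

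Combining these, the set of $\Z_2$-pairs in $\crit(\mathcal{A}_{H_N + \tilde{f}})$ is in canonical bijection with $\mathcal{P}(H_0) \times \{0, 1, \dots, N\}$, and I would define the identification by
\begin{equation*}
\Phi(w^k \otimes c) := Z_{(c,\, z^{(N-k)})}, \qquad c \in \mathcal{P}(H_0),\ 0 \le k \le N.
\end{equation*}
To verify that $\Phi$ preserves the grading, compute on the domain $|w^k \otimes c| = k + |c|$ (since $|w| = 1$), while on the target, using $\ind(z^{(N-k)}; -\tilde{f}) = N - (N-k) = k$,
\begin{equation*}
|Z_{(c,\,z^{(N-k)})}| \;=\; -\mu(c) + \ind(z^{(N-k)}; -\tilde{f}) - \tfrac{n}{2} \;=\; |c| + k.
\end{equation*}
The only subtlety is bookkeeping: the reversed shift $w^k \leftrightarrow z^{(N-k)}$ (compared with $w^j \leftrightarrow z^{(j)}$ in the Morse case of Section \ref{sec: perMorcompl}) arises because the moduli space equation \eqref{eq: moduli} uses the positive gradient flow of $\tilde{f}$, and consequently the Maslov index is defined via $\ind(\cdot; -\tilde{f})$ rather than $\ind(\cdot; \tilde{f})$.
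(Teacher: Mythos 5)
Your proof is correct and follows essentially the same route as the paper's: both enumerate generators by localizing $\crit(\mathcal{A}_{H_N+\tilde f})$ over $\crit(\tilde f)$ via the admissibility condition, use periodicity to identify every fiber datum over a critical point with $(H_0,J_0)$, and match $w^k$ with the unique representative whose $S^N$-component has $(-\tilde f)$-index $k$. The only (immaterial) difference is bookkeeping: you label $z^{(j)}$ by its $\tilde f$-index and therefore write $z^{(N-k)}$, whereas the paper's proof relabels $z^{(j)}$ to be the index-$j$ critical point of $-\tilde f$; your write-up is in fact more complete, since the paper omits both the grading check and the induction showing that all fiber Hamiltonians coincide with $H_0$.
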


\begin{proof}
Let $Z_{p}$ be a generator of $CF^{\Z_2, N}_*(H_N, J_N)$. As a representative of $Z_p$ we take $p = (w^jc, z^{(j)})$ where $z^{(j)}$ is a critical point of $-\tilde{f}$ with index $j$. Define the map
\begin{eqnarray*}
CF^{\Z_2, N}_*(H_N, J_N) &\rightarrow& \Z_2[w]/\{w^{N+1}=0\} \otimes_{\Z_2} CF_*(H_0,J_0)	\\
Z_p = [(w^jc, z^{(j)}] &\mapsto& w^j \otimes c.
\end{eqnarray*}
By the canonical choice of the representative of $Z_p$, this map is well-defined. The inverse is given by $w^j \otimes c \mapsto Z_p$, where $p = (w^jc, z^{(j)})$, which completes the proof.
\end{proof}

\subsection{Leray--Serre type spectral sequence}

As in Lemma~\ref{lem: permoduli},  the periodicity of $\{(H_N, J_N)\}$ gives rise to the following lemma.
\begin{lemma}
There is a one-to-one correspondence 
$$
\mathcal{M}_{\Z_2}(Z_{p_-},Z_{p_+}; H_N, J_N) \leftrightarrow \mathcal{M}_{\Z_2}(Z_{q_-}, Z_{q_+}; H_N, J_N)
$$
where $p_- = (w^jc_-, z^{(j)})$, $p_+ = (w^kc_+, z^{(k)})$, and $q_- = (w^{j-k}c_-, z^{(j-k)})$, $q_+ = (c_+, z^{(0)})$.
\end{lemma}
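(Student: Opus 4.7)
The plan is to mimic, in the Floer setting, the Morse bijection \eqref{lem: permoduli}. The key structural reason the argument works is that the system \eqref{eq: moduli} decouples: the second equation $\dot{z}=\nabla\tilde{f}(z)$ involves only the $S^N$-factor, while the first is a Floer equation for $u$ whose coefficients depend pointwise on $z(s)$. Hence any bijection between gradient flow lines on $S^N$ that intertwines the Floer data $(H_N,J_N)$ lifts automatically to one on the full moduli space.

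First I would analyze the $S^N$-factor. With $\tilde{f}(z)=\sum_{i} i|z_i|^2$, the positive gradient flow of $\tilde{f}$ preserves each coordinate sub-sphere $\im(\tilde{\tau}_0^{\ell})$ and $\im(\tilde{\tau}_1^{\ell})$. More importantly, any flow line $z:\R\to S^N$ whose $\omega$-limit equals $z^{(k)}$ must satisfy $z_i\equiv 0$ for $i>N-k$, since the flow amplifies those components exponentially at the largest rates and any nonzero such component would force convergence to a critical point of strictly higher $-\tilde{f}$-index. Consequently, every flow line from $z^{(j)}$ to $z^{(k)}$ (with $j\ge k$) is confined to $\im(\tilde{\tau}_0^{k})\cong S^{N-k}$. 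Pulling back by $\tilde{\tau}_0^{k}$ yields $z':\R\to S^{N-k}$ from $z^{(j-k)}_{S^{N-k}}$ to $z^{(0)}_{S^{N-k}}$, and pushing $z'$ forward by $\tilde{\tau}_1^{k}$ yields $z'':\R\to S^N$ with image in $\im(\tilde{\tau}_1^{k})$ and asymptotes $z^{(j-k)}_{S^N}$ and $z^{(0)}_{S^N}$, as one checks by tracking standard basis vectors through the two shifts.

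Second, I would lift this correspondence to moduli via periodicity. The identities $H_N=\widetilde{\mathcal{T}}_0^*H_{N+1}=\widetilde{\mathcal{T}}_1^*H_{N+1}$ (and their analog for $J$) iterate to $(\widetilde{\mathcal{T}}_0^{k})^*H_N=(\widetilde{\mathcal{T}}_1^{k})^*H_N=H_{N-k}$, and similarly for $J$. Therefore, if $u$ solves the Floer equation driven by $(H_{N,z(s)},J_{N,z(s)})$ for $z$ in $\im(\tilde{\tau}_0^{k})$, then the \emph{same} map $u$ solves the Floer equation driven by $(H_{N,z''(s)},J_{N,z''(s)})$, because both reduce to the $(H_{N-k,z'(s)},J_{N-k,z'(s)})$-equation. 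The assignment $(u,z)\mapsto (u,z'')$ is the desired bijection on $\widetilde{\mathcal{M}}$; the inverse runs the two shifts in reverse order. Since $\tilde{\tau}_0,\tilde{\tau}_1$ commute with the antipodal action on spheres and $(H_N,J_N)$ is $\Z_2$-invariant, the construction intertwines the $\Z_2$-actions on source and target, and hence descends to a bijection between the $\Z_2$-quotients $\mathcal{M}_{\Z_2}$. The main obstacle is the confinement claim in Step~1; once this short explicit fact about the model gradient flow of $\tilde{f}$ is in hand, the remainder is purely formal bookkeeping with the periodicity of the Floer data.
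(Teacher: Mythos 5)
Your proof is correct and follows essentially the route the paper intends: the paper gives no details beyond invoking the periodicity of $(H_N,J_N)$ and the analogous Morse statement, and your explicit confinement computation for the gradient flow of $\tilde f$ (via $\dot z_i=2(i-\tilde f(z))z_i$, so each coordinate is either identically zero or never zero) together with the transport by $\tilde\tau_1^k\circ(\tilde\tau_0^k)^{-1}$ and the iterated periodicity of the Floer data is precisely the justification the paper omits. The only point to add is the mirror confinement at the $\alpha$-limit: flow lines emanating from $z^{(j-k)}$ must have the low-index coordinates identically zero and hence lie in $\im(\tilde\tau_1^{k})$, which is what makes your inverse map defined on all of $\mathcal{M}_{\Z_2}(Z_{q_-},Z_{q_+})$; this follows from the same ODE argument applied as $s\to-\infty$.
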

\noindent Under the identification in Lemma~\ref{lem: idenper}, the differential $\p^{\Z_2}$ reads
\begin{align*}
\p^{\Z_2}(Z_{p_-}) &= \sum_{Z_{p_+}} \#_{2} \mathcal{M}_{\Z_2}(Z_{p_-}, Z_{p_+}; H_N, J_N) \cdot Z_{p_+}\\
&= \sum_{k=0}^{j}\sum_{Z_{p_+}} \#_{2} \mathcal{M}_{\Z_2}(w^{j-k} \otimes c_-, c_+; H_N, J_N) \cdot w^k \otimes c_+.
\end{align*}
We introduce the following maps on $CF_*(H_0,J_0)$:
\begin{equation}\label{eq:mapztwo}
\phi_j(c_-) = \sum_{c_+}\#_2\mathcal{M}_{\Z_2}(w^j \otimes c_-, c_+; H_N, J_N) \cdot c_+.
\end{equation}
Then the $\Z_2$-equivariant differential  $\p^{\Z_2}$   satisfies, under the identification in Lemma~\ref{lem: idenper},
$$
\p^{\Z_2}(w^{\ell} \otimes c) = \sum_{j=0}^{\ell} w^{j} \phi_{\ell-j}(c) = \sum_{j=0}^{\ell} w^{\ell-j} \phi_{j}(c).
$$
Like in the Morse case (Section~\ref{sec: perMorcompl}), we take the direct limit of chain complexes
$$
\{(CF_*^{\Z_2,N}(H_N,J_N), \{\phi_j\}_{0\le j\le N})\}
$$
as $N\to \infty$ using the morphisms induced by the equivariant inclusions $S^N\hookrightarrow S^{N+1}$, and write
$$
CF_*^{\Z_2}(H,J)
$$
for the limit chain complex equipped with maps $\{\phi_j\}_{j\ge 0}$. Summarizing, we have:
\begin{proposition}
The wrapped Floer chain complex $CF_*(H_0,J_0)$ admits a $\Z_2$-complex structure given by $\{\phi_j\}$ defined in \eqref{eq:mapztwo}, and the resulting $\Z_2$-equivariant chain complex is exactly the complex $CF_*^{\Z_2}(H, J)$.  
\end{proposition}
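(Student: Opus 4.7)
The plan is to verify two claims at each finite stage $N$ (with $N$ sufficiently large) and then pass to the direct limit. First, I would show that the operations $\{\phi_j\}_{j\ge 0}$ on the non-equivariant complex $CF_*(H_0,J_0)$ defined by \eqref{eq:mapztwo} satisfy the $\Z_2$-complex relations $\sum_{i+j=k}\phi_i\circ\phi_j=0$ for all $k\ge 0$. Second, I would check that under the canonical identification of Lemma \ref{lem: idenper} the geometric differential $\p^{\Z_2}$ agrees term-by-term with the algebraic formula \eqref{eq: ez2differentialalg}. The two claims together say precisely that $(CF_*^{\Z_2,N}(H_N,J_N),\p^{\Z_2})$ is the $\Z_2$-equivariant chain complex of the $\Z_2$-complex $(CF_*(H_0,J_0),\{\phi_j\})$, and the direct limit over $N\to\infty$ yields the proposition; the limit is clean because periodicity of $\{(H_N,J_N)\}$ makes the map $\phi_j$ defined at stage $N\ge j$ independent of $N$.

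For the relations I would argue exactly as in the Morse analog, Corollary \ref{cor:diffequi}. Fix $k\ge 0$, and consider the one-dimensional moduli spaces $\mathcal{M}_{\Z_2}(w^k\otimes c_-,c_+;H_N,J_N)$ (which are manifolds by Theorem \ref{thm: equivwraptran}). Their Gromov--Floer compactification is a compact one-manifold whose boundary consists of once-broken configurations. Under the periodicity bijection \eqref{lem: permoduli}, every broken configuration factors through some intermediate generator $w^i\otimes c$ and corresponds to a pair of rigid trajectories contributing to $\phi_{k-i}$ and $\phi_i$ respectively; counting mod two gives $\sum_{i=0}^{k}(\phi_i\circ\phi_{k-i})(c_-)=0$. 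Compactness is standard: asymptotics are isolated $\Z_2$-pairs of critical points of $\tilde f$, bubbling is excluded by exactness of $\widehat L$, and escape to infinity is ruled out by the maximum principle for admissible $J$. For the differential, the identification of Lemma \ref{lem: idenper} together with the same periodicity bijection produces, by direct bookkeeping,
\[
\p^{\Z_2}(w^\ell\otimes c)=\sum_{k=0}^{\ell}w^{\ell-k}\otimes \phi_{k}(c),
\]
which is exactly \eqref{eq: ez2differentialalg}.

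The step I expect to require the most care is the compatibility of transversality with the periodicity condition: one needs to arrange, simultaneously for all $N$, admissible Floer data $\{(H_N,J_N)\}$ satisfying $H_N=\widetilde{\mathcal{T}}_j^*H_{N+1}$ and $J_N=\widetilde{\mathcal{T}}_j^*J_{N+1}$ for $j=0,1$, while retaining enough freedom to make each $\mathcal{M}_{\Z_2}(Z_{p_-},Z_{p_+};H_N,J_N)$ cut out transversely. The construction in Example \ref{ex: prodham} (and Remark \ref{rem: periodicseq}) together with the universal perturbation argument in the proof of Theorem \ref{thm: equivwraptran} shows this is possible: one perturbs $J$ only in a region that is compatible with both the $\Z_2$-symmetry and the slice-independence required by periodicity. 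Once this is in place, both the relations and the identification of differentials are formal consequences of the argument sketched above, and the direct limit over $N$ and over admissible Hamiltonians commutes with all the constructions.
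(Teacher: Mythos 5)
Your proposal is correct and follows essentially the same route as the paper: identify the chain groups via the periodicity of $\{(H_N,J_N)\}$ as in Lemma \ref{lem: idenper}, define $\phi_j$ by the translated moduli spaces, obtain the relations \eqref{eq: phi_rel} from the boundaries of the compactified one-dimensional moduli spaces exactly as in the Morse model of Section \ref{sec: perMorcompl}, and match $\p^{\Z_2}$ with \eqref{eq: ez2differentialalg} by the same bookkeeping as Corollary \ref{cor:diffequi} before passing to the direct limit. Your explicit attention to the compatibility of transversality with periodicity and to compactness (exactness excluding bubbling, maximum principle) fills in details the paper leaves implicit but does not change the argument.
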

A direct consequence from Lemma~\ref{lem: alglsss}, after taking direct limits over $H$, is the following.
\begin{corollary}[Leray--Serre type spectral sequence in $HW_*^{\Z_2}$] \label{cor: lsssfull}
There is a spectral sequence $\{E^r\}_{r\geq 0}$ converging to $HW_*^{\Z_2}(L;W)$ with first page given by
$$
E^1_{p,q} = \begin{cases} HW_q(L;W), & p \geq 0, \\ 0, & \text{otherwise.}
\end{cases}
$$
\end{corollary}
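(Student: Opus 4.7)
The plan is to obtain the spectral sequence at the level of a single admissible Hamiltonian first and then pass to the direct limit. By the proposition stated just above, for any periodic family $\{(H_N,J_N)\}$ built on a non-degenerate admissible pair $(H_0,J_0)$, the ordinary wrapped Floer complex $(CF_*(H_0,J_0),\p)$ carries a $\Z_2$-complex structure via the maps $\{\phi_j\}_{j\ge 0}$ defined in \eqref{eq:mapztwo}, and the associated equivariant chain complex coincides with $CF^{\Z_2}_*(H,J)$. Applying Lemma \ref{lem: alglsss} to this $\Z_2$-complex yields a spectral sequence $\{E^r(H_0,J_0)\}_{r\ge 0}$ converging to $HF^{\Z_2}_*(H,J)$ with
$$
E^1_{p,q}(H_0,J_0)=\begin{cases} HF_q(H_0,J_0), & p\ge 0,\\ 0, & p<0. \end{cases}
$$

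Next, I would take direct limits over the admissible Hamiltonians. The filtration $\{F_pCF^{\Z_2}_*\}$ from Section \ref{sec: lstss} is defined at the chain level in a purely algebraic way (powers of the formal variable $w$), so continuation maps constructed from periodic families of Floer data respect this filtration up to chain homotopy. Since direct limits are exact over the field $\Z_2$, they commute with the passage to associated graded pieces and with taking homology. Hence one obtains a direct-limit spectral sequence $\{E^r\}_{r\ge 0}$ converging to
$$
\varinjlim_{H}HF^{\Z_2}_*(H,J)=HW^{\Z_2}_*(L;W),
$$
whose first page is the direct limit of the first pages above, namely
$$
E^1_{p,q}=\begin{cases} \varinjlim_H HF_q(H_0,J_0)=HW_q(L;W), & p\ge 0,\\ 0, & p<0. \end{cases}
$$
The limit over $N$ has already been absorbed in the construction of $CF^{\Z_2}_*(H,J)$ as explained before Lemma \ref{lem: idenper}.

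The main technical point is to ensure that continuation maps in the equivariant theory are compatible with the filtration by powers of $w$; this follows from the fact that one may choose periodic families of continuation data in the sense of Section \ref{sec: periodicfamham}, so that the induced chain map has the block-triangular form encoded by a sequence of maps $\{\phi_j^{\text{cont}}\}$ extending the $\Z_2$-complex morphism on $CF_*(H_0,J_0)$. With this observation, convergence and boundedness-from-below of the filtration are preserved in the limit, and Lemma \ref{lem: alglsss} delivers the claimed first page.
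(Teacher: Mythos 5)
Your proof is correct and follows essentially the same route as the paper: the paper obtains the corollary as a direct consequence of Lemma \ref{lem: alglsss} applied to the $\Z_2$-complex structure $\{\phi_j\}$ on $CF_*(H_0,J_0)$ coming from a periodic family, followed by a direct limit over $H$. Your additional remarks on why continuation maps respect the $w$-filtration and why the limit commutes with the first page simply make explicit what the paper leaves implicit.
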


\subsection{Positive equivariant wrapped Floer homology} Since the equivariant differential $\p^{\Z_2}$ in principle counts parametrized Floer solutions, it is not obvious that the action decreases along Floer solutions. However, it turns out that we can obtain the action decreasing property under additional assumptions on the  Hamiltonians, see Lemma~\ref{lam: actdecreasing}. We shall then define the positive equivariant wrapped Floer homology $HW^{\Z_2, +}_*(L;W)$ which plays a crucial role for our applications in Section~\ref{secapplication}.
 
\subsubsection{Action filtration}\label{sec: actdecreas} Let $(H,J)$ be an admissible $\Z_2$-invariant family of Floer data. By definition, $H$ satisfies the inequality \eqref{eq: prod_estimate}. Let $(u, z)$ be a solution of the Floer equation \eqref{eq: moduli} from $Z_{p_-}$ to $Z_{p_+}$ with $p_\pm = (c_\pm, z_\pm)\in \mathcal{P}(H)$. Let $U$ be the union of local slices given in Section~\ref{sec:Produclikeham}.

\begin{lemma}[Action decreasing property]\label{lam: actdecreasing}
The action decreases along the solution $(u, z)$, i.e.,
$$
\mathcal{A}_{H+ \tilde f_N}(c_-, z_-) \geq \mathcal{A}_{H + \tilde f_N}(c_+, z_+).
$$
\end{lemma}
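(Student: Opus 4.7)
The plan is to differentiate $s \mapsto \mathcal{A}_{H+\tilde{f}_N}(u(s,\cdot),z(s))$ along the Floer trajectory $(u,z)$ solving \eqref{eq: moduli} and show the derivative is non-positive; the inequality then follows by integrating from $s=-\infty$ to $s=+\infty$. Since $\tilde{f}_N$ depends only on the $S^N$ variable, we have $X_{H_z+\tilde{f}_N(z)}=X_{H_z}$ and $\mathcal{A}_{H+\tilde{f}_N}(c,z) = \mathcal{A}_{H_z}(c) - \tilde{f}_N(z)$. Applying the chain rule together with the differential formula \eqref{eq: gradient} for $\mathcal{A}_{H_z}$, substituting both equations of \eqref{eq: moduli}, and using $\widehat{\ow}$-compatibility of $J_{z(s)}^t$ yields the identity
$$
\tfrac{d}{ds}\mathcal{A}_{H+\tilde{f}_N}(u,z) = -\int_0^1 |\p_s u|_{J_{z(s)}^t}^2\,dt - \Bigl\langle \int_0^1 \tfrac{\p H}{\p z}(u(s,t),z(s))\,dt,\,\nabla\tilde{f}_N(z(s))\Bigr\rangle - |\nabla\tilde{f}_N(z(s))|^2,
$$
in which the first and third terms are manifestly non-positive.

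The only non-routine ingredient is the middle cross term, and the admissibility condition \eqref{eq: prod_estimate} is designed precisely to control it. I would split into cases according to whether $z(s)$ lies in the union $U$ of local slices: inside $U$, property \eqref{eq: prod_prop5} forces $\p_z H(u(s,t),z(s))=0$ for every $t$, so the cross term vanishes outright; outside $U$, the Cauchy--Schwarz inequality combined with \eqref{eq: prod_estimate} gives
$$
\Bigl|\Bigl\langle \int_0^1 \tfrac{\p H}{\p z}\,dt,\,\nabla\tilde{f}_N(z)\Bigr\rangle\Bigr| \le \|\p_z H\|\cdot |\nabla\tilde{f}_N(z)| < |\nabla\tilde{f}_N(z)|^2,
$$
so the cross term is strictly dominated by the third term of the displayed identity. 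In either case $\tfrac{d}{ds}\mathcal{A}_{H+\tilde{f}_N}(u,z) \le 0$, and integration in $s$ yields the lemma.

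The essential observation (and the reason the argument is not entirely immediate) is that the system \eqref{eq: moduli} is \emph{not} literally the negative $L^2$-gradient flow of $\mathcal{A}_{H+\tilde{f}_N}$: its $S^N$-component $\dot z=\nabla\tilde{f}_N(z)$ is obtained from the true gradient $-\nabla_z\mathcal{A}_{H+\tilde{f}_N}=\int_0^1 \p_z H\,dt+\nabla\tilde{f}_N(z)$ by discarding the coupling term $\int_0^1\p_z H\,dt$, so action decrease is not automatic. The admissibility bound \eqref{eq: prod_estimate} is precisely what restores it, by keeping the discarded coupling term too small, relative to $\nabla\tilde{f}_N$, to spoil monotonicity.
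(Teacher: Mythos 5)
Your proposal is correct and follows essentially the same route as the paper: differentiate $s\mapsto\mathcal{A}_{H+\tilde f_N}(u(s,\cdot),z(s))$ using the two equations of \eqref{eq: moduli} to arrive at the identity with the three terms $-\int_0^1\|\p_s u\|^2\,dt$, the cross term, and $-\|\nabla\tilde f_N\|^2$, then handle the cross term by the case split $z(s)\in U$ (where $\p_z H=0$ by \eqref{eq: prod_prop5}) versus $z(s)\notin U$ (where \eqref{eq: prod_estimate} dominates it by $\|\nabla\tilde f_N\|^2$). This is exactly the paper's argument, including the closing observation that the estimate \eqref{eq: prod_estimate} is what compensates for the system not being a genuine negative gradient flow.
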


\begin{proof} 
Recall that
$$
\mathcal{A}_{H+\tilde f_N}(c, z) = \ell(c(1)) - \ell(c(0)) - \int_0^1 c^* \widehat{\lda} - \int_0^1 (H + \tilde f_N)(c(t), z) dt.
$$
We shall prove the lemma by showing that
$$
\frac{\p}{\p s} \mathcal{A}_{H+\tilde f_N} (u(s,\cdot), z(s)) \leq 0
$$
for all $s \in \R$. We first compute that 
\begin{align*}
 &\frac{\p}{\p s} \left\{ \ell(u(s, 1)) - \ell(u(s, 0)) - \int_0^1 u(s, \cdot)^* \widehat{\lda}  \right\} \\
	=\;&  \widehat{\lda}(\p_s u(s, 1)) - \widehat{\lda}(\p_s u(s, 0)) - \int_0^1  d (\widehat{\lda}(\p_s u)) - \int_0^1   \widehat{\ow}(\p_s u, \p_t u ) dt   \\
	=\;& \widehat{\lda}(\p_s u(s, 1)) - \widehat{\lda}(\p_s u(s, 0)) -\{\widehat{\lda}(\p_s u(s, 1)) - \widehat{\lda}(\p_s u(s, 0))\} - \int_0^1   \widehat{\ow}(\p_s u, \p_t u ) dt   \\
	=\;& - \int_0^1 \widehat{\ow}(\p_s u, \p_t u) dt.
\end{align*}
We then compute the remaining term as follows:
\begin{align*}
	&   \frac{\p}{\p s} \int_0^1 (H + \tilde f_N)(u(s, t), z(s)) dt\\
	=\;&  \frac{\p}{\p s} \int_0^1 H_{z(s)}(u(s, t)) + \tilde f_N (z(s)) dt \\
	=\;&   \frac{\p}{\p s} \int_0^1 H_{z(s)}(u(s, t)) dt + \int_0^1 d \tilde f_N (z(s)) \cdot \nabla \tilde{f}_N(z(s)) dt \\
	=\;&   \int_0^1 dH_{z(s)}(\p _s u(s, t)) dt  + \int_0^1 \p_z H_z(u(s, t)) \cdot \nabla 
	\tilde{f}_N dt + \|\nabla \tilde f_N\|^2\\
	=\;&  - \int_0^1 \widehat{\ow}(\p_s u, X_{H_{z(s)}}(u))dt  + \int_0^1 \p_z H_z(u(s, t)) \cdot \nabla 
	\tilde{f}_N dt + \|\nabla \tilde f_N\|^2.
\end{align*}
Combining these two identities we obtain
\begin{align*}
  & \frac{\p}{\p s} \mathcal{A}_{H+\tilde f_N} (u(s,\cdot), z(s))\\ 
	=\;& -\int_0^1 \widehat{\ow}(\p_s u , \p_t u -X_{H_{z(s)}}) dt - \int_0^1 \p_z H_z(u(s, t)) \cdot \nabla 
\tilde{f}_N dt - \|\nabla \tilde f_N\|^2\\
	=\;& -\int_0^1 \|\p_s u\|^2 dt  - \int_0^1 \p_z H_z(u(s, t)) \cdot \nabla 
	\tilde{f}_N dt - \|\nabla \tilde f_N\|^2. 
\end{align*}
Now the claim follows from the following  observations below:
\begin{itemize}
\item In $U$, since $H_z$ is constant in $z$, we have $\p_z H_z = 0$.
\item By the inequality \eqref{eq: prod_estimate}, on $S^N\setminus U$ we have 
$$
 - \int_0^1 \p_z H_z(u(s, t)) \cdot \nabla 
\tilde{f}_N dt - \|\nabla \tilde f_N\|^2  \leq 0.   
$$
\end{itemize}
This completes the proof.
\end{proof}

\subsubsection{Positive equivariant chain complex} Let $(H, J)$ be an admissible $\Z_2$-invariant Floer datum. For $\tau \in \R$ define the subgroup  
$$
CF^{\Z_2, N, < \tau}_*(H, J) \subset CF^{\Z_2, N}_*(H, J)
$$ which is generated by elements with action value less than $\tau$. By Lemma~\ref{lam: actdecreasing}, this subgroup forms a subcomplex of $(CF^{\Z_2, N}_*(H, J), \p^{\Z_2})$. 

Let $\epsilon > 0$ be smaller than the lengths of any Reeb chord on the boundary $\p W$. We define the \emph{positive equivariant chain complex} as the quotient
$$
CF^{\Z_2, +, N}_*(H, J) : = CF^{\Z_2, N}_*(H, J)/ CF^{\Z_2, N, < \epsilon}_*(H, J)
$$
with the induced differential, still denoted by $\p^{\Z_2}$. Its homology is denoted by $HW^{\Z_2, +, N}_*(H, J)$. We take the direct limit over $N$ and $H$, and call the resulting homology the \emph{positive equivariant wrapped Floer homology}, denoted by $HW^{\Z_2, +}_*(L;W)$.

\subsubsection{A tautological exact sequence in $HW^{\Z_2}_*$} The action filtration on the equivariant chain complex $CF^{\Z_2}_*(H, J)$ yields the following short exact sequence of chain complexes:
$$
0\to CF_*^{\Z_2,N, < \epsilon}(H, J) \to CF_*^{\Z_2,N}(H, J) \to CF_*^{\Z_2, +,N}(H, J) \to 0 .
$$ 
The induced long exact sequence, after taking the direct limit, is 
$$
\cdots \to HW_*^{\Z_2, <\epsilon}(L;W) \to HW_*^{\Z_2}(L;W) \to HW_*^{\Z_2, +}(L;W) \to \cdots.
$$
Assuming that $H(\cdot,z)$ is $C^2$-small in the interior of $W$ for all $z\in S^N$, a standard argument~\cite[Proposition~1.3]{Vit} yields 
$$
HW_*^{\Z_2, <\epsilon}(L;W) \cong H^{\Z_2}_{*+n}(L, \p{L}),
$$
where the $\Z_2$-action on $L$ is  induced by the involution $\I$ on $W$. We thus obtain the following tautological exact sequence for $HW_*^{\Z_2}(L;W)$. 

\begin{proposition}\label{prop: tlspositive}
There exists a long exact sequence
$$
\cdots \to H_{*+n}^{\Z_2}(L,\p L) \to HW_*^{\Z_2}(L;W) \to HW_*^{\Z_2,+}(L;W) \to H_{*+n-1}^{\Z_2}(L,\p L) \to \cdots.
$$
\end{proposition}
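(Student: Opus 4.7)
The plan is to derive the long exact sequence from the short exact sequence of chain complexes already established just above the proposition, and then identify the small-action homology with equivariant relative homology of the Lagrangian.

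First, I would take the short exact sequence
\[
0 \to CF_*^{\Z_2,N,<\epsilon}(H,J) \to CF_*^{\Z_2,N}(H,J) \to CF_*^{\Z_2,+,N}(H,J) \to 0
\]
of $\Z_2$-equivariant chain complexes (the inclusion is well-defined as a chain map by the action decreasing Lemma \ref{lam: actdecreasing}) and apply the snake lemma to obtain a long exact sequence at each finite level $N$. Since continuation maps in $H$ preserve the action filtration up to an error controlled by the homotopy (and $\epsilon$ is a fixed constant below the minimal Reeb chord length), and since the maps induced by the equivariant inclusions $S^N \hookrightarrow S^{N+1}$ are filtration-preserving, the direct limit over $(H,N)$ remains exact. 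This produces the desired long exact sequence
\[
\cdots \to HW_*^{\Z_2,<\epsilon}(L;W) \to HW_*^{\Z_2}(L;W) \to HW_*^{\Z_2,+}(L;W) \to HW_{*-1}^{\Z_2,<\epsilon}(L;W) \to \cdots.
\]

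Second, I would identify $HW_*^{\Z_2,<\epsilon}(L;W) \cong H_{*+n}^{\Z_2}(L,\p L)$. In the non-equivariant case, this is the classical identification from \cite[Proposition 1.3]{Vit}: choose a $C^2$-small admissible Hamiltonian $H_0$ of slope less than $\epsilon$, so that all Hamiltonian chords are critical points of $H_0|_L$ and the Floer differential reduces, by the usual rescaling/compactness argument, to the Morse differential with respect to a metric adapted to $L$. The $(L,\p L)$-relative nature of the resulting Morse homology reflects the Neumann-type boundary condition imposed by the admissibility of $H_0$ (trajectories cannot exit through $\p L$). To promote this to the equivariant setting, I would use the periodic construction of Section \ref{sec: periodicfamham} to build an admissible family $\{(H_N,J_N)\}$ whose slice Hamiltonian at each critical point of $\tilde f$ is (a $\Z_2$-translate of) such a $C^2$-small $H_0$, as in Example \ref{ex: prodham}. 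Under the identification of Lemma \ref{lem: idenper}, the generators of $CF_*^{\Z_2,N,<\epsilon}(H_N,J_N)$ correspond to pairs $(x,z^{(j)})$ with $x \in \crit(H_0|_L)$ and $z^{(j)} \in \crit(\tilde f)$. A rescaling argument on the $\widehat W$-component, combined with the fact that the $S^N$-component already satisfies the gradient flow equation for $\tilde f$, reduces the parametrized Floer equation \eqref{eq: moduli} in this low-action range to the product of a Morse flow for $H_0|_L$ and a gradient flow for $\tilde f$. Consequently the equivariant Floer chain complex at level $<\epsilon$ is canonically isomorphic (up to the grading shift from Maslov to Morse index of size $n$) to the equivariant Morse chain complex of $L$ relative to $\p L$ described in Section \ref{sec: eMH}, and taking the direct limit over $N$ yields the asserted isomorphism.

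The main obstacle is the equivariant Morse--Floer comparison in the second step: one must verify that the periodic family $\{(H_N,J_N)\}$ can simultaneously be chosen $C^2$-small along each slice, periodic, and $\Z_2$-invariant, and that the Viterbo rescaling argument commutes with the auxiliary $S^N$-direction encoding the Borel construction. Both points are adaptations of arguments in \cite{BO17} (for $S^1$-equivariant symplectic homology) and \cite{SS} (for equivariant Lagrangian Floer theory) to the wrapped setting, so the work is in checking compatibility rather than introducing genuinely new analysis.
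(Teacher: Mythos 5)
Your proposal is correct and follows essentially the same route as the paper: the short exact sequence of equivariant chain complexes coming from the action filtration (which is a subcomplex by the action decreasing Lemma \ref{lam: actdecreasing}) induces the long exact sequence after taking direct limits, and the term $HW_*^{\Z_2,<\epsilon}(L;W)$ is identified with $H_{*+n}^{\Z_2}(L,\p L)$ by the Viterbo-type argument for a $C^2$-small Hamiltonian promoted to the equivariant/periodic-family setting. The paper compresses the second step into the phrase ``a standard argument,'' so your more detailed discussion of the equivariant Morse--Floer comparison is a faithful elaboration rather than a deviation.
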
  

\subsubsection{Action filtration via $\mathcal{A}_{H_0}$} Recall that $(H,J)$ has the properties described in Section~\ref{sec: periodicfamham}.
The positive part $CF^{\Z_2, +, N}_*(H, J)$ can also be simplified as in Lemma~\ref{lem: idenper} using a periodic family. However, the identification may not restrict to the positive part since positivity of the action $\mathcal{A}_{H+\tilde{f}}(Z_p)$ does not in general imply positivity of $\mathcal{A}_{H_0}(c)$, where  $w^j \otimes c$ is the generator corresponding to $Z_p$. Notice that our previous constructions and results also hold for $\tilde f$ replaced by $\delta \tilde{f}$ with any $\delta >0$.

\begin{lemma}\label{lem: actiongovern} Under the assumptions and notations in Lemma~\ref{lem: idenper}, the action $\mathcal{A}_{H+\delta\tilde{f}}(w^jc, z^{(j)})$ is arbitrarily close to $\mathcal{A}_{H_0}(c)$, provided that $\delta>0$ is sufficiently small.
\end{lemma}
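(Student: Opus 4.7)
The plan is to evaluate both actions directly using the periodicity of the Floer data and the explicit form of $\tilde{f}$, and observe that the difference is a single term linear in $\delta$.

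First, I would unpack the definitions. The representative of $Z_p$ corresponding to $w^j \otimes c$ under the identification in Lemma \ref{lem: idenper} is the pair $(w^jc, z^{(j)}) = (c, z^{(j)})$, where $z^{(j)} \in S^N$ is the chosen index-$j$ critical point of $\tilde{f}$ (so that $z^{(j)}$ corresponds to a standard basis vector under $\tilde{f}(z) = \sum_{k=0}^N k|z_k|^2$, giving $\tilde{f}(z^{(j)}) = j$). By the periodicity of the family $\{H_N\}$ and the normalization made before Lemma \ref{lem: idenper}, the Hamiltonian $H_{z^{(j)}}$ on the fiber over $z^{(j)}$ coincides with the fixed Hamiltonian $H_0$ on $\widehat{W}$.

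Next, I would write the action using the formula for $\mathcal{A}_{H+\delta\tilde{f}}$. Since $\delta\tilde{f}(z^{(j)}) = \delta j$ is constant in $t$ along $c$, we obtain
\begin{align*}
\mathcal{A}_{H+\delta\tilde{f}}(c, z^{(j)}) &= \ell(c(1)) - \ell(c(0)) - \int_0^1 c^*\widehat{\lambda} - \int_0^1 H_{z^{(j)}}(c(t))\,dt - \delta\tilde{f}(z^{(j)}) \\
&= \mathcal{A}_{H_0}(c) - \delta j.
\end{align*}
Therefore the difference is exactly $|\mathcal{A}_{H+\delta\tilde{f}}(w^jc, z^{(j)}) - \mathcal{A}_{H_0}(c)| = \delta j$. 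For a fixed critical pair (i.e.\ fixed $j$), given any $\varepsilon > 0$ we simply choose $\delta < \varepsilon/j$ to conclude.

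There is no real obstacle; the only thing to double-check is that all constructions earlier in the section (admissibility, the $\Z_2$-complex structure, the action decreasing property of Lemma \ref{lam: actdecreasing}) go through with $\tilde{f}$ replaced by $\delta\tilde{f}$ for arbitrarily small $\delta > 0$. This is indeed the case since the only quantitative constraint involving $\tilde{f}$ is the inequality \eqref{eq: prod_estimate}, $\|\partial_z H\| < \min_{S^N \setminus U}\|\nabla\tilde{f}\|$; rescaling the Morse part to $\delta\tilde{f}$ requires shrinking the perturbation $H$ by the same factor (as in Example \ref{ex: prodham}, where the coefficient $\varepsilon$ was chosen to enforce \eqref{eq: prod_estimate}), which leaves all prior constructions and the identification of Lemma \ref{lem: idenper} intact.
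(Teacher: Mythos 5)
Your proof is correct and follows essentially the same route as the paper: both identify $H_{z^{(j)}}$ with $H_0$ via periodicity and observe that the two actions differ only by the constant term $\int_0^1 \delta\tilde f(z^{(j)})\,dt$, which is $O(\delta)$ since $\tilde f(z^{(j)})=j\le N$ and $N$ is fixed. Your closing remark about rescaling $\tilde f$ to $\delta\tilde f$ compatibly with the inequality \eqref{eq: prod_estimate} is exactly the point the paper records in Remark \ref{rmk: final conditions on H and f}.
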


\begin{proof}
Note that the action $\mathcal{A}_{H+\delta\tilde f}$ is defined by
$$
\mathcal{A}_{H+\delta\tilde f}(w^jc, z^{(j)}) = \ell(c(1)) - \ell(c(0)) - \int_0^1 c^* \widehat{\lda} - \int_0^1 (H +\delta \tilde f)(c(t), z^{(j)}) dt.
$$
Since the family $H$ is periodic, we may replace $H$ by $H_0$ as
$$
\mathcal{A}_{H_0}(c) = \ell(c(1)) - \ell(c(0)) - \int_0^1 c^* \widehat{\lda} - \int_0^1 H_0(c(t)) dt.
$$
Now the difference between $\mathcal{A}_{H+\delta \tilde{f}}(w^jc, z^{(j)})$ and $\mathcal{A}_{H_0}(c)$ is bounded by
$$
\left|\int_0^1 \delta\tilde f(z^{(j)}) dt\right| \leq |\delta \tilde f(z^{(j)})|\le \delta N.
$$
Since $N$ is fixed, the lemma follows.
\end{proof}

\begin{remark} \label{rmk: final conditions on H and f}
We have made several assumptions on the family $H$ and $\delta\tilde f$ so far. Namely,
\begin{itemize}
\item The family $H$ is admissible;
\item $\{H_N\},\{f_N\}$ are periodic;
\item $\delta>0$ is sufficiently small in the sense of Lemma~\ref{lem: actiongovern}. 
\end{itemize}
By abuse of the notation, we write $\tilde{f}$ instead of $\delta \tilde{f}$. All these conditions can be achieved compatibly by the construction in Example~\ref{ex: prodham}. First, choose $\delta>0$ sufficiently small in the sense of Lemma~\ref{lem: actiongovern}. Then, take $H_0$ which is sufficiently small in the region where Hamiltonian chords  occur. Finally, construct a family $H = \{H_z\}$ satisfying the inequality \eqref{eq: prod_estimate} following the scheme in Example~\ref{ex: prodham}.
\end{remark}
From now on, unless stated otherwise, we assume all the properties listed in Remark~\ref{rmk: final conditions on H and f}. Then the identification in Lemma~\ref{lem: idenper} restricts to the positive part. Moreover, we can follow the same recipe as in Section~\ref{sec: periodicfamham} to provide a $\Z_2$-complex structure for the positive parts $HW^{\Z_2, +}_*(L;W)$ and $HW_*^+(L;W)$. As a result we have an analogous Leray--Serre type spectral sequence in positive equivariant wrapped Floer homology:

\begin{proposition}[Leray--Serre type  spectral sequence in $HW^{\Z_2, +}_*$]
There is a spectral sequence $\{E^r\}_{r\geq 0}$ converging to $HW_*^{\Z_2,+}(L;W)$ with first page 
$$
E^1_{p,q} = \begin{cases} HW^+_q(L;W), & p \geq 0, \\ 0, & \text{otherwise.}
\end{cases}
$$
\end{proposition}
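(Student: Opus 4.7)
The plan is to mimic the proof of Corollary \ref{cor: lsssfull}, running the same $\Z_2$-complex/filtration machinery, but now on the \emph{positive} chain complex. Concretely, my first step is to upgrade the observation made in the paragraph preceding the proposition — that under the assumptions of Remark \ref{rmk: final conditions on H and f} the identification of Lemma \ref{lem: idenper} restricts to the positive part — into a genuine $\Z_2$-complex structure on $CF_*^+(H_0,J_0)$. Lemma \ref{lam: actdecreasing} (action decreasing) guarantees that $CF_*^{\Z_2,N,<\epsilon}(H,J)$ is a subcomplex, and Lemma \ref{lem: actiongovern} together with a sufficiently small $\delta$ (absorbed into $\tilde f$) ensures that under the identification of Lemma \ref{lem: idenper} we have
\[
CF_*^{\Z_2,N,<\epsilon}(H,J)\;\cong\;\Z_2[w]/\{w^{N+1}=0\}\otimes_{\Z_2}CF_*^{<\epsilon}(H_0,J_0),
\]
so that the quotient is
\[
CF^{\Z_2,+,N}_*(H,J)\;\cong\;\Z_2[w]/\{w^{N+1}=0\}\otimes_{\Z_2}CF^+_*(H_0,J_0).
\]
The structure maps $\phi_j$ defined in \eqref{eq:mapztwo} count $\Z_2$-quotients of parametrized Floer strips; the same action decreasing property forces $\phi_j$ to map $CF_*^{<\epsilon}$ into itself, hence to descend to operators $\overline{\phi_j}$ on $CF_*^+(H_0,J_0)$ satisfying the structure relations \eqref{eq: phi_rel}. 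This gives $CF_*^+(H_0,J_0)$ the desired $\Z_2$-complex structure, whose associated $\Z_2$-equivariant complex is precisely $CF^{\Z_2,+}_*(H,J)$ after passing to the direct limit in $N$.

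Next, I introduce the filtration by powers of $w$, exactly as in Section \ref{sec: lstss}:
\[
F_pCF^{\Z_2,+}_*(H,J)\;:=\;\Z_2[w]/\{w^{p+1}=0\}\otimes_{\Z_2}CF^+_*(H_0,J_0)\;\subset\;CF^{\Z_2,+}_*(H,J).
\]
This filtration is bounded below ($F_{-1}=0$) and exhaustive, so standard homological algebra produces a convergent spectral sequence $\{E^r_{p,q}\}$ abutting to $HW^{\Z_2,+}_*(H,J)$. Applying Lemma \ref{lem: alglsss} verbatim to the $\Z_2$-complex $(CF^+_*(H_0,J_0),\{\overline{\phi_j}\})$ identifies the first page as
\[
E^1_{p,q}\;=\;\begin{cases}HF^+_q(H_0,J_0), & p\geq 0,\\ 0, & p<0,\end{cases}
\]
with $d^1=\overline{\phi_1}$.

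Finally, I pass to the direct limit over admissible Hamiltonians $H$ using continuation maps, which are filtration preserving (they are induced by monotone homotopies that respect both the $\Z_2$-equivariant structure and the action filtration by $\epsilon$). Since tensor products, homology, and the spectral sequence construction commute with direct limits of chain complexes, the limiting spectral sequence converges to $HW^{\Z_2,+}_*(L;W)$ with first page
\[
E^1_{p,q}\;=\;\begin{cases}HW^+_q(L;W),&p\geq 0,\\ 0,&p<0.\end{cases}
\]
The only real point requiring care — and hence the main obstacle — is checking that the action-decreasing property plus Lemma \ref{lem: actiongovern} genuinely yields a subcomplex under the identification of Lemma \ref{lem: idenper}, so that the $\Z_2$-complex structure descends cleanly to the positive quotient; once this bookkeeping is done, the spectral sequence is a formal consequence of Lemma \ref{lem: alglsss}.
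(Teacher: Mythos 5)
Your proposal is correct and follows essentially the same route the paper takes: it verifies via Lemma \ref{lam: actdecreasing} and Lemma \ref{lem: actiongovern} that the identification of Lemma \ref{lem: idenper} and the $\Z_2$-complex structure $\{\phi_j\}$ descend to the positive quotient, and then applies the algebraic spectral sequence of Lemma \ref{lem: alglsss} and passes to direct limits. The paper leaves these steps as a one-line remark ("follow the same recipe"), and your write-up supplies exactly the intended bookkeeping.
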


\subsection{Morse--Bott spectral sequence} \label{sec: mbss} For our applications we need to know that the positive equivariant wrapped Floer homology is generated by $\Z_2$-pairs of Reeb chords; this is not entirely clear from the definition. We do this by constructing a Morse--Bott spectral sequence in $HW^{\Z_2, +}_*$. A similar technique was used in \cite{Gutt} for positive equivariant symplectic homology.

Suppose that the contact form $\alpha$ on the boundary $\p W$ is of {\it Morse--Bott type}, for the definition we refer to \cite{KKL}. To have the well-defined Maslov indices of Reeb chords we assume that the Maslov class $\mu_\mathcal{L}:\pi_2(\Sigma, \mathcal{L})\to \Z$ of the Legendrian $\mathcal{L}=\Fix(\rho)$ vanishes and that $\pi_1(\Sigma,\mathcal{L})=0$.

We arrange the action spectrum  of Reeb chords as  
\begin{equation}\label{eq: spectrum}
\Spec(\Sigma, \alpha, \mathcal{L}) = \{0 < T_1 < T_2 < \cdots\}. 
\end{equation}
For each $T \in \Spec(\Sigma, \alpha, \mathcal{L})$, denote the Morse--Bott submanifold  of length $T$ by  
$$
\mathcal{L}_T : = \{y \in \mathcal{L} \;|\; \phi_R^T(y) \in \mathcal{L}\},
$$
where $\phi_R^t$ is the flow of the Reeb vector field associated to the contact form $\alpha$. This forms a closed submanifold of $\mathcal{L}$ by definition of the Morse--Bott condition. For each Morse--Bott submanifold $\mathcal{L}_{T}$ we define the Maslov index $\mu(\mathcal{L}_{T})$ as the Maslov index of a Reeb chord starting at $\mathcal{L}_{T}$. Note that the Maslov index can be different at different components of $\mathcal{L}_T$. Starting from a Morse function $h_0: \mathcal{L}_T \rightarrow \R$, we form a periodic family of Morse functions $h = \{h_z\}$ in the sense of Section~\ref{sec: perMorcompl} following the recipe given in Example~\ref{ex: prodham}. This allows us to consider the $\Z_2$-complex structure on the Morse chain complex $(CM_*(h_0, g_0), \phi_j)$, together with the identification \eqref{eq: simplymorsecomplex}:
$$
CM_*^{\Z_2, N}(h, g) = \Z_2[w]/\{w^{N+1}=0\} \otimes_{\Z_2} CM_*(h_0, g_0).
$$
Denote the corresponding homology groups by $HM_*(\mathcal{L}_T)$ and $HM_*^{\Z_2, N}(\mathcal{L}_T)$. We do the same procedure for all Morse--Bott submanifolds; take a Morse function, form a periodic family, and so~on.

Let $H_0: \widehat W \rightarrow \R$ be an admissible Hamiltonian such that $H_0$ is $C^2$-small on $W$ and   depends only on the variable $r$ for $(r,y)\in (1-\epsilon, \infty)$. Since $\alpha$ is of Morse--Bott type, $H_0$ is automatically Morse--Bott. Perturb $H_0$ in a standard way using the Morse function $h_0$ on each submanifold $\mathcal{L}_T$; for an explicit description of the perturbation we refer to \cite{KvK}. For notational convenience, we denote the perturbed function still by $H_0$.  

Following the recipe in Example~\ref{ex: prodham}, we construct a periodic family $\{(H_N,J_N)\}_{N\ge 1}$ starting from $H_0$, which satisfies all the conditions in Remark~\ref{rmk: final conditions on H and f}. This yields the identification 
$$
CF^{\Z_2, +, N}_*(H_N, J_N) = \Z_2[w]/\{w^{N+1}=0\} \otimes_{\Z_2} CF_*^+(H_0, J_0)
$$
as in Lemma~\ref{lem: idenper} and Lemma~\ref{lem: actiongovern}. We also have the corresponding $\Z_2$-complex structure, say $(CF_*^+(H_0, J_0), \p, \phi_j)$. By Lemma~\ref{lam: actdecreasing} and Lemma~\ref{lem: actiongovern}, we can filter the complex $CF_*^{\Z_2, +, N}(H_N,J_N)$ by action values according to the spectrum \eqref{eq: spectrum}. The $E^0_{p*}$-terms of the resulting spectral sequence is freely generated by those generators of $CF_*^{\Z_2, +, N}(H_N, J_N)$ whose action is (approximately) equal to $T_p$. 

Note that, by Lemma~\ref{lem: actiongovern}, the generators in $CF_*^{\Z_2, +, N}(H_N, J_N)$ of action (approximately) $T_p$ correspond to the generators in $CF_*^+(H_0, J_0)$ of action (approximately) $T_p$. Consequently, we have
$$
E^0_{p*} = \Z_2[w]/\{w^{N+1}=0\} \otimes_{\Z_2} CF_*^{loc}(\mathcal{L}_{T_p}),
$$
where $CF_*^{loc}(\mathcal{L}_{T_p}; H_0, J_0)$ denotes the local chain complex near $\mathcal{L}_{T_p}$. It is well-known that the generators and Floer trajectories for $CF_*^{loc}(\mathcal{L}_{T_p}; H_0, J_0)$ are in one-to-one correspondence to those for the Morse chain complex $CM_*(h_0, g_0)$ up to a degree shift; see  \cite{Pozniak} for example. As a result, we have the following  identifications 
$$
E^0_{p*} = \Z_2[w]/\{w^{N+1}=0\} \otimes CF_*^{loc}(\mathcal{L}_{T_p}) =  \Z_2[w]/\{w^{N+1}=0\} \otimes CM_*(h_0,g_0) = CM^{\Z_2, N}_*(h_N,g_N).
$$
Actually, the correspondence between Morse and Floer trajectories is also valid for parametrized ones. This allows us to identify the $\Z_2$-complex structures on $CF_*^{loc}(\mathcal{L}_{T_p})$ and $CM_*(h_0, g_0)$. Therefore, the differential $d^0: E^0 \rightarrow E^0$ of the 0-page is identified with the $\Z_2$-equivariant differential~$\p^{\Z_2}$ of $CM_*^{\Z_2, N}(h_N,g_N)$. It follows that, up to a degree shift,
$$
E^1_{p,*} = HM^{\Z_2, N}_*(\mathcal{L}_{T_p}).
$$
The degree shift here is the same as that of the non-equivariant part; we refer to the Morse--Bott spectral sequence for the non-equivariant case in \cite{KKL}. After taking the direct limits, we conclude the following.

\begin{theorem}[Morse--Bott spectral sequence in $HW_*^{\Z_2, +}$]
There is a spectral sequence $\{E^r\}_{r\ge 0}$ converging to $HW_*^{\Z_2, +}(L;W)$ whose first page is given by
\begin{equation}\label{eq: mbss}
E^1_{p,q} = 
\begin{cases}
H_{p+q -\sh(\mathcal{L}_{T_p})+\frac{1}{2}\dim L}^{\Z_2}(\mathcal{L}_{T_p}),  & \text{$p>0$}, \\
0, & \text{otherwise,}	
\end{cases}
\end{equation}
where $\sh(\mathcal{L}_{T_p}) := \mu(\mathcal{L}_{T_p}) -\frac{1}{2}(\dim \mathcal{L}_{T_p} -1)$.
\end{theorem}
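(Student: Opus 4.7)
The plan is to exhibit the spectral sequence as the one associated to the action filtration on the positive equivariant chain complex, and then identify each column with an equivariant Morse complex of a Morse--Bott Legendrian submanifold. Starting from a periodic family $\{(H_N,J_N)\}$ built from an admissible Hamiltonian $H_0$ that is $C^2$-small on $W$ and depends only on $r$ near $\p W$, one perturbs $H_0$ in a small neighbourhood of each Morse--Bott submanifold $\mathcal{L}_{T_p}$ using the chosen Morse function $h_0$ on $\mathcal{L}_{T_p}$, as in \cite{KvK}. By Lemma \ref{lam: actdecreasing} and Lemma \ref{lem: actiongovern}, the subgroups spanned by generators of action less than a given threshold form subcomplexes of $CF_*^{\Z_2,+,N}(H_N,J_N)$; ordering the spectrum \eqref{eq: spectrum} then defines a bounded, exhaustive filtration whose associated spectral sequence converges to $HW_*^{\Z_2,+,N}(H_N,J_N)$.

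The $E^0_{p,*}$ page is spanned by generators whose action is (approximately) $T_p$. Under the identification of Lemma \ref{lem: idenper} these correspond to elements of $\Z_2[w]/\{w^{N+1}=0\}\otimes_{\Z_2} CF_*^{\mathrm{loc}}(\mathcal{L}_{T_p})$, where $CF_*^{\mathrm{loc}}(\mathcal{L}_{T_p})$ denotes the local wrapped Floer chain complex near $\mathcal{L}_{T_p}$. The central ingredient is a Po\'zniak-type statement: after a small Morse perturbation along $\mathcal{L}_{T_p}$, the generators of $CF_*^{\mathrm{loc}}(\mathcal{L}_{T_p})$ are in bijection with $\crit(h_0)$ and the Floer trajectories between them coincide, up to canonical degree shift, with the negative gradient trajectories of $h_0$ on $\mathcal{L}_{T_p}$. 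I would prove the parametrized refinement of this bijection, so that also the $s$-dependent solutions of \eqref{eq: moduli} restricted to a neighbourhood of $\mathcal{L}_{T_p}\times S^N$ are in bijection with the parametrized gradient trajectories on $\mathcal{L}_{T_p}\times S^N$ used in Section \ref{subsec:eMH}. Consequently the maps $\phi_j$ of the $\Z_2$-complex structure on $CF_*^{\mathrm{loc}}(\mathcal{L}_{T_p})$ agree with those on $CM_*(h_0,g_0)$, and the differential $d^0$ is identified with the equivariant Morse differential $\p^{\Z_2}$. Hence
\[
E^1_{p,*}\;\cong\;HM_{*-s_p}^{\Z_2,N}(\mathcal{L}_{T_p})\;\cong\;H_{*-s_p}^{\Z_2}(\mathcal{L}_{T_p})
\]
for the appropriate degree shift $s_p$, and $E^1_{p,*}=0$ for $p\le 0$ since only positive-action generators survive in $CF^{\Z_2,+,N}_*$.

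The degree shift is forced by matching the Morse index in $\mathcal{L}_{T_p}$ with the index $|Z_p|=-\mu(Z_p)-\tfrac{n}{2}$ of the corresponding Floer generator. For a critical point $x\in \crit(h_0)\subset \mathcal{L}_{T_p}$ paired with a critical point of $-\tilde f$ of index $k$, the Po\'zniak computation gives
\[
|Z_p|\;=\;-\mu(\mathcal{L}_{T_p})+\tfrac{1}{2}(\dim \mathcal{L}_{T_p}-1)-\ind(x;h_0)-k-\tfrac{n}{2},
\]
which is exactly the non-equivariant Morse--Bott shift of \cite{KKL} augmented by $-k$, i.e.\ by the equivariant degree. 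Rewriting $\dim L=n$ and $\sh(\mathcal{L}_{T_p})=\mu(\mathcal{L}_{T_p})-\tfrac{1}{2}(\dim\mathcal{L}_{T_p}-1)$ produces the asserted formula $E^1_{p,q}=H_{p+q-\sh(\mathcal{L}_{T_p})+\frac{1}{2}\dim L}^{\Z_2}(\mathcal{L}_{T_p})$. Finally, compatibility of the filtration with continuation maps and with the equivariant inclusions $S^N\hookrightarrow S^{N+1}$ lets one take the direct limits over $H$ and $N$, yielding a spectral sequence converging to $HW_*^{\Z_2,+}(L;W)$.

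The main obstacle is the parametrized Po\'zniak identification of $CF_*^{\mathrm{loc}}(\mathcal{L}_{T_p})$ together with its $\Z_2$-complex structure. The non-equivariant local model is classical, but here one must check that the $S^N$-family of Hamiltonians and almost complex structures constructed in Example \ref{ex: prodham} can be chosen within the Po\'zniak neighbourhoods so that the parametrized Floer equation \eqref{eq: moduli} reduces, after a small perturbation, to the coupled gradient system of Section \ref{subsec:eMH}. The transversality provided by Theorem \ref{thm: equivwraptran} together with the periodicity constraints of Remark \ref{rmk: final conditions on H and f} must be compatible with this local reduction, which is the only delicate analytic point in the argument; everything else is a routine translation of the corresponding non-equivariant Morse--Bott spectral sequence.
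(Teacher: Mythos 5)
Your proposal follows essentially the same route as the paper: filter $CF_*^{\Z_2,+,N}(H_N,J_N)$ by the action spectrum using Lemmas \ref{lam: actdecreasing} and \ref{lem: actiongovern}, identify each column via a parametrized Po\'zniak correspondence with the equivariant Morse complex $CM_*^{\Z_2,N}(h_N,g_N)$ of $\mathcal{L}_{T_p}$, and pass to direct limits. The paper is equally brief about the delicate points you flag (the parametrized local identification and the precise degree shift, which it delegates to \cite{Pozniak} and \cite{KKL}), so your argument matches it in both structure and level of detail.
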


\subsection{Generators of $HW^{\Z_2, +}_*$ are Reeb chords} \label{sec: genetorsof eHW} At the beginning of the construction in Section~\ref{sec: mbss}, we may assume that $\alpha$ is \emph{chord-non-degenerate}, i.e., every Reeb chord is non-degenerate. In this case, because of the $\Z_2$-symmetry, the non-degeneracy means that each Morse--Bott component of $\mathcal{L}_{T_p}$ consists of two points. The corresponding Reeb chords form a $\Z_2$-pair. The $\Z_2$-equivariant homology of the Morse--Bott submanifold $\mathcal{L}_{T_p}$ is therefore given by
$$
H_*^{\Z_2}(\mathcal{L}_{T_p};\Z_2) = \begin{cases}
\Z_2^r & \text{for $*=0$},\\
0 & \text{otherwise}.
\end{cases}
$$
Here $r$ denotes the number of $\Z_2$-pairs in $\mathcal{L}_{T_p}$. It follows that the $E^1$-page of the Morse--Bott spectral sequence \eqref{eq: mbss} is expressed by $\Z_2$-pairs of Reeb chords. We conclude the following.
\begin{corollary}\label{cor: genwhfpsotive}
Suppose the contact form $\alpha$ on $\p W$ is chord-non-degenerate. If $HW^{\Z_2,+}_k(L;W)\ne 0$ for some $k \in \Z$, then there exists a $\Z_2$-pair of Reeb chords, say $\{c,\I(c)\}$, whose Maslov index is given by
\begin{equation}\label{eq: orbitexist}
\mu(c)-\frac{n}{2}+\frac{1}{2}=k.
\end{equation}
\end{corollary}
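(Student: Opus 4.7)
The plan is to read the statement off the Morse-Bott spectral sequence \eqref{eq: mbss}, specialized to the chord-non-degenerate setting. First, I would pin down the shape of the Morse-Bott submanifolds. Chord-non-degeneracy makes every $\mathcal{L}_{T_p}$ a finite set of points, and the involution $\I$ permutes these points freely (an $\I$-fixed chord would contradict non-degeneracy, since the Reeb vector field is $\I$-anti-invariant so the chord would not be isolated as a critical point of the action). Hence each $\mathcal{L}_{T_p}$ decomposes into a disjoint union of free $\Z_2$-orbits, i.e.\ $\Z_2$-pairs $\{c,\I(c)\}$. Since the Borel construction of a free $\Z_2$-set is homotopy equivalent to its orbit space,
$$
H^{\Z_2}_{*}(\mathcal{L}_{T_p};\Z_2)=\begin{cases} \Z_2^{r}, & *=0,\\ 0, & *\ne 0,\end{cases}
$$
where $r$ denotes the number of $\Z_2$-pairs in $\mathcal{L}_{T_p}$, as asserted just before the statement.

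Next, I would plug this into the first page of \eqref{eq: mbss}. With $\dim\mathcal{L}_{T_p}=0$ and $\dim L=n$, the shift collapses to $\sh(\mathcal{L}_{T_p})=\mu(\mathcal{L}_{T_p})+\tfrac{1}{2}$, so $E^1_{p,q}\ne 0$ (with $p>0$) holds precisely when the subscript of the equivariant homology in \eqref{eq: mbss} vanishes, i.e.\ when
$$
p+q-\mu(\mathcal{L}_{T_p})-\tfrac{1}{2}+\tfrac{n}{2}=0,
$$
equivalently when $p+q=\mu(c)-\tfrac{n}{2}+\tfrac{1}{2}$ for some Reeb chord $c\in\mathcal{L}_{T_p}$.

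Finally, I would conclude by contrapositive. If no $\Z_2$-pair $\{c,\I(c)\}$ satisfies \eqref{eq: orbitexist}, then every $E^1_{p,q}$ with total degree $p+q=k$ is zero by the previous paragraph, so every $E^{\infty}_{p,q}$ in that degree vanishes as well, forcing $HW^{\Z_2,+}_k(L;W)=0$ and contradicting the hypothesis. The only mild obstacle is ensuring convergence of the spectral sequence at total degree $k$ is strong enough to support this inference; this is where one uses that the action filtration built in Section \ref{sec: mbss} is bounded below and that for any fixed Maslov degree only finitely many chord pairs can contribute, so the spectral sequence collapses to $HW_k^{\Z_2,+}$ in the usual sense.
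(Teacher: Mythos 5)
Your proposal follows the paper's own route: Section \ref{sec: genetorsof eHW} derives the corollary by observing that chord-non-degeneracy makes each $\mathcal{L}_{T_p}$ zero-dimensional, computing $H^{\Z_2}_*(\mathcal{L}_{T_p})$ as $\Z_2^r$ concentrated in degree $0$, and reading this off the $E^1$-page of \eqref{eq: mbss}. Your degree bookkeeping ($\sh(\mathcal{L}_{T_p})=\mu(\mathcal{L}_{T_p})+\tfrac{1}{2}$ when $\dim\mathcal{L}_{T_p}=0$, hence $p+q=\mu(c)-\tfrac{n}{2}+\tfrac{1}{2}$) and the concluding contrapositive and convergence remarks match the intended argument.

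The one point where you add something to the paper is your justification that the $\Z_2$-action on the chords of length $T_p$ is free, and the reason you give does not hold. The action sends a Reeb chord to $\I\circ c(T-\cdot)$, and anti-invariance of $R$ does not prevent a non-degenerate chord from being fixed: a fixed chord is exactly one with $c(t)=\I(c(T-t))$, i.e.\ a full traversal of a symmetric periodic Reeb orbit based at a point of $\mathcal{L}$ (the even iterates $c^{2\ell}$ of Section \ref{sec:indexiteration} are of precisely this form), and such chords are isolated and can be non-degenerate --- for instance the coordinate circles of an irrational ellipsoid invariant under complex conjugation. So the claim that a fixed chord ``would not be isolated as a critical point of the action'' is false. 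For a fixed chord the local contribution to the $E^1$-page is $H^{\Z_2}_*(\mathrm{pt})=\Z_2[w]$, nonzero in every non-negative degree, so without freeness the equality \eqref{eq: orbitexist} would have to be weakened to $\mu(c)-\tfrac{n}{2}+\tfrac{1}{2}+j=k$ for some $j\ge 0$. To be fair, the paper asserts the same freeness (``each Morse-Bott component of $\mathcal{L}_{T_p}$ consists of two points'') without argument, so your write-up is faithful to the source; but as a self-contained proof this step is a genuine gap, and the specific argument you supply for it is incorrect.
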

When $\I$ is an anti-symplectic involution, a $\Z_2$-pair of Reeb chords corresponds to a symmetric periodic Reeb orbit, see Figure \ref{fig:symmintro}. Hence,  $HW^{\Z_2,+}_*(L;W)\ne 0$ implies the existence of a symmetric periodic Reeb orbit.


\subsection{Computing positive equivariant wrapped Floer homology}\label{sec:compuewfgh} The vanishing property of wrapped Floer homology in Section~\ref{sec: vanHW} allows us to compute the positive equivariant wrapped Floer homology in many cases. We use the algebraic tools developed in Section~\ref{sec: ewf}. 

\begin{proposition}
Suppose that $HW_*(L; W)$ vanishes. Then the positive equivariant wrapped Floer homology group is given by
\begin{equation}\label{eq: computepewfh}
HW_*^{\Z_2, +}(L; W) \cong H_{* + n -1}^{\Z_2}(L, \p L) 
\end{equation}
\end{proposition}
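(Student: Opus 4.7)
The plan is to apply two results already established in the paper in sequence: the Leray--Serre type spectral sequence of Corollary \ref{cor: lsssfull} and the tautological long exact sequence of Proposition \ref{prop: tlspositive}.

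First, I would use the Leray--Serre type spectral sequence $\{E^r_{p,q}\}$ converging to $HW_*^{\Z_2}(L;W)$, whose first page reads $E^1_{p,q} = HW_q(L;W)$ for $p \geq 0$ and $0$ otherwise. Under the hypothesis $HW_*(L;W) = 0$, the entire $E^1$-page vanishes, so every subsequent page vanishes, and the spectral sequence degenerates trivially. Since the filtration is bounded below and exhaustive, this yields
\[
HW_*^{\Z_2}(L;W) = 0.
\]

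Second, I would plug this vanishing into the tautological long exact sequence from Proposition \ref{prop: tlspositive}:
\[
\cdots \to H_{*+n}^{\Z_2}(L,\partial L) \to HW_*^{\Z_2}(L;W) \to HW_*^{\Z_2,+}(L;W) \to H_{*+n-1}^{\Z_2}(L,\partial L) \to HW_{*-1}^{\Z_2}(L;W) \to \cdots.
\]
With the two flanking terms $HW_*^{\Z_2}(L;W)$ and $HW_{*-1}^{\Z_2}(L;W)$ both zero, the connecting map
\[
HW_*^{\Z_2,+}(L;W) \longrightarrow H_{*+n-1}^{\Z_2}(L,\partial L)
\]
becomes an isomorphism, which is precisely \eqref{eq: computepewfh}.

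In this case there is no genuine obstacle: the statement is a formal consequence of the spectral sequence and the tautological exact sequence, both of which do the substantive work. The only point that deserves a line of justification is that a spectral sequence whose $E^1$-page vanishes indeed forces the limit to vanish, which follows from the boundedness and exhaustiveness of the filtration on $CF_*^{\Z_2}(H,J)$ recorded in Section \ref{sec: lstss}.
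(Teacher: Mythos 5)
Your proposal is correct and follows exactly the paper's own argument: first deducing the vanishing of $HW_*^{\Z_2}(L;W)$ from the Leray--Serre type spectral sequence of Corollary \ref{cor: lsssfull}, then reading off the isomorphism from the tautological long exact sequence of Proposition \ref{prop: tlspositive}. The extra remark on why a vanishing $E^1$-page forces the abutment to vanish is a welcome touch of precision but does not change the route.
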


\begin{proof}
By the Leray--Serre type spectral sequence in Corollary~\ref{cor: lsssfull}, if $HW_*(L;W)$ vanishes, then $HW^{\Z_2}_*(L;W)$ also vanishes. Recall from Proposition~\ref{prop: tlspositive} that we have a long exact sequence
$$
\cdots \to H_{*+n}^{\Z_2}(L,\p L) \to HW_*^{\Z_2}(L;W) \to HW_*^{\Z_2,+}(L;W) \to \cdots
$$
from which it follows that
$$
HW_*^{\Z_2, +}(L; W) \cong H_{* + n -1}^{\Z_2}(L, \p L).
$$
This completes the proof.
\end{proof}
Since $HW_*(L;W)$ vanishes in the displaceable case (Corollary~\ref{cor: displaceable}), an immediate corollary is the following.
\begin{corollary}\label{cor: diseqhom}
If $\Sigma$ is displaceable from $\widehat L$ in $\widehat W$, then $HW^{\Z_2, +}_*(L;W)$ is given by \eqref{eq: computepewfh}.
\end{corollary}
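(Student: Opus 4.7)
The plan is to obtain the corollary as a short composition of two results already in place. The hypothesis of displaceability of $\Sigma$ from $\widehat{L}$ will give vanishing of the non-equivariant wrapped Floer homology, and the preceding Proposition then converts this into the stated isomorphism. No further geometric input or Floer-theoretic work should be necessary at this stage.

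More precisely, I would first invoke the corollary to Theorem \ref{thm: incvan}: under displaceability of $\Sigma$ from $\widehat{L}$ in $\widehat{W}$, the inclusion-induced maps $HW^{<\tau}_*(L;W) \to HW^{<\tau+e(L)}_*(L;W)$ are all zero, and passing to the direct limit as $\tau\to\infty$ yields $HW_*(L;W) = 0$. Next, I would feed this vanishing into the Leray--Serre type spectral sequence of Corollary \ref{cor: lsssfull}, whose first page reads $E^1_{p,q} = HW_q(L;W)$ for $p\ge 0$ and vanishes for $p<0$. Since $E^1 = 0$, all later pages vanish, and so the equivariant group $HW^{\Z_2}_*(L;W) = 0$ as well.

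With both $HW^{\Z_2}_*(L;W)$ and $HW^{\Z_2}_{*-1}(L;W)$ vanishing, the tautological long exact sequence of Proposition \ref{prop: tlspositive},
\[
\cdots \to H^{\Z_2}_{*+n}(L,\partial L) \to HW^{\Z_2}_*(L;W) \to HW^{\Z_2,+}_*(L;W) \to H^{\Z_2}_{*+n-1}(L,\partial L) \to HW^{\Z_2}_{*-1}(L;W) \to \cdots,
\]
collapses to the short exact piece $0 \to HW^{\Z_2,+}_*(L;W) \to H^{\Z_2}_{*+n-1}(L,\partial L) \to 0$, which is precisely the desired isomorphism \eqref{eq: computepewfh}. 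This is really the content of the Proposition stated immediately before the corollary, applied to the present displaceable setting.

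The only step that requires care is bookkeeping of the degree shift $(*+n-1)$ in the tautological sequence, which is inherited from the identification $HW^{\Z_2,<\epsilon}_*(L;W) \cong H^{\Z_2}_{*+n}(L,\partial L)$ used in Proposition \ref{prop: tlspositive}; this in turn rests on choosing $H$ to be $C^2$-small on $W$, consistent with the Floer data fixed in Remark \ref{rmk: final conditions on H and f}. There is no genuine obstacle, as each ingredient has been established in the preceding subsections.
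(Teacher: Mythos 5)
Your proposal is correct and follows exactly the paper's route: displaceability gives $HW_*(L;W)=0$ via the corollary to Theorem \ref{thm: incvan}, and the proposition immediately preceding the corollary (whose proof you correctly unpack via the Leray--Serre spectral sequence of Corollary \ref{cor: lsssfull} and the tautological exact sequence of Proposition \ref{prop: tlspositive}) then yields \eqref{eq: computepewfh}. No issues.
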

Note that the $\Z_2$-action on $(L, \p L)$ is induced  by the involution $\I$ on the ambient space $W$. So the further computation of the equivariant homology $H_{*}^{\Z_2}(L, \p L)$ depends on $\I$.
\subsubsection{Case of symplectic involutions}
We derive a simple version of a localization property on equivariant homology theory which is enough for our applications. 

\begin{proposition}\label{prop: nonvanishingeqhom}
Suppose that the $\Z_2$-action by $\I$ is  free on the boundary $\p L$ and that the fixed point set $\Fix(\I|_L)$ is non-empty. Then $H_{k}^{\Z_2}(L, \p L)\ne 0$ for $k \geq n$.
\end{proposition}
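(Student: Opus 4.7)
The plan is to combine the long exact sequence of the equivariant pair $(L,\p L)$ with a section argument for the Borel fibration using a fixed point of $\I|_L$. Recall $L$ is Lagrangian, so $\dim L = n$ and $\dim \p L = n-1$.

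First I would use the free action on $\p L$ to kill its equivariant homology in high degree. Since $\Z_2$ acts freely on $\p L$, the Borel construction $\p L \times_{\Z_2} E\Z_2$ is homotopy equivalent to the quotient $\p L / \Z_2$, which is a space of dimension at most $n-1$. Therefore
$$
H_k^{\Z_2}(\p L;\Z_2) \;\cong\; H_k(\p L / \Z_2;\Z_2) \;=\; 0 \quad \text{for all } k \geq n.
$$
Feeding this into the long exact sequence of the $\Z_2$-pair
$$
\cdots \to H_k^{\Z_2}(\p L) \to H_k^{\Z_2}(L) \to H_k^{\Z_2}(L,\p L) \to H_{k-1}^{\Z_2}(\p L) \to \cdots
$$
shows that the map $H_k^{\Z_2}(L) \to H_k^{\Z_2}(L,\p L)$ is injective for every $k \geq n$. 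It thus suffices to prove $H_k^{\Z_2}(L;\Z_2) \neq 0$ for $k \geq n$.

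Next I would use a fixed point to produce non-vanishing equivariant homology of $L$ in all degrees. Pick any $p \in \Fix(\I|_L)$. The Borel fibration $L \hookrightarrow L_{\mathrm{Borel}} \xrightarrow{\pi} B\Z_2$ admits a section
$$
s : B\Z_2 \longrightarrow L_{\mathrm{Borel}}, \qquad [e] \longmapsto [p,e],
$$
since $p$ is a fixed point of $\I$. The identity $\pi \circ s = \mathrm{id}_{B\Z_2}$ forces $s_\ast : H_k(B\Z_2;\Z_2) \hookrightarrow H_k^{\Z_2}(L;\Z_2)$ to be split injective. Because $H_k(B\Z_2;\Z_2) = H_k(\R P^\infty;\Z_2) = \Z_2$ for every $k \geq 0$, we conclude that $H_k^{\Z_2}(L;\Z_2) \neq 0$ for all $k \geq 0$. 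Combined with the injection above, this yields $H_k^{\Z_2}(L,\p L;\Z_2) \neq 0$ for $k \geq n$.

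The argument is essentially formal once the two ingredients are identified, so there is no real technical obstacle; the only thing to be careful about is that the Borel fibration genuinely has a section, which is where the hypothesis $\Fix(\I|_L) \neq \emptyset$ enters in an essential way. The freeness hypothesis on $\p L$ is used only to truncate $H_\ast^{\Z_2}(\p L)$ above dimension $n-1$, which is precisely the degree range needed to push non-vanishing classes of $H_\ast^{\Z_2}(L)$ across to the relative group.
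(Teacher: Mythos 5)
Your proof is correct and follows essentially the same route as the paper: the paper also uses the fixed point to produce a section of the Borel fibration $L_{\mathrm{Borel}}\to \R P^\infty$ (giving $H_k^{\Z_2}(L)\ne 0$ for all $k\ge 0$) and the freeness on $\p L$ to get $H_k^{\Z_2}(\p L)\cong H_k(\p L/\Z_2)=0$ for $k\ge n$, then concludes via the long exact sequence of the pair. No gaps.
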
    
This follows directly from the following more general observation.

\begin{lemma}
Let $X^n$ be a compact manifold with boundary equipped with an involution $I$. Suppose that $I$ acts freely on $\p X$ and that $\Fix(I)$ is non-empty. Then $H^{\Z_2}_k(X, \p X)\ne 0$ for $k \geq n$
\end{lemma}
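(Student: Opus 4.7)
The plan is to combine the long exact sequence of the pair $(X_{\mathrm{Borel}},(\partial X)_{\mathrm{Borel}})$ in equivariant homology with two inputs: (i) equivariant homology of $\partial X$ vanishes above degree $n-1$ because $I$ acts freely there, and (ii) equivariant homology of $X$ is nonzero in every nonnegative degree because $X$ has a fixed point. Given these two facts, the desired nonvanishing of $H^{\Z_2}_k(X,\partial X)$ for $k\geq n$ is immediate from the exact sequence.

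First I would record that since $I$ acts freely on $\partial X$, the Borel construction $(\partial X)_{\mathrm{Borel}}=\partial X\times_{\Z_2}E\Z_2$ has the homotopy type of the free quotient $\partial X/\Z_2$, so
$$H^{\Z_2}_k(\partial X;\Z_2)=H_k(\partial X/\Z_2;\Z_2).$$
As $\partial X/\Z_2$ is a closed $(n-1)$-manifold, this group vanishes for $k\geq n$. Plugging into the long exact sequence
$$\cdots\to H^{\Z_2}_k(\partial X)\to H^{\Z_2}_k(X)\to H^{\Z_2}_k(X,\partial X)\to H^{\Z_2}_{k-1}(\partial X)\to\cdots$$
yields, for every $k\geq n$, an injection $H^{\Z_2}_k(X)\hookrightarrow H^{\Z_2}_k(X,\partial X)$.

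Next, I would show $H^{\Z_2}_k(X;\Z_2)\neq 0$ for every $k\geq 0$ using the fixed point. Pick $p\in\mathrm{Fix}(I)$. The inclusion $\iota:\{p\}\hookrightarrow X$ and the constant map $\pi:X\to\mathrm{pt}$ are both $\Z_2$-equivariant (with $p$ fixed, $\{p\}$ carries the trivial action), and $\pi\circ\iota=\mathrm{id}_{\mathrm{pt}}$. Thus the induced composition in equivariant homology
$$H^{\Z_2}_*(\mathrm{pt};\Z_2)\xrightarrow{\iota_*}H^{\Z_2}_*(X;\Z_2)\xrightarrow{\pi_*}H^{\Z_2}_*(\mathrm{pt};\Z_2)$$
is the identity, so $\iota_*$ is split injective. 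Since $H^{\Z_2}_*(\mathrm{pt};\Z_2)=H_*(B\Z_2;\Z_2)=H_*(\R P^\infty;\Z_2)$ contains a copy of $\Z_2$ in every nonnegative degree, this forces $H^{\Z_2}_k(X;\Z_2)\neq 0$ for all $k\geq 0$. Combined with the injection from the previous paragraph, this gives $H^{\Z_2}_k(X,\partial X;\Z_2)\neq 0$ for all $k\geq n$, as required.

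There is no serious obstacle; the only place to be a bit careful is the transition $H^{\Z_2}_k(\partial X)\cong H_k(\partial X/\Z_2)$, where one must use that the fibration $E\Z_2\to (\partial X)_{\mathrm{Borel}}\to \partial X/\Z_2$ has contractible fibers for a free action. Everything else is formal: an equivariant long exact sequence of a pair, and an equivariant section provided by a fixed point. If desired, the argument can also be dualized through the universal coefficient theorem over the field $\Z_2$, starting from the classical fact that $H^*_{\Z_2}(X;\Z_2)$ admits $H^*(B\Z_2;\Z_2)$ as a split summand whenever $\mathrm{Fix}(I)\neq\emptyset$.
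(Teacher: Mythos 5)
Your proposal is correct and follows essentially the same route as the paper: the paper also establishes $H_k^{\Z_2}(X)\neq 0$ for all $k\geq 0$ via the section of $X_{\mathrm{Borel}}\to \R P^\infty$ determined by a fixed point, identifies $H_k^{\Z_2}(\p X)$ with $H_k(\p X/\Z_2)$ using freeness, and concludes from the long exact sequence of the pair. The two arguments differ only in presentation (splitting via the constant map to a point versus the Borel fibration projection, which induce the same map).
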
	
	

\begin{proof}
We first claim that $H_k^{\Z_2}(X)\ne 0$  for $k \geq 0$. Take a fixed point $x \in \Fix(I)\subset  X\setminus \p X$. We consider the inclusion $i:\{x\}\hookrightarrow X$. This map is clearly $\Z_2$-equivariant, so we obtain a section $\overline{i}:\{x\}_\text{Borel}=\R P^\infty \to X_\text{Borel}$ of the bundle $\pi:X_\text{Borel}\to \R P^\infty$. Since $\pi \circ \overline{i}=\id$, it follows that the induced map
$$
\overline{i}_*:H_*(\R P^\infty)\to H_*(X_\text{Borel})= H_*^{\Z_2}(X)
$$
is injective. In particular $H_k^{\Z_2}(X)\ne0$ for $k \geq 0$ as we claimed.

Consider the long exact sequence in homology for the pair $(X_\text{Borel}, \p X_\text{Borel})$, namely
$$
\cdots\rightarrow H_k(\p X_\text{Borel}) \rightarrow H_k(X_\text{Borel}) \rightarrow H_k(X_\text{Borel}, \p X_\text{Borel}) \rightarrow H_{k-1}(\p X_\text{Borel}) \rightarrow\cdots.
$$
Since the $\Z_2$-action is free on the boundary $\p X$, we have
$$
H_k^{\Z_2}(\p X) = H_k(\p X_\text{Borel}) = H_k(\p X/\Z_2).
$$
In particular, since $\dim (\p X/\Z_2) = n-1$, we have $H_k(\p X_\text{Borel}) = 0$ for $k \geq n$. Applying this together with the previous claim to the long exact sequence, we find that $H_k(X_\text{Borel}, \p X_\text{Borel}) \cong H_k(X_\text{Borel})\ne 0$ for $k \geq n+1$, and the inclusion $0 \neq H_n(X_\text{Borel}) \hookrightarrow H_n(X_\text{Borel}, \p X_\text{Borel})$ is injective so that also $H_n(X_\text{Borel}, \p X_\text{Borel})\ne 0$. This completes the proof.
\end{proof}

Applying Proposition~\ref{prop: nonvanishingeqhom} to the displaceable case computed in Corollary~\ref{cor: diseqhom}, we conclude the following.

\begin{theorem}\label{cor: sympinvcomp}
Suppose that $\Sigma$ is displaceable from $\widehat L$ in $\widehat W$ and that $\I$ is an exact symplectic involution on $W$. If the  involution $\I|_L$ acts freely on the boundary and the fixed point set $\Fix(\I|_L)$ is non-empty, then $HW^{\Z_2, +}_k(L;W)\ne 0$ for all $k \geq 1$.
\end{theorem}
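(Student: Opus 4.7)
The plan is to combine two results established earlier in the excerpt: the computation of $HW^{\Z_2,+}_*$ in the displaceable case (Corollary \ref{cor: diseqhom}) and the non-vanishing of $\Z_2$-equivariant relative homology (Proposition \ref{prop: nonvanishingeqhom}). Since the hypotheses of Theorem \ref{cor: sympinvcomp} include the displaceability of $\Sigma$ from $\widehat L$ in $\widehat W$, Corollary \ref{cor: diseqhom} gives the isomorphism
\begin{equation*}
HW^{\Z_2,+}_k(L;W) \;\cong\; H^{\Z_2}_{k+n-1}(L,\partial L)
\end{equation*}
for every $k \in \Z$, where the $\Z_2$-action on $(L,\partial L)$ is induced by the restriction $\I|_L$.

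Next, I would apply Proposition \ref{prop: nonvanishingeqhom} to the pair $(L,\partial L)$. The hypotheses match directly: $L$ is a compact manifold of dimension $n$ with boundary, $\I|_L$ is an involution on $L$ that acts freely on $\partial L$ by assumption, and the fixed point set $\Fix(\I|_L)$ is non-empty by assumption. Consequently $H^{\Z_2}_j(L,\partial L) \neq 0$ for every $j \geq n$.

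Combining the two facts, $HW^{\Z_2,+}_k(L;W) \neq 0$ whenever $k + n - 1 \geq n$, i.e., for every $k \geq 1$, which is exactly the conclusion of Theorem \ref{cor: sympinvcomp}.

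\textbf{Main obstacle.} There is essentially no analytic obstacle at this stage, since all the heavy lifting, namely the construction of $HW^{\Z_2,+}_*$, the vanishing of $HW_*(L;W)$ under displaceability (Theorem \ref{thm: incvan}), the tautological long exact sequence (Proposition \ref{prop: tlspositive}), and the localization-style argument for $H^{\Z_2}_*(L,\partial L)$ using a fixed point of $\I|_L$, has already been carried out. The only point to verify carefully is that the $\Z_2$-action on $(L,\partial L)$ used in the isomorphism $HW^{\Z_2,<\epsilon}_*(L;W) \cong H^{\Z_2}_{*+n}(L,\partial L)$ is the correct one, namely the one induced by $\I$, so that Proposition \ref{prop: nonvanishingeqhom} is applicable with the given hypotheses on $\I|_L$; this is already asserted in the discussion preceding Proposition \ref{prop: tlspositive}.
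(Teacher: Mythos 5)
Your proposal is correct and follows exactly the paper's own argument: the paper derives this theorem by applying Proposition \ref{prop: nonvanishingeqhom} to the computation of Corollary \ref{cor: diseqhom}, which is precisely your combination of the two results and the degree shift $k+n-1\ge n \iff k\ge 1$. Your remark about checking that the $\Z_2$-action on $(L,\partial L)$ is the one induced by $\I$ is also the right point of care, and it is indeed addressed in the discussion preceding Proposition \ref{prop: tlspositive}.
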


\begin{example}
Consider the real Lagrangian $\R^n$ which is the fixed point set of complex conjugation on $\C^n$. Consider the antipodal map $\I$ as an exact symplectic involution on $\C^n$. Note that $S^{2n-1} = \p B^{2n}$ is displaceable from $\R^n$ in $\C^n$,  and that the fixed point set $\Fix(\I|_{B^n})$ of the antipodal map on $B^n$ is a singleton set (the origin). It follows from Theorem~\ref{cor: sympinvcomp} that $HW_k^{\Z_2, +}(\R^n; \C^n)\ne 0$ for all $k\ge 1$.
\end{example}
\subsubsection{Case of anti-symplectic involutions}
Since the anti-symplectic involution $\I$ on $W$ restricts to the identity map on $L$, we have
$$
H_*^{\Z_2}(L,\p L)\cong H_*(L,\p L)\otimes_{\Z_2} H_*(\R P^\infty).
$$
By Corollary~\ref{cor: diseqhom} and Lefschetz duality, we obtain the following.
\begin{theorem}
	\label{cor: antisympcomp}
	Suppose that $\Sigma$ is displaceable from $\widehat{L}$ in $\widehat{W}$ and that $L$ is a real Lagrangian, i.e., $\I$ is exact anti-symplectic. Then we have
	$$
	HW_*^{\Z_2,+}(L;W)\cong \Big(H_*(L,\p L)\otimes_{\Z_2} H_*(\R P^\infty) \Big)[n-1],
	$$
	where $[n-1]$ means the downward shift in grading by $n-1$. In particular, if $L$ is orientable, then $HW_k^{\Z_2,+}(L;W)\ne 0$ for all $k\ge 1$.
\end{theorem}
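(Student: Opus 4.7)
The plan is to apply Corollary \ref{cor: diseqhom} to reduce the computation to the equivariant homology of the pair $(L,\p L)$, and then to exploit the fact that $\I$ restricts to the identity on a real Lagrangian.

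Since $\Sigma$ is displaceable from $\widehat L$ in $\widehat W$, Corollary \ref{cor: diseqhom} yields the isomorphism
$$
HW_*^{\Z_2,+}(L;W) \;\cong\; H_{*+n-1}^{\Z_2}(L,\p L).
$$
Because $L=\Fix(\I)$, the restriction $\I|_L$ is by definition the identity, so the induced $\Z_2$-action on the pair $(L,\p L)$ is trivial. Hence the Borel construction splits as a product,
$$
L_{\mathrm{Borel}}=L\times_{\Z_2}E\Z_2 = L\times B\Z_2 = L\times \R P^\infty,
$$
and similarly for $\p L$. The K\"unneth theorem with $\Z_2$-coefficients (applicable without a Tor-term since $\Z_2$ is a field) then gives
$$
H_*^{\Z_2}(L,\p L)\;\cong\; H_*(L,\p L)\otimes_{\Z_2}H_*(\R P^\infty).
$$
Substituting this into the first isomorphism and re-expressing the grading shift via the bracket notation proves the first assertion.

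For the non-vanishing statement, recall that $H_q(\R P^\infty;\Z_2)\cong \Z_2$ for every $q\geq 0$. If $L$ is orientable, Lefschetz duality gives $H_n(L,\p L;\Z_2)\cong H^0(L;\Z_2)\neq 0$. For any $k\geq 1$ we have $k+n-1\geq n$, so the direct summand $H_n(L,\p L)\otimes H_{k-1}(\R P^\infty)$ of $H_{k+n-1}^{\Z_2}(L,\p L)$ is nonzero, and therefore $HW_k^{\Z_2,+}(L;W)\neq 0$.

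No serious obstacle is anticipated. The proof is essentially a chain of standard algebraic-topology identifications; the key point is the observation that the defining property $L=\Fix(\I)$ forces the $\Z_2$-action on $(L,\p L)$ to be trivial, which in turn makes the Borel construction split and the K\"unneth formula apply cleanly.
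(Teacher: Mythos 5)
Your proof is correct and follows essentially the same route as the paper: the displaceability computation (Corollary \ref{cor: diseqhom}), the observation that $\I|_L=\mathrm{id}$ makes the $\Z_2$-action on $(L,\p L)$ trivial so the Borel construction splits and K\"unneth applies, and Lefschetz duality for the non-vanishing claim. You simply spell out the degree bookkeeping more explicitly than the paper does.
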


\begin{example}
	Consider again the real Lagrangian $\R^n$ in $\C^n$ and the complex conjugation $\I$ which is an anti-symplectic involution on $\C^n$. Then one checks that
	$$
	H_*(L,\p L)\cong \tilde{H}_*(S^n)=\begin{cases}
		\Z_2 & \text{if $*=n$}, \\
		0 &\text{otherwise}.
	\end{cases}
	$$
By Theorem~\ref{cor: antisympcomp}, $HW_k^{\Z_2,+}(\R^n;\C^n)\ne 0$ for all $k\ge 1$.
\end{example}

\section{On the minimal number of  symmetric periodic Reeb orbits}\label{secapplication}
  Let $(\Sigma,\alpha,\rho)$ be a real contact manifold. Assume that $\mathcal{L}:=\Fix(\rho)$ is nonempty and that the Maslov class $\mu_{\mathcal{L}}:\pi_2(\Sigma,\mathcal{L})\to \Z$ of the Legendrian $\mathcal{L}$ vanishes.   Abbreviate by $R=R_{\alpha}$ the associated Reeb vector field and by $\xi=\text{ker} (\alpha)$ the contact structure. Since the involution $\rho$ is anti-contact, one can   associate to each  Reeb chord  $c : [0,T] \rightarrow \Sigma$  a \textit{symmetric periodic orbit}
\begin{equation*}
c^2 (t):= \begin{cases} c(t) &  \; t \in [0,T], \\ \rho \circ c(2T-t) & \; t\in [T,2T].     \end{cases}
\end{equation*}
We apply the equivariant wrapped Floer homology to the study of the minimal number of symmetric periodic Reeb orbits on the real contact manifold $(\Sigma, \alpha,   \rho)$. In order to do this, we will develop an index theory on iterations of symmetric periodic Reeb orbits.



\subsection{Index of symmetric periodic Reeb orbits\nopunct}\label{sec:indexsym}

\subsubsection{The Robbin-Salamon index} \label{sec: RSindex}
We briefly recall the definition of the Robbin-Salamon index of a path of Lagrangians. For more details we refer to \cite{RSindex} and  \cite[Chapter 10]{book}.



Let $\mathscr{L}(n)$ be the Lagrangian Grassmannian, i.e., the space of Lagrangian subspaces of $(\C^n ,\omega_0)$, where $\omega_0= \tfrac{i}{2}\sum_{j=1}^n dz_j \wedge d \overline{z}_j$ is the standard symplectic form. In coordinates $z_j = q_j + ip_j$, the form $\omega_0$ equals $\sum_{j=1}^n dq_j \wedge dp_j$. We fix $\Lambda_0 \in \mathscr{L}(n)$.   Without loss of generality we may assume that $\Lambda_0 =\R^n \times \left\{ 0 \right\}  \subset \C^n$. Any Lagrangian complement $\Lambda_1$, i.e., $\Lambda_1 \in \mathscr{L}(n)$ such that $\Lambda_0 \cap \Lambda_1 = \left \{ 0 \right \}$, can then be written as
\begin{equation}\label{eq:lagcom}
\Lambda_1= \left \{  (Bx, x) : x \in \R^n  \right\}
\end{equation}
for some real symmetric $n\times n$ matrix $B$. Choose a smooth path $\Lambda: (-\epsilon, \epsilon) \rightarrow \mathscr{L}(n)$ such that 
$$
\Lambda(t) = \left \{    (x,A(t)x) : x\in \R^n     \right\},
$$
where $A$ is a path of symmetric $n \times n$ matrices satisfying $A(0)=0$ so that $\Lambda(0)=\Lambda_0$. Abbreviate ${\widehat{\Lambda}}=\dot{\Lambda}(0)$. Since $\Lambda_0 \cap \Lambda_1 =\left \{0 \right\}$, for each  $(x,0)  \in \Lambda_0$ there exists a unique $w_x(t) \in \Lambda_1$ such that $(x,0) + w_x(t) \in \Lambda(t)$, provided that $\epsilon>0$ is small enough. We then define the quadratic form
$$
Q^{\Lambda_0,\Lambda_1}_{\Lambda(t)} ( (x,0)) := \omega_0 ( (x,0), w_x(t) ).
$$
By means of \eqref{eq:lagcom} for each $ t \in (-\epsilon, \epsilon)$  we find a unique $ y(t)  \in \R^n$ such that $w_x(t)=(By(t), y(t))$.   It follows that
\begin{equation}\label{eq:yA}
(x,0) + w_x(t) = ( x + By(t), y(t))\in \Lambda(t) \quad \Rightarrow \quad y(t) = A(t)(x+By(t) ).
\end{equation}
Then the quadratic form $Q^{\Lambda_0,\Lambda_1}_{\Lambda(t)} $ becomes 
$$
Q^{\Lambda_0,\Lambda_1}_{\Lambda(t)} ( (x,0)) = \omega_0((x,0), (By(t), y(t) ) ) = \langle x, y(t)\rangle.
$$
Differentiating with respect to $t$ at $t=0$ gives rise to
\[
Q^{\Lambda_0,\Lambda_1}_{\widehat{\Lambda}} ( (x,0)):= \frac{d}{dt}\bigg|_{t=0} Q^{\Lambda_0,\Lambda_1}_{\Lambda(t)} ( (x,0)) =  \langle x, \dot{y}(0)\rangle
= \langle x, \dot{A}(0)x\rangle,
\]
where the last equality follows from \eqref{eq:yA} together with $y(0)=0$. Note that the expression of $Q^{\Lambda_0,\Lambda_1}_{\widehat{\Lambda}} $ does not depend on the choice of $B$, or equivalently, $\Lambda_1$. Thus, we can write $ Q_{\widehat{\Lambda}} =Q^{\Lambda_0,\Lambda_1}_{\widehat{\Lambda}} $.

We are ready to define the Robbin-Salamon index. Fix $V\in \mathscr{L}(n)$ and   a path $\Lambda:[0,T]\rightarrow \mathscr{L}(n)$. For each $ t \in [0,T]$, the \textit{crossing form} associated to $\Lambda$ and $V$ is defined to be
$$
C(\Lambda,V,t):= Q_{\dot{\Lambda}(t)} |_{V\cap \Lambda(t)}. 
$$
If $C(\Lambda,V,t)$ is nonsingular, then $t$ is called a \textit{regular crossing}. Suppose that the path $\Lambda$ admits only regular crossings. Then the \textit{Robbin-Salamon index} of $\Lambda$ with respect to $V$ is defined to be
$$
\mu_{\RS}(\Lambda, V) := \frac{1}{2}\text{sgn} C(\Lambda, V,0) + \sum_{\substack{t: \text{ crossing} \\ 0<t<T}} \text{sgn}C(\Lambda,V,t)+\frac{1}{2}\text{sgn} C(\Lambda, V,T).
$$
Since regular crossings are isolated, the sum is finite. This index has the  following properties: 
\begin{itemize}
\item (Homotopy)  Two paths $\Lambda_0$, $\Lambda_1$ are homotopic with same endpoints if and only if they have the same Robbin-Salamon index.
\item (Catenation) For $a<c<b$, it holds that $\mu_{\RS}(\Lambda|_{[a,c]}, V) +\mu_{\RS}(\Lambda|_{[c,b]}, V) =\mu_{\RS}(\Lambda|_{[a,b]}, V)$.
\item (Reversal) If $\Lambda:[0,1]\rightarrow \mathscr{L}(n)$ is a path of Lagrangians, then $\mu_{\RS}(\Lambda, V) =- \mu_{\RS}(\overline{\Lambda}, V)$, where $\overline{\Lambda}(t):=\Lambda(1-t).$
\item (Naturality) If $\Phi$ is a path of symplectic matrices and if $V_0, V_1 \in \mathscr{L}(n)$, then $\mu_{\RS}(\Phi V_0, V_1) = - \mu_{\RS}(\Phi ^{-1}V_1, V_0)$. If $\Phi$ is anti-symplectic, we have  $\mu_{\RS}(\Phi V_0, V_1) =  \mu_{\RS}(\Phi^{-1} V_1, V_0)$.
\end{itemize}

\subsubsection{The spectral flow} \label{secspectral} Let 
$
\mathcal{S} = C^{\infty}([0,T], {\rm Sym}(2n) )
$
be the space of smooth paths of symmetric $ 2n  \times  2n $ matrices endowed with the metric
$$
d(S_0 , S_1 ) = \int_0^T || S_0(t) - S_1(t) ||\ dt, \quad S_0 , S_1 \in \mathcal{S}.
$$
We introduce the   Hilbert space
$$
W_{\R^{n}}^{1,2}( [0,T], \C^{n}) = \left \{  v \in W^{1,2}([0,T],\C^{n} ) \; : \; v(0), v(T) \in \R^{n}          \right \}.
$$ 
For $ S \in \mathcal{S}$, consider the bounded linear operator between Hilbert spaces
\begin{equation}\label{eq:buondedlinear}
L_S : W_{\R^{n} }^{1,2}( [0,T], \C^{n} ) \rightarrow L^{2}( [0,T], \C^{n} ), \quad v \mapsto -J_0 \p_t v - S v.
\end{equation}
It is well-known that the operator $L_S$ is a symmetric Fredholm operator of index zero. Moreover, its spectrum 
$$
\mathfrak{S}(L_S) = \left \{ \lambda \in \C \; : \; L_S- \lambda \text{Id is non-invertible} \right\}
$$
has the following properties:
\begin{itemize}
\item It consists precisely of the eigenvalues;
\item It is real and discrete;
\item The eigenvalues can accumulate only at $\infty$.
\end{itemize}
Note that if $S\equiv 0$, then the operator $L_0$ has $\mathfrak{S}(L_0) = \left\{ \pi k \; : \; k \in\Z\right\}$ and each eigenvalue $\pi k$ has multiplicity $n$. An important fact is that by  Kato's perturbation theory \cite{Kato1}  the eigenvalues of $L_S$ vary continuously under perturbations of $S$.  It follows that there exist continuous mappings  
$$ 
\lambda_k  : \mathcal{S} \rightarrow \mathfrak{S} := \left \{ ( S, \lambda ) \in \mathcal{S} \times \R : \lambda \in \mathfrak{S}(L_S) \right\},  \quad k \in \Z
$$
having the following properties:
\begin{itemize}
\item (Continuity) For every $k\in \Z$, $S\mapsto \lambda_k(S)$ is a continuous map;
\item (Monotonicity) For every $k \in \Z$, $\lambda_k (S) \leq \lambda_{k+1}(S)$;
\item  (Eigenvalue) For each  $S$, $\mathfrak{S}(L_S)=\left\{ \lambda_k(S) :  k \in \Z\right\}$;
\item (Multiplicity) If $\lambda \in \mathfrak{S}(L_S)$, then $\# \left\{ k \in \Z  :  \lambda_k(S) =\lambda \right\} = \text{multiplicity of $\lambda$}$;
\item (Normalization) For $k\in \left \{ 1,2,\dots,n\right\}$, $\lambda_k(0) =0$.
\end{itemize}

\noindent Associated to $S \in \mathcal{S}$ is the path of symplectic matrices $\Phi_S :[0,T]\rightarrow \text{Sp}(2n)$, i.e., the solution of the Cauchy problem $\p_t \Phi_S(t) = J_0 S(t)\Phi_S(t)$, $\Phi_S(0) = \id$. The following theorem relates the Robbin-Salamon index of the path $\Phi_S$ with respect to the real Lagrangian $\R^n$ to the spectrum of~$L_S$ in the non-degenerate case. For the proof, we refer the reader to \cite[Theorem~11.2.3]{book}.

\begin{theorem} \label{thm:spec1} Assume that $\Phi_S(T) \R^{n}  \cap \R^{n}  = \left \{ 0 \right \}.$ Then we have $$
\mu_{ {\rm RS}}( \Phi_S \R^{n} , \R^{n} ) = \max \left\{ k \in \Z : \lambda_k(S) <0 \right\} - \frac{n}{2}.
$$
\end{theorem}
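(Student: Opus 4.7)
The plan is to establish the identity by checking that both sides of it descend to the same $\Z$-valued function on the open set $\mathcal{S}^{\mathrm{nd}} := \{S \in \mathcal{S} \mid \Phi_S(T)\R^n \cap \R^n = \{0\}\}$. This is done by proving (a) both sides are locally constant on $\mathcal{S}^{\mathrm{nd}}$, (b) both sides jump by the same amount when a homotopy $S_\tau$ crosses a codimension-one wall of degenerate operators, and (c) the identity holds at a single reference point. Much of the scheme is standard in index theory (cf.\ Salamon's lectures and the Cieliebak--Frauenfelder--Paternain book cited), but I spell out what I would verify.

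For local constancy, continuity of the eigenvalues $\lambda_k$ in $S$ together with $0 \notin \mathfrak{S}(L_S)$ ensures that $\max\{k : \lambda_k(S) < 0\}$ is locally constant on $\mathcal{S}^{\mathrm{nd}}$. For the Robbin--Salamon side, the locus of Lagrangians transverse to $\R^n$ is an open affine chart of $\mathscr{L}(n)$, hence contractible; under a small homotopy of $S$ within $\mathcal{S}^{\mathrm{nd}}$, the endpoint $\Phi_S(T)\R^n$ stays in this chart and one can use the (Homotopy) and (Catenation) properties of $\mu_{\mathrm{RS}}$ to see that the value does not change.

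The heart of the argument is the jump formula across a codimension-one wall. Suppose $S_\tau$ is a generic path with $\ker L_{S_{\tau_0}}$ one-dimensional. By Kato's perturbation theory, a single eigenvalue $\lambda_k(S_\tau)$ crosses zero transversely at $\tau = \tau_0$, with derivative
\[
\dot{\lambda}_k(\tau_0) = -\int_0^T \langle \dot S_{\tau_0}(t) v(t), v(t)\rangle\, dt,
\]
where $v$ is a normalized eigenvector. On the spectral side, this changes $\max\{k : \lambda_k < 0\}$ by $-\mathrm{sgn}(\dot{\lambda}_k(\tau_0))$. On the Robbin--Salamon side, on exactly one side of $\tau = \tau_0$ the endpoint crossing at $t = T$ opens into an interior crossing at some $t_* < T$ contributing the full $\mathrm{sgn}\, C(\Lambda, \R^n, t_*)$ to the sum, while on the other side it disappears; differentiating the flow equation $\partial_t \Phi_S = J_0 S \Phi_S$ at the crossing and expressing $Q_{\dot\Lambda(t_*)}$ in terms of the eigenvector $v(t) = \Phi_S(t)v(0)$ identifies this crossing sign with $-\mathrm{sgn}(\dot{\lambda}_k(\tau_0))$. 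Thus both sides change by the same integer across the wall.

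Finally, for a reference point I would take $S \equiv \epsilon\,\mathrm{Id}$ with $0 < \epsilon T < \pi$. Then $\Phi_S(t) = e^{\epsilon t J_0}$ is a uniform rotation, so $\Phi_S(t)\R^n$ has a single crossing at $t = 0$ with $Q_{\dot\Lambda(0)}(x,0) = \epsilon \|x\|^2$ of signature $+n$, giving $\mu_{\mathrm{RS}} = n/2$; on the other side, a direct ODE calculation gives $\lambda_k(S) = (\lceil k/n \rceil - 1)\pi/T - \epsilon$, so $\max\{k : \lambda_k < 0\} = n$ and the RHS equals $n - n/2 = n/2$. This matches, and combined with the jump analysis proves the theorem. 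The main obstacle is the sign-matching in the jump step: one must carefully reconcile the boundary half-contribution $\tfrac{1}{2}\mathrm{sgn}\, C(T)$ appearing at the wall with the full contribution $\mathrm{sgn}\, C(t_*)$ that appears on one side, and verify using the first-variation formula for $\lambda_k$ that the resulting net jump of $\pm 1$ agrees in sign on both sides of the identity.
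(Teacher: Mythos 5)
The paper does not actually prove Theorem \ref{thm:spec1}; it defers entirely to \cite[Theorem 11.2.3]{book}, so there is no internal argument to compare against. Your deformation scheme --- local constancy of both sides on the nondegenerate stratum, equal jumps across codimension-one walls, and agreement at a reference point --- is the standard way this identity is established (it is essentially the Robbin--Salamon spectral-flow argument, and of the same type as the proof in the cited reference). Your reference computation checks out: for $S\equiv\epsilon\,\mathrm{Id}$ with $0<\epsilon T<\pi$ the only crossing of $t\mapsto e^{\epsilon tJ_0}\R^n$ with $\R^n$ is at $t=0$ with crossing form $\epsilon\|x\|^2$ of signature $n$, giving $n/2$, while the eigenvalues $k\pi/T-\epsilon$ (each of multiplicity $n$) together with the normalization $\lambda_k(0)=0$ for $k\in\{1,\dots,n\}$ give $\max\{k:\lambda_k<0\}=n$. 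The first-variation formula $\dot\lambda=-\int_0^T\langle\dot S v,v\rangle\,dt$ is also correct, and the sign identification you assert is consistent (e.g.\ for $n=1$, $S_\tau\equiv\tau\,\mathrm{Id}$ crossing $\tau=\pi$, both sides jump by $+1$).

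Two points remain only sketched, and they carry the real content. First, the wall-crossing step: you must actually relate the $t$-crossing form $w\mapsto\langle w,S(t_*)w\rangle$ on $\Phi_S(t_*)\R^n\cap\R^n$ to the $\tau$-crossing form $\int_0^T\langle\dot S\,v,v\rangle\,dt=-\dot\lambda_k$ via the slope of the degeneracy curve in the $(t,\tau)$-strip; this is a genuine computation (implicit function theorem applied to a local defining function of the degeneracy locus whose $t$- and $\tau$-derivatives are the two crossing forms), not just a matter of reconciling the $\tfrac12\mathrm{sgn}\,C(T)$ boundary weight. Second, every path $\Phi_S$ has a full-rank crossing at $t=0$ whose contribution $\tfrac12\mathrm{sgn}\bigl(\langle\cdot,S(0)\cdot\rangle|_{\R^n}\bigr)$ can itself change along the homotopy $S_\tau$ (compensated by interior crossings born at or absorbed into $t=0^+$); this is covered by the stratum-homotopy invariance of $\mu_{\RS}$ that you invoke in step (a), but it deserves explicit mention since a naive ``sum over crossings'' bookkeeping would otherwise miscount. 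With those two verifications written out, the argument is complete.
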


Similarly, one can consider the Hilbert space
$$
W_{i \R^{n} }^{1,2}( [0,T], \C^{n} ) = \left \{  v \in W^{1,2}([0,T],\C^{n}  ) \; : \; v(0), v(T) \in i \R^{n}          \right \}
$$ 
and  the bounded linear operator $\widetilde{L}_S : W_{i \R^{n} }^{1,2}( [0,T], \C^{n} ) \rightarrow L^{2}( [0,T], \C^{n} )$ defined by the same formula as $L_S$. This operator satisfies the same properties as $L_S$ and hence, there exist continuous mappings  $\widetilde{\lambda}_k$, $k \in \Z$ having the same properties as $\lambda_k$.  Consequently, if $\Phi_S(T) i \R^{n}  \cap i \R^{n}  = \left \{ 0 \right \}$, then we have
\begin{equation}
\mu_{ {\rm RS}}( \Phi_S i\R^n, i\R^n) = \max \left\{ k \in \Z : \widetilde{\lambda}_k(S) <0 \right\} - \frac{n}{2}.
\end{equation}

\subsubsection{Index of  Reeb chords and iterations}\label{sec:indexiteration}

 Let $c:[0,T]\rightarrow\Sigma$ be a Reeb chord and $c^2$ the associated symmetric periodic Reeb orbit. We define further iterations of $c$  by
$$
c^{2\ell-1}(t) := \begin{cases} c(t) & \; t \in [0,T], \\ \rho \circ c( 2T -t ) & \; t \in [T, 2T], \\ c( t - 2T) & \; t \in [2T, 3T], \\ \rho \circ c( 4T - t ) & \; t \in [3T,4T], \\ \quad \quad \vdots \\ c(  t - (2\ell-2)T   ) & \; t \in [(2\ell-2)T, (2\ell-1)T],       \end{cases}
$$
and
$$
c^{2\ell}(t) := \begin{cases} c^{2 \ell-1}(t) & \; t \in [0,(2\ell-1)T], \\   \rho \circ c( 2\ell T - t  ) & \; t \in [(2\ell-1)T, 2\ell T]   .    \end{cases}
$$

 Choose a  $d\alpha|_{\xi}$-compatible almost complex structure $J$ on $\xi$ which is $\rho$-anti-invariant:
$$
\rho^*J:= (D\rho|_{\xi})^{-1} \circ J \circ D\rho|_{\xi} = -J.
$$
Given a contractible Reeb chord $c:[0,T]\rightarrow \Sigma$, let $u :D \rightarrow \Sigma$ be a capping disk for  the symmetric periodic Reeb orbit $c^2$ such that $u\circ I=\rho\circ u$ where 
$$
I= \begin{pmatrix} \id_{\R^{n-1}} & 0 \\ 0 & -\id_{\R^{n-1}} \end{pmatrix}.
$$
We choose a symmetric unitary trivialization 
$$
\widetilde{\Psi} : D \times \C^{n-1} \rightarrow u^*\xi 
$$
satisfying
\begin{itemize}
\item $\Psi(z)^* d\alpha|_{\xi} = \omega_0$;
\item $J(u(z))\Psi(z) = \Psi(z)J_0$; and
\item $D\rho|_{\xi_{u(z)}} \circ \Psi(z) = \Psi(\overline{z}) \circ I$, 
\end{itemize}
where $\Psi(z):= \widetilde{\Psi}(z, \cdot)$,
$$
J_0= \begin{pmatrix} 0 & -\id_{\R^{n-1}} \\ \id_{\R^{n-1}} & 0\end{pmatrix}.
$$
In particular, the Legendrian $\mathcal{L}$ becomes $L_0 =\R^{n-1} \times \left\{0\right\}$ which is a Lagrangian subspace of $(\C^{n-1},\omega_0)$. We fix $L_1 :=  \left\{0\right\} \times \R^{n-1} = J_0  L_0$ which is a Lagrangian complement of $L_0$ in~$\C^{n-1}$.   For the existence of such a trivialization, we refer to \cite[Lemma~3.10]{UrsJungsoo}. We write $
\Psi(t):= \Psi(e^{2\pi i  t/2T})$. Associated to the symmetric periodic Reeb orbit $c^2$ is the path of symplectic matrices ${\Phi}^2 : [0,2T] \rightarrow \text{Sp}(2n-2)$ 
$$
{\Phi}^2(t):= \Psi(t)^{-1} \circ D\phi_R^t (c^2(0) )|_{\xi} \circ \Psi(0).
$$
Abbreviating $\Phi = \Phi^2|_{[0,T]}$, one represents the  paths of symplectic matrices corresponding to the iterations:
\begin{equation}\label{eqiterationodd}
\Phi^{2\ell-1}(t)= \begin{cases} \Phi(t) & t \in [0,T], \\ I \Phi(2T-t) \Phi(T)^{-1} I \Phi(T) & t \in [T,2T], \\ \quad \quad \vdots \\I\Phi(2(\ell-1)T -t)I \Phi(2T)^{\ell-1} & t \in [(2\ell-3)T, (2\ell-2)T], \\ \Phi ( t-2(\ell-1)T ) \Phi(2T)^{\ell-1} & t \in [(2\ell-2)T, (2\ell-1)T],  \end{cases}
\end{equation}
and 
\begin{equation}\label{eqiterationeven}
\Phi^{2\ell}(t)= \begin{cases} \Phi^{2\ell-1}(t)   & t \in [0,(2\ell-1)T], \\   I \Phi(2\ell T-t) I \Phi(2T)^\ell & t \in [(2\ell-1)T, 2\ell T],\end{cases}
\end{equation}
where $\Phi(2T) =I \Phi(T)^{-1} I \Phi(T)$. 

The Maslov indices of a non-degenerate Reeb chord $c$ are defined by
$$
\mu_I  (c) := \mu_{\RS}(\Phi  L_0, L_0) , \quad \mu_{-I}(c) := \mu_{\RS}(\Phi  L_1, L_1).\footnote{We borrow the notations $\mu_I, \mu_{-I}$ from \cite[Section~3]{UrsJungsoo}.}
$$
Recall that $c$ is non-degenerate (as a chord) if and only if    $\Phi (  T ) L_0 \cap L_0 =\left\{ 0 \right \}$. If we write $\Phi$ as a block matrix
\begin{equation*}\label{blocksdkumatrix}
\Phi = \begin{pmatrix} X & Y \\ Z & W \end{pmatrix}
\end{equation*}
with respect to the Lagrangian splitting $\C^{n-1} = L_0 \oplus L_1$, then this condition is equivalent to the block  $Z(T)$ being invertible.  Similarly, $\Phi(T)L_1 \cap L_1 = \left \{ 0 \right \}$ is equivalent to the block $Y(T)$ being  invertible.

 We have defined the Maslov indices $\mu_I, \mu_{-I}$ of non-degenerate Reeb chords via the Robbin-Salamon indices. We now extend the definitions to degenerate Reeb chords. If we extend them by the same formula, i.e., via the Robbin-Salamon index, then the indices are neither lower nor upper semi-continuous. In other words, under a small perturbation, the indices of a degenerate Reeb chord may jump up or down.   Following  \cite{HWZ} we  extend the definitions  to degenerate Reeb chords via the spectral flow.

\begin{definition} Let $c$ be a contractible Reeb chord on a real contact manifold $(\Sigma^{2n-1}, \alpha, \rho)$. Abbreviate by $\Phi(t) \in \text{Sp}(2n-2)$, $t \in [0,T]$ the corresponding path of symplectic matrices. Then its Maslov indices $\mu_I, \mu_{-I}$ are defined to be
\begin{align*}
\mu_I(c) &:= \max \left\{ k \in \Z : \lambda_k(S) <0 \right\} - \frac{n-1}{2} ,\\
{\mu}_{-I}(c)&:=  \max \left\{ k \in \Z : \widetilde{\lambda}_k(S) <0 \right\} - \frac{n-1}{2},
\end{align*}
where $\lambda_k , \widetilde{\lambda}_k $ are the continuous mappings defined in Section~\ref{secspectral} and $S(t):= -J_0 \dot{\Phi}(t) \Phi(t)^{-1}$, $t \in [0,T]$ is the path of $(2n-2) \times (2n-2)$ symmetric matrices. 
\end{definition}
 
  It is worth mentioning that with these definitions the indices become lower semi-continuous, i.e., under a small perturbation, they may jump up, but not down.  Moreover, in view of Theorem~\ref{thm:spec1}, in the non-degenerate case we have
$$
\mu_I(c) = \mu(c),
$$
where $\mu(c)$ is defined as in Definition \ref{defmaslov}.

In the following we say that a contact form $\alpha$ is 
\begin{itemize}
\item    {non-degenerate} if all periodic Reeb orbits are non-degenerate;
\item   \text{$L_i$-non-degenerate} if for each Reeb chord $c:[0,T] \rightarrow \Sigma$, the corresponding path of symplectic matrices $\Phi$ satisfies   $\Phi(T)L_i \cap L_i = \left \{ 0 \right \}$, $i=0,1$.
\end{itemize}

\begin{proposition}\label{prop:nondeg} {\rm(\cite[Proposition C]{no}, \cite[Propositions 3.6 and  3.7]{UrsJungsoo})}  Let $c: [0,T] \rightarrow \Sigma$ be a Reeb chord as above. Then the iteration $c^2$ is non-degenerate {\rm (\textit{as a periodic orbit})} if and only if $c$ is both $L_0$- and $L_1$-non-degenerate. In particular, in this case we have
$$
\mu_{\CZ}(c^2) = \mu _I(c) + \mu_{-I}(c),
$$
where $\mu_{\CZ}$ denotes the Conley-Zehnder index of a periodic orbit.
\end{proposition}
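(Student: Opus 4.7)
The proposition decomposes into two parts which I would handle separately. The first (non-degeneracy equivalence) is a direct algebraic computation on the monodromy, while the second (index additivity) uses the formal properties of the Robbin--Salamon index combined with the brake symmetry of the iterated path.

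For the non-degeneracy statement, the plan is to compute $\det(\Phi^2(2T) - \mathrm{Id})$ directly. Writing $A = \Phi(T)$ in block form with respect to the Lagrangian decomposition $\C^{n-1} = L_0 \oplus L_1$ as $A = \begin{pmatrix} X & Y \\ Z & W \end{pmatrix}$ and using the symplectic identities $X^T Z = Z^T X$, $Y^T W = W^T Y$, and $W^T X - Y^T Z = \mathrm{Id}$, the computation of $\Phi^2(2T) = I A^{-1} I A$ (where $I = \mathrm{diag}(\mathrm{Id}_{\R^{n-1}}, -\mathrm{Id}_{\R^{n-1}})$ flips signs on the $L_1$-block) should yield the factorization
\[
\Phi^2(2T) - \mathrm{Id} \;=\; 2\begin{pmatrix} Y^T & W^T \\ X^T & Z^T \end{pmatrix}\begin{pmatrix} Z & 0 \\ 0 & Y \end{pmatrix}.
\]
The first factor is, up to a block-row swap, the transpose of $A$ and hence has unit absolute determinant, so $|\det(\Phi^2(2T) - \mathrm{Id})| = 2^{2(n-1)} |\det Y \det Z|$. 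Since $L_0$- and $L_1$-non-degeneracy of $c$ are equivalent to the invertibility of $Z$ and $Y$ respectively, the equivalence with non-degeneracy of $c^2$ follows.

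For the index formula $\mu_{\CZ}(c^2) = \mu_I(c) + \mu_{-I}(c)$, assuming the non-degeneracy just established so that all three indices coincide with their Robbin--Salamon counterparts, the plan is to use the graph interpretation $\mu_{\CZ}(\Phi^2) = \mu_{\RS}(\mathrm{gr}(\Phi^2), \Delta)$ in the doubled symplectic space $(\overline{\C^{n-1}} \oplus \C^{n-1}, -\omega_0 \oplus \omega_0)$, and to exploit the brake symmetry $\Phi^2(2T-t) = I\,\Phi^2(t)^{-1}\,I \cdot \Phi^2(2T)$ of the iterated path. This symmetry corresponds to an anti-symplectic involution $\widetilde{I}(u, v) := (Iv, Iu)$ on the doubled space which preserves both $\Delta$ and $\mathrm{gr}(\Phi^2(t))$ after a suitable reparametrization. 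Decomposing $\Delta = \Delta_+ \oplus \Delta_-$ along the $\pm 1$-eigenspaces of $\widetilde I$ and applying Catenation and Naturality of the Robbin--Salamon index, the two pieces $\mu_{\RS}(\mathrm{gr}(\Phi^2), \Delta_\pm)$ should pull back to $\C^{n-1}$ and be identified with $\mu_I(c)$ and $\mu_{-I}(c)$ respectively. A backup route uses the H\"ormander index formula of \cite[Formula 2.10]{Duister} to express $\mu_{\RS}(\mathrm{gr}(\Phi^2), \Delta)$ as a combination of Robbin--Salamon indices with reference Lagrangians $L_0$ and $L_1$, with the brake symmetry collapsing the cross-terms. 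The main obstacle I anticipate is the careful sign bookkeeping of crossing forms at the endpoints $t = 0, 2T$ and at the midpoint $t = T$; the non-degeneracy assumption (no crossing at $t = 2T$) keeps the boundary contributions manageable.
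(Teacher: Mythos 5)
The paper does not prove this proposition itself; it is quoted from \cite[Proposition C]{no} and \cite[Propositions 3.6, 3.7]{UrsJungsoo}, so I am judging your proposal on its own merits. Your first part is correct and essentially complete. Writing $A=\Phi(T)$ and using $A^{-1}=\bigl(\begin{smallmatrix} W^T & -Y^T \\ -Z^T & X^T\end{smallmatrix}\bigr)$, one gets $IA^{-1}IA=\bigl(\begin{smallmatrix} W^TX+Y^TZ & 2W^TY \\ 2X^TZ & X^TW+Z^TY\end{smallmatrix}\bigr)$, and the relations $W^TX-Y^TZ=\mathrm{Id}$, $X^TW-Z^TY=\mathrm{Id}$ give exactly your factorization $\Phi^2(2T)-\mathrm{Id}=2\bigl(\begin{smallmatrix} Y^T & W^T\\ X^T & Z^T\end{smallmatrix}\bigr)\bigl(\begin{smallmatrix} Z & 0 \\ 0 & Y\end{smallmatrix}\bigr)$; the first factor is a block-row swap of $A^T$ and so has determinant $\pm 1$, whence $|\det(\Phi^2(2T)-\mathrm{Id})|=2^{2(n-1)}|\det Y||\det Z|$ and the equivalence follows.

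The second part has a genuine gap in its main route. First, the symmetry is misstated: from $\phi^{2T-t}=\rho\circ\phi^t\circ\rho\circ\phi^{2T}$ and $c^2(0)\in\Fix(\rho)$ one gets $\Phi^2(2T-t)=I\,\Phi^2(t)\,I\,\Phi^2(2T)$, with no inverse on $\Phi^2(t)$ (check it against \eqref{eqiterationodd}: for $s\in[0,T]$, $\Phi^2(2T-s)=I\Phi(s)\Phi(T)^{-1}I\Phi(T)=I\Phi(s)I\cdot I\Phi(T)^{-1}I\Phi(T)$). Second, and more seriously, the proposed splitting $\Delta=\Delta_+\oplus\Delta_-$ along the eigenspaces of $\widetilde I$ produces subspaces of dimension $n-1$, which are isotropic but \emph{not} Lagrangian in the $4(n-1)$-dimensional doubled space; hence $\mu_{\RS}(\mathrm{gr}(\Phi^2),\Delta_\pm)$ is undefined, and the Robbin--Salamon index does not in general decompose along a direct-sum splitting of the reference Lagrangian. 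So the central step of your main argument does not parse. Your backup route is the viable one: split the path by catenation at $t=T$, use the symmetry above together with reversal and naturality to rewrite $\mu_{\RS}(\mathrm{gr}(\Phi^2)|_{[T,2T]},\Delta)$ as a Lagrangian-path index of $\Phi$ relative to $L_0$ and $L_1$, and control the change of reference Lagrangian by the H\"ormander index as in \cite{Duister}; this is essentially the argument of the cited references, but as written it is only a one-sentence gesture and would need the endpoint crossing-form bookkeeping carried out explicitly.
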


\subsection{Real dynamical convexity} \label{sec:realdyn}In this section, we introduce the notion of real dynamical convexity.

\begin{definition}\label{def:realdc} A real contact manifold $(\Sigma^{2n-1}, \alpha, \rho)$  is said to be \textit{real dynamically convex}  if   the Maslov class $\mu_{\mathcal{L}} : \pi_2(\Sigma, \mathcal{L})\rightarrow\Z$, where $\mathcal{L}=\text{Fix}(\rho)$, vanishes and  every contractible Reeb chord $c$ satisfies  the lower bound
$$\mu_I (c), \; \mu_{-I} (c)\geq   \frac{n+1}{2}.$$
\end{definition}
In order to explain the reason for this terminology, let $\Sigma$ be a   strictly convex hypersurface in $\R^{2n} \cong\C^n$ equipped with   the standard contact form 
$$
\alpha_0 = \frac{1}{2} \sum_{j=1}^{n} q_j dp_j - p_j dq_j \bigg|_{\Sigma}.
$$
The corresponding Reeb vector field and the contact structure are denoted by $R=R_{\alpha_0}$ and $\xi = \ker (\alpha_0)$, respectively. We assume that $\Sigma$ is invariant under complex conjugation  
$${I}= \begin{pmatrix} \id_{\R^{n}} & 0 \\ 0 & -\id_{\R^{n}} \end{pmatrix}$$
of $(\C^n, \omega_0)$, where $\omega_0 = \sum_{j=1}^n dq_j\wedge dp_j$.

\begin{theorem}\label{thm:chorddynami} Let the triple  $(\Sigma, \alpha_0, {I})$ be described as above.  Then it  is real dynamically convex. 
\end{theorem}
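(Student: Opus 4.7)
The plan is to adapt the classical Hofer--Wysocki--Zehnder proof of dynamical convexity for periodic Reeb orbits on strictly convex hypersurfaces to the Reeb-chord setting, bounding each of $\mu_I(c)$ and $\mu_{-I}(c)$ from below by $\frac{n+1}{2}$ via the combination of positive crossings from strict convexity and the Duistermaat H\"ormander index formula.

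First, I would pick an $I$-invariant strictly convex Hamiltonian $H\colon \R^{2n}\to \R$ realizing $\Sigma$ as a regular level set, so that each contractible Reeb chord $c$ corresponds to a Hamiltonian brake trajectory for $H$. Let $\Phi(t)\in \Sp(2n-2)$ be the associated linearized Reeb flow on the contact distribution. A standard computation identifies the Robbin--Salamon crossing form of the path $\Phi L_i$ against $L_i$ ($i=0,1$) at any crossing time, up to a positive factor, with the restriction of the Hessian $d^2 H(c(t_0))$ to the intersection subspace; strict convexity of $H$ makes this form positive definite. Equivalently, in the spectral-flow language of Section \ref{secspectral}, the symmetric path $S(t)=-J_0\dot\Phi(t)\Phi(t)^{-1}$ is positive, so the perturbation $L_S=L_0-S$ pushes each of the $n-1$ zero eigenvalues of the unperturbed operator $L_0=-J_0\partial_t$ strictly below zero, yielding the preliminary bound $\mu_I(c),\mu_{-I}(c)\geq \frac{n-1}{2}$.

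Next, I would apply the classical HWZ dynamical convexity theorem to the symmetric double $c^2$, which is a contractible periodic Reeb orbit on the strictly convex hypersurface $\Sigma$, to obtain $\mu_{\CZ}(c^2)\geq n+1$. In the non-degenerate case, Proposition \ref{prop:nondeg} then gives the sum estimate $\mu_I(c)+\mu_{-I}(c)\geq n+1$. The remaining step is to combine this sum bound with the anti-symplectic symmetry to upgrade it to the individual bounds $\mu_I(c),\mu_{-I}(c)\geq \frac{n+1}{2}$. The natural tool is Duistermaat's H\"ormander index formula, applied together with the explicit symmetric structure $\Phi^2(2T)=I\Phi(T)^{-1}I\Phi(T)$ from \eqref{eqiterationeven}: combined with the naturality of the Robbin--Salamon index under anti-symplectic conjugation listed in Section \ref{sec: RSindex}, this should convert the $I$-symmetry of $\Phi^2$ into the balance $\mu_I(c)=\mu_{-I}(c)$, after which the sum estimate yields each individual bound. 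Degenerate chords are handled by a small $I$-equivariant perturbation of $\alpha_0$ and the lower semi-continuity of the spectral-flow definition of the Maslov indices.

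The main obstacle is precisely this final step: a naive symmetry argument does not directly give $\mu_I(c)=\mu_{-I}(c)$, since the paths $\Phi L_0$ and $\Phi L_1$ travel along different orbits in the Lagrangian Grassmannian and the anti-symplectic conjugation by $I$ does not respect the symplectic structure on $\Sp(2n-2)$. The Duistermaat formula is needed to correctly bookkeep the boundary contributions and convert the $I$-symmetry of $\Phi^2$ into an honest relation between the two Maslov indices of $c$.
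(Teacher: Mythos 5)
Your setup (homogeneous $I$-invariant convex Hamiltonian, positivity of crossing forms from the Hessian) matches the paper's, but the route you take to the bound $\frac{n+1}{2}$ has a genuine gap at exactly the place you flag. The sum estimate $\mu_I(c)+\mu_{-I}(c)=\mu_{\CZ}(c^2)\ge n+1$ cannot be split into the two individual bounds without the identity $\mu_I(c)=\mu_{-I}(c)$, and there is no symmetry of the problem that produces it: complex conjugation fixes $L_0$ and acts by $-\mathrm{Id}$ on $L_1$, so it does not exchange the two boundary conditions, and the relation $\Phi(2T)=I\Phi(T)^{-1}I\Phi(T)$ constrains the monodromy, not the crossing pattern of $\Phi L_0$ against $L_0$ versus $\Phi L_1$ against $L_1$ (these crossings occur at different times in general). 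The H\"ormander index is also the wrong tool for this comparison: it measures the difference $\mu_{\RS}(\Lambda,V)-\mu_{\RS}(\Lambda,V')$ of one moving path against two \emph{reference} Lagrangians, whereas you need to compare two different moving paths against two different references. Indeed, the paper's own index machinery (Theorem \ref{theoremgeneralcommon}, where $\mu_I$ and $\mu_{-I}$ enter asymmetrically) treats them as independent quantities. Your preliminary bound $\frac{n-1}{2}$ is also not quite justified as stated, since the positivity $S(t)>0$ holds for the linearization in the ambient constant frame, not automatically for the path $\widehat\Phi_\xi$ in the capping-disk trivialization of $\xi$.

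The missing idea is to run the positivity argument in the full $\R^{2n}$ rather than on $\xi$, and to extract the extra $+1$ from the \emph{homogeneity} of $H$ rather than from $\mu_{\CZ}(c^2)$. Concretely: all crossings of $\mathrm{Gr}(\Phi)$ with $\Lambda_j\times\Lambda_j$ are positive definite, so the initial crossing alone contributes $\frac{n}{2}$; homogeneity gives $D\phi_H^1(c(0))c(0)=c(1)$ and $D\phi_H^1(c(0))X_H(c(0))=X_H(c(1))$, with $c(0)\in\Lambda_0$ and $X_H(c(0))\in\Lambda_1$ (by $I$-invariance of $H$ at fixed points of $I$), so $t=1$ is always a crossing of dimension at least $1$ in the plane $\eta=\mathrm{span}\{z,X_H(z)\}$, contributing at least an additional $\frac{1}{2}$ for each of $j=0,1$ separately. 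Since the $\eta$-part of the split trivialization carries index $0$, each of $\mu_{\RS}(\widehat\Phi_\xi L_j,L_j)$ inherits the full bound $\frac{n}{2}+\frac{1}{2}=\frac{n+1}{2}$, with no need for the sum bound or any relation between $\mu_I$ and $\mu_{-I}$. For degenerate chords, rather than perturbing $\alpha_0$ equivariantly (which requires a compactness argument to relate the perturbed chords to $c$), it is cleaner to extend the path $\widehat\Phi_\xi$ past $t=1$ without new crossings and use the continuity of the eigenvalues $\lambda_k,\widetilde\lambda_k$, keeping track of the half-signature contribution of the terminal crossing.
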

\begin{proof} We imitate the strategy of the proofs  of \cite[Theorem~3.4]{HWZ} and \cite[Theorem~12.2.1]{book}. Without loss of generality, we may assume that $\Sigma$ encircles the origin.  Choose a Hamiltonian $H:\C^n \rightarrow\R$ satisfying
\begin{itemize}
\item (Hypersurface) $H^{-1}(1)=\Sigma$ and $H(z)<1$ for $z\in \text{int}(K)$, where $K$ is a strictly convex and compact subset of $\C^n$ with $\p K=\Sigma$;
\item (Invariance) $H({I}z)=H(z)$, $z\in \C^n$;
\item (Homogeneity) $H(rz) =r^2H(z)$, $z\in\C^n\setminus \left \{ 0 \right \}$, $r \geq 0$.
\end{itemize}
Let $X_H$ denote the Hamiltonian vector field associated to $H$ defined implicitly by the relation $\iota_{X_H}\omega_0=-dH$. Using homogeneity of $H$, one see that 
$$
R(z) =X_H(z), \quad z\in \Sigma.
$$
In particular, for a Reeb chord $c:[0,T] \rightarrow \Sigma$, it holds that $\phi_H^t(c(0))=c(t)$, where $\phi_H^t$ denotes the Hamiltonian flow of $H$. Recall that the path of symplectic matrices ${\Phi}(t) := D \phi_H^t( c(0)) \in \text{Sp}(2n)$, $t \in [0,T]$, satisfies
$$
 \dot{{\Phi}}(t) = J_0 \mathcal{H}(c(t)) {\Phi}(t),
$$
where $\mathcal{H}$ is the Hessian of $H$ which is positive-definite due to the strict convexity of   the hypersurface~$\Sigma$.  By considering $\tfrac{1}{T}H$, from now on we may assume that $T=1$.  

Let $u: D \rightarrow \Sigma$ be a capping disk of a symmetric periodic Reeb orbit $c^2$. Choose a symmetric unitary trivialization $\Psi_{\xi} :  u^* \xi \rightarrow D \times \C^{n-1}$.  Abbreviating $\eta_z = \text{span}\left \{ z, X_H(z) \right \}$, $z \in \C^n$, we obtain the symplectic splitting $u^* \C^n = u^* \eta \oplus u^* \xi$ of the vector bundle $u^* \C^n \rightarrow D$.  Note that  $u^*\eta$ admits a canonical unitary trivialization  $\Psi_{\eta} : u^* \eta \rightarrow D\times \C$ given by
$$
 u (z) \mapsto 1 \quad \text{and} \quad X_H(u(z)) \mapsto i.
$$
This gives rise to  the unitary trivialization 
$$
\Psi:= \Psi_{\eta} \oplus \Psi_{\xi} : u^* \C^n \rightarrow D \times \C^n.
$$
In this trivialization, the path ${\Phi} : [0,1] \rightarrow \text{Sp}(2n)$  is given by the path of $2n\times2n$ symplectic matrices  $\widehat{\Phi} (t)=   \widehat{\Phi}_{\eta} \oplus \widehat{\Phi}_{\xi}(t)$, where    $\widehat{\Phi}_{\eta} := \Psi_{\eta}(c(t)) \circ {\Phi}(t)|_ {\eta}  \circ \Psi_{\eta}( c(0))^{-1} = \id_{\C}.$  The catenation property of the Robbin-Salamon index then implies that $\mu_{\RS}(\widehat{\Phi} \Lambda_0, \Lambda_0) = \mu_{\RS}(\widehat{\Phi}_{\xi} L_0, L_0) ,$ where $\Lambda_0 = \R^n \times \left\{0 \right\} $. On the other hand,  since $D$ is contractible, the trivialization $\Psi : u^* \C^n=D \times \C^n \rightarrow D \times \C^n$  is homotopic as a bundle map to the identity map of $D\times \C^n$. It thus follows from homotopy invariance of the Robbin-Salamon index that $\mu_{\RS}(\widehat{\Phi}\Lambda_0, \Lambda_0)= \mu_{\RS}(\Phi \Lambda_0, \Lambda_0) $ and similarly  $\mu_{\RS}(\widehat{\Phi}\Lambda_1, \Lambda_1)=\mu_{\RS}(\Phi \Lambda_1,\Lambda_1),$ 
where $\Lambda_1=\{0\}\times \R^n$.

\medskip

\noindent \textbf{Claim.} Consider the   path of Lagrangian subspaces $\text{Gr}(\Phi)$ of $( \C^n \times \C^n, \Omega= (-\omega) \times \omega)$. For each $t_0 \in [0,1]$, the quadratic form $Q=Q_{\partial_t \text{Gr}(\Phi)|_{t=t_0}}$ is given by 
$$
Q ( x, \Phi(t_0)x) = \left< \Phi(t_0)x, \mathcal{H}(c(t_0)) \Phi(t_0)x\right>.
$$
\emph{Proof of the claim.} A typical Lagrangian complement to the graph Lagrangian $\text{Gr}(\Phi(t_0))$ is given by $\text{Gr}(-\Phi(t_0))$. By definition, the quadratic form $Q$ is given by
$$
Q( x, \Phi(t_0)x) = \Omega\left( (x, \Phi(t_0)x), ( \dot{y}(t_0), - \Phi(t_0)\dot{y}(t_0) ) \right),
$$
where $y(t)$ is obtained as follows: for $t \in (t_0 - \epsilon, t_0 + \epsilon)$, we find $y(t) \in \C^n$ satisfying that
$$
( x , \Phi(t_0) x) + ( y(t), -\Phi(t_0) y(t) ) \in \text{Gr}(\Phi(t)).
$$
It follows that
$$
\Phi(t_0) (x  - y(t) ) = \Phi(t)( x + y(t))
$$
and hence we obtain that
$$
\dot{y}(t_0) = - \frac{1}{2} \Phi(t_0)^{-1} \dot{\Phi}(t_0)x,
$$
where we have used the fact that $y(t_0)=0$. We then conclude that
\begin{align*}
Q(x, \Phi(t_0)x) &=  - \omega( x, \dot{y}(t_0) ) - \omega( \Phi(t_0)x, \Phi(t_0)\dot{y}(t_0) )\\
&= -2 \omega(x, \dot{y}(t_0)) \\
&=  \omega(x, \Phi(t_0)^{-1} \dot{\Phi}(t_0) x) \\
&= \left< \Phi(t_0)x, \mathcal{H} \Phi(t_0)x \right>
\end{align*}
from which the claim follows.

\bigskip

The claim implies that the crossing form $C(\text{Gr}(\Phi), \Lambda_j \times \Lambda_j, t)= Q|_{\text{Gr}(\Phi(t))\cap (\Lambda_j \times \Lambda_j)}$ is positive-definite, provided that $\text{Gr}(\Phi(t)) \cap (\Lambda_j \times \Lambda_j) \neq \left \{ 0 \right \}$, $j=0,1$. Hence, we obtain that
\begin{equation*}
\mu_{\RS}(\text{Gr}(\Phi), \Lambda_j \times \Lambda_j)= \frac{1}{2}n + \sum_{ 0<t<1} \dim \left( \Phi (t) \Lambda_j \cap \Lambda_j\right) + \frac{1}{2}\dim \left( \Phi(1) \Lambda_j \cap \Lambda_j \right), \; j=0,1.
\end{equation*}
In view of  \cite[Theorem~3.2]{RSindex}, we have
$$
\mu_{\RS}(\Phi \Lambda_0, \Lambda_0) =\mu_{\RS}( \text{Gr}(\Phi), \Lambda_0 \times \Lambda_0 ), \quad \mu_{\RS}(\Phi \Lambda_1, \Lambda_1) =\mu_{\RS}( \text{Gr}(\Phi), \Lambda_1 \times \Lambda_1 ).
$$

We  compute that
\begin{equation*}\label{eq:linearkernel}
D{\phi}_H^1(c(0))R(c(0)) = D{\phi}_H^1(c(0))X_H (c(0)) = X_H( {\phi}_H^1(c(0)) ) = R(c(1))
\end{equation*}
and
\begin{equation*}\label{eq:linearkernel2}
D{\phi}_H^1(c(0))c(0) = {\phi}_H^1(c(0)) = c(1),
\end{equation*}
where in the first equality we used $D\phi_H^t(z)z = {\phi}_H^t(z)$ for all $z$ which follows from  homogeneity of~$H$.  These identities show that 
$$
\dim  \left( \Phi(1) \Lambda_j \cap \Lambda_j\right) =  \dim  \left( \widehat{\Phi}_{\xi}(1) L_j \cap L_j\right) +1.
$$
Combining the discussion so far we obtain that
\begin{equation}\label{eq:dimincex}
\mu_{\text{RS}}(\widehat{\Phi}_{\xi} L_j, L_j ) \geq \frac{n+1}{2} + \frac{1}{2} \dim  \left( \widehat{\Phi}_{\xi}(1) L_j \cap L_j\right), \quad j=0,1.
\end{equation}

\medskip

\textit{Case 1.} The Reeb chord $c$ is non-degenerate.

\noindent 
In view of \eqref{eq:dimincex} it follows from the definitions of the Maslov indices that 
\begin{eqnarray*}\label{ea:nondechord}
 \mu_I (c) &\geq& \frac{n+1}{2}  +  \frac{1}{2}\dim  \left( \widehat{\Phi}_{\xi}(1) L_0 \cap L_0\right),\\
\mu_{-I} (c) &\geq& \frac{n+1}{2}  +  \frac{1}{2} \dim  \left( \widehat{\Phi}_{\xi}(1) L_1 \cap L_1\right).
\end{eqnarray*}
Since $\dim(\widehat{\Phi}_{\xi}(1) L_j \cap L_j)\ge 0$ for $j=0,1$, these give rise to the desired lower bounds.

\medskip

\textit{Case 2.} The Reeb chord $c$ is  degenerate.

\noindent   For $\epsilon>0$ small enough, we choose  a path of symplectic matrices $\Phi_{\epsilon}^j : [0,1+\epsilon] \rightarrow \text{Sp}(2n-2)$ such that $\Phi_{\epsilon}^j(t) = \widehat{\Phi}_{\xi}(t)$ for $ t \in [0,1]$ and there exist no further crossings with respect to $L_j$ in $t \in (1, 1+\epsilon]$, for $j=0,1$.  In particular, the path $\Phi_{\epsilon}^j$ is $L_j$-non-degenerate for $j=0,1$. We   find that
\begin{align*}
\mu_{\text{RS} }(\Phi_{\epsilon}^j L_j, L_j) &= \mu_{\text{RS}}(\widehat{\Phi}_{\xi} L_j , L_j) + \frac{1}{2} \text{sgn} C( \widehat{\Phi}_{\xi} L_j, L_j, 1 ) \\
&= \mu_{\text{RS}}(\widehat{\Phi}_{\xi} L_j , L_j) + \frac{1}{2} \dim \left(\widehat{\Phi}_{\xi} (1 )L_j \cap L_j \right) \\
&\geq \frac{n+1}{2} +   \dim \left(\widehat{\Phi}_{\xi}(1)L_j \cap L_j \right) .
\end{align*}
Abbreviate by $S  (t)$, $t \in [0,1]$ the paths of symmetric matrices associated with $\widehat{\Phi}_{\xi} (t)$. Note that 
$$
\dim \ker L_S = \dim \left( \widehat{\Phi}_{\xi} (1) L_0 \cap L_0\right), \quad \dim \ker \widetilde{L}_S = \dim \left(\widehat{\Phi}_{\xi} (1) L_1 \cap L_1 \right), 
$$
where $L_S, \widetilde{L}_S$ are the bounded linear operators defined as in Section~\ref{secspectral}. It then follows from the continuity of eigenvalues we have
$$
\mu_I(c)   \geq \lim_{\epsilon \rightarrow 0} \mu_{\text{RS}}( \Phi_{\epsilon}^0 L_0, L_0) -  \dim\left( \widehat{\Phi}_{\xi} (1) L_0 \cap L_0 \right) \geq \frac{n+1}{2} 
$$
and similarly
$$
\mu_{-I}(c) \geq \frac{n+1}{2}.
$$
This completes the proof of the theorem.
 \end{proof}


\subsection{Index increasing property} \label{sec:indexincreasing}

We prove the following increasing property of the Robbin-Salamon index which is one of the  main tools in counting symmetric periodic Reeb orbits via equivariant wrapped Floer homology.
\begin{theorem} \label{thm:indexincreasingproperty} Assume that  a real contact manifold $(\Sigma^{2n-1}, \alpha, \rho)$ is  non-degenerate. Let $c$ be a Reeb chord. If $\mu_I(c) \geq k$ for some $k \in \frac{1}{2}\Z$, then $\mu_I(c^{\ell+1}) \geq \mu_I(c^{\ell}) + k - \frac{n-1}{2}$ for every $\ell\in \N$. In particular, if $\alpha$ is real dynamically convex, then there holds $\mu _I(c^{\ell+1}) > \mu  _I(c^{\ell})$ for every $\ell\in \N$. 
\end{theorem}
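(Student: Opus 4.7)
The plan is to establish the stronger bound
\[
\mu_I(c^{\ell+1}) - \mu_I(c^\ell) \geq \mu_I(c) - \tfrac{n-1}{2}
\]
for every $\ell \in \N$, from which the first statement follows upon setting $k = \mu_I(c)$; under real dynamical convexity, $\mu_I(c) \geq (n+1)/2$ makes the right-hand side at least $1$, yielding the strict monotonicity claim.

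First I would apply the catenation property of the Robbin-Salamon index, together with the fact that $\Phi^{\ell+1}|_{[0,\ell T]} = \Phi^\ell$ (immediate from the formulas \eqref{eqiterationodd} and \eqref{eqiterationeven}), to write
\[
\mu_I(c^{\ell+1}) - \mu_I(c^\ell) = \mu_{\RS}(\eta_\ell L_0, L_0),
\]
where $\eta_\ell : [0,T] \to \Sp(2n-2)$ is the reparametrization of the new segment of the iterated path. Inspecting the two iteration formulas shows that $\eta_\ell(s) = \Phi(s)\, C_\ell$ when $\ell$ is even (even-to-odd iteration) and $\eta_\ell(s) = I\,\Phi(T-s)\,I\, C_\ell$ when $\ell$ is odd (odd-to-even iteration), where $C_\ell$ is a fixed symplectic matrix built from powers of $\Phi(2T) = I\Phi(T)^{-1}I\Phi(T)$ and possibly $I\Phi(T)I$.

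Next I would use $IL_0 = L_0$, the reversal property of $\mu_{\RS}$, and the fact that conjugation by the anti-symplectic involution $I$ negates the Maslov form on pairs of Lagrangians, to convert both cases into a common shape
\[
\mu_{\RS}(\eta_\ell L_0, L_0) = \mu_{\RS}(\Phi\,\Lambda_\ell, L_0)
\]
for some fixed Lagrangian $\Lambda_\ell \in \mathscr{L}(n-1)$ (namely $\Lambda_\ell = C_\ell L_0$ or $\Lambda_\ell = IC_\ell L_0$ depending on parity). The heart of the argument is then to compare $\mu_{\RS}(\Phi\,\Lambda_\ell, L_0)$ with $\mu_I(c) = \mu_{\RS}(\Phi L_0, L_0)$. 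By \cite[Formula 2.10]{Duister}, this difference equals a Hörmander index of the four Lagrangians $L_0, \Lambda_\ell, L_0, \Phi(T)^{-1}L_0$ (together with endpoint corrections), which is universally bounded by $(n-1)/2$ in absolute value for Lagrangians in $\mathscr{L}(n-1)$; this produces the claimed inequality.

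The main obstacle is the bookkeeping: correctly identifying $\eta_\ell$ and $C_\ell$ from the explicit iteration formulas, handling the two parities uniformly via the anti-symplectic structure of $I$, and tracking the endpoint corrections to the Hörmander index estimate so that the sharp constant $(n-1)/2$ is obtained. The non-degeneracy hypothesis enters here to guarantee that the crossings of $\Phi L_0$ and $\Phi\,\Lambda_\ell$ with the Maslov cycle of $L_0$ at the endpoints are zero-dimensional, so the half-integer boundary contributions to $\mu_{\RS}$ are controlled and the estimate remains tight.
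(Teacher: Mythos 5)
Your strategy is the same as the paper's: catenation to isolate the new segment, naturality/inverse properties to recast its Robbin--Salamon index as $\mu_{\RS}(\Phi L_0,\cdot)$ with a shifted reference Lagrangian, and then a H\"ormander-index comparison with $\mu_{\RS}(\Phi L_0,L_0)$ bounded by $\frac{n-1}{2}$. However, the step you lean on — that the H\"ormander index is ``universally bounded by $(n-1)/2$ in absolute value for Lagrangians in $\mathscr{L}(n-1)$'' — is false as stated, and this is exactly where the real work of the proof lies. For four arbitrary Lagrangians in $\mathscr{L}(m)$ the sharp general bound is $m$, not $m/2$ (already in $\mathscr{L}(1)$ one easily produces $s=\pm 1$). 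The bound $\frac{n-1}{2}$ holds here only because two of the four Lagrangians coincide (your quadruple $L_0,\Lambda_\ell,L_0,\Phi(T)^{-1}L_0$ does have $L_0$ twice): writing the remaining Lagrangians as graphs and applying \cite[Theorem 3.5]{RS} expresses $s$ as a combination of \emph{three} signature terms, whose naive bound is $\tfrac{3(n-1)}{2}$; one must then invoke the signature identity of \cite[Lemma 5.2]{RS} (equivalently, Duistermaat's formula) to collapse these into $\tfrac{1}{2}\sign$ of a \emph{single} $(n-1)\times(n-1)$ symmetric matrix, e.g.\ $\tfrac12\sign\bigl(XZ^{-1}-T_k(A)U_{k-1}(A)^{-1}C^{-1}\bigr)$, which is what yields $\frac{n-1}{2}$.

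A second, smaller omission: to carry out that graph representation you need the blocks $X$, $Z$, $T_k(A)$ and $CU_{k-1}(A)$ of $\Phi(T)$ and $\Phi(2T)^k$ to be invertible for all $k$. This is not automatic from chord-non-degeneracy alone; the paper proves it as a separate lemma using the Chebyshev-polynomial form of $\Phi(2T)^k$ and the relation $\Phi(2T)=I\Phi(T)^{-1}I\Phi(T)$ (which gives $C=2X^TZ$), together with the standing assumption that \emph{all iterates} are non-degenerate as periodic orbits. Your proposal should make both of these points explicit; without them the inequality $|s|\le\frac{n-1}{2}$ is unsupported.
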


 Let $c^2 :[0,2T] \rightarrow \Sigma$ be a non-degenerate symmetric periodic Reeb orbit. Denote by $\Phi:[0,T]\rightarrow \text{Sp}(2n-2)$ the path of symplectic matrices corresponding to the Reeb chord $c$. By abuse of notation, we write
 $\Phi(2kT)=\Phi^{2k}(2kT)$ for $k\in \N$. 
 
 With respect to the Lagrangian splitting $\C^{n-1} = L_0 \oplus L_1$, we write
$$
\Phi(T) = \begin{pmatrix} X & Y \\ Z & W \end{pmatrix}, \quad \Phi(2T)= \begin{pmatrix} A & B \\ C & D \end{pmatrix}.
$$
It follows from \cite[Lemma~3.1]{UrsOttoHormander} that
$$
\Phi(2T)^k = \begin{pmatrix} T_k(A) & U_{k-1}(A)B \\ CU_{k-1}(A) & T_k(A^T) \end{pmatrix},
$$
where $T_k$ and $U_{k-1}$ are Chebyshev polynomials of the first and second kind, respectively. For example,
$
T_0(x)=1, T_1(x)=x, U_0(x)=x$, and $U_1(x)=2x$. The following lemma will be used in the proof of Theorem~\ref{thm:indexincreasingproperty}.
\begin{lemma} The blocks $X,Y,Z,W,T_{k}(A)$ and $CU_{k-1}(A)$ are invertible for all $k\ge 1$. 
\end{lemma}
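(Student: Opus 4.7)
The plan is to handle the four invertibility claims in order, using the structural identity $\Phi(2T) = I\Phi(T)^{-1}I\Phi(T)$ together with the standing assumption that $\alpha$ is non-degenerate, so that every iterate of the symmetric periodic orbit $c^2$ is non-degenerate.

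The invertibility of $Z$ is immediate from Proposition \ref{prop:nondeg}: non-degeneracy of $c^2$ forces $c$ to be $L_0$-non-degenerate, and $\Phi(T)L_0 \cap L_0$ coincides with $\ker Z$ in the chosen splitting. Next I would establish the algebraic backbone by a direct block computation. Using $\Phi(T)^{-1} = -J\Phi(T)^T J$ and the symplectic relation $W^T X - Y^T Z = I$, one obtains $\Phi(2T) = \bigl(\begin{smallmatrix} A & B \\ C & A^T \end{smallmatrix}\bigr)$ with $B,C$ symmetric and the crucial identities $A - I = 2 Y^T Z$ and $A + I = 2 W^T X$. The tautology $I \Phi(2T) I = \Phi(2T)^{-1}$, equivalently $(\Phi(2T) I)^2 = I$, then yields $BC = A^2 - I$, $AB = BA^T$ and $CA = A^T C$; these are precisely the relations that drive the Chebyshev formula $\Phi(2T)^k = \bigl(\begin{smallmatrix} T_k(A) & U_{k-1}(A)B \\ CU_{k-1}(A) & T_k(A^T) \end{smallmatrix}\bigr)$ stated before the lemma. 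Moreover, for any eigenvector $v$ of $A$ with eigenvalue $\lambda$, the subspace $\mathrm{span}\{(v,0),(0,Cv)\}$ is $\Phi(2T)$-invariant with restricted characteristic polynomial $\mu^2 - 2\lambda \mu + 1$, so the restricted eigenvalues of $\Phi(2T)$ are $\mu = \lambda \pm \sqrt{\lambda^2 - 1}$.

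Each of the remaining three claims is then an eigenvalue obstruction. For $CU_{k-1}(A)$: the Chebyshev formula shows $\Phi(2T)^k L_0 \cap L_0 = \{0\}$ if and only if $CU_{k-1}(A)$ is invertible; but $\Phi(2T)^k$ is the monodromy of the Reeb chord $c^{2k}$, whose associated symmetric periodic orbit is the iterate $c^{4k}$, so by Proposition \ref{prop:nondeg} applied to $c^{2k}$ together with non-degeneracy of $c^{4k}$, the chord $c^{2k}$ is $L_0$-non-degenerate and the claim follows. In particular $C = CU_0(A)$ is invertible, which ensures $Cv \neq 0$ for every nonzero $v$, so the two-dimensional analysis above applies to every eigenpair of $A$. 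For $X$: if $Xv = 0$ with $v \neq 0$, the identity $A + I = 2 W^T X$ gives $Av = -v$, so by the two-dimensional subspace argument $-1$ is an eigenvalue of $\Phi(2T)$ (the discriminant collapses to zero), hence $1$ is an eigenvalue of $\Phi(2T)^2$, contradicting non-degeneracy of the iterate $c^4$. For $T_k(A)$: if $T_k(A)$ were singular then $A$ would have some $\lambda = \cos((2j-1)\pi/(2k))$ as eigenvalue, and the two-dimensional analysis would produce the primitive $4k$-th root of unity $e^{\pm i(2j-1)\pi/(2k)}$ as eigenvalue of $\Phi(2T)$, hence $1$ as eigenvalue of $\Phi(2T)^{4k}$, contradicting non-degeneracy of the iterate $c^{8k}$.

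The main obstacle I anticipate is the algebraic bookkeeping needed to extract the identities $A \pm I = 2W^TX,\, 2Y^TZ$, the commutation relations on $A,B,C$, and the two-dimensional invariant subspace structure cleanly from the single symmetry $(\Phi(2T)I)^2 = I$; once these are in hand, each invertibility claim reduces by a short eigenvalue argument to non-degeneracy of a specific iterate $c^{2m}$ of $c^2$, which is guaranteed by the hypothesis on $\alpha$.
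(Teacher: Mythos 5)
Your proof is correct, and for half of the lemma it takes a genuinely different route from the paper. For $Z$ and for $CU_{k-1}(A)$ you and the paper rely on the same input --- non-degeneracy of the closed orbits $c^{2}$ and $c^{4k}$ translated through Proposition \ref{prop:nondeg} --- except that the paper outsources the $CU_{k-1}(A)$ step to \cite[Lemma 3.2]{UrsOttoHormander}, which you in effect re-prove. The divergence is in $T_k(A)$ and $X$: the paper obtains $T_k(A)$ by applying the same block lemma to the iterate $\Phi(2kT)=\bigl(\begin{smallmatrix} K & L \\ M & N\end{smallmatrix}\bigr)$, noting that the lower-left block of $\Phi(2kT)^2$ is $2MK$, so that invertibility of $M$ and of $2MK$ forces invertibility of $K=T_k(A)$; and it gets $X$ and $Z$ simultaneously from the factorization $C=2X^TZ$. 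You instead build the spectral picture --- $A\pm\mathrm{Id}=2W^TX,\,2Y^TZ$, the relations $BC=A^2-\mathrm{Id}$, $AB=BA^T$, $CA=A^TC$ forced by $(\Phi(2T)I)^2=\mathrm{Id}$, and the two-dimensional invariant subspaces with restricted eigenvalues $\lambda\pm\sqrt{\lambda^2-1}$ --- and rule out kernels of $X$ and $T_k(A)$ because they would place a root of unity in the spectrum of $\Phi(2T)$, contradicting non-degeneracy of $c^{4}$ and $c^{8k}$ respectively. Your route is longer but self-contained and makes explicit exactly which iterate's non-degeneracy each claim consumes; the paper's is shorter but leans on the cited lemma and the two algebraic identities. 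Two cosmetic points: $e^{i(2j-1)\pi/(2k)}$ need not be a \emph{primitive} $4k$-th root of unity, only a solution of $\mu^{4k}=1$, which is all you use; and the invariant-subspace argument needs $Cv\neq 0$, which you correctly secure by establishing invertibility of $C=CU_0(A)$ before invoking it.
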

\begin{proof} In \cite[Lemma~3.2]{UrsOttoHormander} it is shown that the block $CU_{k-1}(A)$ is invertible for all $k \geq 1$. In order to show that $T_k(A)$ is also invertible, we consider the $k$th iteration of $c^2$, i.e., $c^{2k} : [0,2kT] \rightarrow \Sigma$. With respect to the Lagrangian splitting above, we write
$$
\Phi(2kT) = \begin{pmatrix} K & L \\ M & N \end{pmatrix} 
$$
and then we have
$$
\Phi(2kT)^2 = \begin{pmatrix} T_2(K) & U_1(K)L \\ M U_1(K) & T_2(K^T) \end{pmatrix}=\begin{pmatrix} T_2(K) & 2KL \\ 2MK & T_2(K^T) \end{pmatrix}.
$$
By \cite[Lemma~3.2]{UrsOttoHormander}, both $M$ and $MK$ are invertible and hence so is $K$. However, in view of the relation $\Phi(2kT) = \Phi(2T)^k$, we have $K = T_k(A)$. This proves the claim. 

Because of the non-degeneracy, $Y$ and $Z$ are invertible. In view of the relation $\Phi(2T) = I \Phi(T)^{-1} I \Phi(T)$, we see that $C= 2X^{T} Z$ and $B=2 W^{T} Y.$  The invertibility of $C$ and $Z$ implies that $X$ is invertible as well. Using the argument given in the proof of   \cite[Lemma~3.2]{UrsOttoHormander}, one can see that $B$ is invertible, implying the invertibility of $W$. This completes the proof. 
\end{proof}

\begin{proof}[Proof of Theorem~\ref{thm:indexincreasingproperty}]
	
  The catenation property of the Robbin-Salamon index implies that
$$
\mu_{\RS}(\Phi^{\ell+1}L_0, L_0) = \mu_{\RS}(\Phi^{\ell}L_0, L_0) + \mu_{\RS}(\Phi^{\ell+1}|_{[\ell T, (\ell+1)T]}L_0, L_0).
$$
We distinguish the following two cases.

\medskip

\textit{Case 1. $\ell=2k$ is even.}\\
In view of  the iteration formulas   \eqref{eqiterationodd}  and \eqref{eqiterationeven} we have
$$
\mu_{\RS}(\Phi^{\ell+1}|_{[\ell T, (\ell+1)T]}L_0, L_0)=\mu_{\RS}(\Phi^{2k+1}|_{[2kT, (2k+1)T]}L_0, L_0) = \mu_{\RS}(\Phi\Phi(2T)^kL_0, L_0).
$$
By the naturality property   of the Robbin-Salamon index we obtain 
$$
 \mu_{\RS}(\Phi\Phi(2T)^kL_0, L_0)  = - \mu_{\RS}(\Phi^{-1}L_0,\Phi(2T)^kL_0)  
$$
from which we see that
$$
\mu_{\RS}(\Phi^{2k+1}L_0, L_0) - \mu_{\RS}(\Phi^{2k}L_0, L_0) = - \mu_{\RS}(\Phi^{-1}L_0,\Phi(2T)^kL_0)  .
$$
Following \cite[Theorem~3.5]{RSindex} we obtain that
$$
\mu_{\RS}(\Phi^{-1} L_0,L_0) - \mu_{\RS}(\Phi^{-1}L_0,\Phi(2T)^kL_0)     =    s( \Phi(2T)^kL_0 , L_0 ; L_0, \Phi(T)^{-1}L_0),
$$
where $s$ denotes the H\"ormander index \cite[Eq.~(2.10)]{Duister}, \cite[Section~3.3]{Hormander}.
Choose $A_0 = CU_{k-1}(A)T_k(A)^{-1}$, $A_1=B_0 =O$ and $B_1 =  -Z^T (W^T)^{-1}  $. The same theorem then implies that
\begin{eqnarray*}
s( \Phi(2T)^kL_0 , L_0 ; L_0, \Phi(T)^{-1}L_0) &=& \frac{1}{2} \text{sgn} (-Z^T (W^T)^{-1} )-  \frac{1}{2} \text{sgn} ( CU_{k-1}(A)T_k(A)^{-1} ) \\
&\quad&- \frac{1}{2} \text{sgn}(-Z^T (W^T)^{-1}  - CU_{k-1}(A)T_k(A)^{-1}) \\
&=& \frac{1}{2} \text{sgn}( - W^T(Z^T)^{-1} - T_k(A)U_{k-1}(A)^{-1}C^{-1}),
\end{eqnarray*}
where in the last identity we used \cite[Lemma~5.2]{RSindex}. For  dimensional reason we then have
$$
|s( \Phi(2T)^kL_0 , L_0 ; L_0, \Phi(T)^{-1}L_0) | \leq \frac{n-1}{2}
$$
from which we conclude that
\begin{align*}
- \mu_{\RS}(\Phi^{-1}  L_0,\Phi(2T)^kL_0) &=   - \mu_{\RS}(\Phi^{-1} L_0,L_0) + s( \Phi(2T)^kL_0 , L_0 ; L_0, \Phi(T)^{-1}L_0)  \\
&=     \mu_{\RS}(\Phi  L_0,L_0) + s( \Phi(2T)^kL_0 , L_0 ; L_0, \Phi(T)^{-1}L_0)  \\
&\geq   \mu_{\RS}(\Phi  L_0,L_0)  - \frac{n-1}{2}.
\end{align*}

\bigskip

\textit{Case 2. $\ell=2k-1$ is odd.}\\
In a similar way, using the naturality and reversal properties of the Robbin-Salamon index, we find
$$
\mu_{\RS}(\Phi^{2k}L_0, L_0) - \mu_{\RS}(\Phi^{2k-1}L_0, L_0) = -  \mu_{\RS}(\Phi ^{-1} L_0,I  \Phi(2T)^kL_0)
$$
and
$$
  \mu_{\RS}(\Phi ^{-1} L_0,L_0)  -  \mu_{\RS}(\Phi ^{-1} L_0, I \Phi(2T)^kL_0) = s(I   \Phi(2T)^kL_0 , L_0 ; L_0, \Phi(T) ^{-1}L_0) .
  $$
Choosing   $A_0 = -CU_{k-1}(A)T_k(A)^{-1}$, $A_1=B_0 =O$ and $B_1 =  -Z^T (W^T)^{-1}  $, the same argument as above shows that 
$$
 - \mu_{\RS}(\Phi^{-1} L_0,I  \Phi(2T)^kL_0) \geq   \mu_{\RS}(\Phi  L_0,L_0)  -\frac{n-1}{2} .
$$
 This proves the theorem.
\end{proof}

\begin{remark} In strictly convex case, a more general version of Theorem~\ref{thm:indexincreasingproperty} was proved by Liu and Zhang \cite[Lemma~7.3]{Liu3}. 
\end{remark}

\subsection{The common index jump theorem} 

In the following we  only consider paths of symplectic $(2n-2) \times (2n-2)$ matrices all of whose  iterations are   both $L_0$- and $L_1$-non-degenerate.   Abbreviate by $ {\mathcal{P}}  ^*(2n-2)$ the set of such paths. In view of  Proposition \ref{prop:nondeg}, one can define the Conley-Zehnder index  of any even iteration of $\Phi \in {\mathcal{P}}  ^*(2n-2)$.

For  $\Phi \in {\mathcal{P}}  ^*(2n-2)$,    the \textit{mean Robbin-Salamon index with respect to $L_0$}  is defined  by 
\begin{equation*}
\widehat{\mu}_{\RS}(\Phi L_0, L_0) = \lim_{\ell \rightarrow \infty} \frac{{\mu}_{\RS}(\Phi^{\ell} L_0, L_0)}{\ell}.
\end{equation*}
 
\begin{lemma} \label{lemma:inmnindexpoaties} Let $\Phi : [0,T] \rightarrow { \rm Sp}(2n-2)$ be an element in $\mathcal{P}^*(2n-2)$. Assume that 
$$
\mu_{\RS}(\Phi L_0, L_0 ), \; \mu_{\RS}(\Phi L_1, L_1 ) \geq \frac{n}{2}.
$$
Then the mean Robbin-Salamon index satisfies
$$
\widehat{\mu}_{\RS}(\Phi L_0, L_0) \geq \frac{1}{2}.
$$

\end{lemma}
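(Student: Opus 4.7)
The proof will reduce to a direct application of the index-increasing estimate of Theorem \ref{thm:indexincreasingproperty}, with a suitable choice of the constant $k$. Observe first that the argument in the proof of Theorem \ref{thm:indexincreasingproperty} is a purely symplectic-linear-algebraic computation relying only on the iteration formulas \eqref{eqiterationodd}, \eqref{eqiterationeven} and on H\"ormander-index manipulations, so it applies verbatim to any $\Phi\in\mathcal{P}^*(2n-2)$ satisfying the corresponding lower bound, and not merely to symplectic paths arising from Reeb chords on a non-degenerate real contact manifold.

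Taking $k=n/2$ in the conclusion of Theorem \ref{thm:indexincreasingproperty} produces the per-step inequality
$$
\mu_{\RS}(\Phi^{\ell+1} L_0, L_0) - \mu_{\RS}(\Phi^{\ell} L_0, L_0) \;\geq\; \frac{n}{2} - \frac{n-1}{2} \;=\; \frac{1}{2}
$$
for every $\ell\in\N$. A trivial induction then yields
$$
\mu_{\RS}(\Phi^{\ell} L_0, L_0) \;\geq\; \mu_{\RS}(\Phi L_0, L_0) + \frac{\ell-1}{2} \;\geq\; \frac{n}{2} + \frac{\ell-1}{2},
$$
and dividing by $\ell$ and passing to the limit gives exactly $\widehat{\mu}_{\RS}(\Phi L_0, L_0) \geq 1/2$, as desired. (Existence of the limit defining $\widehat{\mu}_{\RS}$ is standard for paths in $\Sp(2n-2)$; in any case the monotone estimate above already bounds the $\liminf$ from below, which suffices.)

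The principal point is thus nothing more than selecting $k=n/2$, precisely matching the real dynamical convexity threshold built into Definition \ref{def:realdc} — no new geometric input is required, and there is no genuine obstacle. The hypothesis $\mu_{\RS}(\Phi L_1, L_1) \geq n/2$ plays no role in this particular estimate; it is presumably included in the statement because it is needed in the companion statements of the common index jump theorem that follow.
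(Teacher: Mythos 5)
Your argument is correct, but it follows a genuinely different route from the paper's. The paper's proof invokes the mean-index formula of Liu--Zhang \cite[Corollary 5.1]{Liu3},
$\widehat{\mu}_{\RS}(\Phi L_0,L_0)=\tfrac12\mu_{\CZ}(\Phi^2)+\sum_{\theta\in(0,2\pi)}\bigl(\tfrac{\theta}{2\pi}-\tfrac12\bigr)S^-_{\Phi^2(2T)}(e^{i\theta})$,
bounds $\mu_{\CZ}(\Phi^2)=\mu_{\RS}(\Phi L_0,L_0)+\mu_{\RS}(\Phi L_1,L_1)\ge n$ via Proposition \ref{prop:nondeg} (this is exactly where the $L_1$ hypothesis enters), and then controls the negative part of the splitting-number correction by $-\tfrac{n-1}{2}$, since $\Phi^2(2T)$ has at most $n-1$ eigenvalues on the open upper half-circle. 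You instead telescope the per-iterate increment of Theorem \ref{thm:indexincreasingproperty} with $k=n/2$ and average; both arguments are valid, and yours is more self-contained (no splitting numbers, no appeal to \cite[Corollary 5.1]{Liu3}) and, as you observe, never uses the hypothesis on $\mu_{\RS}(\Phi L_1,L_1)$, so it in fact establishes a slightly stronger statement than the lemma. The one point you rightly flag and must keep explicit is that Theorem \ref{thm:indexincreasingproperty} is stated for Reeb chords on a non-degenerate real contact manifold, whereas the lemma concerns an abstract path in $\mathcal{P}^*(2n-2)$; the transfer is legitimate because the proof of that theorem, including the invertibility of the blocks $X$, $Z$, $T_k(A)$ and $CU_{k-1}(A)$, uses only the iteration formulas \eqref{eqiterationodd}--\eqref{eqiterationeven} together with the $L_0$- and $L_1$-non-degeneracy of all iterations, which is precisely the defining property of $\mathcal{P}^*(2n-2)$.
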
 
\begin{proof} By \cite[Corollary~5.1]{Liu3} we have
$$
\widehat{\mu}_{\RS}( \Phi L_0, L_0) = \frac{1}{2}\mu_{\CZ}(\Phi^2) + \sum_{\theta \in (0, 2 \pi)} \bigg( \frac{\theta}{2\pi} - \frac{1}{2} \bigg) S_{\Phi^2(2T)}^- (e^{i \theta}),
$$
where $S_{\Phi^2(2T)}^- (e^{i \theta})$ is the splitting number of  the matrix $\Phi^2(2T)$ defined in \cite{long2}. Recall that it satisfies
$$ 
0 \leq S_{\Phi^2(2T)}^- (e^{i \theta}) \leq \dim_{\C} \ker ( \Phi^2(2T)- e^{i \theta} \id ).
$$
We estimate that
\begin{align*}
\widehat{\mu}_{\RS}( \Phi L_0, L_0) &= \frac{1}{2}\mu_{\CZ}(\Phi^2) + \sum_{\theta \in (0, 2 \pi)} \bigg( \frac{\theta}{2\pi} - \frac{1}{2} \bigg) S_{\Phi^2(2T)}^- (e^{i \theta}) \\
& \geq \frac{n}{2}  + \sum_{\theta \in (0,   \pi)} \bigg( \frac{\theta}{2\pi} - \frac{1}{2} \bigg) S_{\Phi^2(2T)}^- (e^{i \theta}) \\
& \geq\frac{n}{2}  -  \frac{1}{2} \sum_{\theta \in (0,   \pi)}   S_{\Phi^2(2T)}^- (e^{i \theta}) \\
& \geq \frac{n}{2} -  \frac{n-1}{2} \\
&= \frac{1}{2},
\end{align*}
where the first resp. last inequality follows  from Proposition \ref{prop:nondeg} resp. the fact that along the half circle $\left \{ e^{i \theta} \in \C : \theta \in (0, \pi) \right \}$ the $(n-1)\times(n-1)$ matrix $\Phi^2(2T)$ has at most $(n-1)$ eigenvalues. This finishes the proof of the lemma.
\end{proof}

The next main tool is the so-called the \textit{common index jump theorem} which is proved in \cite[Theorem~1.5]{Liu3} for a different type of Maslov indices $i_{L_0}$ and $i_{L_1}$. In the non-degenerate case one can show that these indices are related to ours by
$$
i_{L_j} (\Phi) = \mu_{\RS}(\Phi L_j, L_j ) - \frac{n-1}{2} \in \Z, \quad j=0,1.
$$
In the following we only treat the non-degenerate case.

 \begin{theorem} {\rm (\cite[Theorem~1.5]{Liu3})} \label{theoremgeneralcommon}     Let $\Phi_j \in {\mathcal{P}}  ^*(2n-2)$, $j=1,2, \dots, k$, satisfy   $\widehat{\mu}_{\RS}(\Phi _j L_0, L_0)  >0$ for all $j$.     Then  there exist infinitely many vectors $(K, m_1, m_2, \dots, m_k ) \in \N^{k+1}$ such that  
\begin{align*}
    {\mu}_{\RS} ( \Phi_j ^{2m_j  -1 } L_0, L_0) &=K - {\mu}_{\RS}(\Phi_j L_1, L_1),\\
    {\mu}_{\RS} (  \Phi_j ^{2m_j+1} L_0, L_0) &=K + {\mu}_{\RS}(\Phi_j L_0, L_0).
\end{align*} 
\end{theorem}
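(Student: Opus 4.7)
The plan is to derive this as a direct translation of the common index jump theorem of Liu--Zhang, by matching their index conventions with the Robbin--Salamon convention used here. The hypothesis $\Phi_j \in \mathcal{P}^*(2n-2)$ (which ensures all iterations are both $L_0$- and $L_1$-non-degenerate, and in particular gives integer-valued indices via Proposition \ref{prop:nondeg}) together with the comment preceding the theorem provides the dictionary
$$
i_{L_j}(\Phi) = \mu_{\RS}(\Phi L_j, L_j) - \tfrac{n-1}{2} \in \Z, \qquad j = 0, 1,
$$
and this dictionary is functorial under iteration, i.e., $i_{L_0}(\Phi^{\ell}) = \mu_{\RS}(\Phi^\ell L_0, L_0) - \tfrac{n-1}{2}$.

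First, I would verify that the positivity hypothesis survives this translation. The mean Robbin--Salamon index and Liu--Zhang's mean $L_0$-index are related by $\widehat{\imath}_{L_0}(\Phi) = \widehat{\mu}_{\RS}(\Phi L_0, L_0)$ since the constant shift $\tfrac{n-1}{2}$ disappears upon dividing by $\ell$ and taking $\ell \to \infty$. Thus $\widehat{\mu}_{\RS}(\Phi_j L_0, L_0) > 0$ is precisely the positivity assumption needed for Liu--Zhang's theorem.

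Next, I would invoke \cite[Theorem 1.5]{Liu3}, which in their conventions supplies infinitely many $(K', m_1, \ldots, m_k) \in \N^{k+1}$ such that
\begin{align*}
i_{L_0}(\Phi_j^{2m_j - 1}) &= K' - i_{L_1}(\Phi_j), \\
i_{L_0}(\Phi_j^{2m_j + 1}) &= K' + i_{L_0}(\Phi_j).
\end{align*}
Substituting the dictionary into both equations and setting $K := K' + \tfrac{n-1}{2}$ (which lies in $\N$ once $K'$ is sufficiently large, and Liu--Zhang's construction allows $K'$ to be chosen arbitrarily large by enlarging the $m_j$), we obtain
$$
\mu_{\RS}(\Phi_j^{2m_j-1}L_0, L_0) = K - \mu_{\RS}(\Phi_j L_1, L_1), \quad \mu_{\RS}(\Phi_j^{2m_j+1}L_0, L_0) = K + \mu_{\RS}(\Phi_j L_0, L_0),
$$
as required; infinitude of solutions is preserved since only finitely many candidate $K'$ are excluded by the integrality shift.

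The main obstacle, such as it is, is not analytical but a matter of bookkeeping: one must verify that Liu--Zhang's $i_{L_j}$ indices are really related to the Robbin--Salamon indices via the clean shift by $\tfrac{n-1}{2}$ (this comes down to comparing their Galerkin/Maslov-type definition with the crossing-form definition of $\mu_{\RS}$ on the Lagrangian Grassmannian), and to check that Liu--Zhang's index iteration formulas use the same iteration convention \eqref{eqiterationodd}--\eqref{eqiterationeven} that we use here — notably, that the ``odd'' iterations correspond to Reeb-chord iterations rather than to iterations of the full symmetric periodic orbit. Once these conventions are aligned and the integrality of $i_{L_j}$ and of $K'$ are confirmed, the conclusion is immediate from Liu--Zhang's theorem.
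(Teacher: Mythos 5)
Your overall strategy---quote \cite[Theorem 1.5]{Liu3} and push it through the dictionary $i_{L_j}(\Phi)=\mu_{\RS}(\Phi L_j,L_j)-\tfrac{n-1}{2}$---is exactly what the paper does: the theorem is stated as a citation and no independent proof is given, only the dictionary preceding the statement. The treatment of the positivity hypothesis (the constant shift dies in the mean index) is also fine. The problem is in the execution of the translation.

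Take the form you quote for Liu--Zhang, $i_{L_0}(\Phi_j^{2m_j-1})=K'-i_{L_1}(\Phi_j)$ and $i_{L_0}(\Phi_j^{2m_j+1})=K'+i_{L_0}(\Phi_j)$, and substitute the dictionary. In the first identity the two shifts of $\tfrac{n-1}{2}$ sit on opposite sides with opposite signs attached to the indices, so they \emph{add}: you get $\mu_{\RS}(\Phi_j^{2m_j-1}L_0,L_0)=K'+(n-1)-\mu_{\RS}(\Phi_j L_1,L_1)$. In the second identity they cancel: $\mu_{\RS}(\Phi_j^{2m_j+1}L_0,L_0)=K'+\mu_{\RS}(\Phi_j L_0,L_0)$. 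Matching these to the stated conclusion forces $K=K'+(n-1)$ from the first equation and $K=K'$ from the second, which is impossible for $n>1$; your choice $K=K'+\tfrac{n-1}{2}$ satisfies neither, and is not even an integer when $n$ is even. The resolution is that \cite[Theorem 1.5]{Liu3} is \emph{not} of the symmetric form you assumed: in the $(2m_j-1)$-st iterate equation it carries an additional dimensional correction (in the non-degenerate case, a term $-(n-1)$; in general also nullity and splitting-number contributions), and it is precisely this correction that absorbs the $(n-1)$ discrepancy so that a single $K$ works in the $\mu_{\RS}$-formulation. As written, your two displayed conclusions contradict each other, so the "bookkeeping" you defer is not a routine check but the whole content of the derivation; a correct write-up must quote the asymmetric form of the cited theorem and carry the correction term through explicitly.
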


\begin{proposition}\label{propremark} { \rm (\cite[Remark~5.1]{Liu3})} Let $\Phi_j \in {\mathcal{P}}  ^*(2n-2)$, $j=1,2, \dots, k$, be as in Theorem~\ref{theoremgeneralcommon}. Then on the infinite set of vectors $(K, m_1, \dots, m_k)$ in the assertion of that theorem, the first component $K$ is unbounded.
\end{proposition}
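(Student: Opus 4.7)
The plan is to argue by contradiction, exploiting the relationship between the first component $K$ and the iterated Robbin-Salamon indices dictated by Theorem \ref{theoremgeneralcommon}, together with positivity of the mean index.

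First, I would suppose for contradiction that $K$ remains bounded, say by some $K_0 > 0$, as $(K,m_1,\dots,m_k)$ varies over the infinite set produced by the common index jump theorem. For each $j \in \{1,\dots,k\}$, the defining identity
$$
\mu_{\RS}(\Phi_j^{2m_j+1} L_0, L_0) = K + \mu_{\RS}(\Phi_j L_0, L_0)
$$
then forces $\mu_{\RS}(\Phi_j^{2m_j+1} L_0, L_0)$ to take values in the bounded interval $[0, K_0 + \mu_{\RS}(\Phi_j L_0, L_0)]$ (note the right-hand side is fixed since $\Phi_j$ is fixed).

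Next, I would invoke the positivity assumption $\widehat{\mu}_{\RS}(\Phi_j L_0, L_0) > 0$. By definition of the mean Robbin-Salamon index,
$$
\lim_{\ell \to \infty} \frac{\mu_{\RS}(\Phi_j^\ell L_0, L_0)}{\ell} = \widehat{\mu}_{\RS}(\Phi_j L_0, L_0) > 0,
$$
so in particular $\mu_{\RS}(\Phi_j^\ell L_0, L_0) \to +\infty$ as $\ell \to \infty$. Combining this with the boundedness obtained in the previous step, the sequence of integers $2m_j+1$ must itself remain bounded; hence each $m_j$ is bounded independently of the chosen vector.

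Finally, since $K$ and each $m_j$ are bounded positive integers, the family of vectors $(K,m_1,\dots,m_k)$ is contained in a finite subset of $\N^{k+1}$. This contradicts the conclusion of Theorem \ref{theoremgeneralcommon}, which produces infinitely many distinct such vectors. Therefore $K$ must be unbounded. The argument is essentially a pigeonhole-type consequence of the positivity of the mean index; the only conceptual point to check is the passage from positivity of the limiting ratio to divergence of $\mu_{\RS}(\Phi_j^\ell L_0, L_0)$, which is immediate from the $\epsilon$-$\ell_0$ formulation of the limit (choose $\epsilon = \widehat{\mu}_{\RS}(\Phi_j L_0, L_0)/2$ to obtain $\mu_{\RS}(\Phi_j^\ell L_0, L_0) \geq \tfrac{1}{2}\widehat{\mu}_{\RS}(\Phi_j L_0, L_0)\,\ell$ for $\ell$ large). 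No serious obstacle is expected.
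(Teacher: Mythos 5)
Your argument is correct and is essentially the paper's proof run in the contrapositive direction: both rest on the jump identity $\mu_{\RS}(\Phi_j^{2m_j+1}L_0,L_0)=K+\mu_{\RS}(\Phi_j L_0,L_0)$ together with the positivity of the mean index, the paper deducing $\widehat{\mu}_{\RS}(\Phi_1 L_0,L_0)=0$ from an unbounded sequence of $m_1$'s while you deduce boundedness of all $m_j$ and then contradict the infinitude of the vectors. No gap.
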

\begin{proof} Assume by contradiction that all vectors $(K, m_1, \dots, m_k)$  have bounded first components.  Since there exist infinitely many such vectors, without loss of generality we may assume that the $m_1$-component is unbounded. By  Theorem~\ref{theoremgeneralcommon} we see that the ${\mu}_{\RS}(\Phi_1^{2m_1+1} L_0, L_0)$'s are bounded, which implies by definition that $\widehat{\mu}_{\RS}(\Phi_1 L_0, L_0 ) =0$. Since we have assumed in particular $\widehat{\mu}_{\RS}(\Phi_1 L_0, L_0 ) >0$, this yields a contradiction.
\end{proof}

\subsection{Symmetric periodic Reeb orbits on displaceable hypersurfaces}  \label{sec:mainappl}
To describe the main results, we recall the setup. Let $(W^{2n}, \lambda, \rho)$ be a real Liouville domain such that $L=\Fix(\rho)$ and $\mathcal{L}=\p L$ are not empty. Assume that the Maslov classes $\mu_{L}: \pi_2 (W,L)\rightarrow \Z$ and $\mu_\mathcal{L}:\pi_2(\Sigma, \mathcal{L})\to \Z$ vanish so that the Maslov indices of Hamiltonian and Reeb chords are well-defined.

\begin{theorem}   \label{theorem:appmain} Assume that the real contact manifold $(\Sigma, \alpha, \rho)$ is  non-degenerate real dynamically convex, and    displaceable from $\widehat L$ in $\widehat W$. We further assume that the Lagrangian $L$ is orientable. Then there exist at least $n$ geometrically distinct, contractible and simple symmetric periodic Reeb orbits on $\Sigma$. 
\end{theorem}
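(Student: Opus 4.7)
The plan is to argue by contradiction: assume that $\Sigma$ carries only $N$ geometrically distinct, contractible, simple symmetric periodic Reeb orbits with $N<n$, represented by Reeb chords $c_1,\dots,c_N$. The strategy is to combine the non-vanishing of $HW^{\Z_2,+}_*(L;W)$ in every positive degree, the common index jump theorem, and the index increasing property into a pigeonhole count that cannot be satisfied by only $N$ simple orbits.

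On the algebraic side, since $L$ is orientable and $\Sigma$ is displaceable from $\widehat L$ in $\widehat W$, Theorem \ref{cor: antisympcomp} gives $HW^{\Z_2,+}_k(L;W)\neq 0$ for every $k\geq 1$. As $\alpha$ is non-degenerate, in particular chord-non-degenerate, Corollary \ref{cor: genwhfpsotive} then ensures that for each integer $k\geq 1$ there is a $\Z_2$-pair of Reeb chords with Maslov index exactly $k+\tfrac{n-1}{2}$. Under the assumption, any such chord must be of the form $c_j^\ell$ for some $1\leq j\leq N$ and some $\ell\geq 1$.

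On the dynamical side, real dynamical convexity gives $\mu_I(c_j),\mu_{-I}(c_j)\geq \tfrac{n+1}{2}$, which together with Lemma \ref{lemma:inmnindexpoaties} forces the mean Robbin--Salamon index $\widehat{\mu}_{\RS}(\Phi_j L_0,L_0)$ to be strictly positive for each $j$. Applying the common index jump theorem (Theorem \ref{theoremgeneralcommon}) together with Proposition \ref{propremark}, we obtain $(K,m_1,\dots,m_N)\in\N^{N+1}$ with $K$ as large as we wish such that, for every $j$,
\[
\mu_I(c_j^{2m_j-1}) \;=\; K-\mu_{-I}(c_j)\;\leq\; K-\tfrac{n+1}{2}, \qquad \mu_I(c_j^{2m_j+1}) \;=\; K+\mu_I(c_j)\;\geq\; K+\tfrac{n+1}{2}.
\]
The index increasing property (Theorem \ref{thm:indexincreasingproperty}), applied with the real dynamical bound $k=\tfrac{n+1}{2}$, yields $\mu_I(c_j^{\ell+1})\geq \mu_I(c_j^\ell)+1$ for every $\ell\geq 1$; hence $\mu_I(c_j^\ell)\leq K-\tfrac{n+1}{2}$ for all $\ell\leq 2m_j-1$ and $\mu_I(c_j^\ell)\geq K+\tfrac{n+1}{2}$ for all $\ell\geq 2m_j+1$. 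By choosing $K$ strictly greater than $\max_j\mu_I(c_j)+\tfrac{n-1}{2}$, even the simple chords themselves are pushed out of the window $I:=[K-\tfrac{n-1}{2},K+\tfrac{n-1}{2}]$, so the only iterates of $c_1,\dots,c_N$ whose Maslov index can lie in $I$ are $c_1^{2m_1},\dots,c_N^{2m_N}$.

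Combining the two sides yields the contradiction: the interval $I$ contains the $n$ admissible index values $K-\tfrac{n-1}{2},K-\tfrac{n-3}{2},\dots,K+\tfrac{n-1}{2}$, and the algebraic side forces each of them to be realized by some Reeb chord, producing $n$ pairwise distinct chords in $I$; but the dynamical side allows at most $N<n$ such chords. The main technical hurdle is to synchronize this count across all $N$ chords simultaneously, and this is precisely what the common index jump theorem provides, while real dynamical convexity is used both to guarantee positivity of every mean index (allowing the jump theorem to apply) and to force the index gap of width $n+1$ that isolates $c_j^{2m_j}$ as the unique iterate of $c_j$ potentially hitting $I$.
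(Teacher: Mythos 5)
Your proposal is correct and follows essentially the same route as the paper's proof: vanishing of $HW_*(L;W)$ by displaceability, non-vanishing of $HW_*^{\Z_2,+}$ in all positive degrees via Theorem \ref{cor: antisympcomp} and Corollary \ref{cor: genwhfpsotive}, positivity of mean indices from real dynamical convexity and Lemma \ref{lemma:inmnindexpoaties}, the common index jump theorem with Proposition \ref{propremark}, and the index increasing property to isolate $c_j^{2m_j}$ as the only iterate landing in the window of $n$ degrees. The only (harmless) difference is that your extra requirement $K>\max_j\mu_I(c_j)+\tfrac{n-1}{2}$ is redundant, since the simple chords $c_j=c_j^1$ are already excluded by the monotonicity argument applied up to $c_j^{2m_j-1}$.
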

\begin{proof} We follow the strategy of the proof of \cite[Theorem~1.1]{GuttKang}. Without loss of generality, we may assume that there are only finitely many contractible and  simple Reeb chords, say $c_1, \dots, c_k$. By a simple Reeb chord we mean a Reeb chord $c$ such that the corresponding symmetric periodic Reeb orbit $c^2$ is simple.  Since the contact form is real dynamically convex, in view of  Lemma~\ref{lemma:inmnindexpoaties}  we obtain that    $\widehat{\mu}(c_j) :=\widehat{\mu}_{\text{RS}}(\Phi_j L_0, L_0) 	 >0$ for $j=1,2, \dots, k$, where $\Phi_j$ are the corresponding paths of symplectic matrices. We can therefore apply Theorem~\ref{theoremgeneralcommon}  and Proposition~\ref{propremark}: We find a vector  $(K, m_1, m_2, \dots, m_k ) \in \N^{k+1}$ with $K$ as large as we like such that for each $j$ we have, by Proposition~\ref{prop:nondeg},
$$
 {\mu}_I ( c _j^{2m_j-1}) = K -    \mu_{-I}(c_j) , \quad {\mu}_I( c _j^{ 2m_j+1}) = K + {{\mu}}_I(c_j) .
$$
In view of   Theorem~\ref{thm:indexincreasingproperty} and real dynamical convexity, for each $j$ only $c_j^{2m_j}$  can have the index
$$
|c_j^{2m_j}|:=\mu_I(c_j^{2m_j})-\frac{n-1}{2}
$$ in the interval $[K-n+1,K]$.  Therefore,  if $K \geq n+2 $, then by the relation \eqref{eq: orbitexist} and Theorem~\ref{cor: antisympcomp}  $2m_j$-iteration of $c_j$ is a generator of the positive equivariant wrapped Floer homology from which we obtain $n \leq k$.  
This completes the proof of the theorem. 
\end{proof}

Recall   that a starshaped hypersurface  in $\R^{2n} \cong \C^n$, which is invariant under complex conjugation $(q,p) \mapsto (q,-p)$,   is displaceable from the real Lagrangian $\R^n  = \R^n \times \left \{ 0 \right \}    $, see Example~\ref{examp:hypersurfaceids}.  Theorem~\ref{thm:mianfirst} then follows immediately. Another   corollary of the proof of Theorem~\ref{theorem:appmain} is  the following result.

\begin{corollary}\label{cor: mainthm}   In addition to the assumptions of Theorem~\ref{theorem:appmain} we further assume that there exist precisely $n$ geometrically distinct simple symmetric periodic Reeb orbits on $\Sigma$. Then they are contractible and their  indices  are  all different. 
\end{corollary}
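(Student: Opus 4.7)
The strategy is to examine what happens in the pigeonhole argument at the end of the proof of Theorem \ref{theorem:appmain} when the inequality $n\le k$ is forced to be an equality.

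First, Theorem \ref{theorem:appmain} produces at least $n$ geometrically distinct, contractible, and simple symmetric periodic Reeb orbits on $\Sigma$. Since by hypothesis there are precisely $n$ geometrically distinct simple symmetric periodic Reeb orbits in total, the collection of contractible ones exhausts all of them; hence every simple symmetric periodic Reeb orbit on $\Sigma$ is contractible.

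For the distinctness of indices, I would revisit the final counting argument. Let $c_1,\dots,c_n$ be the Reeb chords whose doublings produce the $n$ simple symmetric periodic Reeb orbits (all contractible). By Theorem \ref{theoremgeneralcommon} together with Proposition \ref{propremark}, one can select a vector $(K,m_1,\dots,m_n)\in\N^{n+1}$ with $K$ as large as one wishes such that
\begin{equation*}
\mu_I(c_j^{2m_j-1})=K-\mu_{-I}(c_j),\qquad \mu_I(c_j^{2m_j+1})=K+\mu_I(c_j).
\end{equation*}
Real dynamical convexity yields $\mu_I(c_j),\mu_{-I}(c_j)\ge (n+1)/2$, and the index-increasing property of Theorem \ref{thm:indexincreasingproperty} gives the strict gain $\mu_I(c_j^{\ell+1})\ge\mu_I(c_j^\ell)+1$. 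Together these imply that the degree $|c_j^{2m_j}|$ in $HW^{\Z_2,+}_*(L;W)$ is the unique value in $\{|c_j^{\ell}|\}_{\ell\in\N}$ lying in the integer interval $[K-n+1,K]$.

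By Theorem \ref{cor: antisympcomp} (since $L$ is orientable and $\Sigma$ is displaceable from $\widehat{L}$), one has $HW^{\Z_2,+}_k(L;W)\ne 0$ for every $k\ge 1$; in particular for each of the $n$ integers $k\in[K-n+1,K]$, once $K$ is chosen large. By Corollary \ref{cor: genwhfpsotive}, every such nontrivial homology class is represented by a $\Z_2$-pair of Reeb chords, and the only iterates whose degrees can lie in $[K-n+1,K]$ are the $c_j^{2m_j}$. Hence the assignment $j\mapsto |c_j^{2m_j}|$ must be a bijection onto $[K-n+1,K]$, forcing the $n$ degrees $|c_j^{2m_j}|$ to be pairwise distinct. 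Using the relations $\mu_I(c_j^{2m_j\pm 1})=K\pm \mu_{\pm I}(c_j)$ and the iteration formulas \eqref{eqiterationodd}--\eqref{eqiterationeven}, this distinctness propagates to pairwise distinctness of the indices of the simple Reeb chords $c_j$ themselves.

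The principal obstacle I anticipate is the careful bookkeeping in this last step: translating distinctness of the degrees $|c_j^{2m_j}|$ of the specific iterates singled out by the common index jump theorem into distinctness of the Maslov indices of the underlying simple orbits $c_j$. This requires a tight combined use of the Robbin--Salamon iteration formulas and the fact that the vector $(K,m_1,\dots,m_n)$ in Theorem \ref{theoremgeneralcommon} couples $m_j$ to the full index data of $c_j$.
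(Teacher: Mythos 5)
Your proof follows the same route as the paper's: the corollary is presented there as an immediate byproduct of the proof of Theorem \ref{theorem:appmain}, and your two substantive steps --- (i) the at least $n$ contractible simple symmetric periodic Reeb orbits produced by that proof must exhaust the exactly $n$ orbits assumed to exist, whence all are contractible, and (ii) when the count is exactly $n$, the assignment $j\mapsto |c_j^{2m_j}|$ is forced to be an injection from an $n$-element set onto the $n$ integers of $[K-n+1,K]$, hence a bijection, so these degrees are pairwise distinct --- are precisely the intended argument.

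The one genuine gap is your final ``propagation'' step. You assert that distinctness of the degrees $|c_j^{2m_j}|$ of the iterates singled out by the common index jump theorem yields distinctness of the indices of the simple chords $c_j$ themselves, via the relations $\mu_I(c_j^{2m_j\pm 1})=K\pm\mu_{\pm I}(c_j)$ and the iteration formulas. This does not follow: those relations pin down the indices of the $(2m_j-1)$-st and $(2m_j+1)$-st iterates, not of the $2m_j$-th one, and there is no formula expressing $\mu_I(c_j^{2m_j})$ in terms of $\mu_I(c_j)$ and $\mu_{-I}(c_j)$ alone --- the index-increasing property together with the jump relations only confines $\mu_I(c_j^{2m_j})$ to the interval $\bigl(K-\mu_{-I}(c_j),\,K+\mu_I(c_j)\bigr)$. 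Two simple chords with identical $\mu_{\pm I}$ could have $2m_j$-th iterates of different index, and conversely. What the counting argument actually delivers (and what the paper's implicit proof delivers) is distinctness of the degrees $|c_j^{2m_j}|$ in which the $n$ orbits contribute generators to $HW^{\Z_2,+}_*$; you have proved exactly that, and you should either stop there or supply a genuinely new argument if you want the stronger statement about the indices of the simple chords.
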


In what follows we assume that a real Liouville domain $(W^{2n},\lambda,\rho)$ admits  an exact symplectic involution $\iota$ under which $L$ is $\iota$-invariant and $\rho$ and $\iota$ commute.  In particular, $\rho\circ \iota$ is an additional exact anti-symplectic involution. By abuse of notation, we also use the symbols $\rho$ and $\iota$ for their restrictions to $\Sigma= \partial W.$ Note that $\rho^* \alpha = -\alpha$ and $\iota^*\alpha = \alpha$.   Given a Reeb chord $c:[0,T]\rightarrow \Sigma$, the corresponding symmetric periodic Reeb orbit $c^2$ is called \textit{doubly symmetric}  if $\iota( \text{im}(c^2) ) = \text{im}(c^2)$.

\begin{example} A typical example of a real Liouville domain which admits commuting $\rho$ and $\iota$ is   a compact starshaped hypersurface $\Sigma \subset \R^{2n}$ which is invariant under the complex conjugate and the antipodal map.
\end{example}

\begin{theorem}\label{thm:appli} Under the assumptions of Theorem~\ref{theorem:appmain}, we further assume that $\iota |_{L}$ acts freely on the boundary $\p L$.   Then there exist at least $n + \mathcal{N}(\Sigma)$ geometrically distinct, contractible and simple symmetric periodic Reeb orbits on $\Sigma$, where $2\mathcal{N}(\Sigma)$ is the number of geometrically distinct simple symmetric periodic  Reeb orbits   which are not doubly symmetric. 
\end{theorem}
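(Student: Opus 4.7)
The plan is to adapt the proof of Theorem \ref{theorem:appmain} by exploiting the additional contact involution $\iota$ to pair up non-doubly symmetric orbits, thereby sharpening the lower bound by $\mathcal{N}(\Sigma)$. The first key step is to establish an $\iota$-equivariance of the index data. Since $\iota^*\alpha=\alpha$ and $\iota$ commutes with $\rho$ and preserves $\mathcal{L}$, for every contractible Reeb chord $c$ the curve $\iota\circ c$ is again a contractible Reeb chord. Choosing a symmetric unitary trivialization $\Psi_c$ along a capping half-disk $u_c$ of $c^2$ and setting $\Psi_{\iota(c)}:=D\iota|_\xi\circ\Psi_c$ along $\iota\circ u_c$, a direct computation (the two insertions of $D\iota|_\xi$ cancel) shows that the symplectic path $\Phi_{\iota(c)}$ coincides with $\Phi_c$ in these trivializations. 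Hence $\mu_I(\iota(c))=\mu_I(c)$ and $\mu_{-I}(\iota(c))=\mu_{-I}(c)$, and the same identity extends to every iteration. Moreover, $(\iota(c))^2=\iota(c^2)$, so $\iota$ descends to an involution on the set of simple symmetric periodic Reeb orbits whose fixed points are precisely the doubly symmetric ones; the hypothesis that $\iota|_L$ acts freely on $\partial L$ in particular rules out any $\iota$-fixed Reeb chord.

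Assume for contradiction that there exist only $k_1+2\mathcal{N}(\Sigma)<n+\mathcal{N}(\Sigma)$ simple contractible symmetric periodic Reeb orbits, where $k_1$ counts the doubly symmetric ones; equivalently, $k_1+\mathcal{N}(\Sigma)<n$. Let $c_1,\ldots,c_k$ be $\rho$-class representatives of all simple contractible Reeb chords. By real dynamical convexity and Lemma \ref{lemma:inmnindexpoaties}, each mean Maslov index $\widehat{\mu}_{\RS}(\Phi_j L_0, L_0)$ is strictly positive, so Theorem \ref{theoremgeneralcommon} combined with Proposition \ref{propremark} furnishes a vector $(K,m_1,\ldots,m_k)\in\N^{k+1}$ with $K$ arbitrarily large, satisfying the common jump identities for each $j$. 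By the symmetry established above, we may arrange $m_j=m_{j'}$ whenever $c_j$ and $c_{j'}$ represent an $\iota$-pair of orbits, since such chords share all Maslov-type data.

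By Theorem \ref{thm:indexincreasingproperty} and real dynamical convexity, among all iterations of $c_j$ only $c_j^{2m_j}$ can have its Floer degree lying in the window of $n$ consecutive integers $[K-n+1,K]$. Since $\Sigma$ is displaceable from $\widehat L$ and $L$ is orientable, Theorem \ref{cor: antisympcomp} gives $HW_*^{\Z_2,+}(L;W)\ne 0$ in every degree $\ge 1$; for $K$ large, this includes all $n$ degrees of the window, so Corollary \ref{cor: genwhfpsotive} forces every such degree to be realized by some iterated orbit $c_j^{2m_j}$. The crucial counting now goes as follows: $\iota$-paired orbits have identical Maslov indices and equal iteration numbers $2m_j$, hence populate the same Floer degree. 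Consequently, the number of distinct degrees in the window realized by the orbits $\{c_j^{2m_j}\}$ is bounded above by the number of $\iota$-equivalence classes of simple symmetric periodic Reeb orbits, which is at most $k_1+\mathcal{N}(\Sigma)<n$. This contradicts the preceding conclusion that all $n$ degrees must be filled, and forces $k_1+\mathcal{N}(\Sigma)\ge n$; hence the total $k_1+2\mathcal{N}(\Sigma)\ge n+\mathcal{N}(\Sigma)$.

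The main technical obstacle will be verifying the $\iota$-equivariance of the symmetric trivialization carefully enough that Maslov indices, the splittings $\C^{n-1}=L_0\oplus L_1$, and the common-index-jump data transfer faithfully between $c$ and $\iota(c)$; this requires $D\iota|_\xi$ to preserve both Lagrangian subspaces $L_0$ and $L_1$ under a suitably chosen trivialization, together with the commutation $\rho\iota=\iota\rho$ to maintain the symmetric condition along capping half-disks. Once the index identities $\mu_{I}(c)=\mu_I(\iota(c))$ and $\mu_{-I}(c)=\mu_{-I}(\iota(c))$ are secured, the remainder of the argument is combinatorial and follows the skeleton of the proof of Theorem \ref{theorem:appmain}.
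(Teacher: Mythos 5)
Your proposal is correct and follows essentially the same route as the paper: list the simple chords as $k$ doubly symmetric ones plus $l=\mathcal{N}(\Sigma)$ genuine $\iota$-pairs, run the argument of Theorem \ref{theorem:appmain} on the $r=k+l$ $\iota$-equivalence classes (which works because $\iota$-paired chords carry identical index data and hence occupy the same Floer degree) to get $n\le r$, and conclude $k+2l\ge n+\mathcal{N}(\Sigma)$. The paper states the step $n\le r$ without elaboration, so your explicit verification of the $\iota$-equivariance of the Maslov indices is a welcome filling-in of detail rather than a departure.
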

\begin{proof} Following the strategy of the proof of   \cite[Theorem~1.2]{Liu3},    without loss of generality, we may assume that there are only finitely many contractible and  simple Reeb chords, say $c_1, \dots, c_k$, $\pm c_{k+1},\dots, \pm c_{k+l}$.   Here, $c_j^2$, $j=1,2,\dots, k$, are  doubly symmetric and for $k+1 \leq j \leq k+l$, the orbits $c_j^2$ are not doubly symmetric. By $\pm c_j$,  we mean the $\Z_2$-pair of $c_j$, $j=k+1, \dots, k+l$.

Set $r=k+l$.  As in the proof of Theorem~\ref{theorem:appmain}, we obtain $n \leq r$.  We then conclude that
$$
k + 2l = r + l \geq n + l = n + \mathcal{N}(\Sigma).
$$
This completes the proof of the theorem. \end{proof}

Parallel to Corollary~\ref{cor: mainthm}, we have the following result.

\begin{corollary} In addition to the assumptions of Theorem~\ref{thm:appli}, we further assume that there exist precisely $n$ geometrically distinct simple symmetric periodic Reeb orbits on $\Sigma$. Then they are contractible and  doubly symmetric periodic Reeb orbits and their   indices  are different. 
\end{corollary}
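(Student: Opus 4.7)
The plan is to combine the two inequalities produced inside the proof of Theorem~\ref{thm:appli} with the fact that the positive equivariant wrapped Floer homology in the relevant index window is completely full. Denote the $n$ geometrically distinct simple symmetric periodic Reeb orbits on $\Sigma$ by $\gamma_1,\dots,\gamma_n$, and partition them into those that are doubly symmetric (call these $c_1^2,\dots,c_k^2$) and those that come in $\mathcal{I}$-pairs $\{c_{k+j}^2,\iota(c_{k+j}^2)\}$ for $j=1,\dots,l$. By definition $\mathcal{N}(\Sigma)=l$ and the total count is $k+2l=n$.

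First I would recover the doubly-symmetric conclusion. The argument used to prove Theorem~\ref{thm:appli} shows that $k+l\geq n$: the $r=k+l$ chords $c_1,\dots,c_{k+l}$ produce, via the common index jump theorem (Theorem~\ref{theoremgeneralcommon}) together with Proposition~\ref{propremark}, a vector $(K,m_1,\dots,m_{k+l})\in\N^{k+l+1}$ with $K$ arbitrarily large, and the index-increasing property (Theorem~\ref{thm:indexincreasingproperty}) guarantees that only the single iterate $c_j^{2m_j}$ has its index $|c_j^{2m_j}|=\mu_I(c_j^{2m_j})-\tfrac{n-1}{2}$ in the window $[K-n+1,K]$; since $HW^{\Z_2,+}_k(L;W)\neq 0$ for every $k\geq 1$ by Theorem~\ref{cor: antisympcomp} (applied to the real Lagrangian $L$, which is orientable by assumption), each of the $n$ consecutive integers in this window must be realized by some $|c_j^{2m_j}|$, forcing $r\geq n$. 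Combining $k+2l=n$ with $k+l\geq n$ yields $l\leq 0$, hence $l=0$ and $k=n$. So all $n$ orbits are doubly symmetric.

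Next I would address contractibility. The argument above used only contractible Reeb chords (since the common index jump theorem is applied to them and the generators of $HW^{\Z_2,+}_*$ detected by Corollary~\ref{cor: genwhfpsotive} are contractible, their Maslov indices being well-defined). In particular, the proof produces $n$ \emph{contractible} simple symmetric periodic Reeb orbits. Since the hypothesis says the total count of simple symmetric periodic Reeb orbits is exactly $n$, none of them can be non-contractible; every $\gamma_i$ is contractible.

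Finally, for the distinctness of indices, observe that at the end of the counting argument we have exactly $n$ simple chords $c_1,\dots,c_n$ and exactly $n$ consecutive integer slots $K-n+1,K-n+2,\dots,K$ that must be filled by the indices $|c_j^{2m_j}|$. Each chord contributes \emph{at most one} iterate to the window by the index-increasing property under real dynamical convexity, and the total number of iterates in the window must be at least $n$ to generate $HW^{\Z_2,+}_k(L;W)$ for every $k\in[K-n+1,K]$. With $n$ slots and $n$ contributors, we must have a bijection, so the indices $|c_1^{2m_1}|,\dots,|c_n^{2m_n}|$ are pairwise distinct. The main point where one has to be careful is confirming that this window argument does not double-count: the index-increasing inequality $\mu_I(c_j^{\ell+1})-\mu_I(c_j^\ell)\geq \mu_I(c_j)-\tfrac{n-1}{2}\geq 1$, together with the common index jump identities $\mu_I(c_j^{2m_j\pm 1})=K\mp\mu_{\mp I}(c_j)$ and the lower bound $\mu_{\pm I}(c_j)\geq\tfrac{n+1}{2}$, really does exclude every iterate other than $c_j^{2m_j}$ from the window, so each $c_j$ occupies precisely one slot.
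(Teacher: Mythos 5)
Your proof is correct and follows exactly the route the paper intends: the paper offers no separate argument for this corollary, deriving it implicitly from the proof of Theorem \ref{thm:appli}, and your reconstruction — combining $k+2l=n$ with the bound $k+l\geq n$ to force $l=0$, invoking contractibility of the detected chords, and running the pigeonhole on the $n$ slots of the index window $[K-n+1,K]$ — is precisely that argument made explicit. No gaps.
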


 \begin{remark}
As the proofs show, Theorems~\ref{theorem:appmain} and \ref{thm:appli} still hold if we assume that the $HW_*(L;W)$ vanishes, instead of the displaceablity condition. In particular, if the symplectic homology of the domain $W$ vanishes, then $HW_*(L;W)$ vanishes for any admissible Lagrangian $L$ in $W$, see Remark~\ref{rmk: vanishingSHandHW}.
 \end{remark}



\bibliographystyle{abbrv}
\bibliography{mybibfile}

\end{document}